\DeclareMathOperator{\spn}{span}
\newtheorem{thm}{Theorem}[section]
\newtheorem{definition}[thm]{Definition}
\newtheorem{theorem}[thm]{Theorem}
\newtheorem*{oseledec*}{Oseledec Theorem}
\newtheorem{cor}[thm]{Corollary}
\newtheorem{claim}[thm]{Claim}
\newtheorem{lemma}[thm]{Lemma}
\newtheorem{prop}[thm]{Proposition}
\def\moverlay{\mathpalette\mov@rlay}
\def\mov@rlay#1#2{\leavevmode\vtop{%
   \baselineskip\z@skip \lineskiplimit-\maxdimen
   \ialign{\hfil$\m@th#1##$\hfil\cr#2\crcr}}}
\newcommand{\charfusion}[3][\mathord]{
    #1{\ifx#1\mathop\vphantom{#2}\fi
        \mathpalette\mov@rlay{#2\cr#3}
      }
    \ifx#1\mathop\expandafter\displaylimits\fi}
\newcommand{\bigcupdot}{\charfusion[\mathop]{\bigcup}{\cdot}}
\begin{document}

\title{SYMBOLIC DYNAMICS FOR NON-UNIFORMLY HYPERBOLIC DIFFEOMORPHISMS OF COMPACT SMOOTH MANIFOLDS}
\author{Snir Ben Ovadia}
\address{Faculty of Mathematics and Computer Science, Weizmann Institute of Science, POB 26, Rehovot, 76100 ISRAEL} \email{snir.benovadia@weizmann.ac.il}

\date{}
\thanks{This work is a part of a M.Sc thesis at the Weizmann Institute of Science. The author was partly supported by the ISF grant 199/14.} 

\maketitle

\begin{abstract}
We construct countable Markov partitions for non-uniformly hyperbolic diffeomorphisms on compact manifolds of any dimension, extending earlier work of Sarig \cite{Sarig} for surfaces.

These partitions allow us to obtain symbolic coding on invariant sets of full measure for all hyperbolic  measures whose Lyapunov exponents are bounded away from zero by a fixed constant. Applications include counting results for hyperbolic periodic orbits, and structure of hyperbolic measures of maximal entropy.

   
   
   
\end{abstract}

\tableofcontents
\setcounter{section}{-1}
\section{Introduction}
In this paper we construct countable Markov partitions for non-uniformly hyperbolic $C^{1+\beta}$ diffeomorphisms on compact manifolds of any dimension. The two dimensional case was done in \cite{Sarig}. The uniformly hyperbolic case (in any dimension) was done in \cite{Si1,Si2} and in \cite{B1}. Markov partitions were first constructed for hyperbolic toral automorphisms in dimension two, in \cite{Aw,AW70}.
\subsection{Main results}
Let $\mathcal{G}$ be a directed graph with a countable collection of vertices $\mathcal{V}$ s.t. every vertex has at least one ingoing and one outgoing edge. The {\em topological Markov shift} (TMS) associated to $\mathcal{G}$ is the set
$$\Sigma=\Sigma(\mathcal{G}):=\{(v_i)_{i\in\mathbb{Z}} \in\mathcal{V}^\mathbb{Z} :v_i \rightarrow v_{i+1}\text{  }, \forall i\in\mathbb{Z}\},$$
equipped with the left-shift $\sigma:\Sigma\rightarrow\Sigma$, $\sigma((v_i)_{i\in\mathbb{Z}})=(v_{i+1})_{i\in\mathbb{Z}}$, and the metric 
$d(u,v):=\exp(-\min\{n\in\mathbb{N}_0:u_n\neq v_n\text{ or }u_{-n}\neq v_{-n}\})$ for $u=(u_n)_{n\in\mathbb{Z}},$ $v=(v_n)_{n\in\mathbb{Z}}$. With this metric $\Sigma$ is a complete separable metric space. $\Sigma$ is compact iff $\mathcal{G}$ is finite. $\Sigma$ is locally compact iff every vertex of $\mathcal{G}$ has finite ingoing and outgoing degree.

A {\em Markovian subshift} of $\Sigma(\mathcal{G})$ is a subset of the form $\Sigma(\mathcal{G}')$, where $\mathcal{G}'$ is a subgraph of $\mathcal{G}$.

\medskip
The following definition is due to Sarig: given a TMS $\Sigma$,
$$\Sigma^\#:=\{(v_i)_{i\in\mathbb{Z}}\in\Sigma:\exists v',w'\in\mathcal{V}(\Sigma)\text{ , }\exists n_k,m_k\uparrow\infty\text{ s.t. }v_{n_k}=v'\text{ , }v_{-m_k}=w',\text{ }\forall k\in\mathbb{Z}\}.$$
Notice that by the Poincar\'e recurrence theorem every $\sigma$-invariant probability measure is carried by $\Sigma^\#$. Furthermore, every periodic point of $\sigma$ is in $\Sigma^\#$. 


Let $f$ be a $C^{1+\beta}$ diffeomorphism on a compact smooth boundaryless manifold $M$ of dimension greater than $1$. Here $\beta\in (0,1)$ is the H\"older exponent of $df$.

We say that an ergodic $f$-invariant probability measure $\mu$ is a \textit{hyperbolic measure}, if it has no zero Lyapunov exponents, and among them at least one is positive and one is negative. Let $\chi(\mu):=\min\{|\chi_i|:\chi_i\text{ is a Lyapunov exponent of }\mu\}$. If $0<\chi<\chi(\mu)$, we say $\mu$ is {\em $\chi$-hyperbolic}. Similarly, we call a point in the manifold {\em $\chi$-hyperbolic} if it is Lyapunov regular (see definition \ref{NUH}) and its Lyapunov exponents are bounded away from $0$ by $\chi$. Our main results are:
\begin{theorem}\label{t4.1.1} For every $\chi>0$ there exist a locally compact TMS $\Sigma_\chi$, and a H\"older continuous map $\pi_\chi:\Sigma_\chi\rightarrow M$ s.t.: \begin{enumerate}
    \item[{\rm (1)}] $\pi_\chi\circ\sigma=f\circ\pi_\chi$.
    \item[{\rm (2)}] $\pi_\chi[\Sigma_\chi^\#]$ has full measure for all $\chi$-hyperbolic measures, and every point in $\pi_\chi[\Sigma_\chi^\#]$ has finitely many pre-images in $\Sigma_\chi^\#$.
\end{enumerate}
\end{theorem}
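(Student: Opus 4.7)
The plan is to follow Sarig's strategy for surfaces, suitably generalized to arbitrary dimension. The starting point is non-uniform (Pesin) hyperbolicity: at any Lyapunov regular $\chi$-hyperbolic point $x\in M$ one has an invariant splitting $T_xM=E^s(x)\oplus E^u(x)$ with exponential dichotomy, and one introduces a tempered parameter $Q(x)>0$ (built from the Oseledets norms and the H\"older constant of $df$) that measures how strongly hyperbolic the orbit of $x$ is. Using $Q$ one defines Pesin charts $\Psi_x:B_{Q(x)}(0)\subset E^s(x)\oplus E^u(x)\to M$ in which $f$ becomes an arbitrarily small $C^{1+\beta/2}$ perturbation of the hyperbolic linear map $df_x|_{E^s}\oplus df_x|_{E^u}$. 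The central object will then be \emph{$\varepsilon$-double charts} $\Psi_x^{p^s,p^u}$ tagged with two linear-size parameters $0<p^s,p^u\le Q(x)$ describing admissible sizes of local stable/unstable pieces through $x$.

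Next I would carry out a coarse-graining step. One shows that the (uncountable) family of all $\varepsilon$-double charts can be replaced by a countable sub-family $\mathcal{A}$ which is \emph{relatively dense} along every $\chi$-hyperbolic orbit, in the sense that for each $\chi$-hyperbolic $x$ one can find a bi-infinite sequence $(\Psi_{x_n}^{p^s_n,p^u_n})_{n\in\mathbb{Z}}\in\mathcal{A}^{\mathbb{Z}}$ satisfying the overlap conditions for consecutive Pesin charts (a \emph{generalized pseudo-orbit}, or chain) such that $x_n=f^n(x)$ up to exponentially small error and the sequence visits finitely many elements of $\mathcal{A}$ infinitely often in the past and in the future. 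Let $\Sigma_\chi$ be the TMS whose vertices are $\mathcal{A}$ and whose edges are the pairs that satisfy the overlap axioms. A graph-transform/shadowing argument then shows that each $v=(v_n)\in\Sigma_\chi$ uniquely determines an orbit $\pi_\chi(v)\in M$ via a single transverse intersection of a nested family of admissible stable and unstable manifolds $V^s(v),V^u(v)$ constructed inside the charts. H\"older continuity of $\pi_\chi$ follows from the exponential contraction of admissible manifolds under the graph transform, and equivariance $\pi_\chi\circ\sigma=f\circ\pi_\chi$ is built into the construction. The coarse graining is arranged so that every $\chi$-hyperbolic $x$ has a preimage in $\Sigma_\chi^{\#}$, giving part (2) up to the multiplicity claim.

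For the finite multiplicity assertion I would adapt Sarig's \emph{inverse theorem}: the chain that shadows a point $y=\pi_\chi(v)$ with $v\in\Sigma_\chi^{\#}$ determines, at each time, the Oseledets splitting and the size parameters $(p^s_n,p^u_n)$ up to bounded multiplicative constants. Consequently, two chains in $\Sigma_\chi^{\#}$ projecting to the same point must have their symbols drawn from a uniformly bounded subset of $\mathcal{A}$ at every time, whence by a Bowen-type argument only finitely many such chains can exist. One then converts the extension $\pi_\chi$ into a genuine countable Markov partition by the Bowen--Sinai refinement, and transfers the finite-to-one bound to the new TMS, still called $\Sigma_\chi$; the partition is locally compact because each Pesin chart has only finitely many geometrically compatible neighbours.

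The principal obstacle specific to dimension $\ge 3$ is the Pesin geometry underlying the overlap axioms. In the surface case stable and unstable leaves are curves and transverse intersections are unambiguous; in higher dimension one must carry along the full linear data of $E^s,E^u$ and their inner products, control how these rotate from chart to chart in a H\"older fashion, and verify that the graph-transform fixed point inside an admissible manifold (now a higher-dimensional Lipschitz graph over $E^u$ or $E^s$) stays inside the next chart in the chain. Getting the temperedness of $Q$, the parameters $p^s,p^u$, and the transition constants to match so that the overlap conditions are both \emph{satisfiable} along every $\chi$-hyperbolic orbit and \emph{strong enough} to imply shadowing and uniqueness, is the delicate technical heart of the argument; this is where the higher-dimensional proof diverges most from the two-dimensional one.
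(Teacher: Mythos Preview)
Your proposal is correct and follows essentially the same architecture as the paper: Pesin charts with a tempered size parameter $Q_\epsilon$, $\epsilon$-double charts $\psi_x^{p^s,p^u}$, a coarse-graining to a countable alphabet $\mathcal{A}$ (Proposition~\ref{discreteness}), a graph-transform shadowing lemma (Theorem~\ref{graphtransform}, Proposition~\ref{firstofchapter}) yielding the H\"older factor map $\pi$ (Theorem~\ref{DefOfPi}), the inverse problem comparing chains that shadow the same orbit (Section~2, culminating in Theorem~\ref{beforefinal}), and finally the Bowen--Sinai refinement to a locally finite Markov partition (Section~3). You also correctly identify the main new difficulty in dimension $\geq 3$: controlling the full linear data $C_\chi(x)$ rather than a single angle, which the paper handles via the operator $\Xi$ of Claim~\ref{Xi} in place of Sarig's explicit two-dimensional formulas.
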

\medskip
\begin{theorem}\label{t4.1.3} For
every $\chi$-hyperbolic measure $\mu$ on $M$ there is an ergodic $\sigma$-invariant probability measure $\widehat{\mu}$ on $\Sigma_\chi$, such that $\mu$ equals $\widehat{\mu}\circ\pi_\chi^{-1}$ and the metric entropies of these measures coincide.
\end{theorem}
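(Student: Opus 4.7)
The plan is three-fold: first lift $\mu$ to a $\sigma$-invariant Borel probability measure $\tilde\mu$ on $\Sigma_\chi$ by a fibrewise-averaging construction, then extract an ergodic component $\hat\mu$ that still projects to $\mu$, and finally use the finite-to-one property of $\pi_\chi$ on $\Sigma_\chi^\#$ to deduce $h_{\hat\mu}(\sigma) = h_\mu(f)$.

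\emph{Construction of the lift and ergodic component.} By Theorem \ref{t4.1.1}(2), the set $N_\mu := \pi_\chi[\Sigma_\chi^\#]$ has full $\mu$-measure, and the fibre $F(x) := \pi_\chi^{-1}(x) \cap \Sigma_\chi^\#$ is finite and non-empty for every $x \in N_\mu$. A Lusin--Kuratowski argument (countable-to-one Borel maps between Polish spaces) makes $x \mapsto \#F(x)$ Borel measurable and provides a Borel measurable family of counting measures on the fibres. Define
$$\tilde\mu(A) \ := \ \int_{N_\mu} \frac{\#(A \cap F(x))}{\#F(x)} \, d\mu(x), \qquad A \subseteq \Sigma_\chi \text{ Borel}.$$
Then $\tilde\mu$ is a Borel probability measure concentrated on $\Sigma_\chi^\#$ with $\pi_{\chi*}\tilde\mu = \mu$. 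The intertwining $\pi_\chi \circ \sigma = f \circ \pi_\chi$ forces $\sigma$ to map $F(x)$ bijectively onto $F(fx)$ for every $x \in N_\mu$; combined with the $f$-invariance of $\mu$ this gives the $\sigma$-invariance of $\tilde\mu$. Decompose $\tilde\mu = \int \hat\mu_\omega \, d\omega$ into ergodic components; each push-forward $\pi_{\chi*}\hat\mu_\omega$ is $f$-invariant and their integral equals $\mu$. Since $\mu$ is ergodic, this must be its trivial ergodic decomposition, so $\pi_{\chi*}\hat\mu_\omega = \mu$ for a.e.\ $\omega$; fix any such component and call it $\hat\mu$.

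\emph{Entropy equality and main obstacle.} Since $\pi_\chi$ is a measurable factor map from $(\Sigma_\chi,\hat\mu,\sigma)$ onto $(M,\mu,f)$, the bound $h_{\hat\mu}(\sigma) \geq h_\mu(f)$ is automatic. For the reverse I would apply the Abramov--Rokhlin fibre decomposition $h_{\hat\mu}(\sigma) = h_\mu(f) + h_{\hat\mu}(\sigma \mid \pi_\chi^{-1}\mathcal{B}_M)$ and show the conditional term vanishes. For any countable generating partition $\xi$ of $\Sigma_\chi$, the joint partition $\bigvee_{i=0}^{n-1}\sigma^{-i}\xi$ restricted to $\pi_\chi^{-1}(y)\cap\Sigma_\chi^\#$ has at most $\#F(y)$ cells (that set only has $\#F(y)$ points), so $H_{\hat\mu}(\bigvee_{i=0}^{n-1}\sigma^{-i}\xi \mid \pi_\chi^{-1}\mathcal{B}_M) \leq \int \log \#F(y)\, d\mu(y)$ uniformly in $n$, and dividing by $n$ yields zero in the limit. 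I expect this last step to be the main technical issue, because $\#F(x)$ need not be uniformly bounded in $x$: one may have to handle the possible non-integrability of $\log \#F(x)$, typically by first restricting to $\{x: \#F(x) \leq K\}$ and then letting $K \to \infty$, and also verify the Abramov--Rokhlin formula in the non-compact countable-alphabet symbolic-space setting. The construction and selection of an ergodic component, by contrast, should be routine once the descriptive-set-theoretic measurability of $F$ is in hand.
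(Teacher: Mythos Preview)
Your proposal is correct and follows the standard route that the paper (via its reference to \cite{Sarig}) takes: lift $\mu$ to $\Sigma_\chi$ by averaging over the finite fibres $F(x)=\pi_\chi^{-1}(x)\cap\Sigma_\chi^\#$, pass to an ergodic component, and invoke that finite-to-one factor maps preserve entropy. The paper itself does not spell out the argument; it simply states (in the Remark following the theorem) that equal entropy holds ``because $\pi_\chi$ is finite-to-one on $\Sigma_\chi^\#$'' and otherwise defers to \cite{Sarig}.

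One point you flagged as the ``main technical issue'' is in fact not an issue at all, and seeing why simplifies your write-up considerably. You already observed that $\sigma$ maps $F(x)$ bijectively onto $F(fx)$; hence $x\mapsto\#F(x)$ is an $f$-invariant Borel function on a set of full $\mu$-measure. Since $\mu$ is ergodic, $\#F$ is $\mu$-a.e.\ equal to a constant $N<\infty$. Thus $\int\log\#F\,d\mu=\log N<\infty$, your uniform bound on the conditional entropy is finite, and the Abramov--Rokhlin argument goes through with no truncation or limiting procedure needed. (This same observation also makes the measurability and invariance of the fibrewise-uniform lift $\tilde\mu$ transparent.)
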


Notice that every shift-invariant probability measure on $\Sigma_\chi$ projects to an invariant probability measure on the manifold. The following theorem implies that the projection preserves the entropy.
\begin{theorem}\label{t4.1.2} For $\Sigma_\chi$ given by theorem \ref{t4.1.1}, we denote the set of states of $\Sigma_\chi$ by $\mathcal{V}_\chi$. There exists a function $N_\chi:\mathcal{V}_\chi\rightarrow\mathbb{N}$ s.t. for every $x\in M$ which can be written as $x=\pi_\chi((v_i)_{i\in\mathbb{Z}})$ with $v_i=u$ for infinitely many $i<0$, and $v_i=w$ for infinitely many $i
>0$, it holds that: $|\pi_\chi^{-1}[\{x\}]\cap \Sigma_\chi^\#|\leq N_\chi(u)\cdot N_\chi(w)$.
\end{theorem}
\subsection{Applications}
\begin{theorem}\label{t4.2.1} Suppose $f$ is a $C^{1+\beta}$ diffeomorphism of a compact smooth boundary-less manifold  of dimension greater than $1$, and suppose $f$ has a hyperbolic measure of maximal entropy. Then $$\exists p\in\mathbb{N}\text{ s.t. } \liminf_{n\rightarrow\infty,p|n}e^{-nh_{top}(f)}P_n(f)>0,$$
where $P_n(f):=\#\{x\in M: f^n(x)=x\}$ and $h_{top}(f)$ is the topological entropy of $f$.
\end{theorem}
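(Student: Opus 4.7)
The plan is to use the symbolic model from Theorems~\ref{t4.1.1} and~\ref{t4.1.3} to reduce the periodic-orbit count for $f$ to a loop count on a topologically transitive countable Markov shift, and then invoke the Gurevich--Sarig thermodynamic formalism.

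Let $\mu$ be a hyperbolic measure realising $h_{top}(f)$, set $\chi := \chi(\mu) > 0$, and lift $\mu$ via Theorem~\ref{t4.1.3} to an ergodic $\sigma$-invariant measure $\hat\mu$ on $\Sigma_\chi$ with $h(\hat\mu) = h(\mu) = h_{top}(f)$. By ergodicity, $\hat\mu$ is carried by a single irreducible (topologically transitive) subgraph $\Sigma' \subseteq \Sigma_\chi$; let $p$ be its period and fix a state $v^* \in \mathcal{V}_\chi$ of $\Sigma'$ whose $0$-cylinder has positive $\hat\mu$-mass. To identify the Gurevich entropy, note $h_{Gur}(\Sigma') \ge h(\hat\mu) = h_{top}(f)$ by the Gurevich variational principle; conversely, every shift-invariant probability $\hat\nu$ on $\Sigma_\chi$ gives $\Sigma_\chi^\#$ full measure (Poincar\'e recurrence), and since $\pi_\chi$ is finite-to-one there, $\hat\nu$ projects to an $f$-invariant $\nu$ with $h(\nu) = h(\hat\nu) \le h_{top}(f)$, so $h_{Gur}(\Sigma') = h_{top}(f)$.

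The crux is then to deduce, from the existence of the measure of maximal entropy $\hat\mu$ on the irreducible TMS $\Sigma'$ of finite Gurevich entropy, that $\Sigma'$ is positive recurrent for the zero potential and to extract the quantitative lower bound
$$\liminf_{n\to\infty,\,p\mid n} e^{-n h_{Gur}(\Sigma')}\, Z_n(v^*) \;>\; 0, \qquad Z_n(v^*) := \#\{\bar v \in \Sigma' : \sigma^n \bar v = \bar v,\ v_0 = v^*\}.$$
This is the version of Gurevich--Savchenko's periodic-orbit theorem (sharpened by Sarig) for positive-recurrent countable Markov shifts, and extracting the \emph{liminf} rather than merely the exponential rate is the main obstacle I anticipate: one must show that having an MME on $\Sigma'$ forces positive recurrence, and then combine this with the renewal-theoretic asymptotics for loop numbers.

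Finally, the estimate transfers to $M$: for every periodic $\bar v$ counted by $Z_n(v^*)$, the state $v^*$ occurs infinitely often in both tails of $\bar v$, so Theorem~\ref{t4.1.2} applies with $u = w = v^*$ to give $|\pi_\chi^{-1}\{\pi_\chi(\bar v)\}| \le \varphi_\chi(v^*,v^*)$ (periodic sequences lie in $\Sigma_\chi^\#$). Consequently $P_n(f) \ge Z_n(v^*)/\varphi_\chi(v^*,v^*)$, and combining this with the preceding step yields the claimed $\liminf$ along multiples of $p$.
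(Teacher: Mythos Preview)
Your proposal is correct and follows precisely the route the paper intends: the paper does not give a detailed proof but states that Theorem~\ref{t4.2.1} ``follows from our main results as in \cite{Sarig}'', and the argument you outline (lift the maximal-entropy hyperbolic measure to $\Sigma_\chi$, restrict to an irreducible component $\Sigma'$ of period $p$, identify $h_{Gur}(\Sigma')=h_{top}(f)$, invoke Gurevich's theorem that an MME forces positive recurrence together with the ensuing loop-count asymptotics, and push the count down to $M$ using the uniform preimage bound of Theorem~\ref{t4.1.2}) is exactly Sarig's argument. The step you flag as the main obstacle---that existence of a measure of maximal entropy on a transitive countable Markov shift implies positive recurrence and hence the $\liminf$ bound on $Z_n(v^*)$---is indeed the substantive external input, and it is supplied by Gurevich's theory as cited.
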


\begin{theorem}\label{t4.2.2} Suppose $f$ is a $C^{1+\beta}$ diffeomorphism of a compact smooth boundaryless manifold of dimension greater than $1$. Then $f$ possesses at most countably many ergodic hyperbolic measures of maximal entropy.
\end{theorem}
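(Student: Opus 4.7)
The strategy is to lift each hyperbolic measure of maximal entropy to the symbolic space furnished by Theorem~\ref{t4.1.1}, and then invoke the classical fact that an irreducible countable Markov shift admits at most one measure of maximal entropy.

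First I would reduce to a single $\chi>0$. Every hyperbolic measure $\mu$ is $\chi(\mu)$-hyperbolic with $\chi(\mu)>0$, so the set of hyperbolic MMEs equals $\bigcup_{n\ge 1}\mathcal{M}_h^{1/n}$, where $\mathcal{M}_h^\chi$ denotes the $\chi$-hyperbolic ones. A countable union of countable sets being countable, it suffices to fix $\chi>0$ and show $\mathcal{M}_h^\chi$ is countable.

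Fix such $\chi$. For each $\mu\in\mathcal{M}_h^\chi$, Theorem~\ref{t4.1.3} produces an ergodic $\sigma$-invariant probability measure $\hat\mu$ on $\Sigma_\chi$ with $\hat\mu\circ\pi_\chi^{-1}=\mu$ and $h(\hat\mu)=h(\mu)=h_{\mathrm{top}}(f)$; choosing one such lift gives an injection $\mu\mapsto\hat\mu$, since $\pi_\chi$ pushes $\hat\mu$ back to $\mu$. Since the state set $\mathcal{V}_\chi$ is countable, partition it via the equivalence $u\sim v$ iff there are directed paths $u\to\cdots\to v$ and $v\to\cdots\to u$ in the graph of $\Sigma_\chi$. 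This decomposes $\Sigma_\chi$ into at most countably many topologically transitive subshifts $\{\Sigma_\chi^{(j)}\}_{j\in J}$, and ergodicity forces each $\hat\mu$ onto a single $\Sigma_\chi^{(j)}$. Because $\pi_\chi$ is finite-to-one on a subset of full measure for every ergodic shift-invariant measure on $\Sigma_\chi^{(j)}$ (Theorem~\ref{t4.1.1}(2)), projection preserves entropy (Abramov--Rokhlin), so every such measure has entropy at most $h_{\mathrm{top}}(f)$, with equality realized by $\hat\mu$. In particular $\hat\mu$ is a measure of maximal entropy on the irreducible component $\Sigma_\chi^{(j)}$.

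The conclusion follows from Gurevich's theorem: a topologically transitive countable Markov shift carries at most one measure of maximal entropy. Hence there is at most one admissible $\hat\mu$ per component, and summing over the countable index set $J$ bounds $|\mathcal{M}_h^\chi|\le\aleph_0$, finishing the reduction. The main obstacle is really only to verify that the symbolic uniqueness theorem applies to each component in our non-compact countable setting; once one confirms, as above, that $\hat\mu$ attains the Gurevich entropy of its irreducible component, the symbolic side supplies everything needed and no further dynamical input on $M$ is required.
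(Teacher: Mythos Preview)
Your argument is correct and is essentially the one the paper intends: it does not prove Theorem~\ref{t4.2.2} directly but states that it ``follows from our main results as in \cite{Sarig},'' and Sarig's argument is precisely the lift-to-symbolic, decompose-into-irreducible-components, apply-Gurevich scheme you outline. The only cosmetic point is that the entropy preservation under the finite-to-one factor $\pi_\chi|_{\Sigma_\chi^\#}$ is usually attributed to the vanishing of relative entropy over countable fibers rather than to Abramov--Rokhlin per se, but the content is the same.
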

Theorems \ref{t4.2.1} and \ref{t4.2.2} follows from our main results as in \cite{Sarig}, see also \cite{K3,Bu4}. Unlike the case of surfaces, in dimension 3 and onwards measures of positive maximal entropy are not necessarily hyperbolic (consider e.g. the product of a hyperbolic toral automorphism and an irrational rotation).
\subsection{Comparison to other results in the literature}\

Markov partitions for diffeomorphisms have different definitions (\cite{Aw,AW70,Si1,B1,Sarig}). They were constructed for several classes of systems: hyperbolic toral automorphisms \cite{Aw}, Anosov diffeomorphisms \cite{Si1,Si2}, pseudo--Anosov diffeomorphisms \cite{FS}, Axiom A diffeomorphisms \cite{B1,B2}, and general diffeomorphisms of surfaces \cite{Sarig}.
For flows,  see \cite{Ra69,Ra73,B73,SL14}. For maps with singularities, see \cite{BS80,LM16}. 

\medskip
This work treats the case of manifolds of general dimension, for general $C^{1+\beta}$ diffeomorphisms. The main tool in this work is Pesin's theory of non-uniform hyperbolicty \cite{Pesin, Pesin2}.

\medskip
Katok showed that a $C^{1+\beta}$ diffeomorphism $f$ with a hyperbolic measure $\mu$ has for each $\epsilon>0$ a compact invariant subset $\Gamma_\epsilon$ (which is called a {\em Katok horseshoe}) s.t. $f:\Gamma_\epsilon\to\Gamma_\epsilon$ has a finite Markov partition, and $h_{top}(f|_{\Gamma_\epsilon})>h_\mu(f)-\epsilon$  \cite{K1,K2,KM}. Katok's horseshoes have \textbf{finite} Markov partitions. Our ``horseshoes" have countable Markov partitions. This is unavoidable since maps with finite Markov partition can only have countably many possible values for the topological entropy.

Typically, $\Gamma_\epsilon$ will have zero measure w.r.t. $\mu$. This paper constructs for every $\chi>0$ a non-compact Katok horseshoe $\pi_\chi[\Sigma_\chi]$ with full measure for all $\chi$-hyperbolic measures (in the case of surfaces, positive entropy means hyperbolicity).

\subsection{Overview of the construction of the Markov partition}\label{SectionOverview}
Bowen's construction of Markov partitions for uniformly hyperbolic diffeomorphisms \cite{B3,B4} uses Anosov's shadowing theory for pseudo-orbits. This theory fails for general non-uniformly hyperbolic diffeomorphisms. In dimension two, Sarig developed a refined shadowing theory which does work in the non-uniformly hyperbolic setup. It consists of: \begin{enumerate}
    \item Definition of \underline{``$\epsilon$-chains"}: refinement of the definition of pseudo orbit.
    \item \underline{Shadowing lemma}: every $\epsilon$-chain ``shadows" a unique orbit.
    \item \underline{Solution for the inverse problem}: control of $\epsilon$-chains which shadow the same orbit.
\end{enumerate}
These properties, once established, allow the construction of Markov partitions using the method of Bowen  (\cite{Sarig}). Sarig worked in dimension two. We generalize his work to the higher dimensional case. 

The main difficulty in the generalization is the solution of the ``inverse problem" for the shadowing procedure (section \ref{chapter333}). The argument in \cite{Sarig} uses very strongly
\begin{itemize}
    \item the one-dimensionality of local stable and unstable manifolds
    \item the existence of canonical (up to a sign) bases for the Oseledec spaces $H^s(x), H^u(x)$ ($E^s(x), E^u(x)$ in Sarig's notations)
\end{itemize}
In the higher dimensional case these facts
are not valid. To deal with this we need to modify the definition of Pesin charts and chains as used by Sarig, re-do his graph transform estimations, and most importantly find alternative proofs for many of the comparison estimations needed to solve the ``inverse problem".

\medskip

Some of our arguments require major changes from their two dimensional analogues. When this is the case we give full details. In other cases (e.g. construction of Markov partitions given (1) , (2) and (3) above) we omit the technical details, and limit ourselves to a sketch of the main idea and references to sources in which complete details can be found.
\subsection{Notation}\label{notations}
\begin{enumerate}
    \item For any normed vector space $V$, we denote the unit sphere in $V$ by $V(1)$.
    \item Let $L$ be a linear transformation between two inner-product spaces $V,W$ of finite dimension. The {\em Frobenius norm} of $L$ is defined by $\|L\|_{Fr}=\sqrt{\sum_{i,j}a_{ij}^2}$, where $(a_{ij})$ is the representing matrix for $L$, under the choice of some pair of orthonormal bases for $V$ and $W$. This definition does not depend on the choices of bases.
    \item When a statement about two different cases that are indexed by their subscripts and/or their superscripts holds for both cases, we write in short the two cases together, with the two scripts separated by a ``/". For example, $\pi_\chi(u_{x/y})=x/y$ means $\pi_\chi(u_x)=x$ and $\pi_\chi(u_y)=y$.
    \item Given $A,B,C\in\mathbb{R}^+$ we write $A=e^{\pm B}C$ when $e^{-B}C\leq A\leq e^B C$.
    \item $|\cdot|$ denotes the Riemannian norm on $T_xM$ ($x\in M$) when applied to a tangent vector in $T_xM$, and denotes the Euclidean norm when applied to a vector in $\mathbb{R}^d$.
    \item $|\cdot|_\infty$ denotes the highest absolute value of the coordinates of a vector.
\end{enumerate}

\subsection{Acknowledgements}
I would like to thank my M.Sc. advisor, Omri Sarig, for the patient guidance, encouragement and advice he has provided throughout my time as his student, and the Weizmann Institute for excellent working conditions. I would like to thank the referee for his careful reading of the manuscript, and for his many useful suggestions. Finally, special thanks to J\'er\^ome Buzzi, Sylvain Crovisier, Yuri Lima and Yakov Pesin for useful discussions. 
\section{Local linearization- chains of charts as pseudo-orbits}
Let $f$ be a $C^{1+\beta}$ diffeomorphism of a smooth, compact and boundary-less manifold $M$ of dimension $d\geq 2$.

In this section we adapt the definitions of Pesin charts, chart overlap, and $\epsilon$-chains from \cite{Sarig} to the higher dimensional case. While our Pesin charts are similar to those used by Pesin \cite{BP} and Katok \cite{KM,K1}, there are also differences. These are needed to get the fine graph transform estimates of the next section.

\subsection{Pesin charts}
\subsubsection{Non-uniform hyperbolicity}
Set $\log^+t:=\max\{\log t,0\}$. Every invertible cocycle $A:X\times \mathbb Z\rightarrow GL_d(\mathbb R)$ is uniquely associated with a {\em generator} $A_1:X\rightarrow GL_d(\mathbb{R})$ by $A(x,0)=Id, A(x,n)=A_1(f^{n-1}(x))\cdot...\cdot A_1(x)$ for $n>0$ and $A(x,-n)=A(f^{-n}(x),n)^{-1}$.
\begin{oseledec*}
Let $(X,\mathcal{B},\mu,T)$ be an invertible probability preserving transformation. Suppose $A:X\times \mathbb Z\rightarrow GL_d(\mathbb R)$ is an invertible cocycle. 
Consider its generator $A_1:X\rightarrow GL_d(\mathbb{R})$
. If $\log^+\|A_1\|,\log^+\|A_1^{-1}\|$ are integrable, then for $\mu$-almost every $x\in X$ the following set has full measure: $$\{x\in X| \exists \lim_{n\rightarrow \pm\infty} {{\Big(A(x,n)}^t A(x,n)\Big)}^{\frac{1}{2n}}\text{ and the two limits are equal}\}.$$
\end{oseledec*}
\begin{definition}\label{NUH}
    (Lyapunov regularity, Lyapunov exponents and Lyapunov decomposition)   
    \begin{enumerate}
    \item Define the set of {\em Lyapunov regular points} for the invertible cocycle $df$:
$$LR:=\{x\in M| \exists \lim_{n\rightarrow \pm\infty} {{((d_xf^n)}^t d_xf^n)}^{\frac{1}{2n}}\text{ and the two limits are equal}\}.$$
By the Oseledec theorem, $LR$ has full measure w.r.t any $f$-invariant measure.
\item For any Lyapunov regular $x\in M$, the limit operator $\lim\limits_{n\rightarrow \pm\infty}{{((d_xf^n)}^t d_xf^n)}^{\frac{1}{2n}}$ is positive-definite, and thus diagonalizable with positive eigenvalues. Call its eigen-spaces $H_i(x)$, with respective eigenvalues $e^{\chi_i(x)}$, where $i$ ranges from smallest $\chi_i(x)$ to largest. $\chi_i(x)$ is called {\em the $i$-th Lyapunov exponent at $x$}.
\item We thus get the {\em Lyapunov decomposition} $T_xM=\oplus_{i=1}^{k(x)}H_i(x)$. Denote $\dim(H_i(x))=l_i(x)$. It follows that $k(f(x))=k(x), l_i(f(x))=l_i(x), d_xf[H_i(x)]=H_i(f(x))$, $\forall i=1,...,k(x)$.
\end{enumerate}
\end{definition}
 Notice that these are $f$-invariant functions, and thus are const. a.e. w.r.t to ergodic measures.
\begin{definition}\label{NUH2}
($\chi$-hyperbolicity, non-uniformly hyperbolic set)
\begin{enumerate}
\item For any Lyapunov regular $x\in M$, define $s(x):=\sum\limits_{i:\chi_i(x)<0}l_i(x)$, $u(x):=d-s(x)$.
\item For any $\chi>0$, we say a point $x\in LR$ is {\em $\chi$-hyperbolic} if $\min\{|\chi_i(x)|\}>\chi$ and $s(x)\in\{1,...,d-1\}$. An $f$- invariant and ergodic measure is called {\em $\chi$-hyperbolic} if $\mu$-almost every $x\in M$ is $\chi$-hyperbolic.
    \item The non-uniformly hyperbolic set of points is defined for every $\chi>0$ as follows:
$$ NUH_\chi :=\{x\in LR|\min\{|\chi_i(x)|\}>\chi,s(x)\in\{1,...,d-1\}\}.$$

\end{enumerate}
\end{definition} 
We will restrict ourselves for cases where $s(x)\in\{1,...,d-1\}$- the hyperbolic case.
Katok proves in \cite[theorem~4.2]{K1} that if an ergodic measure has no zero Lyapunov exponents, and is not supported on a single periodic orbit, then it has positive and negative Lyapunov exponents.
Notice that an ergodic probability measure is $\chi$-hyperbolic iff it gives $NUH_\chi$ full measure.
\subsubsection{Lyapunov change of coordinates}
\begin{definition} 

$\text{ }$\\\begin{itemize}
\item For a point $x\in NUH_\chi$: $H^s(x):=\oplus_{i:\chi_i<0}H_i(x)$, $H^u(x):=\oplus_{i:\chi_i>0}H_i(x)$.
\item For two linear vector spaces $V$ and $W$, $GL(V,W)$ is the space of invertible linear transformations between $V$ and $W$.
\end{itemize}
\end{definition}

\begin{theorem}\label{pesinreduction} Oseledec-Pesin $\epsilon$-reduction  theorem (\cite{Pesin},\cite[\textsection~S.2.10]{KM}):

Let $M$  be  a compact Riemannian manifold. Then for each $\chi$-hyperbolic point $x\in M$ there exists an invertible linear transformation $C_\chi(x):\mathbb{R}^d\rightarrow T_xM$ ($C_\chi(\cdot):NUH_\chi\rightarrow GL({\mathbb{R}}^d,T_\cdot M)$) such that the map $D_\chi(x)=C_\chi^{-1}(f(x)) \circ d_xf \circ C_\chi(x)$ has the Lyapunov block form:
$$
\begin{pmatrix}D_s(x)  &   \\  & D_u(x)
\end{pmatrix},
$$
where $D_{s}(x)\in GL(s(x),\mathbb{R}),D_{u}(x)\in GL(u(x),\mathbb{R})$. In addition, for every $x \in  NUH_\chi$ we can decompose $T_xM=H^s(x)\oplus H^u(x)$ and $\mathbb{R}^d=\mathbb{R}^{s(x)}\oplus\mathbb{R}^{u(x)}$, and $C_\chi(x)$ sends each $\mathbb{R}^{s(x)/u(x)}$ to $H^{s/u}(x)$. Furthermore, $\exists \kappa(\chi,f)$ s.t.  $$\kappa^{-1}\leq\frac{|D_sv_s|}{|v_s|}\leq e^{-\chi},\kappa\geq\frac{|D_uv_u|}{|v_u|}\geq e^{\chi},\text{for all  non-zero }v_s\in\mathbb{R}^{s(x)},v_u\in \mathbb{R}^{u(x)}.$$ 
\end{theorem}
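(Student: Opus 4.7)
The plan is to construct $C_\chi(x)$ via the classical Pesin adapted (Lyapunov) inner product. First, I would refine the Oseledec splitting $T_xM = \bigoplus_i H_i(x)$ into the hyperbolic decomposition $T_xM = H^s(x) \oplus H^u(x)$ already introduced before the statement. Because $s(x) \in \{1,\dots,d-1\}$, both summands are nontrivial, and since each $H_i(x)$ is $df$-invariant, $d_xf$ sends $H^{s/u}(x)$ isomorphically onto $H^{s/u}(f(x))$.

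Next, I define an adapted inner product on each summand by
\[
\langle u,v\rangle'_x := 2\sum_{n=0}^{\infty} e^{2n\chi}\,\langle d_xf^n u,\, d_xf^n v\rangle_{f^n(x)}, \qquad u,v\in H^s(x),
\]
and analogously with $f^{-n}$ in place of $f^n$ on $H^u(x)$; then declare $H^s(x) \perp' H^u(x)$. The series converge geometrically: Oseledec regularity at $x\in NUH_\chi$ gives $|d_xf^{\pm n} v|_{f^{\pm n}(x)} \le C(x,\epsilon)e^{-n(|\chi_i|-\epsilon)}|v|_x$ on the appropriate summand, and by definition of $NUH_\chi$ one has $|\chi_i(x)|>\chi$, so each term decays like $e^{2n(\chi - |\chi_i| + \epsilon)}$ once $\epsilon$ is chosen small enough.

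A short telescoping computation (reindexing the defining sum after applying $d_xf$) then yields, for $v\in H^s(x)$,
\[
|d_xf v|'^{\,2}_{f(x)} = e^{-2\chi}\bigl(|v|'^{\,2}_x - 2|v|^2_x\bigr),
\]
and for $v\in H^u(x)$,
\[
|d_xf v|'^{\,2}_{f(x)} = 2|d_xf v|^{2}_{f(x)} + e^{2\chi}|v|'^{\,2}_x.
\]
The first immediately bounds $|d_xf v|'_{f(x)} \le e^{-\chi}|v|'_x$ on $H^s$; the second gives $|d_xf v|'_{f(x)} \ge e^{\chi}|v|'_x$ on $H^u$. For the complementary bounds, I use the uniform constants $M_0 := \sup_y\|d_yf^{-1}\|$ and $M_1 := \sup_y\|d_yf\|$, finite because $M$ is compact, together with the trivial inequality $2|v|^2_x \le |v|'^{\,2}_x$ coming from the $n=0$ term of the series. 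Rearranging the stable identity as $|v|'^{\,2}_x = e^{2\chi}|d_xf v|'^{\,2}_{f(x)} + 2|v|^2_x \le (e^{2\chi} + 2M_0^2)|d_xf v|'^{\,2}_{f(x)}$ produces the lower bound $\kappa^{-1}$ with $\kappa^2 \le e^{2\chi}+2M_0^2$, and the unstable identity similarly gives $|d_xf v|'^{\,2}_{f(x)} \le (M_1^2 + e^{2\chi})|v|'^{\,2}_x$.

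Finally, I define $C_\chi(x):\mathbb{R}^d \to T_xM$ by sending the first $s(x)$ standard basis vectors to a $\langle\cdot,\cdot\rangle'_x$-orthonormal basis of $H^s(x)$ and the remaining $d-s(x)$ to such a basis of $H^u(x)$. By construction $C_\chi(x)$ is an isometry from Euclidean $\mathbb{R}^d$ onto $(T_xM, \langle\cdot,\cdot\rangle'_x)$ respecting the coordinate splitting, so $D_\chi(x) := C_\chi^{-1}(f(x))\circ d_xf \circ C_\chi(x)$ is automatically block-diagonal and inherits the norm estimates above. The main technical hurdle is the convergence of the Lyapunov series, which is exactly why the \emph{strict} inequality $\chi < \min_i|\chi_i(x)|$ is built into the definition of $NUH_\chi$; everything else is a mechanical change of basis together with the compactness of $M$.
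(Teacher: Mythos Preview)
Your proposal is correct and follows essentially the same approach as the paper: both define the identical Lyapunov inner product $\langle\cdot,\cdot\rangle'_x$ on $H^s(x)$ and $H^u(x)$ (with the same factor of $2$), perform the same telescoping computation to obtain the identity $|d_xf v|'^{\,2}_{f(x)}=e^{-2\chi}(|v|'^{\,2}_x-2|v|^2_x)$ and its unstable analogue, extract the $e^{\pm\chi}$ bounds directly and the complementary $\kappa^{\pm1}$ bounds from $M_f=\max_y\{\|d_yf\|,\|d_yf^{-1}\|\}$, and then take $C_\chi(x)$ to be any linear isometry from Euclidean $\mathbb{R}^d$ to $(T_xM,\langle\cdot,\cdot\rangle'_x)$ respecting the $s/u$ splitting. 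The only point the paper adds that you leave implicit is the measurable selection of the orthonormal bases (done there via Gram--Schmidt on a measurable frame), which is needed downstream but not stated in the theorem itself.
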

\begin{proof} Denote the Riemannian inner product on $T_xM$ by $\langle\cdot,\cdot\rangle$. Set
$$\langle u,v \rangle_{x,s}':=2\sum_{m=0}^\infty\langle d_xf^mu,d_xf^mv\rangle e^{2m\chi},\text{ for }u,v\in H^s(x);$$
$$\langle u,v \rangle_{x,u}':=2\sum_{m=0}^\infty\langle d_xf^{-m}u,d_xf^{-m}v\rangle e^{2m\chi},\text{ for }u,v\in H^u(x).$$
Since $x\in NUH_\chi$, the series converge absolutely. Thus, the series can be rearranged, and $\langle \cdot,\cdot \rangle_{x,s}',\langle \cdot,\cdot \rangle_{x,u}'$ are bilinear and are well defined as inner products on $H^s(x),H^u(x)$, respectively.
Define:
\begin{equation}\label{LIP}
\langle u,v \rangle_x':=\langle \pi_s u,\pi_s v \rangle_{x,s}'+\langle \pi_u u,\pi_u v \rangle_{x,u}',
\end{equation}
where $u,v\in T_xM$, and $v=\pi_sv+\pi_uv$ is the unique decomposition s.t. $\pi_{s}v\in H^{s}(x),\pi_{u}v\in H^{u}(x)$ (the tangent vector $u$ decomposes similarly).

\medskip
Choose measurably\footnote{Notice that $C_\chi^{-1}(x)$ are only determined up to an orthogonal self map of $H^{s/u}(x)$- thus a measurable choice is needed. Let $\{D_i\}_{i=1}^N$ be a finite cover of open discs for the compact manifold $M$. Each open disc is orientable, and there exists a continuous choice of orthonormal
(w.r.t to the Riemannian metric) bases of $T_xM$, $(\widetilde{e}_1(x),...,\widetilde{e}_d(x))$. The contiuous choice of bases on the discs induces a meaurable choice of bases on the whole manifold by the partition $D_1, D_2\setminus D_1,D_3\setminus (D_1\cup D_2),...,D_N\setminus\cup_{i=1}^{N-1}D_i$. Using projections to $H^{s/u}(x)$ (which are defined measurably), and the Gram-Schmidt process, we construct orthonormal bases for $H^{s/u}(x)$ in a measurable way, w.r.t to $\langle\cdot,\cdot\rangle_{x,s/u}'$. Denote these bases by $\{b_i(x)\}_{i=1}^{s(x)}$ and $\{b_i(x)\}_{i=s(x)+1}^{d}$, respectively. Define $C_\chi^{-1}(x):b_i(x)\mapsto e_i$, where $\{e_i\}_{i=1}^d$ is the standard basis for $\mathbb{R}^d$.} $C_\chi^{-1}(x):T_xM\rightarrow\mathbb{R}^d$ to be a linear transformation satisfying 
\begin{equation}\label{omrisuggestnumber}
    \langle u,v \rangle_x'=\langle C_\chi^{-1}(x)u,C_\chi^{-1}(x)v \rangle,
\end{equation}
and $C_\chi^{-1}[H^s(x)]=\mathbb{R}^{s(x)}$ (and hence $C_\chi^{-1}[H^u(x)]=(\mathbb{R}^{s(x)})^\perp$). Thus $C_\chi^{-1}[H^s(x)]\perp C_\chi^{-1}[H^u(x)]$, and therefore $D_\chi$ is in this block form. Therefore, when $v_s\neq 0$, we get
$$\langle d_xfv_s,d_xfv_s\rangle_{f(x),s}'=2\sum_{m=0}^\infty |d_xf^{m+1}v_s|^2e^{2(m+1)\chi}e^{-2\chi}=e^{-2\chi}(\langle v_s,v_s\rangle_{x,s}'-2|v_s|^2)$$
$$\Rightarrow \frac{\langle d_xfv_s,d_xfv_s\rangle_{f(x),s}'}{\langle v_s,v_s\rangle_{x,s}'}= e^{-2\chi}(1-\frac{2|v_s|}{\langle v_s,v_s\rangle_{x,s}'}).$$
Call $M_f:=\max_{x\in M}\{\|d_xf\|,\|d_xf^{-1}\|\}$ and notice $\langle v_s,v_s\rangle_{x,s}'>2(|v_s|^2+|d_xfv_s|^2e^{2\chi})\geq2(|v_s|^2+M_f^{-2}|v_s|^2e^{2\chi})$. So:
$$\frac{\langle d_xfv_s,d_xfv_s\rangle_{f(x),s}'}{\langle v_s,v_s\rangle_{x,s}'}\in[\kappa^{-2},e^{-2\chi}]$$
where $\kappa=\max\{e^\chi(1-\frac{1}{1+M_f^{-2}e^{2\chi}})^{-1/2},e^\chi(1+M_f^2)^{1/2}\}$.

The same holds for $D_u$:
$$\langle d_xfv_u,d_xfv_u\rangle_{f(x),u}'=2\sum_{m=0}^\infty |d_xf^{-m+1}v_u|^2e^{2(m-1)\chi}e^{2\chi}=e^{2\chi}(\langle v_u,v_u\rangle_{x,u}'+2|d_xfv_u|^2e^{-2\chi}),$$
and since $\langle v_u,v_u\rangle_{x,u}'>2|v_u|^2$: $\frac{\langle d_xfv_u,d_xfv_u\rangle}{|v_u|_{x,u}'}<\frac{\langle d_xfv_u,d_xfv_u\rangle}{2|v_u|}$, we can conclude:
$$\kappa^2\geq\frac{\langle d_xfv_u,d_xfv_u\rangle_{f(x),u}'}{\langle v_u,v_u\rangle_{x,u}'}\geq e^{2\chi}.$$ Let $w_s\in \mathbb{R}^{s(x)}$, and define $v_s:=C_\chi(x)w_s$.
$$\frac{|D_s(x)w_s|^2}{|w_s|^2}=\frac{|C_\chi^{-1}(f(x))d_xfv_s|^2}{|C_\chi^{-1}(x)v_s|^2}=\frac{|d_xfv_s|_{f(x),s}^{'2}}{|v_s|_{x,s}^{'2}}\in[\kappa^{-2}, e^{-2\chi}].$$
Similarly $\frac{|D_u(x)w_u|^2}{|w_u|^2}\in[e^{2\chi},\kappa^2]$.
\end{proof}
\begin{definition}\label{chi_z}
$\text{ }$

\medskip
\begin{itemize}
    \item $\forall z\in NUH_\chi, \chi_z:=\min\{|\chi_i(z)|\}>\chi.$
    \item We define another Lyapunov change of coordinates, similarly to theorem \ref{pesinreduction}, which will be of use for lemma \ref{RegularOnVs}: 
    $$\langle \xi,\eta \rangle_{z,s}'':=2\sum_{m=0}^\infty\langle d_zf^m\xi,d_zf^m\eta\rangle e^{2m\frac{\chi+\chi_z}{2}},\text{ for }\xi,\eta\in H^s(z);$$
$$\langle \xi,\eta \rangle_{z,u}'':=2\sum_{m=0}^\infty\langle d_zf^{-m}\xi,d_zf^{-m}\eta\rangle e^{2m\frac{\chi+\chi_z}{2}},\text{ for }\xi,\eta\in H^u(z);$$
\begin{equation}\nonumber
\langle \xi,\eta \rangle_z'':=\langle \pi_s \xi,\pi_s \eta \rangle_{z,s}''+\langle \pi_u \xi,\pi_u \eta \rangle_{z,u}''.
\end{equation}
$\widetilde{C}_\chi(\cdot)$ is chosen measurably such that $\langle \xi,\eta \rangle_z''=\langle \widetilde{C}_\chi^{-1}(z)\xi,\widetilde{C}_\chi^{-1}(z)\eta \rangle$.
\end{itemize}
\end{definition}
 Notice that $\|\widetilde{C}^{-1}_\chi(z)\|\geq \|C_\chi^{-1}(z)\|$ for all $z\in NUH_\chi$, since 
\begin{align*}
    \langle \xi_s,\xi_s \rangle_{z,s}''=&2\sum_{m=0}^\infty| d_zf^m\xi_s|^2 e^{2m\chi}e^{m(\chi_z-\chi)}>\langle \xi_s,\xi_s \rangle_{z,s}'\text{ },\\
    \langle \xi_u,\xi_u \rangle_{z,u}''=&2\sum_{m=0}^\infty| d_zf^{-m}\xi_u|^2 e^{2m\chi}e^{m(\chi_z-\chi)}>\langle \xi_u,\xi_u \rangle_{z,u}'\text{ }.
\end{align*}

\begin{claim}\label{cocycle} For every $x\in NUH_\chi$, $D_\chi(x)$ extends to a cocycle the following way: $$D_\chi(x,n)=C_\chi(f^n(x))^{-1}\circ d_xf^n\circ C_\chi(x),$$
and the Oseledec theorem is applicable to it.
\end{claim}
\begin{proof} $NUH_\chi$ is $f$-invariant, and thus $D_\chi(x,n)$ is well defined. It is easy to check that $D_\chi(x,n)$ is a cocycle. By theorem \ref{pesinreduction}, $\|D_\chi^{\pm1}(x)\|\leq\kappa$ $\forall x\in NUH_\chi$; thus making $\log^+\|D_\chi^\pm(x)\|$ uniformly bounded on $NUH_\chi$, and in particularly integrable, as needed.
\end{proof}
\begin{definition}\label{scalingfuncs} {\em Scaling functions ($S/U$ parameters)}: For any $x\in M$, $\xi\in T_xM$ we define:
$$S^2(x,\xi)=2\sum_{m=0}^\infty|d_xf^m\xi|^2e^{2\chi m}, U^2(x,\xi)=2\sum_{m=0}^\infty|d_xf^{-m}\xi|^2e^{2\chi m}\in [2|\xi|^2,\infty].$$
Here, $|\cdot|$ is the Riemannian norm on $T_xM$.
\end{definition}
\noindent\textbf{Remark}: $U(x,\xi),S(x,\xi)$ will be calculated in two cases: \begin{enumerate}
    \item Where $x\in NUH_\chi$, $\xi\in H^{u/s}(x)$ respectively (in this case the quantities are finite).
    \item Where $x$ belongs to a $u/s$- manifold $V^{u/s}$ (definition \ref{def135}), and $\xi\in T_x V^{u/s}$ respectively.
\end{enumerate}
\begin{theorem}\label{SandU} There exists an $F_0>0$ only depending on $f,\chi$ and $M$  such that for any $x\in NUH_\chi$: $$\frac{\|C_\chi^{-1}(f(x))\|}{\|C_\chi^{-1}(x)\|}\in[F_0^{-1},F_0]$$
\end{theorem}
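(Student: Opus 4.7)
The plan is to exploit the conjugacy relation $D_\chi(x) = C_\chi^{-1}(f(x)) \circ d_xf \circ C_\chi(x)$ from Theorem \ref{pesinreduction} to write $C_\chi^{-1}(f(x))$ and $C_\chi^{-1}(x)$ in terms of one another, then bound the conjugating factors uniformly using quantities that depend only on $f$, $\chi$, and $M$.

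First I would invert the relation to obtain
\[
C_\chi^{-1}(f(x)) = D_\chi(x) \circ C_\chi^{-1}(x) \circ (d_xf)^{-1},
\]
and apply submultiplicativity of the operator norm to get
\[
\|C_\chi^{-1}(f(x))\| \;\leq\; \|D_\chi(x)\|\cdot\|C_\chi^{-1}(x)\|\cdot\|(d_xf)^{-1}\|.
\]
From Theorem \ref{pesinreduction}, $D_\chi(x)$ is block-diagonal with $\|D_s(x)\|\le e^{-\chi}\le 1$ and $\|D_u(x)\|\le\kappa$, so $\|D_\chi(x)\|\le\kappa$. By the definition of $M_f=\max_{x\in M}\{\|d_xf\|,\|(d_xf)^{-1}\|\}$ (which is finite because $M$ is compact and $f$ is a diffeomorphism), we conclude
\[
\|C_\chi^{-1}(f(x))\| \;\leq\; \kappa\,M_f\,\|C_\chi^{-1}(x)\|.
\]

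For the reverse inequality I would solve the same identity for $C_\chi^{-1}(x)$, writing
\[
C_\chi^{-1}(x) = D_\chi(x)^{-1} \circ C_\chi^{-1}(f(x)) \circ d_xf.
\]
Again by Theorem \ref{pesinreduction}, the lower bounds $|D_s v_s|\ge\kappa^{-1}|v_s|$ and $|D_u v_u|\ge e^\chi|v_u|$ give $\|D_\chi(x)^{-1}\|\le\kappa$, and $\|d_xf\|\le M_f$, so $\|C_\chi^{-1}(x)\|\le\kappa M_f\,\|C_\chi^{-1}(f(x))\|$. Setting $F_0:=\kappa M_f$, which depends only on $f$, $\chi$, and $M$, yields the desired two-sided bound.

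There is no real obstacle here: the argument is a one-line manipulation once the uniform bound $\|D_\chi^{\pm 1}(x)\|\le\kappa$ is extracted from the block structure established in Theorem \ref{pesinreduction}. The only thing to be careful about is that the bounds on $D_s$ and $D_u$ given in that theorem are stated in terms of Euclidean norms on $\mathbb{R}^{s(x)}$ and $\mathbb{R}^{d-s(x)}$, which is consistent with the standard inner product on $\mathbb{R}^d$ used to define $\|C_\chi^{-1}(x)\|$, so no additional normalization is needed.
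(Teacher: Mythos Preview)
Your proof is correct and considerably more direct than the paper's. The paper does not use the conjugacy relation and submultiplicativity; instead it unwinds the formula
\[
\|C_\chi^{-1}(x)\|^2=\sup_{\xi\in H^s(x),\,\eta\in H^u(x)}\frac{S^2(x,\xi)+U^2(x,\eta)}{|\xi+\eta|^2},
\]
proves that the supremum is attained (via a continuity argument for $S^2(x,\cdot)$ and $U^2(x,\cdot)$), and then compares this expression at $x$ and $f(x)$ by separately bounding the ratios $\frac{S^2(f(x),\xi)}{S^2(x,d_{f(x)}f^{-1}\xi)}$ and $\frac{|d_{f(x)}f^{-1}(\xi+\eta)|^2}{|\xi+\eta|^2}$. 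Your argument bypasses all of this by leveraging the uniform bound $\|D_\chi^{\pm1}(x)\|\le\kappa$ already established in Theorem~\ref{pesinreduction}.

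What the paper's longer route buys is that the functions $S(x,\cdot)$, $U(x,\cdot)$ and their continuity are introduced and analyzed here, and both are invoked later (see the remark preceding Lemma~\ref{boundimprove} and the use of continuity of $S(x,\cdot)$ in the proof of Lemma~\ref{chainbounds}). So the paper's proof is doing double duty: proving the theorem and setting up machinery for \S2. Your proof establishes exactly the stated theorem, cleanly, but if you adopted it in the paper you would need to supply those auxiliary facts elsewhere.
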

\begin{proof} Notice that by definition:
$$\|C_\chi^{-1}(x)\|^2=\sup_{\xi\in H^s(x),\eta\in H^u(x),\xi+\eta\neq0}\frac{S^2(x,\xi)+U^2(x,\eta)
}{|\xi+\eta|^2}.$$

\textit{Claim 1}: For a fixed $x\in NUH_\chi$ $(\xi,\eta)\mapsto \frac{S^2(x,\xi)+U^2(x,\eta)}{|\xi+\eta|^2}$ is continuous where defined.

\textit{Proof}: The norms of vectors are clearly  continuous. So is
$S^2$,
since $S^2(x,\xi)=\langle\xi,\xi\rangle_{x,s}'$- 
a bilinear form
on a finite-dimensional linear space
. The same argument applies for $U^2$
. This concludes the proof of claim 1.

\medskip
A continuous function on a compact set attains its maximum, and the mapping $(\xi,\eta)\mapsto \frac{S^2(x,\xi)+U^2(x,\eta)}{|\xi+\eta|^2}$ is homogeneous. So let $\xi\in H^s(f(x))$ and $\eta\in H^u(f(x))$ be any two tangent vectors s.t. $\frac{S^2(f(x),\xi)+U^2(f(x),\eta)}{|\xi+\eta|^2}$ is maximal. Whence,
\begin{align}\label{eq1}\frac{\|C_\chi^{-1}(f(x))\|^2}{\|C_\chi^{-1}(x)\|^2}=&\frac{\sup_{\xi_1,\eta_1} (S^2(f(x),\xi_1)+U^2(f(x),\eta_1))/|\xi_1+\eta_1|^2}{\sup_{\xi_2,\eta_2} (S^2(x,\xi_2)+U^2(x,\eta_2))/|\xi_2+\eta_2|^2}\nonumber\\
\leq&\frac{\sup_{\xi_1,\eta_1} (S^2(f(x),\xi_1)+U^2(f(x),\eta_1))/|\xi_1+\eta_1|^2}{(S^2(x,d_{f(x)}f^{-1}\xi)+U^2(x,d_{f(x)}f^{-1}\eta))/|d_{f(x)}f^{-1}\xi+d_{f(x)}f^{-1}\eta|^2}\nonumber\\
    =&\frac{S^2(f(x),\xi)+U^2(f(x),\eta)}{S^2(x,d_{f(x)}f^{-1}\xi)+U^2(x,d_{f(x)}f^{-1}\eta)}\cdot\frac{|d_{f(x)}f^{-1}\xi+d_{f(x)}f^{-1}\eta|^2}{|\xi+\eta|^2}.
\end{align}
\textit{Claim 2}: The left fraction of (\ref{eq1}) is bounded away from $0$ and $\infty$ uniformly by a constant (and its inverse) depending only on $f,M$ and $\chi$.

\textit{Proof}: Let $v\in H^s(x)$ (WLOG $|v|=1$),
$$S^2(f(x),d_xfv)=2\sum_{m=0}^\infty|d_{f(x)}f^md_xfv|^2e^{2\chi m}=\Big(2\sum_{m=0}^\infty|d_{x}f^mv|^2e^{2\chi m}-2\Big)e^{-2\chi}=e^{-2\chi}(S^2(x,v)-2).$$
This, and the inequality $S^2(f(x),d_xfv)>2$, yields
$$\frac{1}{1+e^{-2\chi}}e^{-2\chi}S^2(x,v)<S^2(f(x),d_xfv)< e^{-2\chi}S^2(x,v).$$
A similar calculation gives $\frac{U^2(f(x),d_xfv)}{U^2(x,v)}\in[e^{2\chi},e^{2\chi}+M_f^2]$. The claim follows.

\textit{Claim 3}: The right fraction of (\ref{eq1}) is bounded away from $0$ and $\infty$ uniformly by constants
depending only on $f$ and $M$.

\textit{Proof}: Recall that $M_f:=\max_{x\in M}\{\|d_xf\|,\|d_xf^{-1}\|^{-1}\}$. Then
$$\frac{|d_{f(x)}f^{-1}\xi+d_{f(x)}f^{-1}\eta|^2}{|\xi+\eta|^2}=\frac{|d_{f(x)}f^{-1}(\xi+\eta)|^2}{|\xi+\eta|^2}\in [M_f^{-2},M_f^2].$$
The three claims together give the upper bound in the theorem. If we repeat the previous argument with $\xi,\eta$ maximizing the denominator instead of the numerator, we get the lower bound.
\end{proof}
\begin{lemma}\label{contraction}
$C_\chi, \widetilde{C}_\chi$ are contractions (w.r.t to the Eucleadan norm on $\mathbb{R}^d$ and the Riemannian norm on the tangent space).
\end{lemma}
\begin{proof} 
$\forall v_{s}\in H^{s}$,
$|C_\chi^{-1}(x)v_{s}|^2={|v_{s}|'}_{x,s}^2>2|v_{s}|^2$. Similarly $\forall v_{u}\in H^{u}$,
$|C_\chi^{-1}(x)v_{u}|^2={|v_{u}|'}_{x,u}^2>2|v_{u}|^2$. Hence, $\forall w\in \mathbb{R}^d:w=w_s+w_u$:
$$|C_\chi(x)w|^2=(|C_\chi(x)w_s|+|C_\chi(x)w_u|)^2<(\frac{1}{2}(|w_s|+|w_u|))^2\leq(|w_s+w_u|)^2,$$
where the last inequality is due to the fact that $w_s\bot w_u$. The proof that $\widetilde{C}_\chi$ is a contraction is similar, and we leave it to the reader.
\end{proof}

\begin{definition}\text{ }

    \begin{itemize}

    \item Similarly to claim \ref{cocycle}, $\widetilde{D}_\chi(x):=\widetilde{C}^{-1}_\chi(f(x))\circ d_x f\circ \widetilde{C}_\chi(x)$ ($x\in NUH_\chi$) also extends to a cocycle, as follows: $\widetilde{D}_\chi(x,n):=\widetilde{C}_\chi^{-1}(f^n(x))\circ d_xf^n\circ \widetilde{C}_\chi(x)$, $x\in NUH_\chi$. Define $NUH_\chi^\dagger$ to be the set of all points $x\in NUH_\chi$ which are Lyapunov regular w.r.t the invertible cocycle $\widetilde{D}_\chi(x,n)$. This is well defined by the arguments in claim \ref{cocycle}.

    \item \normalfont $\forall N\in\mathbb{N}$: $X_N:=\{x\in NUH_\chi^\dagger: N-1<\|\widetilde{C}_\chi^{-1}(x)\|\leq N\}$.
    Let $Y_N\subset X_N$ be the set of points in $X_N$ which return to $X_N$ infinitely many times with iteration of $f$, and of $f^{-1}$. By Poincar\'{e}'s recurrence theorem, $Y_N$ is of equal measure to $X_N$ w.r.t any $\chi$-hyperbolic measure. Define
    $$NUH_\chi^*:=\bigcupdot_{N\in\mathbb{N}}Y_N.$$
    \end{itemize}

\end{definition}
 Notice that $NUH_\chi^\dagger$ is of full measure w.r.t all $\chi$-hyperbolic measures as an intersection of two full measure sets, and that $NUH_\chi^*$ is of full measure w.r.t all $\chi$-hyperbolic measures, and both are $f$-invariant. 
    
\begin{claim}\label{tempered}
$$\lim_{|n|\rightarrow\infty}\frac{1}{n}\log\|\widetilde{C}_\chi^{\pm1}(f^n(x))\|=0,\lim_{|n|\rightarrow\infty}\frac{1}{n}\log\|C_\chi^{\pm1}(f^n(x))\|=0\text{ }\forall x\in NUH_\chi^*.$$
\end{claim}
\begin{proof}\footnote{The following short elegant proof (of a slightly weaker statement) has been shown to me by the referee: Define $a(x):=\log\|\widetilde{C}_\chi^{-1}(x)\|$, and $b(x):=a(f(x))-a(x)$. Then by theorem \ref{SandU}, $|b|\leq\log F_0$, whence $b\in L^2(M)$ for every $\chi$-hyperbolic measure. We wish to estimate $\lim_{|n|\rightarrow\infty}\Big(\frac{1}{n}\sum_{k=0}^{n-1}b(f^k(x))+\frac{1}{n}a(x)\Big)$. It is then a standard result in Ergodic theory that this limit is 0 (even though $a(\cdot)$ is not necessarily intergable) for a.e. point w.r.t every $\chi$-hyperbolic measure. Note that this argument does not show the existence of the limit on $NUH_\chi^*$.} (See \cite{KM}, last part of theorem S.2.10) Proof that $\lim\limits_{n\rightarrow\pm\infty}\frac{1}{n}\log\|\widetilde{C}_\chi^{-1}(f^n(x))\|=0$ (it will conclude the case for $\|C_\chi^{-1}\|$ as well, since $1\leq\|C_\chi^{-1}(x)\|\leq\|\widetilde{C}_\chi^{-1}(x)\|$): Let $x\in NUH_\chi^*, \xi\in H^s(x)$. By definition, $$|\widetilde{D}_\chi(x,n)\widetilde{C}_\chi^{-1}\xi|=|\widetilde{C}_\chi^{-1}(f^n(x))d_xf^n\xi|.$$
By the definition of $NUH_\chi^*$, $\exists N_x\in\mathbb{N}$ s.t. $x\in Y_{N_x}$. Thus, $\exists n_k\uparrow\infty$ s.t. $f^{n_k}(x)\in Y_{N_x}$ for all $k\in\mathbb{N}$. Also by the definition of $NUH_\chi^*$, as a subset of $NUH_\chi^\dagger$, the following limits exist,
\begin{itemize}
    \item $\lim\limits_{n\rightarrow\pm\infty}\frac{1}{n}\log|d_xf^n\xi|$,
    \item $\lim\limits_{n\rightarrow\pm\infty}\frac{1}{n}\log|\widetilde{D}_\chi(x,n)\widetilde{C}_\chi^{-1}(x)\xi|$.
\end{itemize}
Hence,
\begin{align*}
\lim\limits_{n\rightarrow\pm\infty}\frac{1}{n}\log|\widetilde{D}_\chi(x,n)\widetilde{C}_\chi^{-1}(x)\xi|=&\lim\limits_{k\rightarrow\infty}\frac{1}{n_k}\log|\widetilde{D}_\chi(x, n_k)\widetilde{C}_\chi^{-1}(x)\xi|=\lim\limits_{k\rightarrow\infty}\frac{1}{n_k}\log|\widetilde{C}_\chi^{-1}(f^{n_k}(x))d_xf^{n_k}\xi|\\
=&\lim\limits_{k\rightarrow\infty}\frac{1}{n_k}\log|d_xf^{n_k}\xi| (\because x\in NUH_\chi^*, f^{n_k}(x)\in N_x \forall k\in\mathbb{N})\\
=&\lim\limits_{n\rightarrow\pm\infty}\frac{1}{n}\log|d_xf^n\xi|.
\end{align*}
Thus we get that $df$ and $\widetilde{D}_\chi$ have the same Lyapunov spectrum. Furthermore, denote the Lyapunov space corresponding to $\chi_i$ w.r.t to $\widetilde{D}_\chi(x)$ by $\widetilde{H}_i(x)$, then we get $\widetilde{H}_i(x)=\widetilde{C}_\chi^{-1}(x)[H_i(x)]$.

By definition, for any $n\in \mathbb{Z}$, $\widetilde{C}_\chi^{-1}(f^n(x))=\widetilde{D}_\chi(x,n)\circ \widetilde{C}_\chi^{-1}(x)\circ (d_xf^n)^{-1}$. For every $\eta\in T_{f^n(x)}M$, we can decompose it into components in the following way: $\eta=\eta_s+\eta_u$, $\eta_{s/u}\in H^{s/u}(f^n(x))$. Also, recall $\|\widetilde{C}_\chi(z)\|\leq1$ for any $z\in NUH_\chi$. Thus,
$$|\eta|\leq|\widetilde{C}_\chi^{-1}(f^n(x))\eta|\leq|\widetilde{D}_\chi(x,n)\circ \widetilde{C}_\chi^{-1}(x)\circ (d_xf^n)^{-1}\eta_s|+|\widetilde{D}_\chi(x,n)\circ \widetilde{C}_\chi^{-1}(x)\circ (d_xf^n)^{-1}\eta_u|.$$
The first inequality is due to lemma \ref{contraction}. Decompose $\eta_s$ into components from $\{H_i(f^n(x)): \chi_i<0\}$, and denote by $i$ the index of the Lyapunov space with the largest exponent s.t. $\eta_s$ has a component in it.
Since $\widetilde{H}_i(x)=\widetilde{C}_\chi^{-1}(x)[H_i(x)]$, $H_i(x)=(d_xf^n)^{-1}[H_i(f^n(x))]$ and $df$ and $\widetilde{D}_\chi$ have the same Lyapunov spectrum, we get that the left summand equals $e^{\chi_in+o(n)}e^{-\chi_in+o(n)}|\eta_s|=e^{o(n)}|\eta_s|$. The same arguments can be done for the right summand. Thus, 
$$|\eta|\leq|\widetilde{C}_\chi^{-1}(f^n(x))\eta|\leq e^{o(n)}|\eta_s|+e^{o(n)}|\eta_u|.$$
By the Oseledec theorem, we got that  $\sphericalangle(H^s(f^n(x)),H^u(f^n(x)))$  does not diminish exponentially\footnote{See proposition \ref{Xi}, \textit{Part 1}, for definition of such an angle and more details regarding the use of the Sine theorem.}. By the Sine theorem, $|\eta_{s/u}|\leq \frac{|\eta|}{\sin\sphericalangle(H^s(f^n(x)),H^u(f^n(x)))}
$ . Thus,  $$|\eta|\leq|\widetilde{C}_\chi^{-1}(f^n(x))\eta|\leq (e^{o(n)}+e^{o(n)})|\eta|\cdot\frac{1}{\sin\sphericalangle(H^s(f^n(x)),H^u(f^n(x)))}=e^{o(n)}|\eta|\cdot e^{o(n)}=e^{o(n)}|\eta|.$$
Since $\eta\in T_{f^n(x)}M$ was arbitrary, we get that $lim_{n\rightarrow\pm\infty}\frac{1}{n}\log\|\widetilde{C}_\chi^{-1}(f^n(x))\|=0$ as required. The case for $\|\widetilde{C}_\chi(f^n(x))\|$ is done similarly by the identity $\widetilde{C}_\chi(f^n(x))=d_xf^n\circ \widetilde{C}_\chi(x)\circ \widetilde{D}_\chi(x,n)^{-1}$.

\end{proof}

\subsubsection{Pesin charts}\label{forrho}
Let $\exp_x:T_xM\rightarrow M$ be the exponential map. Since $M$ is compact, there exist $r=r(M),\rho=\rho(M) >0$ s.t. $\forall x\in M$ $\exp_x$ maps $B_{\sqrt{d}\cdot r}^x:=\{v\in T_xM: |v|_x<\sqrt{d\cdot} r\}$ diffeomorphically onto a neighborhood of $B_\rho(x)=\{y\in M: d(x,y)<\rho\}$, where $d(\cdot,\cdot)$ is the distance function on $M\times M$, w.r.t to the Riemannian metric.
\begin{definition}\label{balls}$\text{ }$\\
\begin{enumerate}
\item[(a)] 
    \begin{enumerate}
        \item[(i)] $B_\eta(0), R_\eta(0)$ are open balls located at the origin with radius $\eta$ in $\mathbb{R}^d$ w.r.t Euclidean norm and supremum norm respectively.
        \item[(ii)] For every $x\in M$, we define the following open ball in $T_xM$: $\forall v\in T_xM, r>0$, $B_r^x(x):=\{w\in T_xM: |v-w|<r\}$.
    \end{enumerate}

\item[(b)] We take $\rho$ so small that $(x,y)\mapsto \exp_x^{-1}(y)$ is well defined and 2-Lipschitz on $B_\rho(z)\times B_\rho(z)$ for $z\in M$, and so small that $\|d_y \exp_x^{-1}\|\leq2$ for all $y\in B_\rho(x)$.

\item[(c)] For all $x\in NUH_\chi^*$: $$\psi_x:=\exp_x\circ C_\chi(x):R_r(0)\rightarrow M.$$ Since $C_\chi(x)$ is a contraction, $\psi_x$ maps $R_r(0)$ diffeomorphically into $M$.

\item[(d)] $$f_x:=\psi_{f(x)}^{-1}\circ f\circ \psi_x :R_r(0)\rightarrow \mathbb{R}^d.$$ Then we get $d_0f_x=D_\chi(x)$ (c.f. theorem \ref{pesinreduction}).

\item[(e)] $\forall\epsilon>0: I_\epsilon:=\{e^{-\frac{1}{3}\ell\epsilon}\}_{l\in\mathbb{N}}$, $\widetilde{Q}_\chi(x):=\frac{1}{3^{6/\beta}}\epsilon^\frac{90}{\beta}\|C_\chi^{-1}(x)\|^\frac{-48}{\beta}$,
$$Q_\epsilon(x):=\max\{q\in I_\epsilon |q\leq \widetilde{Q}_\chi(x)\}$$
\end{enumerate}
\end{definition}
This definition is a bit different than Sarig's, and will come in handy in some calculations later on.
\noindent\textbf{Remark}: Depending on context, the notations of $(a)(i)$ will be used to denote balls with the same respective norms in some subspace of $\mathbb{R}^d$ (i.e. $\{v\in \mathbb{R}^{s(x)}:|v|<\eta\}$, $x\in NUH_\chi(f)$).

\begin{theorem}\label{linearization} (See \cite{BP}, theorem 5.6.1) For all $\epsilon$ small enough, and $x\in NUH_\chi^*$:
\begin{enumerate}
    \item $\psi_x(0)=x$ and $\psi_x:R_{Q_\epsilon(x)}(0)\rightarrow M$ is a diffeomorphism onto its image s.t. $\|d_u\psi_x\|\leq2$ for every $u\in R_{Q_\epsilon(x)}(0)$.
\item $f_x$ is well defined and injective on $R_{Q_\epsilon(x)}(0)$, and \begin{enumerate}
\item $f_x(0)=0,d_0f_x=D_\chi(x)$. 
\item $\|f_x-d_0f_x\|_{C^{1+\beta/2}}\leq \epsilon$ on $R_{Q_\epsilon(x)}(0)$.    
\end{enumerate}
\item The symmetric statement holds for $f_x^{-1}$.
\end{enumerate}
\end{theorem}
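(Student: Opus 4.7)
The plan is to handle parts (1) and (2a) by essentially formal computations, and to reduce (2b) to a chain-rule expansion combined with the H\"older control on $df$. Part (3) follows by an identical argument applied to $f^{-1}$, using that $f^{-1}\in C^{1+\beta}$ and that $D_\chi(x)^{-1}$ inherits block-norm bounds from Theorem \ref{pesinreduction}.

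\textbf{Parts (1) and (2a).} We have $\psi_x(0)=\exp_x(0)=x$, and by the chain rule $d_u\psi_x=(d_{C_\chi(x)u}\exp_x)\circ C_\chi(x)$. Because $C_\chi(x)$ is a contraction (preceding lemma) and $\|d_v\exp_x\|\le 2$ for $v\in B^x_{\sqrt d r}(0)$ (shrinking $r$ if necessary in definition \ref{balls}(b)), we get $\|d_u\psi_x\|\le 2$; injectivity follows from that of $C_\chi(x)$ and of $\exp_x$ on $B^x_{\sqrt d r}(0)$. For (2a), $f_x(0)=\psi_{f(x)}^{-1}(f(x))=0$, and using $d_0\exp_p=\mathrm{id}$, the chain rule gives $d_0f_x=C_\chi(f(x))^{-1}\circ d_xf\circ C_\chi(x)=D_\chi(x)$ by Theorem \ref{pesinreduction}.

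\textbf{Set-up and execution of (2b).} The goal is to bound
$$\|f_x-d_0f_x\|_{C^0}+\|df_x-d_0f_x\|_{C^0}+\mathrm{H\ddot ol}_{\beta/2}(df_x)\le \epsilon$$
on $R_{Q_\epsilon(x)}(0)$. Differentiating by the chain rule,
$$d_uf_x=C_\chi(f(x))^{-1}\circ (d_{f_x(u)}\exp_{f(x)})^{-1}\circ d_{\psi_x(u)}f\circ (d_{C_\chi(x)u}\exp_x)\circ C_\chi(x).$$
Writing $d_uf_x-d_vf_x$ as a telescoping sum over the four $u$-dependent middle factors, each difference is controlled by: (i) the $\beta$-H\"older seminorm of $df$, (ii) the Lipschitz constants of $d\exp_x$ and $d\exp_{f(x)}^{-1}$ on $B_\rho$ (depending only on $f,M$), (iii) $\|C_\chi(x)\|\le 1$, and (iv) the outer factor $\|C_\chi^{-1}(f(x))\|$, which by Theorem \ref{SandU} is at most $F_0\|C_\chi^{-1}(x)\|$. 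Combining these yields
$$\|d_uf_x-d_vf_x\|\le C_1F_0\|C_\chi^{-1}(x)\|\,|u-v|^\beta,$$
with $C_1$ depending only on $f,M,\beta$. Since $|u-v|\le 2Q_\epsilon(x)$, interpolation gives
$$\mathrm{H\ddot ol}_{\beta/2}(df_x)\le C_1F_0\|C_\chi^{-1}(x)\|\,(2Q_\epsilon(x))^{\beta/2},$$
and integrating twice bounds the $C^0$ norms of $df_x-d_0f_x$ and of $f_x-d_0f_x$ by the same quantity up to an adjusted constant $C_2$. Substituting the definition $Q_\epsilon(x)\le \tilde Q_\chi(x)=3^{-6/\beta}\epsilon^{90/\beta}\|C_\chi^{-1}(x)\|^{-48/\beta}$ gives
$$Q_\epsilon(x)^{\beta/2}\le 3^{-3}\epsilon^{45}\|C_\chi^{-1}(x)\|^{-24},$$
and since $\|C_\chi^{-1}(x)\|\ge 1$ (because $C_\chi$ is a contraction, hence its inverse has norm $\ge 1$), the overall estimate reduces to $C_3F_0\epsilon^{45}\|C_\chi^{-1}(x)\|^{-23}\le \epsilon$ for all $\epsilon$ small enough, uniformly in $x$.

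\textbf{Main obstacle.} The delicate step is the bookkeeping: one must verify that the total positive power of $\|C_\chi^{-1}(x)\|$ arising from the composition (through $\|C_\chi^{-1}(f(x))\|\le F_0\|C_\chi^{-1}(x)\|$ together with any secondary contributions from differentiating $\exp^{-1}_{f(x)}$ near $\psi_x(u)$) is strictly smaller than the exponent $48/\beta\cdot\beta/2=24$ produced by $\tilde Q_\chi(x)^{\beta/2}$. The large exponent $48/\beta$ in the definition of $\tilde Q_\chi$ is chosen with room to spare, so that the final bound depends on $\epsilon$ alone; the same slack is what will later be needed when these charts are used in the graph transform of \textsection 1.3.2, and this is presumably why Sarig's definition is modified here. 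Once this is in place, the rest of the proof is a routine verification, and part (3) is obtained by applying the identical argument to $f^{-1}$ in place of $f$.
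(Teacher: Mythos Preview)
Your proposal is correct and follows essentially the same approach as the paper, which simply refers to Sarig's Theorem~2.7 with constants adjusted for dimension $d$; the telescoping chain-rule estimate and the absorption of the single $\|C_\chi^{-1}(x)\|$ factor by $Q_\epsilon(x)^{\beta/2}\le 3^{-3}\epsilon^{45}\|C_\chi^{-1}(x)\|^{-24}$ is exactly the intended mechanism. Two small points worth tightening: in your chain-rule formula the inverse derivative of $\exp_{f(x)}$ should be evaluated at $C_\chi(f(x))f_x(u)\in T_{f(x)}M$ rather than at $f_x(u)\in\mathbb{R}^d$, and you should remark that $f_x$ is well-defined on $R_{Q_\epsilon(x)}(0)$ because $Q_\epsilon(x)<\epsilon^{3/\beta}$ forces $f(\psi_x[R_{Q_\epsilon(x)}(0)])\subset B_\rho(f(x))$ for $\epsilon$ small.
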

\begin{proof} This proof is the same as \cite[theorem~5.6.1]{BP} and \cite[theorem~2.7]{Sarig}.
\end{proof}
\begin{definition} Suppose $x\in NUH_\chi^*$ and $\eta\in (0,Q_\epsilon(x)]$, then the Pesin Chart $\psi_x^\eta$ is the map $\psi_x:R_\eta(0)\rightarrow M$. Recall, $R_\eta(0)=[-\eta,\eta]^d$.
\end{definition}
\begin{lemma}\label{omega0} For all $\epsilon$ small enough, for every $x\in NUH_\chi^*$:

\begin{enumerate}
\item[(1)] $Q_\epsilon(x)<\epsilon^{3/\beta}$ on $NUH_\chi^*$.

\item[(2)]$\|C_\chi^{-1}(f^i(x))\|^{48}<\epsilon^{2/\beta}/Q_\epsilon(x)$ for i=-1,0,1.

\item[(3)] For all $t>0$ $\#\{Q_\epsilon(x)| Q_\epsilon(x)>t, x\in M\}<\infty$.

\item[(4)] $\exists\omega_0$ depending only on $\chi$, $M$ and $f$ s.t. $\omega_0^{-1}\leq Q_\epsilon\circ f/Q_\epsilon\leq \omega_0$ on $NUH_\chi^*$.

\item[(5)]$\exists q_\epsilon:NUH_\chi^*\rightarrow(0,1)$ s.t. $q_\epsilon\leq\epsilon\cdot Q_\epsilon$ and $e^{-\epsilon}\leq\frac{q_\epsilon\circ f}{q_\epsilon}\leq e^\epsilon$.
\end{enumerate}
\end{lemma}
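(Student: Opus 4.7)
My plan is that all five items reduce to combining the explicit formula
$$\tilde Q_\chi(x)=3^{-6/\beta}\,\epsilon^{90/\beta}\,\|C_\chi^{-1}(x)\|^{-48/\beta}$$
with: (a) the contraction property $\|C_\chi^{-1}(x)\|\ge 1$; (b) the one-step comparison $\|C_\chi^{-1}(f^{\pm1}x)\|\in[F_0^{-1},F_0]\cdot\|C_\chi^{-1}(x)\|$ from Theorem~\ref{SandU}; and (c) the temperedness of claim~\ref{tempered}. I would treat the items in the order (1), (3), (2), (4), (5).

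For (1), the contraction bound gives $\tilde Q_\chi(x)\le 3^{-6/\beta}\epsilon^{90/\beta}$, which is strictly less than $\epsilon^{3/\beta}$ once $\epsilon^{87}<3^6$. Since $Q_\epsilon(x)\le\tilde Q_\chi(x)$, this yields $Q_\epsilon(x)<\epsilon^{3/\beta}$. Item (3) is immediate from the definition of $I_\epsilon$: values of $Q_\epsilon$ exceeding $t$ lie in $\{e^{-\ell\epsilon/3}:\ell<-3\epsilon^{-1}\log t\}$, which is finite. For (2), rearranging the formula for $\tilde Q_\chi$ gives $\|C_\chi^{-1}(x)\|^{48}=\epsilon^{90}/(3^6\tilde Q_\chi(x)^\beta)\le\epsilon^{90}/(3^6 Q_\epsilon(x)^\beta)$, and applying Theorem~\ref{SandU} to push the norm to $f^i(x)$ ($|i|\le1$) multiplies by $F_0^{48}$. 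The required bound $<\epsilon^{2/\beta}/Q_\epsilon(x)$ then reduces to $Q_\epsilon(x)^{1-\beta}<(3^6/F_0^{48})\epsilon^{2/\beta-90}$, and substituting the estimate from (1) gives $\epsilon^{3(1-\beta)/\beta}$ on the left, which is smaller than the right side for small $\epsilon$ because $3(1-\beta)/\beta>2/\beta-90$ (equivalently $1/\beta>-87$).

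For (4), Theorem~\ref{SandU} yields $\tilde Q_\chi\circ f/\tilde Q_\chi\in[F_0^{-48/\beta},F_0^{48/\beta}]$, and the discretization gap $Q_\epsilon(x)\in(e^{-\epsilon/3}\tilde Q_\chi(x),\tilde Q_\chi(x)]$ spreads this by at most an additional factor $e^{\epsilon/3}$; taking $\omega_0:=2F_0^{48/\beta}$ (valid once $\epsilon\le 3\log 2$) finishes the item.

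Item (5) is the main obstacle and requires care because $\omega_0$ from (4) is independent of $\epsilon$ and may exceed $e^\epsilon$, so one cannot just set $q_\epsilon=\epsilon Q_\epsilon$ and rely on (4). My plan is to smooth along the orbit: define
$$q_\epsilon(x):=\inf_{n\in\mathbb Z}\epsilon\,e^{\epsilon|n|}\,Q_\epsilon(f^n x).$$
Taking $n=0$ immediately gives $q_\epsilon\le\epsilon Q_\epsilon$. For the slowly-varying bound, rewrite $q_\epsilon(fx)=\inf_{k\in\mathbb Z}\epsilon e^{\epsilon|k-1|}Q_\epsilon(f^k x)$ (reindexing $k=m+1$); the elementary bound $|k|-1\le|k-1|\le|k|+1$ gives, term-by-term,
$$e^{-\epsilon}\cdot\epsilon e^{\epsilon|k|}Q_\epsilon(f^k x)\le\epsilon e^{\epsilon|k-1|}Q_\epsilon(f^k x)\le e^{\epsilon}\cdot\epsilon e^{\epsilon|k|}Q_\epsilon(f^k x),$$
and taking the infimum in $k$ yields $e^{-\epsilon}\le q_\epsilon(fx)/q_\epsilon(x)\le e^\epsilon$. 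Positivity of $q_\epsilon(x)$ is the crux: by claim~\ref{tempered}, for any $\epsilon'>0$ there exists $C(x)>0$ with $\|C_\chi^{-1}(f^n x)\|\le C(x)e^{\epsilon'|n|}$ on $NUH_\chi^*$, which after substitution into $\tilde Q_\chi$ gives $Q_\epsilon(f^n x)\ge C'(x)\,e^{-(48\epsilon'/\beta)|n|}$. Choosing $\epsilon'=\beta\epsilon/96$ makes the exponential factor $e^{(\epsilon-48\epsilon'/\beta)|n|}=e^{\epsilon|n|/2}\ge 1$, so $e^{\epsilon|n|}Q_\epsilon(f^n x)\ge C'(x)>0$ for all $n$ and therefore $q_\epsilon(x)>0$. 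Boundedness of $q_\epsilon$ below $1$ follows from $q_\epsilon\le\epsilon Q_\epsilon<\epsilon<1$ via (1). The delicacy in this final item — matching the tempered exponent $\epsilon'$ to the weight $e^{\epsilon|n|}$ — is the only genuinely new point beyond arithmetic manipulation of the formula for $\tilde Q_\chi$.
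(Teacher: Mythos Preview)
Your proof is correct. Items (1)--(4) are essentially the paper's argument, just reorganized; the arithmetic in (2) is cleaner than the paper's version but lands in the same place.

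The only genuine difference is in (5). The paper constructs $q_\epsilon$ via the reciprocal-sum kernel
\[
\frac{1}{q_\epsilon(x)}:=\frac{1}{\epsilon}\sum_{k\in\mathbb{Z}} e^{-|k|\epsilon}\,\frac{1}{Q_\epsilon(f^k(x))},
\]
and then invokes Pesin's Tempering Kernel Lemma (\cite{BP}, Lemma~3.5.7) to verify the slow-variation bound. You instead use the infimum kernel $q_\epsilon(x)=\inf_n \epsilon\, e^{\epsilon|n|}Q_\epsilon(f^nx)$ and check the $e^{\pm\epsilon}$ ratio by the one-line estimate $|k-1|\in[|k|-1,|k|+1]$. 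Both constructions are standard; yours is more self-contained since it avoids the external citation and makes the positivity argument (matching the tempered exponent to the weight) explicit. The paper's sum-based kernel has the mild advantage that it is automatically continuous in an appropriate sense along the orbit, but that is not needed here.
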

\begin{proof}
(1) Clear by definition of $\widetilde{Q}_\epsilon(x)$.

(2)$\|C_\chi^{-1}(x)\|^{48}=\widetilde{Q}_\epsilon^{-\beta}(x)\cdot\frac{1}{3^{6}}\epsilon^{90}\leq \epsilon^3/Q_\epsilon^\beta(x)=\epsilon^{2/\beta}/Q_\epsilon(x)\cdot \frac{\epsilon^{3-2/\beta}}{Q_\epsilon^{\beta-1}(x)}=\epsilon^{2/\beta}/Q_\epsilon(x)\cdot \frac{{(\epsilon^{3\beta-2})}^{1/\beta}}{Q_\epsilon^{\beta-1}(x)}=$

$=\epsilon^{2/\beta}/Q_\epsilon(x)\cdot {(\frac{\epsilon^{3/\beta}}{Q_\epsilon(x)})}^{\beta-1}\cdot \epsilon^{1/\beta}$.

Hence by part (1) and the fact that $\beta<1$, $$\|C_\chi^{-1}(x)\|^{48}\ <\Big(\epsilon^{2/\beta}/Q_\epsilon(x)\Big)\cdot \epsilon^{1/\beta}.$$
Using part (4), and noticing $\epsilon^{1/\beta}<F_0^{-1}$ (for small enough $\epsilon$; recall $F_0$ from theorem \ref{SandU}):
$$\|C_\chi^{-1}(f^ix)\|^{48}\ <\epsilon^{2/\beta}/Q_\epsilon(x) \text{  where i=-1,0,1}.$$

(3) For all $t>0$: $\{Q_\epsilon(x)| Q_\epsilon(x)>t, x\in M\}\subset\{e^{-\epsilon l\frac{1}{3}}\}_{l\in\mathbb{N}}\cap (t,\epsilon^{3/\beta}]$.

(4) By theorem \ref{SandU}:  $F_0^{-1}\leq\frac{\|C_\chi^{-1}(f(x))\|}{\|C_\chi^{-1}(x)\|}\leq F_0$ on $NUH_\chi^*$. Now $\widetilde{Q}_\epsilon(x)=(\|C_\chi^{-1}(x)\|)^{-48/\beta}\epsilon^{90/\beta}\frac{1}{3^{6/\beta}}$ hence
$$\frac{\widetilde{Q}_\epsilon(f(x))}{\widetilde{Q}_\epsilon(x)}\in[F_0^{-48/\beta},F_0^{48/\beta}].$$
By definition of $Q_\epsilon(x)$: $Q_\epsilon\in(e^{-\epsilon/3}\widetilde{Q}_\epsilon,\widetilde{Q}_\epsilon]$, hence
 $$e^{-\epsilon/3}F_0^{-48/\beta}\leq Q_\epsilon\circ f/Q_\epsilon\leq F_0^{48/\beta}.$$
 Set $\omega_0:=e^{\epsilon/3}F_0^{48/\beta}$.

(5) By claim \ref{tempered} $\lim\limits_{n\rightarrow\pm\infty}\frac{1}{n}\log\widetilde{Q}_\epsilon(f^n(x))=0$, and hence by the fact seen in (4) that $Q_\epsilon(x)\in(e^{-\epsilon}\widetilde{Q}_\epsilon(x),\widetilde{Q}_\epsilon(x)]$ we get also $\lim\limits_{n\rightarrow\pm\infty}\frac{1}{n}\log Q_\epsilon(f^n(x))=0$. 
$$\frac{1}{q_\epsilon(x)}:=\frac{1}{\epsilon}\sum_{k\in\mathbb{Z}} e^{-|k|\epsilon}\frac{1}{Q_\epsilon(f^k(x))}.$$
The sum converges because $\frac{1}{k}\log Q_\epsilon(f^k(x))\xrightarrow[k\rightarrow\pm\infty]{}0$, and it is easy to check that the sum behaves as we wish (Pesin's Tempering Kernel Lemma \cite{BP}, lemma 3.5.7).
\end{proof}

\begin{lemma}
Let $(X,\mathcal{B},\mu,T)$ be a p.p.t . Let $q:X\rightarrow\mathbb{R}^+$ be a measurable function. Then the set $\{x\in X: \overline{\lim}q(T^nx)>0\}$ has full measure.
\end{lemma}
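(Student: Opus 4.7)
The plan is to exploit the pointwise positivity of $q$ by stratifying $X$ into the level sets $A_k := \{x \in X : q(x) > 1/k\}$ for $k \in \mathbb{N}$, and then applying the Poincar\'e recurrence theorem to each $A_k$ separately. Since $q(x) > 0$ for every $x$, one has $X = \bigcup_{k \geq 1} A_k$; in particular $\mu(A_k) \to 1$ as $k \to \infty$.

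For each fixed $k$, I would apply Poincar\'e recurrence to the measurable set $A_k$ to produce a $\mu$-null subset $N_k \subseteq A_k$ such that every $x \in A_k \setminus N_k$ satisfies $T^n x \in A_k$ for infinitely many $n \geq 0$. For such an $x$ we get $q(T^n x) > 1/k$ along an infinite subsequence, hence $\overline{\lim}_n q(T^n x) \geq 1/k > 0$. Thus each $A_k \setminus N_k$ is contained in the target set $\{x : \overline{\lim}_n q(T^n x) > 0\}$.

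The conclusion will then follow from the elementary set inclusion
$$\bigcup_k (A_k \setminus N_k) \;\supseteq\; \Bigl(\bigcup_k A_k\Bigr) \setminus \Bigl(\bigcup_k N_k\Bigr) \;=\; X \setminus \bigcup_k N_k,$$
whose right-hand side has full measure as the complement of a countable union of null sets. There is essentially no genuine obstacle here; the only minor subtlety is recognizing that one should apply Poincar\'e recurrence not to a single set but to the countable family $\{A_k\}_k$, so as to promote the pointwise positivity of $q$ into the asymptotic positivity of $\overline{\lim}_n q(T^n x)$.
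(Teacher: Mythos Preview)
Your proof is correct and follows essentially the same approach as the paper: both stratify $X$ by level sets of $q$ and apply Poincar\'e recurrence to each piece, then take a countable union. The only cosmetic difference is that the paper uses the disjoint dyadic layers $A_m=\{x:q(x)\in[2^{-m},2^{-m+1})\}$ (indexed over $m\in\mathbb{Z}$) rather than your nested sets $\{q>1/k\}$, but the argument is otherwise identical.
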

\begin{proof}
Let $A_m:=\{x: q(x)\in[2^{-m},2^{-m+1})\}, m\in\mathbb{Z}$. By Poincar\'{e}'s recurrence theorem:

$$B_m:=\{x\in A_m: \#\{n:T^nx\in A_m\}=\infty\}\subset A_m\text{ and  }\mu(A_m)=\mu(B_m).$$


$X=\bigcupdot\limits_{m\in\mathbb{Z}}A_m=\bigcupdot\limits_{m\in\mathbb{Z}}B_m\text{ mod }\mu$, and for all $x\in B_m$: $\overline{\lim}q(T^nx)\geq 2^{-m}>0$.
\end{proof}

\begin{definition}\label{nuhchisharp}
$$NUH_\chi^\#:=\{x\in NUH_\chi^*|\limsup\limits_{n\rightarrow\infty}q_\epsilon(f^n(x)),\limsup\limits_{n\rightarrow\infty}q_\epsilon(f^{-n}(x))>0\}.$$ 
\end{definition}
By the previous lemma, $NUH_\chi^\#$ has full measure. This is the set we will be interested in from now on.
\subsection{Overlapping charts}
\subsubsection{The overlap condition}
The following definition is from \cite{Sarig}, \textsection 3.1.
\begin{definition}\label{isometries}$\text{ }$\\
\begin{enumerate}
\item[(a)] For every $x\in M$ there is an open neighborhood $D$ of diameter less than $\rho$ and a smooth map $\Theta_D:TD\rightarrow\mathbb{R}^d$ s.t.:
\begin{enumerate}
    \item[(1)] $\Theta_D:T_xM\rightarrow\mathbb{R}^d$ is a linear isometry for every $x\in D$

    \item[(2)] Define $\nu_x:=\Theta_D|_{T_xM}^{-1}:\mathbb{R}^d\rightarrow T_xM$, then $(x,u)\mapsto(\exp_x\circ\nu_x)(u)$ is smooth and Lipschitz on $D\times B_2(0)$ w.r.t the metric $d(x,x')+|u-u'|$

    \item[(3)] $x\mapsto\nu_x^{-1}\circ \exp_x^{-1}$ is a Lipschitz map from $D$ to $C^2(D,\mathbb{R}^d)=\{C^2-\text{maps from }D\text{ to }\mathbb{R}^d\}$.

    Let $\mathcal{D}$ be a finite cover of $M$ by such neighborhoods. Denote with $\varpi(\mathcal{D})$ the Lebesgue number of that cover: If $d(x,y)<\varpi(\mathcal{D})$ then $x$ and $y$ belong to the same $D$ for some $D$.
\end{enumerate}
\item[(b)] We say that two Pesin charts $\psi_{x_1}^{\eta_1},\psi_{x_2}^{\eta_2}$ {\em $\epsilon-$overlap} if $e^{-\epsilon}<\frac{\eta_1}{\eta_2}<e^\epsilon$, and for some $D\in\mathcal{D}$, $x_1,x_2\in D$ and $d(x_1,x_2)+\|\Theta_D\circ C_\chi(x_1)-\Theta_D\circ C_\chi(x_2)\|<\eta_1^4\eta_2^4$.
\end{enumerate}

\noindent\textbf{Remark}: The overlap condition is symmetric and monotone: if $\psi_{x_i}^{\eta_i},i=1,2$ $\epsilon-$overlap, then $\psi_{x_i}^{\xi_i},i=1,2$ $\epsilon-$overlap for all $\eta_i\leq\xi_i\leq Q_\epsilon(x_i)$ s.t. $e^{-\epsilon}<\frac{\xi_1}{\xi_2}<e^\epsilon$.
    
\end{definition}

\begin{prop}\label{chartsofboxes} The following holds for all $\epsilon$ small enough: If $\psi_x:R_\eta(0)\rightarrow M$ and $\psi_y:R_\zeta(0)\rightarrow M$ $\epsilon$-overlap, then: \begin{enumerate}
\item $\psi_x[R_{e^{-2\epsilon}\eta}(0)]\subset\psi_y[R_\zeta(0)]$ and $\psi_y[R_{e^{-2\epsilon}\zeta}(0)]\subset\psi_x[R_\eta(0)]$,
\item $dist_{C^{1+\beta/2}}(\psi_{x/y}^{-1}\circ\psi_{y/x},Id)<\epsilon\eta^2\zeta^2$ (recall the notations of \ref{notations}, section 4) where the $C^{1+\beta/2}$ distance is calculated on $R_{e^{-\epsilon} r(M)}(0)$ and $r(M)$ is defined in \textsection1.1.3.
\end{enumerate}
\end{prop}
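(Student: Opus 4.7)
The plan is to factor each Pesin chart through a linear endomorphism of $\mathbb{R}^d$ and a nearly-isometric nonlinear piece, compare the two ingredients separately for the two charts, and read off the estimates. Pick $D\in\mathcal{D}$ containing both $x$ and $y$, and set $A_z := \Theta_D\circ C_\chi(z)$ and $E_z(u):=\exp_z(\nu_z u)$, so that $\psi_z = E_z\circ A_z$. Then $\psi_x^{-1}\circ\psi_y = A_x^{-1}\circ G\circ A_y$ with $G := E_x^{-1}\circ E_y$, and $G=\mathrm{Id}$ when $x=y$, so in general $G$ should be close to the identity.

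I would first prove (2). Condition (a)(3) of Definition~\ref{isometries} asserts that $z\mapsto E_z^{-1}=\nu_z^{-1}\circ\exp_z^{-1}$ is Lipschitz from $D$ into $C^2(D,\mathbb{R}^d)$, while (a)(2) bounds $E_y$ in $C^2$ on $B_2(0)$; composing, one gets $\|G-\mathrm{Id}\|_{C^{1+\beta/2}}\lesssim d(x,y)$ on the relevant compact domain in $R_{e^{-\epsilon}r(M)}(0)$. Then I would write
\[
\psi_x^{-1}\circ\psi_y-\mathrm{Id} \;=\; A_x^{-1}\bigl[(G-\mathrm{Id})\circ A_y\bigr] \;+\; A_x^{-1}(A_y-A_x)
\]
and use that $\Theta_D$ is a fiberwise isometry and $C_\chi$ a contraction, so $\|A_y\|\leq 1$ (inner composition does not enlarge the $C^{1+\beta/2}$ norm) and $\|A_x^{-1}\|=\|C_\chi^{-1}(x)\|$. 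Combined with the overlap bound $d(x,y)+\|A_x-A_y\|<\eta^4\zeta^4$, this yields
\[
\mathrm{dist}_{C^{1+\beta/2}}(\psi_x^{-1}\circ\psi_y,\mathrm{Id}) \;\lesssim\; \|C_\chi^{-1}(x)\|\cdot\eta^4\zeta^4.
\]

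To finish (2), I would invoke part (2) of the lemma following Definition~\ref{balls}: $\|C_\chi^{-1}(x)\|^{48}<\epsilon^{2/\beta}/Q_\epsilon(x)\leq\epsilon^{2/\beta}/\eta$, together with $\eta,\zeta<\epsilon^{3/\beta}$ (part (1) of the same lemma). A short computation then shows $\|C_\chi^{-1}(x)\|\cdot\eta^2\zeta^2$ is dominated by a large positive power of $\epsilon$, so the right-hand side above is $<\epsilon\eta^2\zeta^2$ once $\epsilon$ is small enough; the symmetric bound for $\psi_y^{-1}\circ\psi_x$ is identical after swapping roles. For (1), the $C^0$ part of (2) gives $|\psi_y^{-1}\psi_x(u)|_\infty\leq e^{-2\epsilon}\eta+\epsilon\eta^2\zeta^2$ for $u\in R_{e^{-2\epsilon}\eta}(0)$, and combining $\eta\leq e^\epsilon\zeta$ with $\eta,\zeta<\epsilon^{3/\beta}$ makes this $<\zeta$ for $\epsilon$ small, so $\psi_x(u)\in\psi_y[R_\zeta(0)]$.

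The main obstacle is the arithmetic in the previous paragraph: tracking the exact exponents so that the potentially large factor $\|C_\chi^{-1}(x)\|$ from the Lyapunov change of coordinates is absorbed by the fourth powers $\eta^4\zeta^4$ coming from the overlap condition, leaving the target factor $\epsilon\eta^2\zeta^2$ with room to spare. The precise definition $\tilde Q_\chi(x)=3^{-6/\beta}\epsilon^{90/\beta}\|C_\chi^{-1}(x)\|^{-48/\beta}$ (and the resulting quantized $Q_\epsilon$) is engineered exactly for this trade-off, and is the reason the unusual exponents appear.
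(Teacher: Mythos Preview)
Your approach is correct and essentially the same as the paper's, which defers to Sarig's proof of the analogous proposition: both factor $\psi_z=E_z\circ A_z$ with $A_z=\Theta_D\circ C_\chi(z)$, use the Lipschitz properties of $z\mapsto E_z^{-1}$ and $(z,u)\mapsto E_z(u)$ from Definition~\ref{isometries} together with the overlap bound $d(x,y)+\|A_x-A_y\|<\eta^4\zeta^4$, and then absorb the factor $\|C_\chi^{-1}(x)\|$ via the definition of $Q_\epsilon$. The only organizational difference is that the paper (following Sarig) begins with the forward estimate $d(E_y(C_xv),E_x(C_xv))<L_1\eta^4\zeta^4$ and deduces (1) and (2) in parallel, whereas you compute $\psi_x^{-1}\circ\psi_y-\mathrm{Id}$ directly and then read (1) off from the $C^0$ part of (2); both routes are equivalent.
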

\begin{proof}


Suppose $\psi_x^\eta$ and $\psi_y^\zeta$ $\epsilon$-overlap, and fix some $D\in\mathcal{D}$ which contains $x,y$ such that $d(x,y)+\|\Theta_D\circ C_\chi(x)-\Theta_D\circ C_\chi(y)\|<\eta^4\zeta^4$. For $z=x,y$, write $C_{z}=\Theta_D\circ C_\chi(z)$, and $\psi_{z}=\exp_{z}\circ \nu_{z}\circ C_{z}$. By the definition of Pesin charts, $\eta\leq Q_\epsilon(x)< \epsilon^\frac{90}{\beta}\|C_\chi^{-1}(x)\|^\frac{-48}{\beta}$, $\zeta\leq Q_\epsilon(y)<\epsilon^\frac{90}{\beta}\|C_\chi^{-1}(y)\|^\frac{-48}{\beta}$. In particular, $\eta,\zeta\leq \epsilon^\frac{3}{\beta}$. 
Our first constraint on $\epsilon$ is that it would be less than $1$, and so small that $$\epsilon^{3/\beta}<\epsilon<\frac{\min\{1,r(M),\rho(M)\}}{5(L_1+L_2+L_3+L_4)^3},$$ 
where $r(M),\rho(M)$ are defined on \textsection1.1.3, and: \begin{enumerate}
\item $L_1$ is a uniform Lipschitz constant for the maps $(x,v)\mapsto(\exp_x\circ\nu_x)(v)$ on $D\times B_{r(M)}(0)$ ($D\in\mathcal{D}$).
\item  $L_2$ is a uniform Lipschitz constant for the maps $x\mapsto\nu_x^{-1}\exp_x^{-1}$ from $D$ into $C^2(D,\mathbb{R}^2)$ ($D\in\mathcal{D}$).
\item $L_3$ is a uniform Lipschitz constant for $\exp_x^{-1}:B_{\rho(M)}(x)\rightarrow T_xM$ ($x\in M$).
\item $L_4$ is a uniform Lipschitz constant for $\exp_x:B_{r(M)}(0)\rightarrow M$ ($x\in M$).
\end{enumerate}
We assume WLOG that these constants are all larger than one. Now proceed as in \cite[proposition~3.2]{Sarig}.
\end{proof}

\noindent\textbf{Remark}: By item (2) in the proposition above, the greater the distortion of $\psi_x$ or $\psi_y$ the closer they are one to another. This ``distortion compensating bound" will be used in the sequel to argue that $\psi_{f(x)}^{-1}\circ f\circ\psi_x$ remains close to a linear hyperbolic map if we replace $\psi_{f(x)}$ by an overlapping map $\psi_y$  (proposition  \ref{3.4inomris} below).

\begin{lemma}\label{overlap} Suppose $\psi_{x_1}^{\eta_1},\psi_{x_2}^{\eta_2}$ $\epsilon-$overlap, and write $C_1=\Theta_D\circ C_\chi(x_1), C_2=\Theta_D\circ C_\chi(x_2)$ for the $D$ s.t. $D\ni x_1,x_2$. Then \begin{enumerate}
\item 
$\|C_1^{-1}-C_2^{-1}\|<2\epsilon \eta_1 \eta_2$,
\item $\frac{\|C_1^{-1}\|}{\|C_2^{-1}\|}=e^{\pm \eta_1\eta_2}.$
\end{enumerate}
\end{lemma}
The proof is similar to \cite[proposition~3.2]{Sarig}.
\begin{proof} 

$\psi_{x_2}^{-1}\circ\psi_{x_1}$ maps $R_{e^{-\epsilon}\eta_1}(0)$ into $\mathbb{R}^d$. Its derivative at the origin is :
\begin{align*}
A  :=&C_\chi^{-1}(x_2)d_{x_1}\exp_{x_2}^{-1}C_\chi(x_1)=C_2^{-1}d_{x_1}[\nu_{x_2}^{-1}\exp_{x_2}^{-1}]\nu_{x_1}C_1\\
=&C_2^{-1}C_1+C_2^{-1}(d_{x_1}[\nu_{x_2}^{-1}\exp_{x_2}^{-1}]-\nu_{x_1}^{-1})\nu_{x_1}C_1\\
 \equiv &C_2^{-1}C_1+C_2^{-1}(d_{x_1}[\nu_{x_2}^{-1}\exp_{x_2}^{-1}]-d_{x_1}[\nu_{x_1}^{-1}\exp_{x_1}^{-1}])\nu_{x_1}C_1.
\end{align*}
Let $L_1,...,L_4$ be as in the previous proof. Since $\|d_{x_1}[\nu_{x_2}^{-1}\exp_{x_2}^{-1}]-d_{x_1}[\nu_{x_1}^{-1}\exp_{x_1}^{-1}]\|\leq L_2d(x_1,x_2)<L_2\eta_1^4\eta_2^4<\epsilon\eta_1^2\eta_2^2$, and $\nu_{x_1}C_1$ is a contraction, and $\|A-Id\|\leq d_{C^1}(\psi_{x_2}^{-1}\circ\psi_{x_1},Id)<\epsilon\eta_1^2\eta_2^2$,
$$\|C_2^{-1}C_1-Id\|<2\epsilon\|C_2^{-1}\|\eta_1^2\eta_2^2.$$
We have that $\|C_1^{-1}-C_2^{-1}\|\leq\|C_1^{-1}\|\cdot\|C_2^{-1}C_1-Id\|\leq2\epsilon\|C_1^{-1}\|\cdot\|C_2^{-1}\|\eta_1^2\eta_2^2<2\epsilon \eta_1\eta_2$. This concludes part 1.

For part 2: 
$$\frac{\|C_1^{-1}\|}{\|C_2^{-1}\|}\leq1+\frac{\eta_1\eta_2}{\|C_2^{-1}\|}\leq e^{ \eta_1\eta_2},$$
and by symmetry $\frac{\|C_1^{-1}\|}{\|C_2^{-1}\|}=e^{\pm\eta_1\eta_2}$
\end{proof}
\subsubsection{The form of $f$ in overlapping charts}

We introduce a notation: for two vectors $u\in\mathbb{R}^{d_1}$ and $v\in\mathbb{R}^{d_2}$, $(-u-,-v-)\in\mathbb{R}^{d_1+d_2}$ means the new vector whose coordinates are the coordinates of these two, put in the same order as written.
\begin{prop}\label{3.4inomris} The following holds for all $\epsilon$ small enough. Suppose $x,y\in NUH_\chi^*$, $s(x)=s(y)$ and $\psi_{f(x)}^\eta$ $\epsilon-$overlaps $\psi_y^{\eta'}$, then $f_{xy}:=\psi_y^{-1}\circ f\circ\psi_x$ is a well defined injective map from $R_{Q_\epsilon(x)}(0)$ to $\mathbb{R}^d$, and there are matrices $D_s:\mathbb{R}^{s(x)}\rightarrow\mathbb{R}^{s(x)}, D_u:\mathbb{R}^{d-s(x)}\rightarrow\mathbb{R}^{d-s(x)}$ and differentiable maps $h_s:R_{Q_\epsilon(x)}(0)\rightarrow \mathbb{R}^{s(x)},h_u:R_{Q_\epsilon(x)}(0)\rightarrow \mathbb{R}^{u(x)}$, s.t. $f_{xy}$ can be put in the form
$$f_{xy}(-v_s-,-v_u-)=(D_s v_s+h_s(v_s,v_u),D_u v_u+h_u(v_s,v_u)),$$ where $v_{s/u}$ are the ``stable"/"unstable" components of the input vector $u$ for $\psi_x$; and $\kappa^{-1}\leq\|D_s^{-1}\|^{-1},\|D_s\|\leq e^{-\chi}$, $e^\chi\leq\|D_u^{-1}\|^{-1},\|D_u\|\leq\kappa$, $|h_{s/u}(0)|<\epsilon\eta$, $\|\frac{\partial(h_s,h_u)}{\partial(v_s,v_u)}\|<\epsilon\eta^{\beta/3}$ on $R_\eta(0)$ (in particular $\|\frac{\partial(h_s,h_u)}{\partial(v_s,v_u)}\rvert_0\|<\epsilon\eta^{\beta/3}$), and $\|\frac{\partial(h_s,h_u)}{\partial(v_s,v_u)}\rvert_{v_1}-\frac{\partial(h_s,h_u)}{\partial(v_s,v_u)}\rvert_{v_2}\|\leq\epsilon|v_1-v_2|^{\beta/3}$ on $R_{Q_\epsilon(x)}(0)$. A similar statement holds for $f_{xy}^{-1}$, assuming that $\psi_{f^{-1}(y)}^{\eta'}$ $\epsilon-$overlaps $\psi_x^\eta$.
\end{prop}
For proof see \cite[proposition~3.4]{Sarig}.

\subsubsection{Coarse graining}
Recall the definition of $s(x)$ for a Lyapunov regular $x\in NUH\chi$, in definition  \ref{NUH2}, and the assumption $s(x)\in\{1,...,d-1\}$.
\begin{prop}\label{discreteness} The following holds for all $\epsilon$ small enough: There exists a countable collection $\mathcal{A}$ of Pesin charts with the following properties:
\begin{enumerate}
\item {\em Discreteness}:  $\{\psi\in\mathcal{A}:\psi=\psi_x^\eta,\eta>t\}$ is finite for every $t>0$.
\item {\em Sufficiency}: For every $x\in NUH_\chi^*$ and for every sequence of positive numbers $0<\eta_n\leq e^{-\epsilon/3}Q_\epsilon(f^n(x))$ in $I_\epsilon$ s.t. $e^{-\epsilon}\leq\frac{\eta_n}{\eta_{n+1}}\leq e^\epsilon$ there exists a sequence $\{\psi_{x_n}^{\eta_n}\}_{n\in\mathbb{Z}}$ of elements of $\mathcal{A}$ s.t. for every $n$:
\begin{enumerate}
\item $\psi_{x_n}^{\eta_n}$ $\epsilon$-overlaps $\psi_{f^n(x)}^{\eta_n}$, $\frac{Q_\epsilon(f^n(x))}{Q_\epsilon(x_n)}=e^{\pm\frac{\epsilon}{3}}$ and $s(x_n)=s(f^n(x))$ \normalfont($=s(x)$, since Lyapunov exponents and dimensions are $f$-invariant);
\item $\psi_{f(x_n)}^{\eta_{n+1}}$ $\epsilon$-overlaps $\psi_{x_{n+1}}^{\eta_{n+1}}$;
\item $\psi_{f^{-1}(x_n)}^{\eta_{n-1}}$ $\epsilon$-overlaps $\psi_{x_{n-1}}^{\eta_{n-1}}$;
\item $\psi_{x_n}^{\eta_n'}\in\mathcal{A}$ for all $\eta_n'\in I_\epsilon$ s.t. $\eta_n\leq\eta_n'\leq\min\{Q_\epsilon(x_n),e^\epsilon\eta_n\}$.
\end{enumerate}
\end{enumerate}
\end{prop}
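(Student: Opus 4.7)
The plan is a coarse-graining of $NUH_\chi^*$. The subtlety is that $C_\chi$ is only measurable, so closeness of $(y, C_\chi(y))$ between two points gives no information about $(f(y), C_\chi(f(y)))$; yet conditions 2(b)--(c) demand exactly such control along the sequence $\{x_n\}$. To handle this, I would encode each point by its data at three consecutive times:
\[
\tilde\Psi(y) := \Bigl(\,f^{-1}(y),\, y,\, f(y),\ \Theta_{D_{-1}} C_\chi(f^{-1}(y)),\, \Theta_{D_0} C_\chi(y),\, \Theta_{D_1} C_\chi(f(y)),\ s(y),\ Q_\epsilon(f^{-1}(y)),\, Q_\epsilon(y),\, Q_\epsilon(f(y))\,\Bigr),
\]
with $D_{-1}, D_0, D_1 \in \mathcal{D}$ chosen measurably so that $f^{-1}(y) \in D_{-1}$, $y \in D_0$, $f(y) \in D_1$. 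For each discrete configuration $\theta = (q_{-1}, q_0, q_1, s, D_{-1}, D_0, D_1) \in I_\epsilon^3 \times \{1,\ldots,d-1\} \times \mathcal{D}^3$, the slice $S_\theta \subset NUH_\chi^*$ on which the discrete $\tilde\Psi$-coordinates equal $\theta$ has all $\|C_\chi^{-1}(f^i(\cdot))\|$ uniformly bounded (by Definition \ref{balls}(e) and theorem \ref{SandU}), hence the continuous projection of $\tilde\Psi(S_\theta)$ is precompact. Fix a countable $X \subset NUH_\chi^*$ such that $X \cap S_\theta$ is $\tilde\Psi$-dense in $S_\theta$ for every $\theta$.

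For each $(\eta, \theta)$ with $\eta \in I_\epsilon$ and $\eta \leq q_0$, select a finite $c\eta^8$-net $N_{\eta,\theta} \subset X \cap S_\theta$ (with $c := 1/(5e^{8\epsilon})$) for the continuous $\tilde\Psi$-part, and set
\[
\mathcal{A} := \bigcup_{(\eta, \theta)}\ \bigcup_{\substack{\eta' \in I_\epsilon \\ \eta \leq \eta' \leq \min(q_0,\, e^\epsilon \eta)}} \bigl\{\psi_y^{\eta'} : y \in N_{\eta,\theta}\bigr\}.
\]
The extra union over $\eta'$ takes care of condition 2(d) for free.

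For discreteness with $t > 0$: any $\psi_y^{\eta'} \in \mathcal{A}$ with $\eta' > t$ comes from some $(\eta,\theta)$ with $\eta > t/e^\epsilon$ and $q_0 \geq \eta' > t$; by item (3) and the $\omega_0$-control of item (4) in the preceding lemma, $\eta, q_{-1}, q_0, q_1$ each lie in a fixed finite subset of $I_\epsilon$, the remaining indices range over finite sets, and each $N_{\eta,\theta}$ is finite. For sufficiency, given $x$ and $\{\eta_n\}$, set $q_n := Q_\epsilon(f^n(x))$, pick $D_n \in \mathcal{D}$ containing $f^n(x)$, and define $\theta_n := (q_{n-1}, q_n, q_{n+1}, s(x), D_{n-1}, D_n, D_{n+1})$. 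Since $f^n(x) \in S_{\theta_n}$, the net $N_{\eta_n,\theta_n}$ contains some $x_n$ with $\tilde\Psi(x_n)$ at continuous distance $< c\eta_n^8$ from $\tilde\Psi(f^n(x))$. Condition 2(a) is immediate since $Q_\epsilon(x_n) = q_n = Q_\epsilon(f^n(x))$, $s(x_n) = s(f^n(x))$, and the two terms of the overlap inequality are each $< c\eta_n^8$, summing to $< 2c\eta_n^8 < \eta_n^8$; condition 2(d) holds by construction. For 2(b), both $f(x_n)$ and $x_{n+1}$ lie in $D_{n+1}$ (since $D_1$ in $\theta_n$ and $D_0$ in $\theta_{n+1}$ are both $D_{n+1}$), and triangle inequalities through $f^{n+1}(x)$ give
\[
d(f(x_n), x_{n+1}) + \bigl\|\Theta_{D_{n+1}}\bigl(C_\chi(f(x_n)) - C_\chi(x_{n+1})\bigr)\bigr\| \leq 2c(\eta_n^8 + \eta_{n+1}^8) \leq 4c\, e^{8\epsilon}\, \eta_{n+1}^8 < \eta_{n+1}^8,
\]
which is the overlap inequality; 2(c) is entirely symmetric.

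The main obstacle is the very reason for the three-step encoding: without $C_\chi(f^{\pm1}(y))$ built into $\tilde\Psi(y)$, the discretization at step $n$ carries no information about the discretization at step $n\pm 1$, so the shifted overlap conditions 2(b)--(c) cannot be extracted from the on-time condition 2(a). Including these extra coordinates reduces the whole propagation step to two triangle inequalities, and the constant $c = 1/(5e^{8\epsilon})$ is chosen precisely to absorb the slack $\eta_n \leq e^\epsilon \eta_{n+1}$ that appears when summing the two steps.
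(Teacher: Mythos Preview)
Your proposal is correct and follows essentially the same approach as the paper. The paper's proof is a reference to Sarig's proposition~3.5, with the sole modification that the parameter space is enlarged from $X = M^3 \times (0,\infty)^3 \times GL_d(\mathbb{R})^3$ to $X = M^3 \times (0,\infty)^3 \times GL_d(\mathbb{R})^3 \times \{1,\ldots,d-1\}$ in order to record the stable dimension $s(x)$; your encoding $\tilde\Psi$ reproduces exactly this three-time parametrization (plus the extra $\mathcal{D}^3$ bookkeeping to make the isometries $\Theta_D$ coherent), and your identification of the reason for the three-step encoding---that conditions 2(b)--(c) cannot be recovered from on-time data alone---is precisely the point of Sarig's construction.
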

\begin{proof} The proof is the same as the proof of \cite[proposition~3.5]{Sarig} except for one difference: In the higher-dimensional case $s(x)\in\{1,...,d-1\}$ has $d-1$ possible values instead of just one. To deal with this we apply the discretization of \cite{Sarig} to $NUH_\chi^*\cap [s(x)=s]$ for each $s=1,..., d-1$.

\end{proof}
\subsection{$\epsilon$-chains and infinite-to-one Markov extension of $f$}\label{epsilonchains}
\subsubsection{Double charts and $\epsilon$-chains}

Recall that $\psi_x^\eta$ ($0<\eta\leq Q_\epsilon(x)$) stands for the Pesin chart $\psi_x:R_\eta(0)\rightarrow M$. An {\em $\epsilon$-double Pesin chart} (or just \enquote{double chart}) is a pair $\psi_x^{p^s,p^u}:=(\psi_x^{p^s},\psi_x^{p^u})$ where $0<p^u,p^s\leq Q_\epsilon(x)$.
\begin{definition}\label{edges} $\psi_{x}^{p^s,p^u}\rightarrow\psi_{y}^{q^s,q^u}$ means :
\begin{itemize}
\item $\psi_{x}^{p^s\wedge p^u}$ and $\psi_{f^{-1}(y)}^{p^s\wedge p^u}$ $\epsilon$-overlap;
\item $\psi_{f(x)}^{q^s\wedge q^u}$ and $\psi_{y}^{q^s\wedge q^u}$ $\epsilon$-overlap;
\item $q^u=\min\{e^\epsilon p^u,Q_\epsilon(y)\}$ and $p^s=\min\{e^\epsilon q^s,Q_\epsilon(x)\}$;
\item $s(x)=s(y)$ \normalfont (this requirement did not appear in Sarig's definition, as it holds automatically in the two dimensional hyperbolic case).
\end{itemize}
\end{definition}
\begin{definition}\label{defepsilonchains} (see \cite{Sarig}, definition 4.2)
$\{\psi_{x_i}^{p^s_i,p^u_i}\}_{i\in\mathbb{Z}}$ (resp. 
$\{\psi_{x_i}^{p^s_i,p^u_i}\}_{i\geq0}$ and 
$\{\psi_{x_i}^{p^s_i,p^u_i}\}_{i\leq0}$) is called an {\em $\epsilon$-chain} (resp. positive , negative $\epsilon$-chain) if $\psi_{x_i}^{p^s_i,p^u_i}\rightarrow\psi_{x_{i+1}}^{p^s_{i+1},p^u_{i+1}}$ for all $i$. We abuse terminology and drop the $\epsilon$ in ``$\epsilon$-chains".
\end{definition}

Let $\mathcal{A}$ denote the countable set of Pesin charts we have constructed in \textsection1.2.3 and recall that $I_\epsilon=\{e^{-k\epsilon/3}:k\in\mathbb{N}\}$.
\begin{definition}\label{graphosaurus} $\mathcal{G}$ is the directed graph with vertices $\mathcal{V}$ and $\mathcal{E}$ where:
\begin{itemize}
\item $\mathcal{V}:=\{\psi_{x}^{p^s,p^u}:\psi_{x}^{p^s\wedge p^u}\in \mathcal{A},p^s,p^u\in I_\epsilon, p^s,p^u\leq Q_\epsilon(x)\}$.
\item $\mathcal{E}:=\{(\psi_{x}^{p^s,p^u},\psi_{y}^{q^s,q^u})\in \mathcal{V}\times\mathcal{V}:\psi_{x}^{p^s,p^u}\rightarrow\psi_{y}^{q^s,q^u}\}$.
\end{itemize}
\end{definition}
This is a countable directed graph. Every vertex has a finite degree, because of the following lemma and the discretization of $\mathcal{A}$ (proposition  \ref{discreteness}). Let 
$$\Sigma(\mathcal{G}):=\{x\in\mathcal{V}^\mathbb{Z}:(x_i,x_{i+1})\in\mathcal{E} ,\forall i\in\mathbb{Z}\}.$$
$\Sigma(\mathcal{G})$ is the Markov shift associated with the directed graph $\mathcal{G}$. It comes equipped with the left-shift $\sigma$, and the standard metric.

\begin{lemma}\label{lemma131} If $\psi_x^{p^s,p^u}\rightarrow\psi_y^{q^s,q^u}$ then $\frac{q^u\wedge q^s}{p^u\wedge p^s}=e^{\pm\epsilon}$. Therefore for every $\psi_x^{p^s,p^u}\in\mathcal{V}$ there are only finitely many $\psi_y^{q^s,q^u}\in\mathcal{V}$ s.t. $\psi_x^{p^s,p^u}\rightarrow\psi_y^{q^s,q^u}$ or $\psi_y^{q^s,q^u}\rightarrow\psi_x^{p^s,p^u}$.
\end{lemma}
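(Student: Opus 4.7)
The plan is to first establish the ratio identity and then deduce the finite-degree statement from it together with the discreteness of $\mathcal{A}$ (proposition \ref{discreteness}(1)).

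For the ratio, I would begin by simplifying the two minima using the edge-defining equations. From $q^u=\min\{e^\epsilon p^u,Q_\epsilon(y)\}$ together with $q^s\leq Q_\epsilon(y)$, which holds because $\psi_y^{q^s,q^u}\in\mathcal{V}$, the $Q_\epsilon(y)$ factor drops out, giving $q^u\wedge q^s=\min\{e^\epsilon p^u, q^s\}$. Symmetrically, $p^s=\min\{e^\epsilon q^s,Q_\epsilon(x)\}$ combined with $p^u\leq Q_\epsilon(x)$ yields $p^u\wedge p^s=\min\{p^u, e^\epsilon q^s\}$. The claimed bound then reduces to the elementary inequality
\[
e^{-\epsilon}\leq \frac{\min\{e^\epsilon a,b\}}{\min\{a,e^\epsilon b\}}\leq e^\epsilon
\]
with $a=p^u$ and $b=q^s$, which I would check by a short case split on whether $a\leq b$ and on which entry realizes each minimum.

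For the finite-degree conclusion, fix $\psi_x^{p^s,p^u}\in\mathcal{V}$ and consider any successor $\psi_y^{q^s,q^u}$. By the ratio bound just proved, $\eta:=q^u\wedge q^s\geq e^{-\epsilon}(p^u\wedge p^s)=:t$, a strictly positive quantity depending only on the source vertex. The definition of $\mathcal{V}$ forces $\psi_y^\eta\in\mathcal{A}$, and by proposition \ref{discreteness}(1) the set $\{\psi_z^\eta\in\mathcal{A}:\eta>t/2\}$ is finite, which gives finitely many admissible pairs $(y,\eta)$. For each such $y$ the coordinate $q^u$ is uniquely determined by $q^u=\min\{e^\epsilon p^u,Q_\epsilon(y)\}$, while $q^s$ ranges over the finite set $I_\epsilon\cap(0,Q_\epsilon(y)]$; this yields finitely many successors. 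The predecessor count is obtained by the analogous argument: the lower bound $p^u\wedge p^s\geq e^{-\epsilon}(q^u\wedge q^s)$ bounds the admissible chart sizes from below, proposition \ref{discreteness}(1) forces finitely many candidate $x$'s, and then $p^s$ is determined while $p^u$ ranges over a finite subset of $I_\epsilon\cap(0,Q_\epsilon(x)]$.

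No substantive obstacle is expected. The only step requiring care is the reduction to $\min\{e^\epsilon p^u, q^s\}$ and $\min\{p^u, e^\epsilon q^s\}$, which depends on the inequalities $q^s\leq Q_\epsilon(y)$ and $p^u\leq Q_\epsilon(x)$ built into the definition of $\mathcal{V}$; once these are in place, the remainder is elementary algebra plus an appeal to the already-established discreteness of $\mathcal{A}$.
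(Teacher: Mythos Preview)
Your approach is correct and follows the standard argument (the paper itself simply refers to \cite{Sarig}, lemma 4.4, without reproducing the proof). The reduction of the two minima using $q^s\leq Q_\epsilon(y)$ and $p^u\leq Q_\epsilon(x)$, followed by the elementary case split on $a=p^u$, $b=q^s$, is exactly the right route to the ratio bound.

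There is one small slip in the finite-degree part: the set $I_\epsilon\cap(0,Q_\epsilon(y)]$ is \emph{not} finite, since $I_\epsilon=\{e^{-\ell\epsilon/3}:\ell\in\mathbb{N}\}$ accumulates at $0$. You need a lower bound on $q^s$, and you already have one. Either use $q^s\geq q^u\wedge q^s\geq t$, which you established, or more directly invoke the edge relation $p^s=\min\{e^\epsilon q^s,Q_\epsilon(x)\}$ with $p^s$ fixed: this forces $q^s\geq e^{-\epsilon}p^s$, and in fact determines $q^s=e^{-\epsilon}p^s$ uniquely whenever $p^s<Q_\epsilon(x)$. With that correction $q^s$ ranges over the finite set $I_\epsilon\cap[e^{-\epsilon}p^s,Q_\epsilon(y)]$, and the rest of your argument goes through unchanged. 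The same remark applies symmetrically to the predecessor count and $p^u$.
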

\begin{proof}
See lemma 4.4 in \cite{Sarig}.
\end{proof}

\begin{definition} Let $(Q_k)_{k\in\mathbb{Z}}$ be a sequence in $I_\epsilon=\{e^{-\frac{\ell\epsilon}{3}}\}_{l\in\mathbb{N}}$. A sequence of pairs $\{(p^s_k,p^u_k)\}_{k\in\mathbb{Z}}$ is called {\em $\epsilon$-subordinated} to $(Q_k)_{k\in\mathbb{Z}}$ if for every $k\in\mathbb{Z}$: \begin{itemize}
\item $0<p^s_k,p^u_k\leq Q_k$
\item $p^s_k,p^u_k\in I_\epsilon$
\item $p_{k+1}^u=\min\{e^\epsilon p_k^u,Q_{k+1}\}$ and $p_{k-1}^s=\min\{e^\epsilon p_k^s,Q_{k-1}\}$
\end{itemize}
\end{definition}
For example, if $\{\psi_{x_k}^{p^s_k,p^u_k}\}_{k\in\mathbb{Z}}$ is a chain, then $\{(p^s_k,p^u_k)\}_{k\in\mathbb{Z}}$ is $\epsilon$-subordinated to $\{Q_\epsilon(x_k)\}_{k\in\mathbb{Z}}$.

\begin{lemma}\label{subordinatedchain}(\cite[lemma~4.6]{Sarig})
Let $(Q_k)_{k\in\mathbb{Z}}$ be a sequence in $I_\epsilon$, and suppose $q_k\in I_\epsilon$ satisfy $0<q_k\leq Q_k$ and $\frac{q_k}{q_{k+1}}=e^{\pm\epsilon}$ for all $k\in\mathbb{Z}$. Then there exists a sequence $\{(p^s_k,p^u_k)\}_{k\in\mathbb{Z}}$ which is $\epsilon$-subordinated to $(Q_k)_{k\in\mathbb{Z}}$ and $p^s_k\wedge p^u_k\geq q_k$ for all $k\in\mathbb{Z}$.
\end{lemma}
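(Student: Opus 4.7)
I would construct $p_k^u$ and $p_k^s$ explicitly by the formulas
\[
p_k^u := \min_{n\ge 0} e^{n\epsilon} Q_{k-n}, \qquad p_k^s := \min_{n\ge 0} e^{n\epsilon} Q_{k+n},
\]
and then verify the four required properties (bounded by $Q_k$, membership in $I_\epsilon$, the recursion, and the lower bound by $q_k$) in order. Since the two formulas are symmetric under $k \mapsto -k$, I would only argue the $p^u$ case in detail and invoke symmetry for $p^s$.

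First I would check that each minimum is attained and lies in $I_\epsilon$. The key observation is the uniform lower bound
\[
e^{n\epsilon} Q_{k-n} \ \geq\ e^{n\epsilon} q_{k-n} \ \geq\ q_k > 0,
\]
where the second inequality uses the hypothesis $q_k/q_{k-n} = e^{\pm n\epsilon}$, which forces $q_{k-n} \geq e^{-n\epsilon} q_k$. Since $\{e^{n\epsilon} Q_{k-n}\}_{n\ge 0}$ lies in the discrete set $I_\epsilon \cdot e^{\mathbb{Z}\epsilon}$ and is bounded below by $q_k$ and above by $Q_k$, only finitely many distinct values occur, so the minimum is attained at some $n^\ast \ge 0$. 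Writing $Q_{k-n^\ast} = e^{-\ell \epsilon}$, the equality $p_k^u = e^{(n^\ast - \ell)\epsilon}$ combined with $p_k^u \leq Q_k \leq 1$ forces $n^\ast - \ell \leq 0$, so $p_k^u \in I_\epsilon$.

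Next I would verify the recursion. Splitting the minimum defining $p_{k+1}^u$ according to whether $n=0$ or $n\geq 1$:
\[
p_{k+1}^u \ =\ \min\Bigl\{Q_{k+1},\ \min_{n\ge 1} e^{n\epsilon} Q_{k+1-n}\Bigr\} \ =\ \min\Bigl\{Q_{k+1},\ e^\epsilon \min_{m\ge 0} e^{m\epsilon} Q_{k-m}\Bigr\} \ =\ \min\{Q_{k+1},\ e^\epsilon p_k^u\},
\]
which is exactly the required recursion; the analogous computation gives $p_{k-1}^s = \min\{Q_{k-1}, e^\epsilon p_k^s\}$. Taking $n=0$ in the defining formula yields $p_k^u,\, p_k^s \leq Q_k$, and the lower bound $e^{n\epsilon} Q_{k\pm n} \geq q_k$ established above gives $p_k^u \wedge p_k^s \geq q_k$.

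I do not expect any real obstacle here: the whole argument is the standard observation that, given the allowed geometric growth rate $e^\epsilon$ and the caps $Q_k$, the largest $\epsilon$-subordinated sequence bounded above by $(Q_k)$ is precisely the pointwise minimum of the tilted sequences $e^{n\epsilon} Q_{k-n}$ (for $p^u$) and $e^{n\epsilon} Q_{k+n}$ (for $p^s$). The only minor technical point to be careful about is that the pointwise minima actually land inside the discrete set $I_\epsilon$, and that is handled by the comparison with $q_k$ and with $Q_k$ noted above.
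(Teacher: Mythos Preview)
Your construction $p_k^u=\min_{n\ge 0}e^{n\epsilon}Q_{k-n}$, $p_k^s=\min_{n\ge 0}e^{n\epsilon}Q_{k+n}$ is exactly the standard one (the paper defers to Sarig's Lemma 4.6, which uses the same formula), and your verification of the recursion, the bounds, and membership in $I_\epsilon$ is correct. The only imprecision is the phrase ``bounded above by $Q_k$'': the terms $e^{n\epsilon}Q_{k-n}$ need not be bounded above, but this is harmless since you only need that the terms relevant to the minimum lie in $[q_k,Q_k]$, and discreteness then gives attainment.
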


\begin{lemma}(\cite[lemma~4.7]{Sarig})
 Suppose $\{(p^s_k,p^u_k)\}_{k\in\mathbb{Z}}$ is $\epsilon$-subordinated to a sequence $(Q_k)_{k\in\mathbb{Z}}\subset I_\epsilon$. If $\limsup_{n\rightarrow\infty}(p^s_n\wedge p^u_n)>0$ and $\limsup_{n\rightarrow-\infty}(p^s_n\wedge p^u_n)>0$, then $p_n^u$ (resp. $p_n^s$) is equal to $Q_n$ for infinitely many $n>0$, and for infinitely many $n<0$.
\end{lemma}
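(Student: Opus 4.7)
The plan is to argue by contradiction in each of the four cases separately, exploiting the fact that whenever the $\min$ in the subordination relation is attained by the growth term rather than the cap $Q_n$, the sequence evolves deterministically at rate $e^\epsilon$. First I would record the two key observations. Every $Q_k\in I_\epsilon$ satisfies $Q_k\le 1$, so $p^u_k,p^s_k\le 1$ for all $k$. And whenever $p^u_{k+1}<Q_{k+1}$ the subordination relation forces $p^u_{k+1}=e^\epsilon p^u_k$; whenever $p^s_{k-1}<Q_{k-1}$ it forces $p^s_{k-1}=e^\epsilon p^s_k$.

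For the claim about $p^u_n$ at $n>0$: assume for contradiction that $p^u_n=Q_n$ for only finitely many positive $n$ and pick $N$ so that $p^u_n<Q_n$ for all $n\ge N$. Then $p^u_{n+1}=e^\epsilon p^u_n$ for every such $n$, hence $p^u_n=e^{(n-N)\epsilon}p^u_N\to\infty$, contradicting $p^u_n\le 1$. The statement about $p^s_n$ at $n<0$ is symmetric: the backward recurrence $p^s_{n-1}=e^\epsilon p^s_n$, assumed to hold for all sufficiently negative $n$, forces $p^s_n$ to grow without bound as $n\to-\infty$.

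The remaining two cases are where the positivity hypothesis enters. For $p^u_n$ at $n<0$: assuming $p^u_n<Q_n$ for all $n\le -N$, iterate the forward recurrence $p^u_{n+1}=e^\epsilon p^u_n$ backward to get $p^u_n=e^{\epsilon(n+N)}p^u_{-N}\to 0$ as $n\to-\infty$. Since $p^s_n\wedge p^u_n\le p^u_n$, this contradicts $\limsup_{n\to-\infty}(p^s_n\wedge p^u_n)>0$. The case of $p^s_n$ at $n>0$ is symmetric, using $\limsup_{n\to\infty}(p^s_n\wedge p^u_n)>0$ via $p^s_n\wedge p^u_n\le p^s_n$ and the forward decay $p^s_{n}=e^{-\epsilon}p^s_{n-1}$.

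There is no real obstacle; the only point worth keeping track of is the asymmetry in the direction of propagation: $p^u$ is pushed forward in $n$ by its defining recurrence, while $p^s$ is pushed backward. This is precisely why boundedness takes care of the ``forward'' direction for $p^u$ and the ``backward'' direction for $p^s$, and the $\limsup$ hypothesis is needed for the opposite directions. Once this dichotomy is isolated, each of the four assertions is a one-line exponential estimate.
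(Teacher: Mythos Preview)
Your argument is correct and complete. The paper itself does not give a proof of this lemma but merely refers to \cite{Sarig}, lemma 4.7; the contradiction argument you outline---exploiting that $p^u_{k+1}<Q_{k+1}$ forces $p^u_{k+1}=e^\epsilon p^u_k$ (and symmetrically for $p^s$), then using boundedness by $1$ in the growth direction and the $\limsup$ hypothesis in the decay direction---is precisely the standard proof.
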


\begin{prop}\label{prop131}
For every $x\in NUH_\chi^\#$ there is a chain $\{\psi_{x_k}^{p_k^s,p_k^u}\}_{k\in\mathbb{Z}}\subset\Sigma(\mathcal{G})$ s.t. $\psi_{x_k}^{p_k^s\wedge p_k^u}$ $\epsilon$-overlaps $\psi_{f^k(x)}^{p_k^s\wedge p_k^u}$ for every $k\in\mathbb{Z}$.
\end{prop}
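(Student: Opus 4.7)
My plan is to first manufacture a numerical sequence of admissible widths that tracks the orbit of $x$, then apply the Sufficiency Proposition \ref{discreteness} to materialise these widths as actual charts in $\mathcal{A}$, and finally assemble them into a chain. Write $Q_k := Q_\epsilon(f^k(x))$.

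\emph{Step 1 (admissible widths from tempering).} Because $x\in NUH_\chi^\#$, the function $q_\epsilon$ satisfies $\limsup_{k\to\pm\infty} q_\epsilon(f^k(x))>0$, together with $q_\epsilon\le\epsilon Q_\epsilon$ and $e^{-\epsilon}\le q_\epsilon\circ f/q_\epsilon\le e^\epsilon$. Set $q_k$ to be the largest element of $I_\epsilon$ not exceeding $q_\epsilon(f^k(x))$; after shrinking $\epsilon$ by a harmless constant to absorb the $I_\epsilon$-quantisation, the sequence $(q_k)$ satisfies $q_k\in I_\epsilon$, $0<q_k\le Q_k$, $q_k/q_{k+1}=e^{\pm\epsilon}$, and $\limsup_{k\to\pm\infty} q_k>0$.

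\emph{Step 2 (subordinate to $(Q_k)$).} Apply the subordination lemma (the lemma after the definition of $\epsilon$-subordinated) to the sequences $(Q_k)$ and $(q_k)$ to obtain $\{(p_k^s,p_k^u)\}_{k\in\mathbb Z}$ which is $\epsilon$-subordinated to $(Q_k)$ with $p_k^s\wedge p_k^u\ge q_k$ for every $k$. In particular $\eta_k:=p_k^s\wedge p_k^u\in I_\epsilon$, $\eta_k\le Q_k$, $\eta_k/\eta_{k+1}=e^{\pm\epsilon}$, and $\limsup_{k\to\pm\infty}\eta_k>0$.

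\emph{Step 3 (locate chart centers).} Apply Proposition \ref{discreteness} to $x$ with the width sequence $(\eta_k)$ (trimmed by one notch of $I_\epsilon$ if necessary so that $\eta_k\le e^{-\epsilon/3}Q_k$). This produces $\{\psi_{x_k}^{\eta_k}\}_{k\in\mathbb Z}\subset\mathcal A$ such that $\psi_{x_k}^{\eta_k}$ $\epsilon$-overlaps $\psi_{f^k(x)}^{\eta_k}$, $Q_\epsilon(x_k)/Q_k=e^{\pm\epsilon}$, $s(x_k)=s(f^k(x))=s(x)$, consecutive charts $\psi_{f(x_k)}^{\eta_{k+1}}$ and $\psi_{x_{k+1}}^{\eta_{k+1}}$ (and the symmetric pair one step back) $\epsilon$-overlap, and $\psi_{x_k}^{\eta'}\in\mathcal A$ for every $\eta'\in I_\epsilon$ with $\eta_k\le\eta'\le\min\{Q_\epsilon(x_k),e^\epsilon\eta_k\}$.

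\emph{Step 4 (assemble the double charts into a chain).} Form $\psi_{x_k}^{p_k^s,p_k^u}$. Property (d) of Proposition \ref{discreteness} applied to both $p_k^s$ and $p_k^u$ (each of which lies in $[\eta_k,e^\epsilon\eta_k]\cap I_\epsilon$ by subordination) shows $\psi_{x_k}^{p_k^s\wedge p_k^u}=\psi_{x_k}^{\eta_k}\in\mathcal A$, so $\psi_{x_k}^{p_k^s,p_k^u}\in\mathcal V$; the bounds $p_k^s,p_k^u\le Q_\epsilon(x_k)$ follow because $Q_\epsilon(x_k)\ge e^{-\epsilon}Q_k$ and we built in the $e^{-\epsilon/3}$ margin in Step 3. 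The four arrow conditions are verified as follows: the two $\epsilon$-overlap requirements $\psi_{x_k}^{p_k^s\wedge p_k^u}\leftrightarrow\psi_{f^{-1}(x_{k+1})}^{\cdots}$ and $\psi_{f(x_k)}^{\cdots}\leftrightarrow\psi_{x_{k+1}}^{\cdots}$ come from the overlaps in Step 3 (monotonicity of the overlap relation lets us pass from width $\eta_k$ to the relevant smaller width); the recursions $p_{k+1}^u=\min\{e^\epsilon p_k^u,Q_\epsilon(x_{k+1})\}$ and $p_k^s=\min\{e^\epsilon p_{k+1}^s,Q_\epsilon(x_k)\}$ follow from the corresponding recursions with $Q_k$ in place of $Q_\epsilon(x_k)$ once we observe that whenever the min is attained by $Q_k$, the ratio $Q_k/Q_\epsilon(x_k)=e^{\pm\epsilon}$ and the one-notch trimming make the two versions agree; finally $s(x_k)=s(x_{k+1})$ by Step 3. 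The required overlap $\psi_{x_k}^{p_k^s\wedge p_k^u}\cap\psi_{f^k(x)}^{p_k^s\wedge p_k^u}$ is the content of Step 3 with width $\eta_k$.

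\emph{Main obstacle.} The tricky point is the mismatch between $Q_\epsilon(f^k(x))$, to which $(p_k^s,p_k^u)$ is subordinated, and $Q_\epsilon(x_k)$, which is the cap appearing in the definition of $\to$. The standard fix, used throughout this formalism, is to absorb the $e^{\pm\epsilon}$ discrepancy by trimming widths by a $e^{-\epsilon/3}$ factor at Step 3; the new ingredient relative to Sarig's surface case is simply that the discretisation space in Proposition \ref{discreteness} carries the extra factor $\{1,\dots,d-1\}$ for $s(x)$, guaranteeing $s(x_k)=s(x)$ for all $k$ and hence that the $s$-condition in the definition of $\to$ is automatically met.
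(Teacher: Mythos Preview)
Your overall strategy is precisely that of Sarig's Proposition~4.5, which is what the paper invokes, and you correctly isolate the single higher-dimensional adjustment (the $s(x)$ coordinate in the discretisation, guaranteeing $s(x_k)=s(x)$ and hence the fourth clause of $\to$).

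There is, however, a real gap in Step~4. Your claim that $p_k^s,p_k^u\in[\eta_k,e^\epsilon\eta_k]$ ``by subordination'' is false: subordination gives $p_k^s\wedge p_k^u\ge q_k$ and $p_k^{s/u}\le Q_k$, but the larger of the two can sit anywhere up to $Q_k$, far above $e^\epsilon\eta_k$. This breaks both your verification that $p_k^{s/u}\le Q_\epsilon(x_k)$ and, more seriously, your passage from the recursion $p_{k+1}^u=\min\{e^\epsilon p_k^u,Q_{k+1}\}$ to the required $p_{k+1}^u=\min\{e^\epsilon p_k^u,Q_\epsilon(x_{k+1})\}$: when the minimum is attained at the cap, $Q_{k+1}$ and $Q_\epsilon(x_{k+1})$ differ by up to $e^{\pm\epsilon}$, and trimming $\eta_k$ does nothing for this. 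The standard repair (as in Sarig) is, \emph{after} the centres $x_k$ have been produced in Step~3, to re-apply the subordination lemma with the cap sequence $(Q_\epsilon(x_k))_k$ in place of $(Q_k)_k$; this is legitimate because the trimming $\eta_k\le e^{-\epsilon/3}Q_k$ together with $Q_\epsilon(x_k)\ge e^{-\epsilon}Q_k$ (plus one more notch if necessary) still gives $\eta_k\le Q_\epsilon(x_k)$, so the hypotheses of the subordination lemma hold with the new caps, and the resulting $(p_k^s,p_k^u)$ then satisfies the arrow recursions verbatim. The monotonicity of the overlap condition, which you invoke and which the paper singles out as the key point, then lets you carry the overlaps of Step~3 (at width $\eta_k$) up to the possibly different width $p_k^s\wedge p_k^u$.
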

\begin{proof} This follows from proposition \ref{discreteness} as in \cite[proposition~4.5]{Sarig}. 
\end{proof}

\section{``Shadowing lemma"- admissible manifolds and the Graph Transform}
In this section we prove the shadowing lemma: for every $\epsilon$-chain $\{\psi_{x_i}^{p_i^s,p_i^u}\}_{i\in\mathbb{Z}}$ there is some $x\in M$ s.t. $f^i(x)\in \psi_{x_i}[R_{p^s_i\wedge p^u_i}(0)]$ for all $i$. We say that the chain shadows the orbit of $x$.

The construction of $x$, and the properties of the shadowing operation, are established using a graph transform argument. The bulk of this section is the detailed analysis of the graph transform and its properties. Historical account of the graph transform can be found in page \pageref{graphtransform}.

The following definition is motivated by \cite{KM,Sarig}.
\begin{definition}\label{def135} Recall $s(x)=\dim H^s(x)$.
Let $x\in NUH_\chi$, a {\em $u-$manifold} in $\psi_x$ is a manifold $V^u\subset M$ of the form
$$V^u=\psi_x[\{(F_1^u(t_{s(x)+1},...,t_d),...,F_{s(x)}^u(t_{s(x)+1},...,t_d),t_{s(x)+1},...t_d) : |t_i|\leq q\}],$$
where $0<q\leq Q_\epsilon(x)$, and $\overrightarrow{F}^u$ is a $C^{1+\beta/3}$ function s.t. $\max\limits_{\overline{R_q(0)}}|\overrightarrow{F}^u|_\infty\leq Q_\epsilon(x)$.

Similarly we define an {\em $s-$manifold} in $\psi_x$:
$$V^s=\psi_x[\{(t_1,...,t_{s(x)},F_{s(x)+1}^s(t_1,...,t_{s(x)}),...,F_d^s(t_1,...,t_{s(x)})): |t_i|\leq q\}],$$
with the same requirements for $\overrightarrow{F}^s$ and $q$. We will use the superscript ``$u/s$" in statements which apply to both the $s$ case and the $u$ case. The function $\overrightarrow{F}=\overrightarrow{F}^{u/s}$ is called the {\em representing function} of $V^{u/s}$ at $\psi_x$. The parameters of a $u/s$ manifold in $\psi_x$ are: 
\begin{itemize}
\item $\sigma-$parameter: $\sigma(V^{u/s}):=\|d_{\cdot}\overrightarrow{F}\|_{\beta/3}:=\max\limits_{\overline{R_q(0)}}\|d_{\cdot}\overrightarrow{F}\|+\text{H\"ol}_{\beta/3}(d_{\cdot}\overrightarrow{F})$,

where $\text{H\"ol}_{\beta/3}(d_{\cdot}\overrightarrow{F}):=\max\limits_{\vec{t_1},\vec{t_2}\in\overline{R_q(0)}}\{\frac{\|d_{\overrightarrow{t_1}}\overrightarrow{F}-d_{\overrightarrow{t_2}}\overrightarrow{F}\|}{|\overrightarrow{t_1}-\overrightarrow{t_2}|^{\beta/3}}\}$ and $\|A\|:=\sup\limits_{v\neq0}\frac{|Av|_\infty}{|v|_\infty}$.
\item $\gamma-$parameter: $\gamma(V^{u/s}):=\|d_0\overrightarrow{F}\|$
\item $\varphi-$parameter: $\varphi(V^{u/s}):=|\overrightarrow{F}(0)|_\infty$
\item $q-$parameter: $q(V^{u/s}):=q$
\end{itemize}

A {\em $(u/s,\sigma,\gamma,\varphi,q)-$manifold} in $\psi_x$ is a $u/s$ manifold $V^{u/s}$ in $\psi_x$ whose parameters satisfy $\sigma(V^{u/s})\leq\sigma,\gamma(V^{u/s})\leq\gamma,\varphi(V^{u/s})\leq\varphi,q(V^{u/s})\leq q$.
\end{definition}
\noindent\textbf{Remark}: Notice that the dimensions of an $s$ or a $u$ manifold in $\psi_x$ depend on $x$. Their sum is $d$.
\begin{definition}\label{admissible} Suppose $\psi_x^{p^u,p^s}$ is a double chart. A $u/s$-admissible manifold in $\psi_x^{p^u,p^s}$ is a {\em $(u/s,\sigma,\gamma,\varphi,q)-$manifold} in $\psi_x$ s.t.
$$\sigma\leq\frac{1}{2},\gamma\leq\frac{1}{2}(p^u\wedge p^s)^{\beta/3},\varphi\leq10^{-3}(p^u\wedge p^s),\text{ and }q = \begin{cases} p^u & u-\text{manifolds} \\ 
p^s & s-\text{manifolds}. \end{cases}$$
\end{definition}
\noindent\textbf{Remark}: If $\epsilon<1$ (as we always assume), then these conditions together with $p^u,p^s<Q_\epsilon(x)$ force $$\mathrm{Lip}(\overrightarrow{F})=\max\limits_{\vec{t_1},\vec{t_2}\in\overline{R_q(0)}}\frac{|\overrightarrow{F}(\overrightarrow{t_1})-\overrightarrow{F}(\overrightarrow{t_2})|_\infty}{|\overrightarrow{t}_1-\overrightarrow{t}_2|_\infty}<\epsilon,$$ because for every $\overrightarrow{t}_{u/s}$ in the domain of $\overrightarrow{F}$, $|\overrightarrow{t}_{u/s}|_\infty\leq p^{u/s}\leq Q_\epsilon(x)<\epsilon^{3/\beta}$ and $\|d_{\overrightarrow{t}_{u/s}}F\|\leq\|d_0F\|+\text{H\"ol}_{\beta/3}(d_{\cdot}\overrightarrow{F})\cdot|\overrightarrow{t}_{u/s}|_\infty^{\beta/3}\leq\frac{1}{2}(p^u\wedge p^s)^{\beta/3}+\frac{1}{2}(p^{u/s})^{\beta/3}<(p^{u/s})^{\beta/3}<\epsilon$ and by Lagrange's mean-value theorem applied to the restriction of $\vec{F}$ to the interval connecting each $\vec{t}_1$  and $\vec{t}_2$, we are done.
Another important remark: If $\epsilon$ is small enough then $\max\limits_{\overline{R_q(0)}}|\overrightarrow{F}|_\infty<10^{-2}Q_\epsilon(x)$, since $\max\limits_{\overline{R_q(0)}}|\overrightarrow{F}|_\infty\leq|\overrightarrow{F}(0)|_\infty+\mathrm{Lip}(\overrightarrow{F})\cdot p^{u/s}<\varphi+\epsilon p^{u/s}\leq(10^{-3}+\epsilon)p^{u/s}<10^{-2}p^{u/s}$
\begin{definition}\label{def137}$\text{ }$\\\begin{enumerate}
    \item Let $V_1,V_2$ be two $u-$ manifolds (resp. $s-$manifolds) in $\psi_x$ s.t. $q(V_1)=q(V_2)$, then:
$$dist(V_1,V_2):=\max|F_1-F_2|_\infty,dist_{C^1}(V_1,V_2):=\max|F_1-F_2|_\infty+\max\|d_{\cdot}F_1-d_{\cdot}F_2\|$$
\item For a map $E:\mathrm{Dom}\rightarrow M_{r_1}(\mathbb{R})$, where $M_{r_1}(\mathbb{R})$ is the space of real matrices of dimensions $r_1\times r_1$, and $\mathrm{Dom}$ is the closure of some open and bounded subset of $\mathbb{R}^{r_2}$ ($r_1,r_2\in\mathbb{N}$), the {\em $\alpha$-norm} of $E(\cdot)$ is
$$\|E(\cdot)\|_{\alpha}:=\|E(\cdot)\|_\infty+\text{H\"ol}_{\alpha}(E(\cdot))\text{, where, }$$

$\|E(\cdot)\|_\infty=\sup\limits_{s\in\mathrm{Dom}}\|E(s)\|=\sup\limits_{s\in\mathrm{Dom}}\sup\limits_{v\neq 0}\frac{|E(s)v|_\infty}{|v|_\infty}$, and $\text{H\"ol}_{\alpha}(E(\cdot))=\sup\limits_{s_1,s_2 \in\mathrm{Dom}}\{\frac{\|E(s_1)-E(s_2)\|}{|s_1-s_2|_\infty^\alpha}\}$.
\end{enumerate}
\end{definition}
\noindent\textbf{Remark}: \begin{enumerate}
    \item The second part of this definition is a generalization of the specific $\frac{\beta}{3}$-norm of admissible manifolds, as represented by the $\sigma$ parameter in definition \ref{def135}.
    \item  Notice that the difference between $\|\cdot\|$ and $\|\cdot\|_\infty$ in our notation, is that $\|\cdot\|$ is the operator norm, while $\|\cdot\|_\infty$ is the supremum norm for operator valued maps.
    \item $\|\cdot\|_\alpha$ is submultiplicative: for any two  functions $\varphi$ and $\psi$ as in definition \ref{def137}(2),
$$\|\psi\cdot\varphi\|_\alpha\leq\|\varphi\|_\alpha\cdot\|\psi\|_\alpha.
\footnote{To see that, it is enough to show that $\text{H\"ol}_\alpha(\varphi\cdot\psi)\leq \text{H\"ol}_\alpha(\psi)\cdot\|\varphi\|_\infty+\text{H\"ol}_\alpha(\varphi)\cdot\|\psi\|_\infty$. This holds since,
$$\sup\limits_{x,y\in\mathrm{Dom}}\frac{|\varphi(x)\psi(x)-\varphi(y)\psi(y)|}{|x-y|^\alpha}=\sup\limits_{x,y\in\mathrm{Dom}}\frac{|\varphi(x)\psi(x)-\varphi(x)\psi(y)+\varphi(x)\psi(y)-\varphi(y)\psi(y)|}{|x-y|^\alpha}\leq \mathrm{\mathrm{RHS}}.$$} $$
\end{enumerate}
\begin{lemma}\label{banachalgebras}
Let $E:\mathrm{dom}(E)\rightarrow M_r(\mathbb{R})$ be a map as in definition \ref{def137}. Then for any $0<\alpha<1$, if $\|E(\cdot)\|_\alpha<1$, then  $$\|(Id+E(\cdot))^{-1}\|_\alpha\leq\frac{1}{1-\|E(\cdot)\|_\alpha}$$
\end{lemma}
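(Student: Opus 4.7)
The plan is to imitate the Neumann series argument for bounded operators, but carried out in the Banach algebra sense for the $\alpha$-norm on matrix-valued maps. The key preliminary step is to show that $\|\cdot\|_\alpha$ is submultiplicative; once that is in hand, the series $\sum_{n\geq 0}(-E(\cdot))^n$ converges in $\alpha$-norm and agrees pointwise with $(Id+E(\cdot))^{-1}$.

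First I would verify submultiplicativity: for any two maps $E_1,E_2:\mathrm{dom}\to M_r(\mathbb{R})$,
\[
\|E_1(\cdot)E_2(\cdot)\|_\alpha\le\|E_1(\cdot)\|_\alpha\,\|E_2(\cdot)\|_\alpha.
\]
Pointwise one has $\|E_1(s)E_2(s)\|\le\|E_1\|_\infty\|E_2\|_\infty$, so $\|E_1E_2\|_\infty\le\|E_1\|_\infty\|E_2\|_\infty$. For the Hölder part, the identity
\[
E_1(s_1)E_2(s_1)-E_1(s_2)E_2(s_2)=\bigl(E_1(s_1)-E_1(s_2)\bigr)E_2(s_1)+E_1(s_2)\bigl(E_2(s_1)-E_2(s_2)\bigr)
\]
together with the triangle inequality and division by $|s_1-s_2|_\infty^\alpha$ gives $\text{H\"ol}_\alpha(E_1E_2)\le\text{H\"ol}_\alpha(E_1)\|E_2\|_\infty+\|E_1\|_\infty\text{H\"ol}_\alpha(E_2)$. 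Adding the two estimates and comparing with the product $(\|E_1\|_\infty+\text{H\"ol}_\alpha(E_1))(\|E_2\|_\infty+\text{H\"ol}_\alpha(E_2))$, which has the extra nonnegative cross term $\text{H\"ol}_\alpha(E_1)\text{H\"ol}_\alpha(E_2)$, yields submultiplicativity.

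Next, by induction $\|E(\cdot)^n\|_\alpha\le\|E(\cdot)\|_\alpha^n$. Since $\|E(\cdot)\|_\alpha<1$, in particular $\|E(s)\|\le\|E(\cdot)\|_\infty\le\|E(\cdot)\|_\alpha<1$ for every $s\in\mathrm{dom}$, so $Id+E(s)$ is invertible and the Neumann series $\sum_{n=0}^\infty(-1)^n E(s)^n$ converges to $(Id+E(s))^{-1}$ pointwise. The partial sums $S_N(\cdot):=\sum_{n=0}^N(-1)^nE(\cdot)^n$ form a Cauchy sequence in the $\alpha$-norm, because for $M<N$,
\[
\|S_N-S_M\|_\alpha\le\sum_{n=M+1}^{N}\|E(\cdot)\|_\alpha^n,
\]
which is the tail of a convergent geometric series. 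Hence they converge in $\alpha$-norm to a limit $S(\cdot)$, which must coincide with $(Id+E(\cdot))^{-1}$ by pointwise uniqueness.

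Finally, passing to the limit in $\|S_N(\cdot)\|_\alpha\le\sum_{n=0}^N\|E(\cdot)\|_\alpha^n$ yields
\[
\bigl\|(Id+E(\cdot))^{-1}\bigr\|_\alpha\le\sum_{n=0}^\infty\|E(\cdot)\|_\alpha^n=\frac{1}{1-\|E(\cdot)\|_\alpha},
\]
which is the desired bound. The only mildly delicate point is the submultiplicativity step, since the $\alpha$-norm mixes an $L^\infty$ part with a Hölder seminorm, but the computation above handles both terms cleanly and the rest is the standard geometric-series bookkeeping.
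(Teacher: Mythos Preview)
Your proof is correct and follows essentially the same approach as the paper: both establish submultiplicativity of $\|\cdot\|_\alpha$ via the same telescoping identity, then use the Neumann series / Banach algebra argument. The only difference is cosmetic: the paper simply asserts that $BH=\{\varphi:\mathrm{dom}(E)\to M_r(\mathbb{R}):\|\varphi\|_\alpha<\infty\}$ is a Banach algebra and invokes ``a well known lemma in Banach algebras,'' whereas you spell out the Neumann series convergence explicitly.
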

\begin{proof}

In addition to the third item of the remark above, it is also easy to check that $BH:=\{\varphi:dom(E)\rightarrow M_r(\mathbb{R}): \|\varphi\|_\alpha<\infty\}$ is a complete Banach space, and hence a Banach algebra. By a well known lemma in Banach algebras, if $\|E\|_\alpha<1$ then $\|(Id+E(\cdot))^{-1}\|_\alpha\leq\frac{1}{1-\|E(\cdot)\|_\alpha}$.

\end{proof}

\begin{prop}\label{firstbefore} The following holds for all $\epsilon$ small enough. Let $V^u$ be a $u-$admissible manifold in $\psi_x^{p^s,p^u}$, and $V^s$ an $s-$admissible manifold in $\psi_x^{p^s,p^u}$, then:
\begin{enumerate}
\item $V^u$ and $V^s$ intersect at a unique point $P$,
\item $P=\psi_x(\overrightarrow{t})$ with $|\overrightarrow{t}|_\infty\leq10^{-2}(p^s\wedge p^u)$,
\item $P$ is a Lipschitz function of $(V^s,V^u)$, with Lipschitz constant less than 3 (w.r.t the distance $dist(\cdot,\cdot)$ in definition \ref{def137}).
\end{enumerate}
\end{prop}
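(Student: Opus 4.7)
The plan is to reduce the intersection problem to a fixed-point equation in $\psi_x$-coordinates and apply the Banach contraction principle. Writing $\vec{t} = (\vec{t}^s, \vec{t}^u) \in \mathbb{R}^{s(x)} \oplus \mathbb{R}^{d-s(x)}$, a point $\psi_x(\vec{t})$ lies on both $V^u$ and $V^s$ precisely when $\vec{t}^s = \vec{F}^u(\vec{t}^u)$ and $\vec{t}^u = \vec{F}^s(\vec{t}^s)$, where $\vec{F}^{u/s}$ are the representing functions. Equivalently, $\vec{t}$ is a fixed point of
\begin{equation*}
T(\vec{t}^s, \vec{t}^u) := \bigl(\vec{F}^u(\vec{t}^u),\ \vec{F}^s(\vec{t}^s)\bigr).
\end{equation*}

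By the remark following Definition~\ref{admissible}, both $\vec{F}^u$ and $\vec{F}^s$ are $\epsilon$-Lipschitz on their admissible domains and satisfy $|\vec{F}^{u/s}(0)|_\infty \leq 10^{-3}(p^u \wedge p^s)$. Setting $r := 10^{-2}(p^u \wedge p^s)$ and $B := \overline{R^{s(x)}_r(0)} \times \overline{R^{d-s(x)}_r(0)}$, for every $\vec{t} \in B$ we have $|\vec{F}^{u/s}(\vec{t}^{u/s})|_\infty \leq 10^{-3}(p^u\wedge p^s) + \epsilon r < r$, so $T(B) \subseteq B$. The block form of $T$ with respect to $|\cdot|_\infty$ yields
\begin{equation*}
|T(\vec{t}_1) - T(\vec{t}_2)|_\infty \leq \epsilon\, |\vec{t}_1 - \vec{t}_2|_\infty,
\end{equation*}
so $T$ is an $\epsilon$-contraction on $B$. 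Banach gives a unique fixed point $\vec{t}^* \in B$; then $P := \psi_x(\vec{t}^*) \in V^u \cap V^s$ satisfies $|\vec{t}^*|_\infty \leq r = 10^{-2}(p^u \wedge p^s)$, which establishes (2). For uniqueness in (1), any intersection point corresponds to a fixed point of $T$ in the full admissible domain $\overline{R^{s(x)}_{p^s}(0)} \times \overline{R^{d-s(x)}_{p^u}(0)}$, where $T$ is still $\epsilon$-Lipschitz; hence two such fixed points $\vec{t}_1, \vec{t}_2$ would satisfy $|\vec{t}_1 - \vec{t}_2|_\infty \leq \epsilon|\vec{t}_1 - \vec{t}_2|_\infty$, forcing equality.

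For (3), let $(V^s_i, V^u_i)$, $i=1,2$, have representing functions $\vec{F}^{s/u}_i$, associated operators $T_i$, and fixed points $\vec{t}^*_i$. Using $\vec{t}^*_i = T_i(\vec{t}^*_i)$,
\begin{align*}
|\vec{t}^*_1 - \vec{t}^*_2|_\infty &\leq |T_1(\vec{t}^*_1) - T_1(\vec{t}^*_2)|_\infty + |T_1(\vec{t}^*_2) - T_2(\vec{t}^*_2)|_\infty \\
&\leq \epsilon\, |\vec{t}^*_1 - \vec{t}^*_2|_\infty + \max\bigl(\mathrm{dist}(V^u_1, V^u_2),\ \mathrm{dist}(V^s_1, V^s_2)\bigr),
\end{align*}
so $|\vec{t}^*_1 - \vec{t}^*_2|_\infty \leq (1-\epsilon)^{-1}\max\bigl(\mathrm{dist}(V^u_1, V^u_2),\ \mathrm{dist}(V^s_1, V^s_2)\bigr)$. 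Combined with the bound $\|d_{\cdot}\psi_x\| \leq 2$ from Theorem~\ref{linearization}, we conclude $d(P_1, P_2) \leq 2(1-\epsilon)^{-1} \max(\cdot) < 3 \max(\cdot)$ for $\epsilon$ small enough.

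The main obstacle is the asymmetry between $p^u$ and $p^s$: they can differ substantially, so $T$ does not in general preserve the full admissible box, and one must first contract onto the symmetric subbox of half-length $10^{-2}(p^u\wedge p^s)$ to invoke Banach, then recover global uniqueness by a separate $\epsilon$-Lipschitz argument. A secondary bookkeeping point is tracking the norm comparisons between $|\cdot|_\infty$ on $\mathbb{R}^d$, the Euclidean norm, and the Riemannian distance on $M$ so as to pin down the explicit constant $<3$; this is absorbed into the choice of $\epsilon$ via the smallness hypothesis.
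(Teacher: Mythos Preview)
Your proof is correct and follows the same contraction-mapping approach as the paper. The paper composes the representing functions to get an $\epsilon^2$-contraction on one factor (and invokes a McShane extension to handle global uniqueness on the larger asymmetric box), whereas you work directly with the product map $T$ as an $\epsilon$-contraction and obtain global uniqueness from the Lipschitz inequality alone without needing $T$ to be a self-map; this is a minor streamlining of the same argument.
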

\begin{proof} Assume $\epsilon\in(0,\frac{1}{2})$. Remark: We will omit the $\overrightarrow{\cdot}$ notation when it is clear that the object under inspection is a vector in $\mathbb{R}^t$ ($t=s(x),u(x)$). Write $V^u=\psi_x[\{(-F(t_u)-,-t_u-): |t_u|_\infty\leq p^u\}]$ and $V^s=\psi_x[\{(-t_s-,-G(t_s)-): |t_s|_\infty\leq p^s\}]$.

Let $\eta:=p^s\wedge p^u$. Note that $\eta<\epsilon$ and that $|F(0)|_\infty,|G(0)|_\infty\leq10^{-3}\eta$ and $\mathrm{Lip}(F),\mathrm{Lip}(G)<\epsilon$ by the remark following definition \ref{admissible}. Hence the maps $F$ and $G$ are contractions, and they map the closed cubes $\overline{R_{10^{-2}\eta}(0)}$ in the respective dimensions into themselves: for every $(H,t)\in\{(F,t_u),(G,t_s)\}$:
$$|H(t)|_\infty\leq|H(0)|_\infty+\mathrm{Lip}(H)|t|_\infty<10^{-3}\eta+\epsilon10^{-2}\eta=(10^{-1}+\epsilon)10^{-2}\eta<10^{-2}\eta.$$
It follows that $G\circ F$ is a $\epsilon^2$ contraction of $\overline{R_{10^{-2}\eta}(0)}$ into itself. By the Banach fixed point theorem it has a unique fixed point: $G\circ F(w)=w$. Denote $v=F(w)$. We claim $V^s,V^u$ intersect at $$P=\psi_x(-F(w)-,-w-)=\psi_x(-v-,-w-)=\psi_x(-v-,-G(v)-),$$
\begin{itemize}
\item $P\in V^u$ since $v=F(w)$ and $|w|_\infty\leq10^{-2}\eta<p^u$.
\item $P\in V^s$ since $w=G\circ F(w)=G(v)$ and $|v|_\infty<|F(0)|_\infty+\mathrm{Lip}(F)|w|_\infty\leq10^{-3}\eta+\epsilon10^{-2}\eta<10^{-2}\eta<p^s$.
\end{itemize}
We also see that $|v|_\infty,|w|_\infty\leq10^{-2}\eta$.

We claim that $P$ is the unique intersection point of $V^s,V^u$. Denote $\xi=p^u \vee p^s$ and extend $F,G$ arbitrarily to $\epsilon-$Lipschitz functions $\widetilde{F},\widetilde{G}:\overline{R_\xi(0)}\rightarrow\overline{R_{Q_\epsilon(x)}(0)}$ 
using McShane's extension formula \cite{McShane}. \footnote{$\widetilde{F}_i(s)=\sup\limits_{u\in dom(F_i)}\{F_i(u)-\mathrm{Lip}(F_i)\cdot |s-u|_\infty\},\widetilde{G}_i(s)=\sup\limits_{u\in dom(G_i)}\{G_i(u)-\mathrm{Lip}(G_i)\cdot |s-u|_\infty\}$ for each coordinate of $F,G$, which in turn induces $\epsilon$-Lipschitz extensions to $F,G$ since the relevant norm is $|\cdot|_\infty$.\label{lipextensions}}
Let $\widetilde{V}^s,\widetilde{V}^u$
denote the $u/s-$sets represented by $\widetilde{F},\widetilde{G}$. Any intersection point of $V^s,V^u$ is an intersection point of $\widetilde{F},\widetilde{G}$, which takes the form $\widetilde{P}=\psi_x(-\widetilde{v}-,-\widetilde{w}-)$ where $\widetilde{v}=\widetilde{F}(\widetilde{w})$ and $\widetilde{w}=\widetilde{G}(\widetilde{v})$. Notice that $\widetilde{w}$ is a fixed point of $\widetilde{G}\circ\widetilde{F}$. The same calculations as before show that $\widetilde{G}\circ\widetilde{F}$ contracts $\overline{R_\xi(0)}$ into itself. Such a map has a unique fixed point--- hence $\widetilde{w}=w$, whence $\widetilde{P}=P$. This concludes parts 1,2.

Next we will show that $P$ is a Lipschitz function of $(V^s,V^u)$. Suppose $V_i^u,V_i^s$ ($i=1,2$) are represented by $F_i$ and $G_i$ respectively. Let $P_i$ denote the intersection points of $V_i^s\cap V_i^u$. We saw above that $P
_i=\psi_x(-v_i-,-w_i-)$, where $w_i$ is a fixed point of $f_i:=G_i\circ F_i:\overline{R_{10^{-2}\eta}(0)}\rightarrow\overline{R_{10^{-2}\eta}(0)}$. Then $f_i$ are $\epsilon^2$ contractions of $\overline{R_{10^{-2}\eta}(0)}$ into itself. Therefore:
$$|w_1-w_2|_\infty=|f_1^n(w_1)-f_2^n(w_2)|_\infty\leq|f_1(f_1^{n-1}(w_1))-f_2(f_1^{n-1}(w_1))|_\infty+|f_2(f_1^{n-1}(w_1))-f_2(f_2^{n-1}(w_2))|_\infty\leq$$
$$\leq\max\limits_{\overline{R_{10^{-2}\eta}(0)}}|f_1-f_2|_\infty+\epsilon^2|f_1^{n-1}(w_1)-f_2^{n-1}(w_2)|_\infty\leq$$
$$\leq\cdots\leq\max\limits_{\overline{R_{10^{-2}\eta}(0)}}|f_1-f_2|_\infty(1+\epsilon^2+...+\epsilon^{2(n-1)})+\epsilon^{2n}|w_1-w_2|_\infty$$
and passing to the limit $n\rightarrow\infty$: $$|w_1-w_2|_\infty\leq(1-\epsilon^2)^{-1}\max\limits_{\overline{R_{10^{-2}\eta}(0)}}|f_1-f_2|_\infty.$$
Similarly $v_i$ is a fixed point of $g_i=F_i\circ G_i$, a contraction from $\overline{R_{10^{-2}\eta}(0)}$ to itself. The same arguments give $|v_1-v_2|_\infty\leq(1-\epsilon^2)^{-1}\max\limits_{\overline{R_{10^{-2}\eta}(0)}}|g_1-g_2|_\infty$.
Since $\psi_x$ is 2-Lipschitz,
$$d(P_1,P_2)<\frac{2}{1-\epsilon^2}(\max\limits_{\overline{R_{10^{-2}\eta}(0)}}|g_1-g_2|_\infty+\max\limits_{\overline{R_{10^{-2}\eta}(0)}}|f_1-f_2|_\infty).$$
Now
$$\max\limits_{\overline{R_{10^{-2}\eta}(0)}}|F_1\circ G_1-F_2\circ G_2|_\infty\leq\max\limits_{\overline{R_{10^{-2}\eta}(0)}}|F_1\circ G_1-F_1\circ G_2|_\infty+\max\limits_{\overline{R_{10^{-2}\eta}(0)}}|F_1\circ G_2-F_2\circ G_2|_\infty\leq$$
$$\leq \mathrm{Lip}(F_1)\max\limits_{\overline{R_{10^{-2}\eta}(0)}}|G_1-G_2|_\infty+\max\limits_{\overline{R_{10^{-2}\eta}(0)}}|F_1-F_2|_\infty$$
and in the same way
$$\max\limits_{\overline{R_{10^{-2}\eta}(0)}}|G_1\circ F_1-G_2\circ F_2|_\infty\leq \mathrm{Lip}(G_1)\max\limits_{\overline{R_{10^{-2}\eta}(0)}}|F_1-F_2|_\infty+\max\limits_{\overline{R_{10^{-2}\eta}(0)}}|G_1-G_2|_\infty$$
So since $\mathrm{Lip}(G),\mathrm{Lip}(F)\leq\epsilon$: $d(P_1,P_2)<\frac{2(1+\epsilon)}{1-\epsilon^2}[dist(V_1^s,V_2^s)+dist(V_1^u,V_2^u)]$.

It remains to choose $\epsilon$ small enough so that $\frac{2(1+\epsilon)}{1-\epsilon^2}<3$.

\end{proof}

The next theorem analyzes the action of $f$ on admissible manifolds. Similar Graph Transform Lemmas were used to show Pesin's stable manifold theorem (\cite{BP} chapter 7, \cite{Pesin}), see also \cite{perron1,Perron2}. Our analysis shows that the Graph Transform preserves admissibility. The general idea is similar to what Sarig does in \cite[\textsection~4.2]{Sarig} for the two dimensional case. Katok and Mendoza treat the general dimensional case in \cite{KM}. Our proof complements theirs, by establishing additional analytic properties.

Recall the notation $\psi_x^{p^s,p^u}\rightarrow\psi_y^{q^s,q^u}$ from definition \ref{edges}.
\begin{theorem}\label{graphtransform}
(Graph Transform) The following holds for all $\epsilon$ small enough: suppose $\psi_x^{p^s,p^u}\rightarrow\psi_y^{q^s,q^u}$, and $V^u$ is a u-admissible manifold in $\psi_x^{p^s,p^u}$, then:
\begin{enumerate}
\item $f[V^u]$ contains a u-manifold $\widehat{V}^u$ in $\psi_y^{q^s,q^u}$ with parameters:
\begin{itemize}
\item $\sigma(\widehat{V}^u)\leq e^{\sqrt{\epsilon}}e^{-2\chi}[\sigma(V^u)+\sqrt{\epsilon}]$
\item $\gamma(\widehat{V}^u)\leq e^{\sqrt{\epsilon}}e^{-2\chi}[\gamma(V^u)+\epsilon^{\beta/3}(q^s\wedge q^u)^{\beta/3}]$
\item $\varphi(\widehat{V}^u)\leq e^{\sqrt{\epsilon}}e^{-\chi}[\varphi+\sqrt{\epsilon}(q^s\wedge q^u)]$
\item $q(\widehat{V}^u)\geq\min\{e^{-\sqrt{\epsilon}}e^\chi q(V^u),Q_\epsilon(y)\}$
\end{itemize}
\item $f[V^u]$ intersects any s-admissible manifold in $\psi_y^{q^s,q^u}$ at a unique point.
\item $\widehat{V}^u$ restricts to a u-manifold in $\psi_y^{q^s,q^u}$. This is the unique u-admissible manifold in $\psi_y^{q^s,q^u}$ inside $f[V^u]$. We call it $\mathcal{F}_u(V^u)$.
\item Suppose $V^u$ is represented by the function $F$. If $p:=(F(0),0)$ then $f(p)\in\mathcal{F}_u(V^u)$.
\end{enumerate}
Similar statements hold for the $f^{-1}$-image of an s-admissible manifold in $\psi_y^{q^s,q^u}$.
\end{theorem}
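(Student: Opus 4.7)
The plan is to work entirely in charts, using the form of $f$ in overlapping charts given by Proposition \ref{3.3inomris}. Write the admissible manifold as $V^u = \psi_x\bigl[\{(-F(t_u)-,-t_u-) : |t_u|_\infty \le q(V^u)\}\bigr]$. Because $\psi_x^{p^s,p^u} \to \psi_y^{q^s,q^u}$, in particular $\psi_{f(x)}^{\cdot}$ $\epsilon$-overlaps $\psi_y^{\cdot}$, so $f_{xy} := \psi_y^{-1}\circ f \circ \psi_x$ is well-defined on $R_{Q_\epsilon(x)}(0) \supset R_{q(V^u)}(0)$ and splits as $f_{xy}(v_s,v_u) = (D_s v_s + h_s(v_s,v_u), D_u v_u + h_u(v_s,v_u))$ with the estimates of Proposition \ref{3.3inomris}. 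Thus $f\circ\psi_x(-F(t)-,-t-)=\psi_y(-\tilde t_s-,-\tilde t_u-)$ where $\tilde t_s(t) = D_s F(t) + h_s(F(t),t)$ and $\tilde t_u(t) = D_u t + h_u(F(t),t)$. I would let $g(t):=\tilde t_u(t)$ and define the representing function of the image by $\hat F(\tau) := \tilde t_s(g^{-1}(\tau))$ on the largest domain on which everything is defined.

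First I would show $g$ is a diffeomorphism onto its image with uniformly expanding properties. Since $\|D_u^{-1}\|^{-1}\ge e^\chi$ and $\|\partial h_u/\partial v_{s,u}\| \le \epsilon\, q^{\beta/3}$ (multiplied by $\mathrm{Lip}(F)\le\epsilon$ in the $v_s$-slot), the derivative $d_t g = D_u + \partial_{v_u}h_u + \partial_{v_s}h_u\cdot d_tF$ is an $O(\epsilon^{1+\beta/3})$ perturbation of $D_u$. Then Lemma \ref{banachalgebras} shows $g^{-1}$ is well-defined and $\|d_\cdot g^{-1}\| \le e^{-\chi}(1+O(\epsilon))$ in the appropriate $\beta/3$-norm. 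The domain on which $\hat F$ extends as a $u$-admissible manifold in $\psi_y^{q^s,q^u}$ is exactly where $g^{-1}$ is defined and $|g^{-1}|_\infty \le q(V^u)$; the expansion of $g$ plus the forced overlap $q^u = \min\{e^\epsilon p^u,Q_\epsilon(y)\}$ yields $q(\hat V^u)\ge \min\{e^{-\sqrt\epsilon}e^\chi q(V^u),Q_\epsilon(y)\}$.

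For the parameter bounds I compute $\hat F$ and its derivative explicitly. The $\varphi$-estimate uses $|\hat F(0)|_\infty \le \|D_s\||F(g^{-1}(0))|_\infty + |h_s(F(g^{-1}(0)),g^{-1}(0))|_\infty$, bounding $|g^{-1}(0)|_\infty$ by $e^{-\chi}|h_u(F(0),0)|$ and using $|h_{s,u}(0)| < \epsilon\eta$; this produces the $e^{-2\chi}[\varphi + \sqrt\epsilon(q^s\wedge q^u)]$ bound after absorbing constants into $e^{\sqrt\epsilon}$. The $\gamma$-estimate follows from $d_0\hat F = (D_s\,d_0F + \partial_{v_s}h_s\, d_0F + \partial_{v_u}h_s)\, d_0g^{-1}$, where the $D_s\,d_0F$ term contributes $e^{-2\chi}\gamma(V^u)$ after composing with $d_0g^{-1}\approx D_u^{-1}$, and the $h_s$-terms contribute at most $O(\epsilon^{\beta/3} e^{-\chi} q^{\beta/3})$. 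The $\sigma$-estimate is the most delicate: I would apply the $\beta/3$-Hölder product rule of Lemma \ref{banachalgebras} to $d_\tau\hat F = [D_s d_{g^{-1}(\tau)}F + (d h_s)(\ldots)]\cdot d_\tau g^{-1}$, exploit that the Hölder seminorm of $d_\cdot g^{-1}$ picks up another factor $e^{-\chi\cdot \beta/3}$ from chain-rule composition with the expanding $g$, and bound the Hölder seminorm of $dh_s$ by $\epsilon$ via Proposition \ref{3.3inomris}.

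Items (2)--(4) are short consequences. Part (2) follows since $\hat V^u$ is $u$-admissible in $\psi_y^{q^s,q^u}$ by part (1), so Proposition \ref{firstbefore} gives a unique intersection with any $s$-admissible manifold; this intersection must lie in $f[V^u]\supset \hat V^u$. Part (3) is the uniqueness statement of part (1): any $u$-admissible submanifold of $f[V^u]$ inside $\psi_y^{q^s,q^u}$ must be the graph of a function equal to $\hat F$ on the overlap, hence must coincide with $\mathcal F_u(V^u)$. Part (4) is immediate: $p=\psi_x(F(0),0)$, so $f(p) = \psi_y(\tilde t_s(0),\tilde t_u(0)) = \psi_y(\hat F(g(0)),g(0))\in \hat V^u$, provided $|g(0)|_\infty \le q(\hat V^u)$, which is exactly the size estimate already established. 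The main obstacle is the $\sigma$-estimate: keeping both the multiplicative $e^{-2\chi}$ contraction and the additive $\sqrt\epsilon$ error while controlling the $\beta/3$-Hölder norm through the inversion of $g$ and a composition chain; everything else reduces to bookkeeping with the Banach-algebra lemma and the bounds from Proposition \ref{3.3inomris}.
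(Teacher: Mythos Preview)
Your overall strategy is the same as the paper's: pass to charts via Proposition~\ref{3.3inomris}, invert the unstable-coordinate map $g(t)=D_ut+h_u(F(t),t)$ using Lemma~\ref{banachalgebras}, and read off the parameter estimates for $\hat F=\tilde t_s\circ g^{-1}$. The $\varphi$-, $\gamma$-, $\sigma$-, and $q$-estimates proceed essentially as you outline (the paper organizes them as Claims 2--5, with the $\sigma$-estimate handled exactly by the Banach-algebra product rule you mention, after first bounding $\|d_\cdot t\|_{\beta/3}\le e^{-\chi}e^{3\epsilon}$).

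There is, however, a genuine gap in your argument for part~(2). You argue: $\hat V^u$ is $u$-admissible, so Proposition~\ref{firstbefore} gives a unique intersection with any $s$-admissible $W^s$, and this point lies in $f[V^u]$. That establishes \emph{existence} of a point in $f[V^u]\cap W^s$, but not uniqueness: $f[V^u]$ is strictly larger than $\hat V^u$ (its $u$-coordinate ranges over $\mathrm{Im}(\tau)\supsetneq R_{q^u}(0)$), and you have not ruled out a second intersection in $f[V^u]\setminus\hat V^u$. The paper's Claim~7 handles this by writing all of $f[V^u]$ as $\psi_y[\{(H(s),s):s\in\mathrm{Im}(\tau)\}]$ with $\mathrm{Lip}(H)\le\epsilon$, extending the representing function $I$ of $W^s$ to an $\epsilon$-Lipschitz function on $\mathrm{Im}(\tau)$, checking that $I\circ H$ maps a suitable compact set into itself, and then invoking the Banach fixed point theorem on this larger domain. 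Without this extension step your uniqueness claim does not follow.

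A minor point on part~(4): you need $|g(0)|_\infty\le q^u$, not $|g(0)|_\infty\le q(\hat V^u)$, since $\mathcal{F}_u(V^u)$ is the restriction with $q$-parameter equal to $q^u$. This is the estimate $|h_u(F(0),0)|_\infty<\epsilon\eta+3\epsilon^2\cdot10^{-3}\eta<q^u$, which is not quite the same as the bound on $|g^{-1}(0)|$ you computed for the $\varphi$-estimate, though it uses the same ingredients.
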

\begin{proof}
Let $V^u=\psi_x[\{(F_1^u(t_{s(x)+1},...,t_d),...,F_{s(x)}^u(t_{s(x)+1},...,t_d),t_{s(x)+1},...t_d) : |t_i|\leq p^u\}]$ be a $u$-admissible manifold in $\psi_x^{p^s,p^u}$. We omit the $^u$ super-script of $F$ to ease notations, and denote the parameters of $V^u$ by $\sigma,\gamma,\varphi$ and $q$. Let $\eta:=p^s\wedge p^u$. By the admissibility of $V^u$, $\sigma\leq\frac{1}{2},\gamma\leq\frac{1}{2}\eta^{\beta/3},\varphi\leq10^{-3}\eta, q=p^u$, and $\mathrm{Lip}(F)<\epsilon$. We analyze $\Gamma_y^u=\psi_y^{-1}[f[V^u]]\equiv f_{xy}[\mathrm{Graph} (F)]$, where $f_{xy}=\psi_y^{-1}\circ f\circ\psi_x$ and $graph(F):=\{(-F(t)-,-t-): |t|_\infty\leq q\}$. Since $V^u$ is admissible, $graph(F)\subset R_{Q_\epsilon(x)}(0)$. On this domain 
(proposition  \ref{3.4inomris}),
$$f_{xy}(v_s,v_u)=(D_sv_s+h_s(v_s,v_u),D_uv_u+h_u(v_s,v_u)),$$
where $\kappa^{-1}\leq\|D_s^{-1}\|^{-1},\|D_s\|\leq e^{-\chi}$ and $e^\chi\leq\|D_u^{-1}\|^{-1},\|D_u\|\leq\kappa$, $|h_{s/u}(0)|<\epsilon\eta$, $\|\frac{\partial(h_s,h_u)}{\partial(v_s,v_u)}\rvert_0\|<\epsilon\eta^{\beta/3}$, and $\|\frac{\partial(h_s,h_u)}{\partial(v_s,v_u)}\rvert_{v_1}-\frac{\partial(h_s,h_u)}{\partial(v_s,v_u)}\rvert_{v_2}\|\leq\epsilon|v_1-v_2|^{\beta/3}$.
By proposition  \ref{3.4inomris}, $\|\frac{\partial(h_s,h_u)}{\partial(v_s,v_u)}\|<\epsilon\eta^{\beta/3}$ on $R_\eta(0)$, so since $\eta\leq Q_\epsilon(x)\leq\epsilon^{3/\beta}$ we get: \begin{equation}\label{remarkproof}\|\frac{\partial(h_s,h_u)}{\partial(v_s,v_u)}\|<\epsilon^2,\end{equation} and $|h_{s/u}(0)|<\epsilon^2$ on $graph(F)$. Using the equation for $f_{xy}$ we can put $\Gamma_y^u$ in the following form:
\begin{equation}\label{astrix}
    \Gamma_y^u=\{(D_sF(t)+h_s(F(t),t),D_ut+ h_u(F(t),t): |t|_\infty\leq q\}, \text{ } \Big( t=(t_{s(x)+1},...,t_d)\Big).
\end{equation}
The idea (as in \cite{KM} and \cite{Sarig}) is to call the ``$u$" part of the coordinates $\tau$, solve for $t=t(\tau)$, and then substitute the result in the ``$s$" coordinates.\\

\textit{Claim 1}: The following holds for all $\epsilon$ small enough: $D_ut+h_u(F(t),t)=\tau$ has a unique solution $t=t(\tau)$ for all $\tau\in R_{e^{\chi-\sqrt{\epsilon}}q}(0)$, and
\begin{enumerate}[label=(\alph*)]
\item $\mathrm{Lip}(t)<e^{-\chi+\epsilon}$,
\item $|t(0)|_{\infty}<2\epsilon\eta$,
\item The $\frac{\beta}{3}$-norm of $d_{\cdot}t$ is smaller than $e^{-\chi}e^{3\epsilon}$.
\end{enumerate}
\textit{Proof}: Let $\tau(t):=D_ut+h_u(F(t),t)$. For every $|t|_\infty\leq q$ and a unit vector $v$:
\begin{align*}
|d_t\tau v|\geq & |D_u v|-\max\|\frac{\partial(h_s,h_u)}{\partial(v_s,v_u)}\|\cdot(\|d_tF\|+u(x))\geq|D_u v|-\max\|\frac{\partial(h_s,h_u)}{\partial(v_s,v_u)}\|\cdot(\|d_tF\|+d)\\
\geq & |D_u v|-\epsilon^2(\epsilon+d)\text{ }(\because\text{ }\eqref{remarkproof}\text{ })\\
\geq & |D_u v|(1-\epsilon^2(\epsilon+d))(\because |D_u v|\geq\|D_u^{-1}\|^{-1} \geq e^\chi>1)>e^{-\epsilon}\|D_u^{-1} \|^{-1}\text{(provided }\epsilon\text{ is small enough)}
\end{align*}
Since $v$ was arbitrary, it follows that $\tau$ is expanding by a factor of at least $e^{-\epsilon}\|D_u^{-1}\|^{-1}\geq e^{\chi-\epsilon}>1$, whence one-to-one. Since $\tau$ is one-to-one, $\tau^{-1}$ is well-defined on $\tau[\overline{R_{q}(0)}]$. We estimate this set: Since $\tau$ is continuous and $e^{-\epsilon}\|D_u^{-1}\|^{-1}$ expanding (recall, these calculations are done w.r.t the supremum norm, as defined in definition \ref{def135}): $\tau[\overline{R_{q}(0)}]\supset \overline{R_{e^{-\epsilon}\|D_u^{-1}\|^{-1}q}(\tau(0))}$. The center of the box can be estimated as follows:
$$|\tau(0)|_\infty=|h_u(F(0),0)|_\infty\leq |h_u(0,0)|_\infty+\max\|\frac{\partial(h_s,h_u)}{\partial(v_s,v_u)}\||F(0)|_\infty\leq\epsilon\eta+\epsilon^2 10^{-3}\eta\leq2\epsilon\eta$$
Recall that $\eta\leq q$, and therefore $|\tau(0)|_\infty\leq2\epsilon q$, and hence 

$\tau[\overline{R_{q}(0)}]\supset \overline{R_{(e^{-\epsilon}\|D_u^{-1}\|^{-1}-2\epsilon)q}(0)}\supset \overline{R_{\|D_u^{-1}\|^{-1}(e^{-\epsilon}-2\epsilon)q}(0)}\supset \overline{R_{e^\chi e^{-\sqrt{\epsilon}}q}(0)}$ for $\epsilon$ small enough. So $\tau^{-1}$ is well-defined on this domain.

Since $t(\cdot)$ is the inverse of a $\|D_u^{-1}\|^{-1}e^{-\epsilon}$-expanding map, $\mathrm{Lip}(t)\leq e^{\epsilon}\|D_u^{-1}\|<e^{-\chi+\epsilon}$ proving (a). 

\medskip
We saw above that $|\tau(0)|_\infty<2\epsilon\eta$. For all $\epsilon$ small enough, this is significantly smaller than $e^{\chi-\sqrt{\epsilon}}q$, therefore $\tau(0)$ belongs to the domain of $t$. It follows that:
\begin{equation}\label{secondastrix}
    |t(0)|_\infty=|t(0)-t(\tau(0))|_\infty\leq \mathrm{Lip}(t)|\tau(0)|_\infty<e^{-\chi+\epsilon}2\epsilon\eta.
\end{equation}
For all $\epsilon$ small enough this is less than $2\epsilon\eta$, proving (b).

\medskip
Next we will calculate the $\frac{\beta}{3}$-norm of $d_\vartheta t$. We use the following notation: Let

$G(\vartheta):=(F(t(\vartheta)),t(\vartheta))$, then $d_{G(\vartheta)}h_u$ is a $u(x)\times d$ matrix, where $u(x):=d-s(x)$. The matrix $d_{t(\vartheta)}F$ is a $u(x)\times s(x)$ matrix, then let $A'(\vartheta)$:=\begin{tabular}{| l |}
\hline $d_{t(\vartheta)}F $  \\ \hline
$I_{u(x)\times u(x)}$   \\ \hline
\end{tabular}, and $A(\vartheta):=d_{G(\vartheta)}h_u \cdot A'(\vartheta)$; where the notation in the definition of $A'$ means that it is the matrix created by stacking the two matrices $d_{t(\vartheta)}F$ (represented in the standard bases, as implied by the notation) and $I_{u(x)\times u(x)}$.
Using this and the identity $$\vartheta=\tau(t(\vartheta))=D_ut(\vartheta)+h_u(F(t(\vartheta)),t(\vartheta))$$  we get: \begin{equation}\label{banana}
    d_\vartheta t=(D_u+A(\vartheta))^{-1}=(Id+D_u^{-1}A(\vartheta))^{-1}D_u^{-1}.
    \end{equation}
    From lemma \ref{banachalgebras} we see that in order to bound the $\frac{\beta}{3}$-norm of $d_\cdot t$, it is enough to show that the $\frac{\beta}{3}$-norm of $D_u^{-1}A(\cdot)$ is less than 1. The lemma is applicable since $d_\vartheta t$ is a square matrix, as both $t$ and $\vartheta$ in this setup are vectors in $\mathbb{R}^{u(x)}$ (recall, $u(x)=d-s(x)$).
    
Here is the bound of the $\frac{\beta}{3}$-norm of $D_u^{-1}A$:

\begin{align*}
\|A(\vartheta_1)-A(\vartheta_2)\|=&\|d_{G(\vartheta_1)}h_uA'(\vartheta_1)-d_{G(\vartheta_2)}h_uA'(\vartheta_2)\|\\
=&\|d_{G(\vartheta_1)}h_uA'(\vartheta_1)-d_{G(\vartheta_1)}h_uA'(\vartheta_2)+d_{G(\vartheta_1)}h_uA'(\vartheta_2)-d_{G(\vartheta_2)}h_uA'(\vartheta_2)\|\\ \leq&\|d_{G(\vartheta_1)}h_u[A'(\vartheta_1)-A'(\vartheta_2)]\|+\|[d_{G(\vartheta_1)}h_u-d_{G(\vartheta_2)}h_u]A'(\vartheta_2)\|\\
\leq&\Big(\mathrm{Lip}(t)^\frac{\beta}{3}\cdot\|d_{G(\vartheta_1)}h_u\|\cdot \text{H\"ol}_{\beta/3}(d_\cdot F)+\text{H\"ol}_{\beta/3}(d_\cdot h_u)\cdot \mathrm{Lip}(G)^\frac{\beta}{3}\cdot\|d_{t(\vartheta_2)}F\|\Big)|\vartheta_1-\vartheta_2|_\infty^{\beta/3}.
\end{align*}
Recall:
\begin{itemize}
\item $\mathrm{Lip}(F)<\epsilon$
\item $\mathrm{Lip}(t)\leq e^{-\chi+\epsilon}$
\item $\text{H\"ol}_{\beta/3}(d_\cdot h_u)\leq \epsilon$
\item $d_0h_u\leq\epsilon\eta^{\beta/3}$
\end{itemize}
From the identity $G(\vartheta)=(F(t(\vartheta)),t(\vartheta))$ we get that Lip($G)\leq\mathrm{Lip}(F)\cdot\mathrm{Lip}(t)+\mathrm{Lip}(t)$; and from that and the first two items above we deduce that $\mathrm{Lip}(G)\leq 1$, whence $\mathrm{Lip}(G)^\frac{\beta}{3}\leq 1$. We also know $\|d_{t(r)}F\|\leq \mathrm{Lip}(F)\leq (p^u)^\frac{\beta}{3}\leq\epsilon^2$  (by the remark after definition \ref{admissible}). From the last two items we deduce $\|d_\cdot h_u\|_\infty\leq(\epsilon\sqrt{2}q)^{\beta/3}+\epsilon\eta^{\beta/3}$. From the admissibility of the manifold represented by $F$, we get H\"ol$_{\frac{\beta}{3}}(d_\cdot F)\leq \frac{1}{2}$.

So it is clear that for a small enough $\epsilon$, $\text{H\"ol}_{\beta/3}(A(\cdot))<2\epsilon^2$, and hence $\text{H\"ol}_{\beta/3}(D_u^{-1}A(\cdot))<2\epsilon^2$. Also, $\|D_u^{-1}A(0)\|\leq e^{-\chi}\|A(0)\|=e^{-\chi}\|d_{(F(0),t(0))}h_uA'(0)\|$, and by proposition  \ref{3.4inomris} $\|d_{G(\vartheta)}h_u\|\leq\epsilon^2$. Since $\|d_\cdot F\|\leq\epsilon$,
for all small enough $\epsilon$, $\|A'(0)\|\leq\epsilon^2(1+\epsilon)<2\epsilon^2$, and $\|D_u^{-1}A(0)\|\leq e^{-\chi}2\epsilon^4$. Hence $\|A(\vartheta)\|\leq\|A(0)\|+\text{H\"ol}_\frac{\beta}{3}(A(\cdot))\cdot(e^\chi q)^\frac{\beta}{3}\leq2\epsilon^4$.

In total, $\|A(\cdot)\|_\frac{\beta}{3}\leq2\epsilon^4+2\epsilon^2<4\epsilon^2$, and so $\|D_u^{-1}A(\cdot)\|_{\beta/3}<e^{-\chi}4\epsilon^2<1$. Thus we can use lemma \ref{banachalgebras} and \eqref{banana} to get
$$\| d_\cdot t\|_{\beta/3}\leq \frac{1}{1-4\epsilon^2}\cdot\|D_u^{-1}\|\leq e^{8\epsilon^2}\cdot e^{-\chi}\leq e^{3\epsilon}\cdot e^{-\chi},$$
for small enough $\epsilon$. Here we used that $D_u$ is at least $e^\chi$-expanding. This completes claim 1. 

\medskip We now substitute $t=t(\vartheta)$ in \eqref{astrix}, and find that $$\Gamma_y^u\supset\{(H(\vartheta),\vartheta):|\vartheta|_\infty<e^{\chi-\sqrt{\epsilon}}q\},$$ where $\vartheta\in \mathbb{R}^{u(x)}$ (as it is the right set of coordinates of $(H(\vartheta),\vartheta)$), and $H(\vartheta):=D_sF(t(\vartheta))+h_s(F(t(\vartheta)),t(\vartheta))$. Claim 1 guarantees that $H$ is well-defined and $C^{1+\beta/3}$ on $R_{e^{\chi-\sqrt{\epsilon}}}(0)$. We find the parameters of $H$:

\textit{Claim 2}: For all $\epsilon$ small enough, $|H(0)|_\infty<e^{-\chi+\sqrt{\epsilon}}[\varphi+\sqrt{\epsilon}(q^u\wedge q^s)]$, and $|H(0)|_\infty<10^{-3}(q^u\wedge q^s)$.

\textit{Proof}: 
By \eqref{secondastrix}, $|t(0)|_\infty<2\epsilon\eta$. Since $\mathrm{Lip}(F)<\epsilon,|F(0)|_\infty<\varphi$ and $\varphi\leq10^{-3}\eta$: $|F(t(0))|_\infty<\varphi+2\epsilon^2\eta<\eta$ provided $\epsilon$ is small enough. Thus
\begin{align*}
|H(0)|_\infty\leq&\|D_s\||F(t(0))|_\infty+|h_s(F(t(0)),t(0))|_\infty\\ \leq&\|D_s\|(\varphi+2\epsilon^2\eta)+[|h_s(0)|_\infty+\max\|d_\cdot h_s\||(F(t(0)),t(0))|_\infty]\\
\leq&\|D_s\|(\varphi+2\epsilon^2\eta)+[\epsilon\eta+3\epsilon^2\max\{|F(t(0))|_\infty,|t(0)|_\infty\}]\\
\leq&\|D_s\|(\varphi+2\epsilon^2\eta)+[\epsilon\eta+3\epsilon^2\eta](\because |F(t(0))|_\infty<\eta,|t(0)|_\infty<2\epsilon\eta<\eta)\\
\leq&\|D_s\|[\varphi+\eta\kappa(\epsilon+5\epsilon^2)].
\end{align*}
Recalling that $\|D_s\|\leq e^{-\chi}$,  $\eta\equiv p^u\wedge p^s\leq e^\epsilon(q^u\wedge q^s)$ and $\varphi<10^{-3}(p^u\wedge p^s)$:
$|H(0)|_\infty\leq e^{-\chi+\epsilon}[10^{-3}+2\kappa\epsilon](q^u\wedge q^s)$ and for all $\epsilon$ sufficiently small this is less than $10^{-3}(q^u\wedge q^s)$. The claim follows.

\medskip
\textit{Claim 3}: For all $\epsilon$ small enough, $\|d_0H\|<e^{-\chi+\epsilon}[\gamma+\epsilon^{\beta/3}(q^u\wedge q^s)^{\beta/3}]$, and $\|d_0H\|<\frac{1}{2}(q^u\wedge q^s)^{\beta/3}$

\textit{Proof}: $\|d_0H\|\leq\|d_0t\|[\|D_s\|\cdot\|d_{t(0)}F\|+\|d_{(F(t(0)),t(0))}h_s\|\cdot\max\|A'(0)\|]$ and
\begin{itemize}
\item $\|d_0t\|\leq \mathrm{Lip}(t)<e^{-\chi+\epsilon}$ (claim 1).
\item $\|d_{t(0)}F\|<\gamma+\frac{2}{3}\epsilon^{\beta/3}\eta^{\beta/3}$, because $\text{H\"ol}_{\beta/3}(d_\cdot F)\leq\frac{1}{2}$ and therefore by claim 1(b), 

$\|d_{t(0)}F\|<\|d_0F\|+\text{H\"ol}_{\beta/3}(d_\cdot F)|t(0)|_\infty^{\beta/3}<\gamma+\sigma(3\epsilon\eta)^{\beta/3}<\gamma+\frac{1}{2}3^\frac{1}{3}\epsilon^{\beta/3}\eta^{\beta/3}\leq \gamma+\frac{3}{4}\epsilon^{\beta/3}\eta^{\beta/3}$.
\item $\|d_{(F(t(0)),t(0))}h_s\|\leq3\epsilon\eta^{\beta/3}$, because $|F(t(0))|_\infty<\eta$ (proof of claim 2), and $|t(0)|_\infty<2\epsilon\eta$ (claim 1), so by the \text{H\"ol}der regularity of $d_\cdot h_{s/u}$:

$\|d_{(F(t(0)),t(0))}h_s\|\leq \|d_0h_s\|+\epsilon\max\{|F(t(0))|_\infty,|t(0)|_\infty\}^{\beta/3}\leq2\epsilon\eta^{\beta/3}$.
\item $\|A'(0)\|\leq 1+\epsilon<2$.
\end{itemize}

These estimates together give us that:
$$\|d_0H\|<e^{-\chi+\epsilon}\|D_s\|[\gamma+\frac{3}{4}\epsilon^{\beta/3}\eta^{\beta/3}+\|D_s\|^{-1}3\epsilon\eta^{\beta/3}\cdot2]<e^{-2\chi+\epsilon}[\gamma+\eta^{\beta/3}(\frac{3}{4}\epsilon^{\beta/3}+6\kappa\epsilon)]\leq$$
$$\leq e^{-2\chi+\epsilon}[\gamma+e^{\epsilon\beta/3}(q^u\wedge q^s)^{\beta/3})(\frac{3}{4}\epsilon^{\beta/3}+6\kappa\epsilon)].$$
This implies that for all $\epsilon$ small enough $\|d_0H\|<e^{-2\chi+\epsilon}[\gamma+\epsilon^{\beta/3}(q^u\wedge q^s)^{\beta/3}]$, which is stronger than the estimate in the claim.

Since $\gamma\leq\frac{1}{2}(p^u\wedge p^s)^{\beta/3}$ and $(p^u\wedge p^s)\leq e^\epsilon(q^u\wedge q^s)$  (by lemma \ref{lemma131}), we also get that for all $\epsilon$ small enough: $\|d_0H\|\leq\frac{1}{2}(q^u\wedge q^s)^{\beta/3}$, as required. 

\medskip
\textit{Claim 4}: For all $\epsilon$ small enough $\|d_\cdot H\|_{\beta/3}<e^{-2\chi+\sqrt{\epsilon}}[\sigma+\sqrt{\epsilon}]$, and $\|d_\cdot H\|_{\beta/3}<\frac{1}{2}$

\textit{Proof}: By claim 1 and its proof: \begin{itemize}
\item $\|d_\cdot t\|_{\beta/3}\leq \|D_u^{-1}\|e^{3\epsilon}$
\item $\|d_{t(\cdot)} F\|_{\beta/3}\leq \sigma$ and $t$ is a contraction.
\item $\|d_{(F(t),t)} h_s\|_{\beta/3}<\epsilon$
\end{itemize}
We need the following fact: Suppose
$\psi,\varphi:\mathrm{Dom}\rightarrow M_{r_1}(\mathbb{R})$, where Dom of some open and bounded subset of $\mathbb{R}^{r_2}$ ($r_1,r_2\in\mathbb{N}$)- then  $\|\varphi\cdot\psi\|_\alpha\leq\|\varphi\|_\alpha\cdot\|\psi\|_\alpha$ (see item (3) in the remark after definition \ref{def137}).

Recall that $H(\vartheta)=D_sF(t(\vartheta))+h_s(F(t(\vartheta)),t(\vartheta))$, hence $$d_\vartheta H=D_sd_{(t(\vartheta))}Fd_\vartheta t+A(\vartheta).$$
By the
bound for $\|A(\cdot)\|_\frac{\beta}{3}$ in claim 1 (recall $A'(\vartheta)$:=\begin{tabular}{| l |}
\hline $d_{t(\vartheta)}F $  \\ \hline
$I_{u(x)\times u(x)}$   \\ \hline
\end{tabular}, $A(\vartheta):=d_{G(\vartheta)}h_u \cdot A'(\vartheta)$), we get $\|d_\cdot H\|_{\beta/3}\leq \|D_s\|\cdot\sigma\cdot e^{3\epsilon-\chi}+4\epsilon^2$. This and $\|D_s\|\leq e^{-\chi}$ gives: $$\|d_\cdot H\|_{\beta/3}\leq e^{-2\chi+\sqrt{\epsilon}}\cdot[\sigma+\sqrt{\epsilon}].$$ 

If $\epsilon$ is small enough, then since $\sigma\leq\frac{1}{2}$, $\|d_\cdot H\|_{\beta/3}\leq e^{-2\chi+\sqrt{\epsilon}}[\sigma+\sqrt{\epsilon}]\leq\frac{1}{2}$.

\medskip
\textit{Claim 5}: For all $\epsilon$ small enough $\widehat{V}^u:=\{\psi_y((H(\vartheta),\vartheta)):|\vartheta|_\infty\leq\min\{e^{\chi-\sqrt{\epsilon}}q,Q_\epsilon(y)\}\}$ is a $u$-manifold in $\psi_y$. The parameters of $\widehat{V}^u$ are as in part 1 of the statement of the theorem, and $\widehat{V}^u$ contains a $u$-admissible manifold in $\psi_y^{q^u,q^s}$.

\textit{Proof}: To see that $\widehat{V}^u$ is a $u$-manifold in $\psi_y$ we have to check that $H$ is $C^{1+\beta/3}$ and $\|H\|_\infty\leq Q_\epsilon(y)$. Claim 1 shows that $H$ is $C^{1+\beta/3}$. To see $\|H\|_\infty\leq Q_\epsilon(y)$, we first observe that for all $\epsilon$ small enough $\mathrm{Lip}(H)<\epsilon$ because $\|d_\vartheta H\|\leq\|d_0 H\|+\text{H\"ol}_{\beta/3}(d_\cdot H)Q_\epsilon(y)^{\beta/3}\leq\epsilon+\frac{1}{2}\epsilon<\sqrt{\epsilon}$ (for small enough $\epsilon$).

It follows that $\|H\|_\infty\leq|H(0)|_\infty+\sqrt{\epsilon}Q_\epsilon(y)<(10^{-3}+\sqrt{\epsilon})Q_\epsilon(y)<Q_\epsilon(y)$.

Next we claim that $\widehat{V}^u$ contains a $u$-admissible manifold in $\psi_y^{q^s,q^u}$. Since $\psi_x^{p^s,p^u}\rightarrow\psi_y^{q^s,q^u},q^u=\min\{e^\epsilon p^u,Q_\epsilon(y)\}$. Consequently, for every $\epsilon$ small enough $$e^{\chi-\sqrt{\epsilon}}q\equiv e^{\chi-\sqrt{\epsilon}}p^u>e^\epsilon p^u\geq q^u$$ So $\widehat{V}^u$ restricts to a $u$-manifold with $q$-parameter equal to $q^u$. Claims 2-4 guarantee that the parameter bounds for $\widehat{V}^u$ satisfy $u$-admissibility in $\psi_y^{q^s,q^u}$ (see definition \ref{admissible}), and that part 1 of theorem \ref{graphtransform} holds.

\medskip
\textit{Claim 6}: $f[V^u]$ contains exactly one $u$-admissible manifold in $\psi_y^{q^s,q^u}$. This manifold contains $f(p)$ where $p=(F(0),0)$.

\textit{Proof}: The previous claim shows existence. We prove uniqueness. Using the identity $\Gamma_y^u=\{(D_sF(t)+h_s(F(t),t),D_ut+h_u(F(t),t)):|t|_\infty\leq q\}$, any $u$-admissible manifold in $\psi_y^{q^s,q^u}$ which is contained in $f[V^u]$ is a subset of $$\psi_y[\{(D_sF(t)+h_s(F(t),t),D_ut+h_u(F(t),t)):|t|_\infty\leq q, |D_ut+h_u(F(t),t)|_\infty\leq q^u\}]$$ We just at the end of claim 5 that for all $\epsilon$ small enough, $q^u<e^{\chi-\sqrt{\epsilon}}q$. By claim 1 the equation $\tau=D_ut+h_u(F(t),t)$ has a unique solution $t=t(\tau)\in R_q(0)$ for all $\tau\in R_{q^u}(0)$. Our manifold must therefore be equal to $$\psi_y[\{(D_sF(t(\tau))+h_s(F(t(\tau)),t(\tau)),\tau):|\tau|_\infty\leq q^u\}]$$ This is exactly the $u$-admissible manifold that we have constructed above.

Let $\mathcal{F}[V^u]$ denote the unique $u$-admissible manifold in $\psi_y^{q^s,q^u}$ contained in $f[V^u]$. We claim that $f(p)\in\mathcal{F}[V^u]$. By the previous paragraph it is enough to show that the second set  of coordinates (the $u$-part) of $\psi_y^{-1}[\{f(p)\}]$ has infinity norm of less than $q^u$. Call the $u$-part $\tau$, then:\begin{align*}
    |\tau|_\infty=&\text{second set of coordinates of }f_{xy}(F(0),0)=|h_u(F(0),0)|_\infty\\
\leq &|h_u(0,0)|_\infty+\max\|d_\cdot h_u\|\cdot|F(0)|_\infty<\epsilon\eta+3\epsilon^2\cdot 10^{-3}\eta<e^{-\epsilon}\eta<(q^s\wedge q^u)\leq q^u.
\end{align*}
This concludes the proof of the claim.

\medskip
\textit{Claim 7}: $f[V^u]$ intersects every $s$-admissible manifold in $\psi_y^{q^s,q^u}$ at a unique point.

\textit{Proof}: 
 In this part we use the fact that $\psi_x^{p^s,p^u}\rightarrow\psi_y^{q^s,q^u}$ implies $s(x)=s(y)$. Let $W^s$ be an $s$-admissible manifold in $\psi_y^{q^s,q^u}$. We saw in the previous claim that $f[V^u]$ and $W^s$ intersect at least at one point. Now we wish to show that the intersection point is unique: Recall that we can put $f[V^u]$ in the form $$f[V^u]=\psi_y[\{(D_sF(t)+h_s(F(t),t),D_ut+h_u(F(t),t)):|t|_\infty\leq q\}].$$ We saw in the proof of claim 1 that the second coordinate $\tau(t):=D_ut+h_u(F(t),t)$ is a one-to-one continuous map whose image contains $R_{q^u}(0)$. We also saw that $\|d_\cdot t^{-1}\|^{-1}>e^{-\epsilon}\|D_u^{-1}\|^{-1}\geq e^{\chi-\epsilon}>1$. Consequently the inverse function $t:Im(\tau)\rightarrow \overline{R_q(0)}$ satisfies $\|d_\cdot t\|<1$, and so $$f[V^u]=\psi_y[\{(H(\vartheta),\vartheta):\vartheta\in Im(\tau)\}],\text{ where }\mathrm{Lip}(H)\leq\epsilon.$$ Let $I:\overline{R_{q^u}(0)}\rightarrow\mathbb{R}^{u(x)}$ denote the function which represents $W^s$ in $\psi_y$, then $\mathrm{Lip}(I)\leq\epsilon$. Extend it to an $\epsilon$-Lipschitz function on $Im(\tau)$ (by McShane's  extension formula \cite{McShane}, similarly to footnote \footref{lipextensions}). The extension represents a Lipschitz manifold $\widetilde{W}^s\supset W^s$. We wish to use the same arguments we used in proposition \ref{firstbefore} to show the uniqueness of the intersection point (this time of $f[V^u]$ and $\widetilde{W}^s$). We need the following observations:
\begin{enumerate}
 \item $\overline{R_{q^u}(0)}\subset \overline{R_{e^{\chi-\sqrt{\epsilon}}q}(0)}\subset Im(\tau)$, as seen in the estimation of $Im(\tau)$ in the beginning of claim 1.
 \item $Im(\tau)$ is compact, as a continuous image of a compact set.
 \item $\tau(t)=D_ut+h_u(F(t),t)$, hence $\|d_\cdot \tau\|\leq \kappa+\epsilon$. Hence $\mathrm{diam}(Im(\tau))\leq q(1+\kappa)$. So for $\epsilon<\frac{1}{\kappa+1}\wedge\frac{1}{3}$, we get that for $I\circ H$ (an $\epsilon^2$-Lipschitz function from $Im(\tau)$ to itself), its image is contained in $\overline{R_{\epsilon^2\|d_\cdot \tau\|q+|I\circ H(\tau(0))|}(0)}\subset \overline{R_q(0)}\subset Im(\tau)$ (the last inclusion is due to the bound we have seen $|\tau(0)|\leq2\epsilon\eta$)
\end{enumerate}
As in the proof of proposition \ref{firstbefore}, we can use the Banach fixed point theorem 
This gives us that $f[V^u]$ and $W^s$ intersect in at most one point.

This concludes the proof of claim 7, and thus the proof of the theorem for $u$-manifolds. 

The case of $s$-manifolds follows from the symmetry between $s$ and $u$-manifolds:

\begin{enumerate}
\item $V$ is a $u$-admissible manifold w.r.t $f$ iff $V$ is an $s$-admissible manifold w.r.t to $f^{-1}$, and the parameters are the same.
\item $\psi_x^{p^s,p^u}\rightarrow\psi_y^{q^s,q^u}$ w.r.t $f$ iff $\psi_y^{q^s,q^u}\rightarrow\psi_x^{p^s,p^u}$ w.r.t $f^{-1}$. 
\end{enumerate}
\end{proof}

\begin{definition}
Suppose $\psi_x^{p^s,p^u}\rightarrow\psi_y^{q^s,q^u}$.
\begin{itemize}
    \item  The {\em Graph Transform} (of an $s$-admissible manifold) $\mathcal{F}_{s}$ maps an $s$-admissible manifold $V^{s}$ in $\psi_y^{q^s,q^u}$ to the unique $s$-admissible manifold in $\psi_x^{p^s,p^u}$ contained in $f^{-1}[V^{s}]$.
    \item The {\em Graph Transform} (of a $u$-admissible manifold) $\mathcal{F}_{u}$ maps a $u$-admissible manifold $V^{u}$ in $\psi_x^{p^s,p^u}$ to the unique $u$-admissible manifold in $\psi_y^{q^s,q^u}$ contained in $f[V^{u}]$.
\end{itemize}
\end{definition}

The operators $\mathcal{F}_{s},\mathcal{F}_{u}$ depend on the edge $\psi_x^{p^s,p^u}\rightarrow\psi_y^{q^s,q^u}$, even though the notation does not specify it.

\begin{prop}\label{prop133}
If $\epsilon$ is small enough then the following holds: For any $s/u$-admissible manifold $V_1^{s/u},V_2^{s/u}$ in $\psi_x^{p^s,p^u}$
$$dist(\mathcal{F}_{s/u}(V_1^{s/u}),\mathcal{F}_{s/u}(V_2^{s/u}))\leq e^{-\chi/2}dist(V_1^{s/u},V_2^{s/u}),$$
$$dist_{C^1}(\mathcal{F}_{s/u}(V_1^{s/u}),\mathcal{F}_{s/u}(V_2^{s/u}))\leq e^{-\chi/2}[dist_{C^1}(V_1^{s/u},V_2^{s/u})+dist(V_1^{s/u},V_2^{s/u})^{\beta/3}].$$
\end{prop}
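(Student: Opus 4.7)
The plan is to work with the representing functions. By symmetry treat only the $u$-case: let $F_1,F_2$ represent $V_1^u,V_2^u$ in $\psi_x^{p^s,p^u}$, and let $H_1,H_2$ represent $\mathcal{F}_u(V_1^u),\mathcal{F}_u(V_2^u)$ in $\psi_y^{q^s,q^u}$, constructed as in the proof of Theorem \ref{graphtransform}: $H_i(s)=D_s F_i(t_i(s))+h_s(F_i(t_i(s)),t_i(s))$, where $t_i(s)$ is the unique solution of $D_u t+h_u(F_i(t),t)=s$ in $R_q(0)$.

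\textbf{Step 1 (controlling $|t_1-t_2|$).} Subtracting the defining equations for $t_1(s),t_2(s)$ and using Eq.~(\ref{remarkproof}) together with $\mathrm{Lip}(F_i)<\epsilon$, I would write
\[
D_u(t_1(s)-t_2(s))=h_u(F_2(t_2(s)),t_2(s))-h_u(F_1(t_1(s)),t_1(s)),
\]
bound the right side by $\epsilon^2\bigl(|F_1(t_1(s))-F_2(t_2(s))|_\infty+|t_1(s)-t_2(s)|_\infty\bigr)$, split $F_1(t_1)-F_2(t_2)=(F_1(t_1)-F_1(t_2))+(F_1(t_2)-F_2(t_2))$, absorb the $t_1-t_2$ terms into the left side using $\|D_u^{-1}\|\le e^{-\chi}$, and conclude $|t_1(s)-t_2(s)|_\infty\le 2e^{-\chi}\epsilon^2\,\mathrm{dist}(V_1^u,V_2^u)$ (for $\epsilon$ small).

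\textbf{Step 2 ($C^0$ contraction).} With Step 1 in hand, estimate
\[
|H_1(s)-H_2(s)|_\infty\le \|D_s\|\,|F_1(t_1(s))-F_2(t_2(s))|_\infty+|h_s(F_1(t_1(s)),t_1(s))-h_s(F_2(t_2(s)),t_2(s))|_\infty.
\]
The first summand is bounded by $e^{-\chi}\bigl(\mathrm{Lip}(F_1)|t_1-t_2|_\infty+\mathrm{dist}(V_1^u,V_2^u)\bigr)\le e^{-\chi}(1+O(\epsilon))\mathrm{dist}(V_1^u,V_2^u)$, and the second by $3\epsilon^2\bigl(|F_1(t_1)-F_2(t_2)|_\infty+|t_1-t_2|_\infty\bigr)=O(\epsilon^2)\mathrm{dist}(V_1^u,V_2^u)$. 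Summing and choosing $\epsilon$ small enough so that $e^{-\chi}(1+O(\epsilon))\le e^{-\chi/2}$ gives the first inequality.

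\textbf{Step 3 ($C^1$ contraction).} Differentiating $H_i$ yields
\[
d_s H_i= D_s\,d_{t_i(s)}F_i\cdot d_s t_i+d_{(F_i(t_i(s)),t_i(s))}h_s\cdot A_i'(s)\cdot d_s t_i,
\]
where $A_i'$ is the matrix built as in the proof of Theorem \ref{graphtransform}. To bound $\|d_s H_1-d_s H_2\|$ I would expand each factor as in the proof of Claim 4 there, then for each difference such as $d_{t_1(s)}F_1-d_{t_2(s)}F_2$ insert $\pm d_{t_2(s)}F_1$ and use:
(i) $\|d_{t_1(s)}F_1-d_{t_2(s)}F_1\|\le \mathrm{H\ddot{o}l}_{\beta/3}(d_\cdot F_1)\,|t_1(s)-t_2(s)|_\infty^{\beta/3}\le \sigma\cdot(2e^{-\chi}\epsilon^2)^{\beta/3}\mathrm{dist}(V_1^u,V_2^u)^{\beta/3}$, which contributes the $\mathrm{dist}(V_1^u,V_2^u)^{\beta/3}$ term;
(ii) $\|d_{t_2(s)}F_1-d_{t_2(s)}F_2\|\le \mathrm{dist}_{C^1}(V_1^u,V_2^u)$.
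Similarly split the $d_{\cdot}h_s$ and $d_s t_i$ differences; the latter requires the identity $d_s t_i=(D_u+A_i(s))^{-1}$ so that
\[
d_s t_1-d_s t_2=-d_s t_1\cdot(A_1(s)-A_2(s))\cdot d_s t_2,
\]
whose $\|\cdot\|$-norm is $O(e^{-2\chi})$ times $\mathrm{dist}_{C^1}(V_1^u,V_2^u)+\mathrm{dist}(V_1^u,V_2^u)^{\beta/3}$ by the same splitting. Collecting all contributions with the prefactor $\|D_s\|\le e^{-\chi}$ and absorbing the $O(\epsilon)$ slop into the exponential yields the claimed $e^{-\chi/2}\bigl[\mathrm{dist}_{C^1}(V_1^u,V_2^u)+\mathrm{dist}(V_1^u,V_2^u)^{\beta/3}\bigr]$.

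\textbf{Main obstacle.} The $C^0$ contraction is essentially a one‑line consequence of the hyperbolic block decomposition plus the fixed‑point argument for $t_i(\cdot)$. The real bookkeeping is in Step 3: every factor in $d_s H_i$ is a composition of maps evaluated at the different points $t_1(s),t_2(s)$, and one must carefully separate the $C^1$ discrepancy of the representing functions from the Hölder discrepancy coming from the fact that $t_1(s)\neq t_2(s)$ --- the latter is precisely what forces the extra $\mathrm{dist}(V_1^u,V_2^u)^{\beta/3}$ summand on the right. The $s$-manifold case then follows by the symmetry $\mathcal{F}_s^{(f)}=\mathcal{F}_u^{(f^{-1})}$ used at the end of the proof of Theorem \ref{graphtransform}.
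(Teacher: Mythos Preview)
Your proposal is correct and follows essentially the same approach as the paper's proof: bound $|t_1-t_2|$ by subtracting the implicit equations, deduce the $C^0$ contraction directly, and for the $C^1$ estimate control $d_st_1-d_st_2$ via (a rearrangement of) your resolvent identity $(D_u+A_1)(d_st_1-d_st_2)=(A_2-A_1)d_st_2$, then split each remaining factor in $d_sH_i$ into a $C^1$-difference piece and a H\"older piece exactly as you outline. The paper's Parts~1--4 correspond to your Steps~1--3, with the same source for the extra $\mathrm{dist}^{\beta/3}$ term.
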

\begin{proof} (For the $u$ case) Suppose $\psi_x^{p^s,p^u}\rightarrow\psi_y^{q^s,q^u}$, and let $V_i^u$ be two $u$-admissible manifolds in $\psi_x^{p^s,p^u}$. We take $\epsilon$ to be small enough for the arguments in the proof of theorem \ref{graphtransform} to work. These arguments give us  that if $V_i=\psi_x[\{(F_i(t),t):|t|_\infty\leq p^u\}]$, then $\mathcal{F}_u[V_i]=\psi_y[\{(H_i(\tau),\tau):|\tau|_\infty\leq q^u\}]$, where $t_i$ and $\tau$ are of length $u(x)$ (as a right set of components) and
\begin{itemize}
\item $H_i(\tau)=D_sF_i(t_i(\tau))+h_s(F_i(t_i(\tau)),t_i(\tau))$,
\item $t_i(\tau)$ is defined implicitly by $D_ut_i(\tau)+h_u(F_i(t_i(\tau)),t_i(\tau))=\tau$ and $|d_\cdot t_i|_\infty<1$,
\item $\|D_s^{-1}\|\leq\kappa,\|D_s\|\leq e^{-\chi},\|D_u^{-1}\|\leq e^{-\chi},\|D_u\|\leq \kappa$,
\item $|h_{s/u}(0)|_\infty<\epsilon(p^s\wedge p^u), \text{H\"ol}_{\beta/3}(d_\cdot h_{s/u})\leq\epsilon,max\|d_\cdot h_{s/u}\|<3\epsilon^2$.
\end{itemize}
In order to prove the proposition we need to estimate $\|H_1-H_2\|_\infty,\|d_\cdot H_1-d_\cdot H_2\|_\infty$ in terms of $\|F_1-F_2\|_\infty,\|d_\cdot F_1-d_\cdot F_2\|_\infty$.

\medskip
\textit{Part 1:} For all $\epsilon$ small enough $\|t_1-t_2\|_\infty\leq\epsilon\|F_1-F_2\|_\infty$

\textit{Proof}: By definition, $D_ut_i(\tau)+h_u(F_i(t_i(\tau)),t_i(\tau))=\tau$. By differentiating both sides we get

$|D_u(t_1-t_2)|_\infty\leq|h_u(F_1(t_1),t_1)-h_u(F_2(t_2),t_2)|_\infty\leq\max\|d_\cdot h_u\|\cdot\max\{|F_1(t_1)-F_2(t_2)|_\infty,|t_1-t_2|_\infty\}\leq$ 

$\leq\max\|d_\cdot h_u\|\cdot\max\{|F_1(t_1)-F_1(t_2)|_\infty+|F_1(t_2)-F_2(t_2)|_\infty,|t_1-t_2|_\infty\}\leq$

$\leq3\epsilon^2\max\{\|F_1-F_2\|_\infty+\mathrm{Lip}(F_1)|t_1-t_2|_\infty,|t_1-t_2|_\infty\}\leq3\epsilon^2\|F_1-F_2\|_\infty+3\epsilon^2(\epsilon+1)|t_1-t_2|_\infty$

Rearranging terms and recalling $\|D_u^{-1}\|\leq e^{-\chi}$: $$\|t_1-t_2\|_\infty\leq\frac{3\epsilon^2\|F_1-F_2\|_\infty}{e^{\chi-\epsilon}-3\epsilon^2(1+\epsilon)}$$ The claim follows.

\medskip
\textit{Part 2}: For all $\epsilon$ small enough $\|H_1-H_2\|_\infty<e^{-\chi/2}\|F_1-F_2\|_\infty$, whence proving the first claim of the proposition.

\textit{Proof}: $|H_1-H_2|_\infty\leq\|D_s\|\cdot|F_1(t_1)-F_2(t_2)|_\infty+|h_s(F_1(t_1),t_1)-h_s(F_2(t_2),t_2)|_\infty\leq$

$\leq\|D_s\|\cdot|F_1(t_1)-F_2(t_2)|_\infty+\|d_\cdot h_s\|_\infty\max\{|F_1(t_1)-F_2(t_2)|_\infty,|t_1-t_2|_\infty\}\leq$

$\leq\|D_s\|\cdot(|F_1(t_1)-F_2(t_1)|_\infty+|F_2(t_1)-F_2(t_2)|_\infty)+$

$+\|d_\cdot h_s\|_\infty\cdot\max\{|F_1(t_1)-F_2(t_1)|_\infty+|F_2(t_1)-F_2(t_2)|_\infty,|t_1-t_2|_\infty\}\leq$

$\leq\|D_s\|(\mathrm{Lip}(F_1)+\mathrm{Lip}(F_2))|t_1-t_2|_\infty+\|d_\cdot h_s\|_\infty\max\{\mathrm{Lip}(F_1)+\mathrm{Lip}(F_2)\}|t_1-t_2|_\infty\leq$

$\leq\Big((\|D_s\|+\|d_\cdot h_s\|_\infty)(\mathrm{Lip}(F_1)+\mathrm{Lip}(F_2))+\|d_\cdot h_s\|_\infty\Big)|t_1-t_2|_\infty\leq$

$\leq\Big((e^{-\chi}+3\epsilon^2)2\epsilon+3\epsilon^2\Big)|t_1-t_2|_\infty\leq\Big((e^{-\chi}+3\epsilon^2)2\epsilon+3\epsilon^2\Big)\frac{3\epsilon^2\|F_1-F_2\|_\infty}{e^{\chi-\epsilon}-3\epsilon^2(1+\epsilon)}(\because\text{ part 1})$

Now when $\epsilon
\rightarrow0$ the RHS multiplier goes to $0$, hence for small enough $\epsilon$ it is smaller than $e^{-\chi/2}$.

\medskip
\textit{Part 3}: For all $\epsilon$ small enough: $$\|d_\cdot t_1-d_\cdot t_2\|_\infty<\sqrt{\epsilon}(\|d_\cdot F_1-d_\cdot F_2\|_\infty+\|F_1-F_2\|_\infty^{\beta/3})$$

\textit{Proof}: Define as in claim 1 of theorem \ref{graphtransform}, $A'_i(\vartheta)$:=\begin{tabular}{| l |}
\hline $d_{t_i(\vartheta)}F_i $  \\ \hline
$I_{u(x)\times u(x)}$   \\ \hline
\end{tabular} and $A_i(\vartheta):=d_{G_i(\vartheta)}h_u \cdot A_i'(\vartheta)$, where  $G_i(\vartheta)=(F_i(t_i(\vartheta)),t_i(\vartheta))$.

We have seen at the beginning of the Graph Transform that

$Id=(D_u+A_i(\vartheta))d_\vartheta t_i$. By taking differences we obtain:

$(D_u+A_1(\vartheta))(d_\vartheta t_1-d_\vartheta t_2)=(D_u+A_1(\vartheta))d_\vartheta t_1-(D_u+A_2)d_\vartheta t_2+(A_2-A_1)d_\vartheta t_2=Id-Id+(A_2-A_1)d_\vartheta t_2=$ 

$=(A_2-A_1)d_\vartheta t_2=(d_{G_1}h_u\cdot A'_1-d_{G_2}h_uA'_2)\cdot d_\vartheta t_2$

Define $J_i(t):=$\begin{tabular}{| r |}
\hline $d_{t}F_i $  \\ \hline
$I_{u(x)\times u(x)}$   \\ \hline
\end{tabular}, which is the same as $A'_i(\vartheta)$, with a parameter change ($t=t_i(\vartheta)$). So now we see that:
\begin{align*}
(D_u+A_1)(d_\vartheta t_1-d_\vartheta t_2)=&(d_{G_1}h_u\cdot J_1(t_1)-d_{G_2}h_u\cdot J_2(t_2))d_\vartheta t_2\\
=&(d_{G_1}h_u\cdot J_1(t_1)-d_{G_2}h_u\cdot J_1(t_1)+d_{G_2}h_u\cdot J_1(t_1)-d_{G_2}h_u\cdot J_2(t_2))d_\vartheta t_2
\\=&\Big([d_{G_1}h_u\cdot J_1(t_1)-d_{G_2}h_u\cdot J_1(t_1)]+[d_{G_2}h_u\cdot J_1(t_1)-d_{G_2}h_u\cdot J_2(t_1)]\\
+&[d_{G_2}h_u\cdot J_2(t_1)-d_{G_2}h_u\cdot J_2(t_2)]\Big)d_\vartheta t_2:=I+II+III.
\end{align*}
Since $\|D_u^{-1}\|^{-1}\geq e^\chi,\|d_\cdot F_1\|_\infty<1$ and $\|d_\cdot h_u\|_\infty<3\epsilon^2$, we get $\|D_u+A_1\|\geq e^\chi-3\epsilon^2$, and hence:
$$\|d_\cdot t_1-d_\cdot t_2\|_\infty\leq\frac{1}{e^\chi-3\epsilon^2}\|I+II+III\|_\infty\leq \|I\|_\infty+\|II\|_\infty+\|III\|_\infty (\text{for }\epsilon\text{ small enough})$$

We begin analyzing: recall $\text{H\"ol}_{\beta/3}(d_\cdot h_{s/u})\leq\epsilon$:
\begin{itemize}
\item $\|d_{(F_1(t_1),t_1)}h_u-d_{(F_2(t_1),t_1)}h_u\|\leq\epsilon\|F_1-F_2\|_\infty^{\beta/3}$
\item $\|d_{(F_2(t_1),t_1)}h_u-d_{(F_2(t_2),t_1)}h_u\|\leq\epsilon\|t_1-t_2\|_\infty^{\beta/3}(\because \mathrm{Lip}(F_2)<1)$ 
\item $\|d_{(F_2(t_2),t_1)} h_u-d_{(F_2(t_2),t_2)} h_u\|_\infty<3\epsilon\|F_1-F_2\|_\infty^{\beta/3}$
\end{itemize}
By part 1 $\|t_1-t_2\|_\infty\leq\epsilon\|F_1-F_2\|_\infty^{\beta/3}$, it follows that:
$$\|d_{G_1}h_u-d_{G_2}h_u\|<3\epsilon\|F_1-F_2\|_\infty^{\beta/3}$$ and hence $$\|I\|_\infty\leq3\epsilon\|F_1-F_2\|_\infty^{\beta/3}$$
Using the facts that $\|d_\cdot t_{1/2}\|_\infty<1,\|d_\cdot h_u\|_\infty<3\epsilon^2,\mathrm{Lip}(d_\cdot F_2)<1$ and $\text{H\"ol}_{\beta/3}(d_\cdot F_2)<1$ (from the definition of admissible manifolds and the proof of the Graph Transform) we get that:

\hspace{16pt}$\|II\|_\infty\leq3\epsilon^2\|d_\cdot F_1-d_\cdot F_2\|_\infty$,

\hspace{16pt}$\|III\|_\infty\leq3\epsilon^2\|t_1-t_2\|_\infty\leq3\epsilon^2\|F_1-F_2\|_\infty\leq3\epsilon^2\|F_1-F_2\|_\infty^{\beta/3}$.

So for all $\epsilon$ sufficiently small $\|d_\cdot t_1-d_\cdot t_2\|_\infty<\sqrt{\epsilon}(\|d_\cdot F_1-d_\cdot F_2\|_\infty+\|F_1-F_2\|_\infty^{\beta/3})$

\medskip
\textit{Part 4}: $\|d_\cdot H_1-d_\cdot H_2\|_\infty<e^{-\chi/2}(\|d_\cdot F_1-d_\cdot F_2\|_\infty+\|F_1-F_2\|_\infty^{\beta/3})$

\textit{Proof}: By the definition of $H_i$: $d_\vartheta H_i=D_s\cdot d_{t_i} F_i\cdot d_\vartheta t_i+d_{G_i}h_u\cdot J_i(t_i)\cdot d_\vartheta t_i$

Taking differences we see that: $$\|d_\vartheta H_1-d_\vartheta H_2\|\leq\|d_\vartheta t_1-d_\vartheta t_2\|\cdot\|D_\vartheta d_{t_1}F_1+d_{G_1}h_u\|+\|d_\vartheta t_2\|\cdot\|D_s\|\cdot(\|d_{t_1}F_1-d_{t_1}F_2\|+\|d_{t_1}F_2-d_{t_2}F_2\|)$$
$$+\|d_\vartheta t_2\|\cdot\|d_{G_1}h_u-d_{G_2}h_u\|\cdot\|d_{t_1}F_1\|+\|d_\vartheta t_2\|\cdot\|d_{G_2}h_u\|\cdot\|d_{t_1}F_1-d_{t_2}F_2\|:=I'+II'+III'+IV'$$
Using the same arguments as in part 3 we can show that:
\begin{align*}
I'\leq&\|d_\cdot t_1-d_\cdot t_2\|(e^{-\chi}+6\epsilon^2)<\sqrt{\epsilon}(\|d_\cdot F_1-d_\cdot F_2\|_\infty+\|F_1-F_2\|_\infty^{\beta/3)}\\
II'\leq& e^{-\chi}(\|d_\cdot F_1-d_\cdot F_2\|_\infty+\|t_1-t_2\|_\infty^{\beta/3})\leq e^{-\chi}(\|d_\cdot F_1-d_\cdot F_2\|_\infty+\|F_1-F_2\|_\infty^{\beta/3})\text{ because of part 1}\\
III'\leq&3\epsilon\|F_1-F_2\|_\infty^{\beta/3}\text{ (same as in the estimate of I in part 3)}\\
IV'\leq&3\epsilon^2\|d_\cdot F_1-d_\cdot F_2\|_\infty+3\epsilon^2\|F-1-F_2\|_\infty^{\beta/3}.
\end{align*}
It follows that $$\|d_\cdot H_1-d_\cdot H_2\|_\infty<(e^{-\chi}+10\epsilon+\sqrt{\epsilon})(\|d_\cdot F_1-d_\cdot F_2\|_\infty+\|F_1-F_2\|_\infty^{\beta/3}).$$
If $\epsilon$ is small enough then $$\|d_\cdot H_1-d_\cdot H_2\|_\infty<e^{-\chi/2}(\|d_\cdot F_1-d_\cdot F_2\|_\infty+\|F_1-F_2\|_\infty^{\beta/3})$$
\end{proof}

\subsubsection{A Markov extension}
Recall the definitions from the begining of \textsection \ref{defepsilonchains}:

$$\mathcal{V}=\{\psi_x^{p^s,p^u}:\psi_x^{p^s\wedge p^u}\in\mathcal{A},p^s,p^u\in I_\epsilon,p^s,p^u\leq Q_\epsilon(x)\}$$
$$\mathcal{E}=\{(\psi_x^{p^s,p^u},\psi_y^{q^s,q^u})\in\mathcal{V}\times\mathcal{V}:\psi_x^{p^s,p^u}\rightarrow\psi_y^{q^s,q^u}\}$$
$$\mathcal{G}\text{ is the directed graph with vertices }\mathcal{V}\text{ and edges }\mathcal{E}$$
\begin{definition}
    $\Sigma(\mathcal{G}):=\{v=(v_i)_{i\in\mathbb{Z}}:v_i\in\mathcal{V},v_i\rightarrow v_{i+1}\forall i\}$, equipped with the left-shift $\sigma$, and the metric $d(v,w)=e^{-\min\{|i|:v_i\neq w_i\}}$.
\end{definition}
\begin{definition} Suppose $(v_i)_{i\in\mathbb{Z}},v_i=\psi_{x_i}^{p_i^s,p_i^u}$ is a chain. Let $V_{-n}^u$ be a $u$-admissible manifold in $v_{-n}$. 
$\mathcal{F}_u^n(v_{-n})$ is the $u$-admissible manifold in $v_0$ which is a result of the application of the graph transform $n$ times along $v_{-n},...,v_{-1}$ (each application is the transform described in section 3 of theorem \ref{graphtransform}). Similarly any $s$-admissible manifold in $v_n$ is mapped by $n$ applications of the graph transform to an $s$-admissible manifold in $v_0$: $\mathcal{F}_s^n(v_n)$. These two manifolds depend on $v_{-n},...,v_n$.
\end{definition}
\begin{definition} Let $\psi_x$ be some chart. Assume $V_n$ is a sequence of $s/u$ manifolds in $\psi_x$. We say that $V_n$ converges uniformly to $V$, an $s/u$ manifold in $\psi_x$, if the representing functions of $V_n$ converge uniformly to the representing function of $V$.
\end{definition}
\begin{prop}\label{firstofchapter}
Suppose $(v_i=\psi_{x_i}^{p_i^s,p_i^u})_{i\in\mathbb{Z}}$ is a chain of double charts, and choose arbitrary $u$-admissible manifolds $V^u_{-n}$ in $v_{-n}$, and arbitrary $s$-admissible manifolds $V_n^s$ in $v_n$. Then:
\begin{enumerate}
\item The limits $V^u((v_i)_{i\leq0}):=\lim\limits_{n\rightarrow\infty}\mathcal{F}_u^n(V^u_{-n}),V^s((v_i)_{i\geq0}):=\lim\limits_{n\rightarrow\infty}\mathcal{F}_s^n(V^s_n)$ exist, and are independent of the choice of $V_{-n}^u$ and $V_n^s$.
\item $V^{u}((v_i)_{i\leq0})/V^{s}((v_i)_{i\geq0})$ is an $s/u$-admissible manifold in $v_0$.
\item $f[V^s(v_i)_{i\geq0}]\subset V^s(v_{i+1})_{i\geq0}$ and $f^{-1}[V^u(v_i)_{i\leq0}]\subset V^u(v_{i-1})_{i\leq0}$
\item \begin{align*}
V&^s((v_i)_{i\geq0})=\{p\in\psi_{x_0}[R_{p_0^s}(0)]:\forall k\geq0, f^k(p)\in\psi_{x_k}[R_{10Q_\epsilon(x_k)}(0)]\}\\
V&^u((v_i)_{i\leq0})=\{p\in\psi_{x_0}[R_{p_0^u}(0)]:\forall k\geq0, f^{-k}(p)\in\psi_{x_{-k}}[R_{10Q_\epsilon(x_{-k})}(0)]\}
\end{align*}
\item The maps $(v_i)_{i\in\mathbb{Z}}\mapsto V^u((v_i)_{i\leq0}),V^s((v_i)_{i\geq0})$ are H\"older continuous:
$$\exists K>0, \theta\in(0,1):\forall n\geq0,\vec{v},\vec{w}\text{ chains: if }v_i=w_i\text{ }\forall|i|\leq n\text{ then:}$$
$$dist_{C^1}(V^u((v_i)_{i\leq0}),V^u((w_i)_{i\leq0}))<K\theta^n,dist_{C^1}(V^s((v_i)_{i\geq0}),V^s((w_i)_{i\geq0}))<K\theta^n$$
\end{enumerate}
\end{prop}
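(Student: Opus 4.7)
\emph{Parts (1)--(2).} The key input is Proposition \ref{prop133}: one application of the graph transform is an $e^{-\chi/2}$--contraction on admissible manifolds in the $C^0$ metric, and a contraction in the $C^1$ metric modulo a term involving $\operatorname{dist}^{\beta/3}$. I will first fix attention on the $u$-case. Writing $\mathcal{F}_u^{n+k}(V_{-n-k}^u) = \mathcal{F}_u^n\!\left(\mathcal{F}_u^k(V_{-n-k}^u)\right)$, both $\mathcal{F}_u^k(V_{-n-k}^u)$ and $V_{-n}^u$ are $u$-admissible in $v_{-n}$, hence their $C^0$--distance is uniformly bounded by $2\cdot 10^{-2}Q_\epsilon(x_{-n}) \le C$. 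Proposition \ref{prop133} then gives
\[
\operatorname{dist}\!\left(\mathcal{F}_u^{n+k}(V_{-n-k}^u),\,\mathcal{F}_u^n(V_{-n}^u)\right) \le C\, e^{-n\chi/2},
\]
so the sequence is Cauchy in $C^0$, giving uniform convergence of the representing functions and independence of the choice of $V_{-n}^u$. A parallel recursion handles $C^1$: setting $D_n := \operatorname{dist}_{C^1}$ after $n$ transforms we get $D_n \le e^{-\chi/2}(D_{n-1}+d_{n-1}^{\beta/3})$ with $d_{n-1}\le C e^{-(n-1)\chi/2}$, which by summing a geometric series yields $D_n \le C'\, e^{-n\theta_0}$ for some $\theta_0 = \theta_0(\chi,\beta)>0$. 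Thus the representing functions converge in $C^1$. For admissibility of the limit (Part 2): the parameters $\sigma,\gamma,\varphi,q$ of every iterate satisfy the bounds of Definition \ref{admissible} by Theorem \ref{graphtransform}; the $C^0$ and $C^1$ bounds pass to the limit by uniform convergence, while the bound $\sigma \le \tfrac12$ on the $\beta/3$--Hölder norm of the derivative passes by lower semicontinuity of Hölder seminorms under uniform convergence of the derivatives. The same argument with $u$ replaced by $s$ (and the chain reversed) handles $V^s$.

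\emph{Part (3).} This is algebraic once (1) is in place. By definition of $\mathcal{F}_s$ applied along the edge $v_0\to v_1$, one has $\mathcal{F}_s^{n+1}(V_{n+1}^s) \subset f^{-1}[\mathcal{F}_s^n(V_{n+1}^s)]$ where the outer manifold lives in $v_0$ and the inner one in $v_1$. Choosing $V_{n+1}^s$ itself as the arbitrary $s$-admissible input used to define $V^s((v_{i+1})_{i\ge 0})$, and letting $n\to\infty$ on both sides using continuity of $f$ and the $C^0$ convergence from Part (1), produces $f[V^s((v_i)_{i\ge 0})]\subset V^s((v_{i+1})_{i\ge 0})$. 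The $u$-statement is symmetric.

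\emph{Part (4).} The inclusion ``$\subseteq$'' is immediate from iterated application of Part (3) combined with admissibility: if $p\in V^s((v_i)_{i\ge 0})$ then $f^k(p)\in V^s((v_{i+k})_{i\ge 0})$, an $s$-admissible manifold in $v_k$, so $f^k(p)\in\psi_{x_k}[R_{p_k^s}(0)]\subset\psi_{x_k}[R_{10Q_\epsilon(x_k)}(0)]$. For the reverse inclusion, suppose $f^k(p) \in \psi_{x_k}[R_{10 Q_\epsilon(x_k)}(0)]$ for all $k\ge 0$ and write $p_k := \psi_{x_k}^{-1}(f^k p)$. The hyperbolic form of $f_{x_k x_{k+1}}$ from Proposition \ref{3.3inomris}, together with $\|\partial h\| < \epsilon^2$ on $R_\eta(0)$, shows that the unstable component of $p_k$ expands by at least $e^\chi(1-O(\epsilon))$ while staying in a box of radius $10Q_\epsilon$. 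Hence the unstable component of $p_0$ must vanish, placing $p$ on the graph of the representing function of $V^s((v_i)_{i\ge 0})$; the admissibility bound $|t_0|_\infty \le p_0^s$ is what forces the initial coordinate to lie in $R_{p_0^s}(0)$. The $V^u$ statement is symmetric in backward time.

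\emph{Part (5).} If $v_i = w_i$ for all $|i|\le n$, pick any $u$-admissible manifold $V_{-n}^u$ in $v_{-n}=w_{-n}$. Then $\mathcal{F}_u^n(V_{-n}^u)$ is the same admissible manifold in $v_0=w_0$ for both chains, since the edges $v_{-n}\to\cdots\to v_0$ and $w_{-n}\to\cdots\to w_0$ coincide. The Cauchy tail estimate derived in Part (1) gives $\operatorname{dist}_{C^1}\!\bigl(V^u(\vec v),\mathcal{F}_u^n(V_{-n}^u)\bigr)\le C' e^{-n\theta_0}$ and the same for $\vec w$, so by the triangle inequality $\operatorname{dist}_{C^1}(V^u(\vec v),V^u(\vec w))\le 2C' e^{-n\theta_0}$. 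Expressing $n$ in terms of the shift metric gives Hölder continuity with $K:=2C'e^{\theta_0}$ and $\theta := e^{-\theta_0}$. The symmetric argument works for $V^s$. The main technical obstacle is the $C^1$ recursion in Part (1): the extra $\operatorname{dist}^{\beta/3}$ term in Proposition \ref{prop133} prevents a clean contraction, and one must verify that the resulting inhomogeneous geometric sum still decays exponentially, with a rate $\theta_0<\chi/2$ depending on $\beta$ --- this is also what forces the Hölder exponent $\theta$ in Part (5) to depend on $\chi$ (and to degenerate as $\chi\to 0$, consistent with the remark following Theorem \ref{t4.1.1}).
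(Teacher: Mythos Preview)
Your outline for Parts (1), (2), (3) and (5) is essentially the same as the paper's, and correct. In Part (2) you invoke lower semicontinuity of the H\"older seminorm under uniform convergence of the derivatives; the paper instead goes through Arzel\`a--Ascoli and an integral identity to identify the limit of $d_\cdot F_n$ with $d_\cdot F$, but once you have the $C^1$ Cauchy estimate (which you correctly derive from the inhomogeneous recursion in Proposition \ref{prop133}), your route is fine and slightly cleaner.

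There is, however, a genuine gap in your argument for the reverse inclusion in Part (4). You write that the unstable component of $p_k$ ``expands by at least $e^\chi(1-O(\epsilon))$'' and hence the unstable component of $p_0$ must vanish, placing $p$ on the graph. This is not correct: writing $p_k=(v_k,w_k)$ in $\psi_{x_k}$--coordinates, Proposition \ref{3.3inomris} gives $w_{k+1}=D_u w_k+h_u(v_k,w_k)$, and the inhomogeneous term satisfies $|h_u(0,0)|<\epsilon\eta_k$, which is \emph{not} controlled by $|w_k|$. So the inequality $|w_{k+1}|\geq e^\chi(1-O(\epsilon))|w_k|$ fails, and in any case $w_0=0$ would not put $p$ on the graph of $F$ (the graph is $\{(t,F(t))\}$, not $\{(t,0)\}$). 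A second, smaller issue: the bound $\|\partial h\|<\epsilon^2$ you quote holds only on $R_{\eta}(0)$, whereas $p_k$ is assumed only to lie in $R_{10Q_\epsilon(x_k)}(0)$.

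The fix is the paper's: given $z=\psi_{x_0}(v_0,w_0)$ in the right-hand set, introduce the reference point $z':=\psi_{x_0}(F(w_0),w_0)\in V^u$, which shares the same unstable coordinate, and track the \emph{differences} $\Delta v_{-k},\Delta w_{-k}$ along the backward orbit. Both $z$ and $z'$ stay in $R_{10Q_\epsilon(x_{-k})}(0)$ (the latter by Part (3)), so the inhomogeneous terms cancel in the difference and one obtains $|\Delta v_{-k-1}|\geq(e^{\chi/2}-\epsilon)|\Delta v_{-k}|-\epsilon|\Delta w_{-k}|$ together with the complementary contraction for $\Delta w$. Since $\Delta w_0=0$, an induction shows $|\Delta v_{-k}|$ grows geometrically unless $\Delta v_0=0$, forcing $v_0=F(w_0)$ and hence $z\in V^u$.
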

\begin{proof} Parts (1)--(4) are a version of Pesin's Stable Manifold Theorem \cite{Pesin}
. Part (5) is a version of Brin's Theorem on the H\"older continuity of the Oseledets distribution \cite{Bri}. Whereas Brin's theorem only states H\"older continuity on Pesin sets, part (5) gives H\"older continuity everywhere, as in Sarig's version--- but with respect to a different metric (symbolic, not Riemannian).

The proofs of parts (1), (3), (4) and (5) are quite similar to the proofs of \cite[proposition~4.15]{Sarig}, we nevertheless present the proof since the understanding of this proposition is important to the ideas presented in this work. 

We treat the case of $u$ manifolds, the stable case is similar.

\textit{Part 1}: By the Graph Transform theorem $\mathcal{F}_u^n(V_{-n}^u)$ is a $u$-admissible manifold in $v_0$. By the previous proposition for any other choice of $u$-admissible manifolds $W_{-n}^u$ in $v_{-n}$:
$$dist(\mathcal{F}_u^n(V_{-n}^u),\mathcal{F}_u^n(W_{-n}^u))<e^{-\frac{1}{2}\chi n}dist(V_{-n}^u,W_{-n}^u)\leq e^{-\frac{1}{2}\chi n}2Q_\epsilon(x)<e^{-\frac{1}{2}\chi n},$$
(here we need the fact that for any admissible manifold with representing function $F$, $\|F\|_\infty\leq Q_\epsilon(x)$).

Thus, if the limit exists then it is independent of $V_{-n}^u$.

For every $n\geq m$: $$dist(\mathcal{F}_u^n(V^u_{-n}),\mathcal{F}_u^m(V^u_{-m}))\leq\sum\limits_{k=0}^{n-m}dist(\mathcal{F}_u^{m+k}(V^u_{-m-k}),\mathcal{F}_u^{m+k+1}(V^u_{-m-k-1}))\leq$$
$$\leq\sum_{k=0}^{n-m}dist(\mathcal{F}_u^m(V_{-m}^u),\mathcal{F}_u^{m+1}(V_{-m-1}^u))e^{-\frac{1}{2}\chi k}<\sum_{k=0}^{n-m}e^{-\frac{1}{2}\chi m}dist(V_{0}^u,\mathcal{F}_u(V_{-1}^u))e^{-\frac{1}{2}\chi k}\leq$$
$$\leq2e^{-m\frac{\chi}{2}}\frac{1}{1-e^{-\chi/2}}\underset{m,n\rightarrow\infty}{\longrightarrow}0 \Big(\because dist(V_0,\mathcal{F}_u(V_{-1}^u))\leq2Q_\epsilon(x)<2\Big).$$
Hence, $\mathcal{F}_u^n(V_{-n}^u)$ is a Cauchy sequence in a complete space, and therefore converges.

\medskip
\textit{Part 2}: Admissibility of the limit: Write $v_0=\psi_x^{p^s,p^u}$ and let $F_n$ denote the functions which represent $\mathcal{F}_u^n(V^u_{-n})$ in $v_0$. Since $\mathcal{F}_u^n(V^u_{-n})$ are $u$-admissible in $v_0$, for every $n$:

\begin{itemize}
\item $\|d_\cdot F_n\|_{\beta/3}\leq\frac{1}{2}$
\item $\|d_0F_n\|\leq\frac{1}{2}(p^s\wedge p^u)^{\beta/3}$
\item $|F_n(0)|_\infty\leq10^{-3}(p^s\wedge p^u)$
\end{itemize}
Since $\mathcal{F}_u^n(V_{-n}^u)\xrightarrow[n\rightarrow\infty]{}, V^u((v_i)_{i\leq0})$: $F_n\xrightarrow[n\rightarrow\infty]{}F$ uniformly, where $F$ represents $V^u((v_i)_{i\leq0})$. By the Arzela-Ascoli theorem, $\exists n_k\uparrow\infty$ s.t. $d_\cdot F_{n_k}\xrightarrow[k\rightarrow\infty]{}L$ uniformly where $\|L\|_{\beta/3}\leq\frac{1}{2}$. For each term in the sequences, the following identity holds:$$\forall t\in R_{q^u}(0):\text{ }\vec{F}_{n_k}(t)=\vec{F}_{n_k}(0)+\int\limits_{0}^{1}d_{\lambda t}\vec{F}_{n_k}\cdot t\text{ }d\lambda.$$

Since $d_\cdot F_{n_k}$ converge uniformly, we get:
$$\forall t\in R_{q^u}(0):\text{ }\vec{F}(t)=\vec{F}(0)+\int\limits_{0}^{1}L(\lambda t)\cdot t\text{ }d\lambda.$$
The same calculations give us that $\forall t_0\in R_{q^u}(0):$
$$\forall t\in R_{q^u}(0):\text{ }\vec{F}(t)=\vec{F}(t_0)+\int\limits_{0}^{1}L(\lambda (t-t_0)+t_0)\cdot (t-t_0)\text{ }d\lambda=$$
$$=\vec{F}(t_0)+\int\limits_{0}^{1}[L(\lambda (t-t_0)+t_0)-L(t_0)]\cdot (t-t_0)\text{ }d\lambda+L(t_0)\cdot(t-t_0).$$
Since $L$ is $\beta/3$-H\"older on a compact set, and in particular uniformly continuous, the second and third summands are $o(|t-t_0|)$, and hence $F$ is differentiable and $ d_tF=L(t)$. We also see that $\{d_\cdot F_n\}$ can only have one limit point. Consequently $d_\cdot F_n\xrightarrow[n\rightarrow\infty]{uniformly}d_\cdot F$ . So $\|d_\cdot F\|_{\beta/3}\leq\frac{1}{2},\|d_0F\|\leq\frac{1}{2}(p^s\wedge p^u)^{\beta/3}$ and $|F(0)|_\infty\leq10^{-3}(p^s\wedge p^u)$, whence the $u$-admissibility of $V^u((v_i)_{i\leq0})$.

\medskip
\textit{Part 3:} Let $V^u:=V^u((v_i)_{i\leq0})=\lim\mathcal{F}_u^n(V^u_{-n})$, and $W^u:=V^u((v_{i-1})_{i\leq0})=\lim\mathcal{F}_u^n(V^u_{-n-1})$.
\begin{align*}dist(V^u,\mathcal{F}_u(W^u))\leq& dist(V^u,\mathcal{F}_u^n(V_{-n}^u))+dist(\mathcal{F}_u^n(V_{-n}^u),\mathcal{F}_u^{n+1}(V_{-n-1}^u))+dist(\mathcal{F}_u^{n+1}(V_{-n-1}^u),\mathcal{F}_u(W^u))\\
\leq &dist(V^u,\mathcal{F}_u^n(V_{-n}^u))+e^{-\frac{1}{2}n\chi}dist(V^u_{-n},\mathcal{F}_u(V^u_{-n-1}))+e^{-\frac{1}{2}\chi}dist(\mathcal{F}_u^{n}(V_{-n-1}^u),W^u).\end{align*}
The first and third summands tend to zero by definition, and the second goes to zero since

$dist(V^u_{-n},\mathcal{F}_u(V^u_{-n-1}))<1$. So $V^u=\mathcal{F}_u(W^u)\subset f[W^u]$.

\medskip
\textit{Part 4}: The inclusion $\subset$ is simple: Every $u$-admissible manifold $W_i^u$ in $\psi_{x_i}^{p_i^s,p_i^u}$ is contained in $\psi_{x_i}[R_{p_i^u}(0)]$, because if $W_i^u$ is represented by the function $F$ then any $p=\psi_{x_i}(v,w)$ in $W_i^u$ satisfies $|w|_\infty\leq p_i^u$ and $$|v|_\infty=|F(w)|_\infty\leq|F(0)|_\infty+\max\|d_\cdot F\|\cdot|w|_\infty\leq \varphi+\epsilon\cdot|w|_\infty\leq(10^{-3}+\epsilon)p_i^u<p_i^u$$ Applying this to $V^u:=V^u[(v_i)_{i\leq0}]$, we see that for every $p\in V^u$, $p\in \psi_{x_0}[R_{p_0^u}(0)]$. Hence by part 3 for every $k\geq0$:$$f^{-k}(p)\in f^{-k}[V^u]\subset V^u[(v_{i-k})_{i\leq 0}]\subset\psi_{x_{-k}}[R_{p_{-k}^u}(0)]\subset\psi_{x_{-k}}[R_{10Q_\epsilon(x_{-k})}(0)].$$

So now to prove $\supset$: Suppose $z\in\psi_{x_0}[R_{p_0^u}(0)]$ and $f^{-k}(z)\in\psi_{x_{-k}}[R_{10Q_\epsilon(x_{-k})}(0)]$ for all $k\geq0$. Write $z=\psi_{x_0}(v_0,w_0)$. We show that $z\in V^u$ by proving that $v_0=F(w_0)$ where $F$ is the representing function for $V^u$. Set $z':=\psi_{x_0}(F(w_0),w_0)=:\psi_{x_0}(v',w')$. For $k\geq0$, $f^{-k}(z),f^{-k}(z')\in\psi_{x_{-k}}[R_{10Q_\epsilon(x_{-k})}(0)]$ (the first point by assumption, and the second since $f^{-k}(z')\in f^{-k}[V^u]\subset V^u((v_{i-k})_{i\leq0}$). It is therefore possible to write: $$f^{-k}(z)=\psi_{x_{-k}}(v_{-k},w_{-k})\text{ and }f^{-k}(z')=\psi_{x_{-k}}(v'_{-k},w'_{-k})\text{  }(k\geq0)$$ where $|v_{-k}|_\infty,|w_{-k}|_\infty,|v'_{-k}|_\infty,|w'_{-k}|_\infty\leq10Q_\epsilon(x_{-k})$ for all $k\geq0$.

By proposition  \ref{3.4inomris} for $f^{-1}$, $\forall k\geq0$, $f^{-1}_{x_{-k-1}x_{-k}}=\psi_{x_{-k-1}}^{-1}\circ f^{-1}\circ\psi_{x_{-k}}$ can be written as $$f_{x_{-k-1}x_{-k}}^{-1}(v,w)=(D_{s,k}^{-1}v+h_{s,k}(v,w),D_{u,k}^{-1}w+h_{u,k}(v,w)),$$ where $\|D_{s,k}\|^{-1}\geq e^{\chi/2},\|D_{u,k}^{-1}\|\leq e^{-\chi/2}$, and $\max\limits_{R_{10Q_\epsilon(x_{-k})}(0)}\|d_\cdot h_{s/u,k}\|<\epsilon$ (provided $\epsilon$ is small enough). Define $\Delta v_{-k}:=v_{-k}-v'_{-k}$, and $\Delta w_{-k}:=w_{-k}-w'_{-k}$. Since for every $k\leq 0$, $(v_{-k-1},w_{-k-1})=f_{x_{-k-1}x_{-k}}^{-1}(v_{-k},w_{-k})$ and $(v'_{-k-1},w'_{-k-1})=f_{x_{-k-1}x_{-k}}^{-1}(v'_{-k},w'_{-k})$ we get the two following bounds:
$$|\Delta v_{-k-1}|_\infty\geq\|D_{s,k}\|^{-1}\cdot|\Delta v_{-k}|_\infty-\max\|d_\cdot h_{s,k}\|\cdot(|\Delta v_{-k}|_\infty+|\Delta w_{-k}|_\infty)\geq(e^{\chi/2}-\epsilon)|v_{-k}|_\infty-\epsilon|\Delta w_{-k}|_\infty,$$
$$|\Delta w_{-k-1}|_\infty\leq\|D_{u,k}^{-1}\|\cdot|\Delta w_{-k}|_\infty+\max\|d_\cdot h_{u,k}\|\cdot(|\Delta w_{-k}|_\infty+|\Delta v_{-k}|_\infty)\leq(e^{-\chi/2}+\epsilon)|w_{-k}|_\infty+\epsilon|\Delta v_{-k}|_\infty.$$
Denote $a_k:=|\Delta v_{-k}|_\infty,b_k:=|\Delta w_{-k}|_\infty$ and assume that $\epsilon$ is small enough that $e^{-\chi/2}+\epsilon\leq e^{-\chi/3},e^{\chi/2}-\epsilon\geq e^{\chi/3}$ then we get (since $b_0\equiv0$): $$a_{k+1}\geq e^{\chi/3}a_k-\epsilon b_k,b_{k+1}\leq e^{-\chi/3}b_k+\epsilon a_k.$$

Assume $\epsilon$ is so small that $e^{\chi/3}-\epsilon>1,e^{-\chi/3}+\epsilon<1$, then we claim $a_k\leq a_{k+1},b_k\leq a_k \forall k\geq0$: 
Proof by induction: This is true for $k=0$ since $b_0=0$. Assume by induction that $a_k\leq a_{k+1},b_k\leq a_k$
, then:
$$b_{k+1}\leq e^{-\chi/3}b_k+\epsilon a_k\leq(e^{-\chi/3}+\epsilon)a_k\leq a_k\leq a_{k+1},$$
$$a_{k+2}\geq e^{\chi/3}a_{k+1}-\epsilon b_{k+1}\geq(e^{\chi/3}-\epsilon)a_{k+1}\geq a_{k+1}.$$

We also see that $a_{k+1}\geq(e^{\chi/3}-\epsilon)a_k$ for all $k\geq0$, hence $a_k\geq(e^{\chi/3}-\epsilon)^k a_0$. Either $a_k\rightarrow\infty$ or $a_0=0$. But $a_k=|v_{-k}-v'_{-k}|_\infty\leq20Q_\epsilon(x_{-k})<20\epsilon$, so $a_0=0$, and therefore $v_0=v'_0$, whence $F(w'_0)=F(w_0)$. So

$$z=\psi_x(F(w_0),w_0)\in V^u,$$
which concludes the proof of part 4.

\medskip
\textit{Part 5}: If two chains $v=(v_i)_{i\in\mathbb{Z}},w=(w_i)_{i\in\mathbb{Z}}$ satisfy $v_i=w_i$ for $|i|\leq N$ then $dist(V^u((v_i)_{i\leq0}),V^u((w_i)_{i\leq0}))\leq e^{-\frac{1}{2}N\chi}$: Given $n>N$ let $V_{-n}^u$ be a $u$-admissible manifold in $v_{-n}$, and let $W_{-n}^u$ be a $u$-admissible manifold in $w_{-n}$. $\mathcal{F}_u^{n-N}(V_{-n}^u)$ and $\mathcal{F}_u^{n-N}(W_{-n}^u)$ are $u$-admissible manifolds in $v_{-N}(=w_{-N})$. Let $F_N,G_N$ be their representing functions. Admissibility implies that $$\|F_N-G_N\|_\infty\leq\|F_N\|_\infty+\|G_N\|_\infty<2Q_\epsilon<1$$ $$\|d_\cdot F_N-d_\cdot G_N\|_\infty\leq\|d_\cdot F_N\|_\infty+\|d_\cdot G_N\|_\infty<2\epsilon<1$$
Represent $\mathcal{F}_u^{n-k}(V_{-n}^u)$ and $\mathcal{F}_u^{n-k}(W_{-n}^u)$ by $F_k$ and $G_k$. By the previous proposition, $$\|F_{k-1}-G_{k-1}\|_\infty\leq e^{-\chi/2}\|F_k-G_k\|_\infty,$$
$$\|d_\cdot F_{k-1}-d_\cdot G_{k-1}\|_\infty\leq e^{-\chi/2}(\|d_\cdot F_k-d_\cdot G_k\|_\infty+2\|F_k-G_k\|_\infty^{\beta/3}).$$
Iterating the first inequality, from $N$, going down, we get: $\|F_k-G_k\|_\infty\leq e^{-\frac{1}{2}(N-k)\chi}$. So: $$dist(\mathcal{F}_u^n(V^u_{-n}),\mathcal{F}_u^n(W^u_{-n}))\leq e^{-\frac{1}{2}N\chi}.$$ Passing to the limit as $n\rightarrow\infty$ we get: $$dist(V^u((v_i)_{i\leq0}),V^u((w_i)_{i\leq0}))\leq e^{-\frac{1}{2}N\chi}.$$
Now plug $\|F_k-G_k\|_\infty\leq e^{-\frac{1}{2}\chi(N-k)}$ in $\|d_\cdot F_{k-1}-d_\cdot G_{k-1}\|_\infty\leq e^{-\chi/2}(\|d_\cdot F_k-d_\cdot G_k\|_\infty+2\|F_k-G_k\|_\infty^{\beta/3})$, and set $c_k:=\|d_\cdot F_k-d_\cdot G_k\|_\infty,\theta_1=e^{-\frac{1}{2}\chi},\theta_2=e^{-\frac{1}{6}\beta\chi}$ then: $$c_{k-1}\leq \theta_1(c_k+2\theta_2^{N-k})$$ hence (by induction): 

$\forall 0\leq k\leq N$: $c_0\leq\theta_1^kc_k+2(\theta_1^k\theta_2^{N-k}+\theta_1^{k-1}\theta_2^{N-k+1}+...+\theta_1\theta_2^{N-1})$. 
Now substitute $k=N$. Since $\theta_1<\theta_2,c_N\leq1$, $c_0\leq\theta_1^N+2N\theta_2^N<(2N+1)\theta_2^N$. 

It follows that $dist_{C^1}(\mathcal{F}_u^n(V^u_{-n}),\mathcal{F}_u^n(W^u_{-n}))\leq2(N+1)\theta_2^N$. In part 2 we saw that $\mathcal{F}_u^n(V^u_{-n})$ and $\mathcal{F}_u^n(W^u_{-n})$ converge to  $V^u((v_i)_{i\leq0})$ and $V^u((w_i)_{i\leq0})$ (resp.) in $C^1$. Therefore if we pass to the limit as $n\rightarrow\infty$ we get that $dist_{C^1}(V^u((v_i)_{i\leq0}),V^u((w_i)_{i\leq0}))\leq2(N+1)\theta_2^N$. Now pick $\theta\in(\theta_2,1),K>0$ s.t. $2(N+1)\theta_2^N\leq K\theta^N$ $\forall N$.
\end{proof}
\begin{theorem}\label{DefOfPi}
Given a chain of double charts $(v_i)_{i\in\mathbb{Z}}$, let $\pi((\underline v)):=$unique intersection point of $V^u((v_i)_{i\leq0})$ and $V^s((v_i)_{i\geq0})$. Then \begin{enumerate}
\item $\pi$ is well-defined and $\pi\circ\sigma=f\circ\pi$.
\item $\pi:\Sigma\rightarrow M$ is H\"older continuous.
\item $\pi[\Sigma]\supset\pi[\Sigma^\#]\supset NUH_\chi^\#(f)$, therefore $\pi[\Sigma]$ and $\pi[\Sigma^\#]$ have full probability w.r.t any hyperbolic invariant measure with Lyapunov exponents bounded away from 0 by $\chi$.
\end{enumerate}
\end{theorem}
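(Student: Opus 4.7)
\textbf{Plan for the proof of Theorem \ref{DefOfPi}.}

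For part (1), I would first invoke Proposition \ref{firstofchapter}(2) to see that $V^u((v_i)_{i\leq 0})$ and $V^s((v_i)_{i\geq 0})$ are $u$-admissible and $s$-admissible manifolds in $v_0$ respectively, and then apply Proposition \ref{firstbefore} to conclude that they meet in a unique point, so $\pi$ is well-defined. For the conjugacy $\pi\circ\sigma=f\circ\pi$, the clean way is to show that $f(\pi(\underline v))$ lies simultaneously in $V^u((v_i)_{i\leq 1})$ and $V^s((v_i)_{i\geq 1})$, and then use the uniqueness from Proposition \ref{firstbefore} again. The inclusion into $V^s((v_i)_{i\geq 1})$ is immediate from Proposition \ref{firstofchapter}(3) which says $f[V^s((v_i)_{i\geq 0})]\subset V^s((v_{i+1})_{i\geq 0})$. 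For the unstable side, I would reindex the inclusion $f^{-1}[V^u((v_i)_{i\leq 0})]\subset V^u((v_{i-1})_{i\leq 0})$ (replacing $\underline v$ by $\sigma\underline v$) to get $f[V^u((v_i)_{i\leq 0})]\subset V^u((v_i)_{i\leq 1})$, and then apply $f$ to $\pi(\underline v)\in V^u((v_i)_{i\leq 0})$.

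For part (2), the route is to combine Proposition \ref{firstofchapter}(5) with the Lipschitz dependence of the intersection point in Proposition \ref{firstbefore}(3). Concretely, if $d(\underline v,\underline w)=e^{-N}$, then $v_i=w_i$ for all $|i|\leq N-1$, so Proposition \ref{firstofchapter}(5) gives $\mathrm{dist}(V^{s/u}(\underline v),V^{s/u}(\underline w))\leq K\theta^{N-1}$; by Proposition \ref{firstbefore}(3) the intersection point is a $3$-Lipschitz function of the pair of admissible manifolds, so $d(\pi(\underline v),\pi(\underline w))\leq 6K\theta^{N-1}=C\,d(\underline v,\underline w)^{-\log\theta}$, giving Hölder continuity with exponent $-\log\theta$ (which, as noted in the remark after Theorem \ref{t4.1.1}, degenerates as $\chi\to 0$ because $\theta\to 1$).

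For part (3), given $x\in NUH_\chi^\#$, I would invoke Proposition \ref{prop131} to produce a chain $\underline v=(\psi_{x_k}^{p_k^s,p_k^u})_{k\in\mathbb{Z}}$ in $\Sigma(\mathcal{G})$ such that $\psi_{x_k}^{p_k^s\wedge p_k^u}$ $\epsilon$-overlaps $\psi_{f^k(x)}^{p_k^s\wedge p_k^u}$ for every $k$. Applying Proposition \ref{chartsofboxes}, overlap gives $f^k(x)=\psi_{f^k(x)}(0)\in\psi_{x_k}[R_{p_k^u\wedge p_k^s}(0)]\subset\psi_{x_k}[R_{10Q_\epsilon(x_k)}(0)]$ for every $k$. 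Hence the characterization of $V^{s/u}$ in Proposition \ref{firstofchapter}(4) puts $x$ in $V^u((v_i)_{i\leq 0})\cap V^s((v_i)_{i\geq 0})$, and uniqueness from Proposition \ref{firstbefore} gives $\pi(\underline v)=x$, establishing $\pi[\Sigma]\supset NUH_\chi^\#$.

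The main obstacle is promoting this to $\pi[\Sigma^\#]\supset NUH_\chi^\#$, i.e.\ showing that the produced chain $\underline v$ actually lies in $\Sigma^\#$ (some vertex repeats infinitely often in both the positive and negative directions). The definition of $NUH_\chi^\#$ enforces $\limsup_{n\to\pm\infty}q_\epsilon(f^n(x))>0$; by construction of the chain in Proposition \ref{prop131} this forces $\limsup_{n\to\pm\infty}(p_n^s\wedge p_n^u)>0$, and then the subordination lemma cited just before Proposition \ref{prop131} gives that $p_n^u=Q_\epsilon(x_n)$ (resp.\ $p_n^s=Q_\epsilon(x_n)$) for infinitely many $n>0$ (resp.\ $n<0$). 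Combined with the discreteness clause in Proposition \ref{discreteness}(1), the set of vertices appearing along such a subsequence is finite, so by the pigeonhole principle some vertex appears infinitely often on each side, giving $\underline v\in\Sigma^\#$. Invariance of $NUH_\chi^\#$ under all $\chi$-hyperbolic measures (Claim in \S1.1.1 together with the tempering kernel lemma) then yields the full-measure statement.
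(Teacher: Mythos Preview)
Your plans for parts (2) and (3) match the paper's proof essentially verbatim, but there is a real gap in your argument for part (1).

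The inclusion $f[V^u((v_i)_{i\leq 0})]\subset V^u((v_i)_{i\leq 1})$ that you claim to obtain by reindexing is false; the containment goes the other way. Proposition~\ref{firstofchapter}(3) gives $f^{-1}[V^u((v_i)_{i\leq 0})]\subset V^u((v_{i-1})_{i\leq 0})$; applying this to $\sigma\underline v$ yields $f^{-1}[V^u((v_i)_{i\leq 1})]\subset V^u((v_i)_{i\leq 0})$, and since $f$ is a bijection this is equivalent to $V^u((v_i)_{i\leq 1})\subset f[V^u((v_i)_{i\leq 0})]$, the reverse of what you wrote. This reflects the content of the Graph Transform (Theorem~\ref{graphtransform}): $f[V^u]$ \emph{contains} the admissible manifold $\mathcal{F}_u(V^u)$, it is not contained in it. Consequently you cannot conclude $f(\pi(\underline v))\in V^u((v_i)_{i\leq 1})$ from $\pi(\underline v)\in V^u((v_i)_{i\leq 0})$ by pushing forward. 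The same obstruction blocks the symmetric attempt (pulling $\pi(\sigma\underline v)$ back to check membership in $V^s((v_i)_{i\geq 0})$), so there is no way to make this work using only Proposition~\ref{firstofchapter}(3).

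The paper sidesteps this by using Proposition~\ref{firstofchapter}(4) instead: it first shows that $z=\pi(\underline v)$ is characterized as the unique point with $f^k(z)\in\psi_{x_k}[R_{Q_\epsilon(x_k)}(0)]$ for all $k\in\mathbb{Z}$ (the $\subset$ direction of (4) gives membership in both $V^s$ and $V^u$, and Proposition~\ref{firstbefore} gives uniqueness). Then $f(z)$ trivially satisfies $f^k(f(z))=f^{k+1}(z)\in\psi_{x_{k+1}}[R_{Q_\epsilon(x_{k+1})}(0)]$ for all $k$, which is exactly the characterizing condition for $\pi(\sigma\underline v)$. This two-sided dynamical characterization is the missing idea in your Part~1.
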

\begin{proof}
\textit{Part 1}: $\pi$ is well-defined thanks to proposition \ref{firstbefore}. Now, write $v_i=\psi_{x_i}^{p_i^s,p_i^u}$ and $z=\pi((\underline v))$. We claim that $f^k(z)\in\psi_{x_k}[R_{Q_\epsilon(x_k)}(0)]\text{  }(k\in\mathbb{Z})$. For $k=0$ this is true because $z\in V^s((v_i)_{i\geq0})$, and this is an $s$-admissible manifold in $\psi_{x_0}^{p_0^s,p_0^u}$. For $k>0$ we use the previous proposition (part 3) to see that $$f^k(z)\in f^k[V^s((v_i)_{i\geq0})]\subset V^s((v_{i+k})_{i\geq0}).$$
Since $V^s((v_{i+k})_{i\geq0})$ is an $s$-admissible manifold in $\psi_{x_k}^{p_k^s,p_k^u}$,  $f^k(z)\in\psi_{x_k}[R_{Q_\epsilon(x_k)}(0)]$. The case $k<0$ can be handled the same way, using $V^u((v_i)_{i\leq0})$. Thus $z$ satisfies: $$f^k(z)\in\psi_{x_k}[R_{Q_\epsilon(x_k)}(0)]\text{  }\text{ for all }k\in\mathbb{Z}.$$
Any point which satisfies this must be $z$, since by the previous proposition (part 4) it must lie in $V^u((v_i)_{i\leq0})\cap V^s((v_i)_{i\geq0})$, and this equation characterizes $\pi(\underline v)=z$. Hence: $$f^k(f(\pi(\underline v)))=f^{k+1}(\pi(\underline v))\in\psi_{x_{k+1}}^{p_{k+1}^s,p_{k+1}^u}\text{  }\forall k\in\mathbb{Z}$$ and this is the condition that characterizes $\pi(\sigma\underline v)$.

\medskip
\textit{Part 2}: We saw that $\underline v\mapsto V^u((v_i)_{i\leq0})$ and $\underline v\mapsto V^s((v_i)_{i\geq0})$ are H\"older continuous (previous proposition). Since the intersection of an $s$-admissible manifold and a $u$-admissible manifold is a Lipschitz function of those manifolds (proposition \ref{firstbefore}), $\pi$ is also H\"older continuous.

\medskip
\textit{Part 3}: We prove $\pi[\Sigma]\supset NUH_\chi^\#(f)$: Suppose $x\in NUH_\chi^\#(f)$, then by proposition \ref{prop131} there exist $\psi_{x_k}^{p_k^s,p_k^u}\in\mathcal{V}$ s.t. $\psi_{x_k}^{p_k^s,p_k^u}\rightarrow\psi_{x_{k+1}}^{p_{k+1}^s,p_{k+1}^u}$ for all $k$, and s.t. $\psi_{x_k}^{p_k^s\wedge p_k^u}$ $\epsilon-$overlaps $\psi_{f^k(x)}^{p_k^s\wedge p_k^u}$ for all $k\in\mathbb{Z}$. This implies $$f^k(x)=\psi_{f^k(x)}(0)\in\psi_{f^k(x)}[R_{p_k^s\wedge p_k^u}(0)]\subset\psi_{f^k(x)}[R_{Q_\epsilon(x_k)}(0)].$$
Then $x$ satisfies $f^k(x)\in\psi_{x_k}[R_{Q_\epsilon(x_k)}(0)]\text{  }(k\in\mathbb{Z})$. It follows that $x=\pi(\underline v)$ with $\underline v=(\psi_{x_i}^{p_i^s,p_i^u})_{i\in\mathbb{Z}}$. By lemma \ref{subordinatedchain} applied to $\{Q_\epsilon(f^n(x))\}_{n\in\mathbb{Z}}$ and $\{q_\epsilon(f^n(x))\}_{n\in\mathbb{Z}}$, there exists a sequence $\{(p^s_i,p^u_i)\}_{i\in\mathbb{Z}}$ $\epsilon$-subordinated to $\{Q_\epsilon(f^i(x))\}_{i\in\mathbb{Z}}$ s.t. $p^s_i\wedge p^u_i\geq q_\epsilon(f^i(x))$ for all $i\in\mathbb{Z}$
.
By the definition of $NUH_\chi^\#(f)$ there exists a sequence $i_k,j_k\uparrow\infty$ for which $p_{i_k}^s\wedge p_{i_k}^u\geq e^{-\frac{\epsilon}{3}}q_\epsilon(f^{i_k}(x))$ and $p_{-j_k}^s\wedge p_{-j_k}^u\geq e^{-\frac{\epsilon}{3}}q_\epsilon(f^{-j_k}(x))$ are bounded away from zero (see proposition \ref{subordinatedchain}). By the discreteness property of $\mathcal{A}$ (proposition  \ref{discreteness}), $\psi_{x_i}^{p_i^s,p_i^u}$ must repeat some symbol infinitely often in the past, and some symbol (possibly a different one) infinitely often in the future. Thus the above actually proves that $$\pi[\Sigma^\#]\supset NUH_\chi^\#(f),$$
where $\Sigma^\#=\{\underline v\in\Sigma: \exists v,w\in\mathcal{V},n_k,m_k\uparrow\infty\text{ s.t. }v_{n_k}=v,v_{-m_k}=w\}$.
\end{proof}
\subsubsection{The relevant part of the extension}
\begin{definition} A double chart $v=\psi_x^{p^s,p^u}$ is called {\em relevant} iff there exists a chain $(v_i)_{i\in\mathbb{Z}}$ s.t. $v_0=v$ and $\pi(\underline v)\in NUH_\chi^\#(f)$. A double chart which is not relevant is called {\em irrelevant}.
\end{definition}
\begin{definition}\label{sigmarel} {\em The relevant part of $\Sigma$} is $\Sigma_{rel}:=\{\underline v\in\Sigma: v_i\text{ is relevant for all }i\}$.

A chain $\underline{u}\in \Sigma_{rel}$ is called {\em ``regular"} if $\underline{u}$ also belongs to $\Sigma^\#$.
\end{definition}
\begin{prop}\label{sigmarelprop}
The previous theorem holds with $\Sigma_{rel}$ replacing $\Sigma$.
\end{prop}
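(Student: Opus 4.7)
The plan is to observe that $\Sigma_{rel}$ is already a subshift of $\Sigma$ (of the form required by the first page of the paper), and that all three conclusions of Theorem \ref{DefOfPi} are inherited from $\Sigma$, with the only nontrivial point being that the chains constructed in the proof of Theorem \ref{DefOfPi}(3) actually live inside $\Sigma_{rel}$.

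\textbf{Step 1 (subshift structure).} First I would check that $\Sigma_{rel}$ corresponds to a subgraph $\mathcal{G}_{rel}\subset\mathcal{G}$, where every vertex has at least one incoming and one outgoing edge. Let $\mathcal{V}_{rel}$ be the set of relevant double charts, equipped with the edges of $\mathcal{G}$ restricted to $\mathcal{V}_{rel}\times\mathcal{V}_{rel}$. Given $v\in\mathcal{V}_{rel}$, pick a chain $\underline v=(v_i)_{i\in\mathbb{Z}}$ with $v_0=v$ and $x:=\pi(\underline v)\in NUH_\chi^\#(f)$. Using Theorem \ref{DefOfPi}(1), $\pi(\sigma^k\underline v)=f^k(x)$. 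The set $NUH_\chi^\#(f)$ is $f$-invariant: Lyapunov regularity and the sign pattern of the Lyapunov spectrum are preserved by $f$; membership in $NUH_\chi^*$ is preserved after intersecting with the $f$-invariant full measure set provided by lemma \ref{aaronson}; and the conditions $\limsup_{n\to\pm\infty}q_\epsilon(f^n(\cdot))>0$ are manifestly shift-invariant. Hence $f^k(x)\in NUH_\chi^\#(f)$ for all $k$, which means that $\sigma^k(\underline v)$ witnesses that $v_k$ is relevant. In particular $v_{-1},v_1\in\mathcal{V}_{rel}$, so $v$ has an incoming and outgoing edge inside $\mathcal{G}_{rel}$.

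\textbf{Step 2 (parts (1) and (2)).} Parts (1) and (2) of Theorem \ref{DefOfPi} descend to $\Sigma_{rel}$ by mere restriction: $\pi|_{\Sigma_{rel}}$ is well-defined and H\"older continuous because $\pi$ already is on the ambient space $\Sigma$, and the intertwining $\pi\circ\sigma=f\circ\pi$ is preserved because $\Sigma_{rel}$ is $\sigma$-invariant (Step 1 shows that shifting a chain of relevant vertices produces a chain of relevant vertices).

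\textbf{Step 3 (part (3)).} The one thing that requires an argument is the inclusion $\pi[\Sigma_{rel}^\#]\supset NUH_\chi^\#(f)$. Given $x\in NUH_\chi^\#(f)$, Theorem \ref{DefOfPi}(3) already produced a chain $\underline v=(\psi_{x_i}^{p_i^s,p_i^u})_{i\in\mathbb{Z}}\in\Sigma^\#$ with $\pi(\underline v)=x$. By Step 1, each coordinate $v_i$ is relevant (since $\pi(\sigma^i\underline v)=f^i(x)\in NUH_\chi^\#(f)$), so $\underline v\in\Sigma_{rel}$. The $\Sigma^\#$ condition gives symbols $v',w'\in\mathcal{V}$ repeating infinitely often in the future and past along $\underline v$; these repeating symbols automatically lie in $\mathcal{V}_{rel}$, so $\underline v\in\Sigma_{rel}^\#$ and $x\in\pi[\Sigma_{rel}^\#]$. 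Thus $\pi[\Sigma_{rel}]\supset\pi[\Sigma_{rel}^\#]\supset NUH_\chi^\#(f)$, completing the proof.

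The only mildly delicate point is the $f$-invariance of $NUH_\chi^\#(f)$ used in Step 1. I expect this to be the main (minor) obstacle: one must either verify it pointwise — which, if $NUH_\chi^*$ is not manifestly $f$-invariant from the definition, can be arranged by replacing it with the $f$-invariant full measure subset obtained by applying lemma \ref{aaronson} simultaneously to all iterates of $f$ — or, alternatively, observe that since both $NUH_\chi^\#$ and its $f$-invariant core agree on every hyperbolic invariant measure, the statements of theorems \ref{t4.1.1}--\ref{t4.1.3} are unaffected.
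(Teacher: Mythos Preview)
Your proposal is correct and follows essentially the same approach as the paper: parts (1) and (2) are immediate by restriction, and for part (3) you use the $f$-invariance of $NUH_\chi^\#(f)$ together with $\pi\circ\sigma=f\circ\pi$ to conclude that every coordinate of the chain produced by Theorem \ref{DefOfPi}(3) is relevant. The paper's proof is terser (it omits your Step 1 on the subshift structure and simply asserts $f$-invariance of $NUH_\chi^\#(f)$ without your hedging), but the substance is identical.
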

\begin{proof} All the properties of $\pi:\Sigma_{rel}\rightarrow M$ are obvious, besides the statement that $\pi[\Sigma_{rel}^\#]\supset NUH_\chi^\#(f)$, where $\Sigma_{rel}^\#=\Sigma^\#\cap\Sigma_{rel}$: Suppose $p\in NUH_\chi^\#(f)$, then the proof of the last theorem shows that $\exists\underline v\in\Sigma^\#$ s.t. $\pi(\underline v)=p$. Since $NUH_\chi^\#(f)$ is $f$-invariant and $f\circ\pi=\pi\circ\sigma$ then $\pi(\sigma^i(\underline v))=f^i(p)\in NUH_\chi^\#(f)$. So $v_i$ is relevant for all $i\in\mathbb{Z}$. It follows that $v\in\Sigma_{rel}^\#$.
\end{proof}
\noindent\textbf{Summary up to here}
\begin{itemize}
    \item We defined $\epsilon$-chains (definition \ref{epsilonchains}).
    \item We proved the shadowing lemma: for every chain $v=(\psi^{p_i^s,p_i^u}_{x_i})_{i\in\mathbb{Z}}$ there exists $x\in M$ s.t. $f^i(x)\in \psi_{x_i}[R_{p^s_i\wedge p^u_i}(0)]$ $(i\in\mathbb{Z})$ [proof of theorem \ref{DefOfPi}].
    \item We found a countable set of charts $\mathcal{A}$ s.t. the countable Markov shift $$\Sigma=\{v\in \mathcal{A}^\mathbb{Z}: v\text{ is an }\epsilon-\text{chain}\}$$ is a Markov extension of a set containing $NUH_\chi^\#$ (theorem \ref{DefOfPi});
    But the coding $\pi:\Sigma\rightarrow M$ is not finite-to-one.
\end{itemize}
Along the way we saw that if $x$ is shadowed by $v=(v_i)_{i\in\mathbb{Z}}$ then $V^s((v_i)_{i\geq0}),V^u((v_i)_{i\leq0})$ are local stable/unstable manifolds of $x$. (proposition \ref{firstofchapter}).

\section{Regular chains which shadow the same orbit are close}\label{chapter333}
\subsection{The inverse problem for regular chains}
Let $\Sigma_{rel}^\#:=\Sigma^\#\cap\Sigma_{rel}$ (see definition \ref{sigmarel} and the proof of  proposition \ref{sigmarelprop}). The aim of this part is to show that the map $\pi:\Sigma_{rel}^\#\rightarrow NUH_\chi$ from theorem \ref{DefOfPi}, is ``almost invertible": if $\pi((\psi_{x_i}^{p_i^s,p_i^u})_{i\in\mathbb{Z}})=\pi((\psi_{y_i}^{q_i^s,q_i^u})_{i\in\mathbb{Z}})$, then $\forall i$: $x_i\approx y_i$, $p^{s/u}_i\approx q^{s/u}_i$ and $C_\chi(x_i)\approx C_\chi(y_i)$, where $\approx$ means that the respective parameters belong to the same compact sets. The compact sets can be as small as we wish by choosing small enough $\epsilon$. The discretization we constructed in \textsection 1.2.3 in fact allows these compact sets to contain only a finite number of such possible parameters for our charts. This does not mean that $\pi$ is finite-to-one, but it will allow us to construct a finite-to-one coding later. 
\subsubsection{Comparing orbits of tangent vectors}
\begin{lemma}\label{firstchapter2}
Let $(\psi_{x_i}^{p_i^s,p_i^u})_{i\in\mathbb{Z}},(\psi_{y_i}^{q_i^s,q_i^u})_{i\in\mathbb{Z}}$ be two chains s.t. $\pi((\psi_{x_i}^{p_i^s,p_i^u})_{i\in\mathbb{Z}})=\pi((\psi_{y_i}^{q_i^s,q_i^u})_{i\in\mathbb{Z}})=p$, then $d(x_i,y_i)<\frac{\sqrt{d}}{50}\max\{p_i^s\wedge p_i^u,q_i^s\wedge q_i^u\}$.
\end{lemma}
\begin{proof}$f^i(p)$ is the intersection of a $u$-admissible manifold and an $s$-admissible manifold in $\psi_{x_i}^{p_i^s,p_i^u}$, therefore (by proposition \ref{firstbefore}) $f^i(p)=\psi_{x_i}(v_i,w_i)$ where $|v_i|_\infty,|w_i|_\infty<10^{-2}(p_i^s\wedge p_i^u)$. $d(f^i(p),x_i)=d(\exp_{x_i}(C_\chi(x_i)(v_i,w_i)),x_i)=|C_\chi(x_i)(v_i,w_i)|_2\leq \sqrt{d}|(v_i,w_i)|_\infty\leq\frac{\sqrt{d}}{100}(p_i^s\wedge p_i^u)$. Similarly $d(f^i(x),y_i)<\frac{\sqrt{d}}{100}(q_i^s\wedge q_i^u)$. It follows that $d(x_i,y_i)<\frac{\sqrt{d}}{50}\max\{q_i^s\wedge q_i^u,p_i^s\wedge p_i^u\}$.
\end{proof}
The following definitions are taken from \cite{Sarig}.
\begin{definition}
Let $V^u$ be a $u$-admissible manifold in the double chart $\psi_x^{p^s,p^u}$. We say that $V^u$ {\em stays in windows} if there exists a negative chain $(\psi_{x_i}^{p_i^s,p_i^u})_{i\leq0}$ with $\psi_{x_0}^{p_0^s,p_0^u}=\psi_x^{p^s,p^u}$ and $u$-admissible manifolds $W_i^u$ in $\psi_{x_i}^{p_i^s,p_i^u}$ s.t. $f^{-|i|}[V^u]\subset W_i^u$ for all $i\leq0$.
\end{definition}
\begin{definition}
Let $V^s$ be an $s$-admissible manifold in the double chart $\psi_x^{p^s,p^u}$. We say that $V^s$ {\em stays in windows} if there exists a positive chain $(\psi_{x_i}^{p_i^s,p_i^u})_{i\geq0}$ with $\psi_{x_0}^{p_0^s,p_0^u}=\psi_x^{p^s,p^u}$ and $s$-admissible manifolds $W_i^s$ in $\psi_{x_i}^{p_i^s,p_i^u}$ s.t. $f^i[V_i^u]\subset W_i^s$ for all $i\geq0$.
\end{definition}

Remark: If $\underline v$ is a chain, then $V_i^u:=V^u((v_k)_{k\leq i})$ and $V_i^s:=V^s((v_k)_{k\geq i})$ stay in windows, since $f^{-k}[V_i^u]\subset V_{i-k}^u$ and $f^{k}[V_i^s]\subset V_{i+k}^s$ for all $k\geq0$ (by proposition  \ref{firstofchapter}).
\begin{prop}\label{Lambda} The following holds for all $\epsilon$ small enough: Let $V^s$ be an admissible $s$-manifold in $\psi_x^{p^s,p^u}$, and suppose $V^s$ stays in windows.
\begin{enumerate}
\item For every $y,z\in V^s$: $d(f^k(y),f^k(z))<4e^{-\frac{1}{2}k\chi}$ for all $k\geq0$.
\item For every $y\in V^s$ and $u\in T_yV^s(1)$: $|d_yf^ku|\leq4e^{-k\frac{2\chi}{3}}\|C_\chi(x)^{-1}\|$ for all $k\geq0$.

\noindent Recall: $T_yV^s(1)$ is the unit ball in $T_yV^s$, w.r.t the Riemannian norm.
\item For $y\in V^s$, let $\omega_s(y)$ be a normalized volume form of $T_yV^s$. For all $y,z\in V^s$
,  $\forall k\geq0$:
$$|\log|d_yf^k\omega_s(y)|-\log|d_zf^k \omega_s(z)||<\epsilon d(y,z)^{\beta/4},$$
\end{enumerate}
 
\noindent where $|d_yf^k\omega_s(y)|=\bigl|\det d_yf^k|_{T_yV^s}\bigr|,|d_zf^k\omega_s(z)|=\bigl|\det d_zf^k|_{T_zV^s}\bigr|$. The symmetric statement holds for $u$-admissible manifolds which stay in windows, for ``$s$" replaced by ``$u$" and $f$ by $f^{-1}$.
\end{prop}
\noindent\textbf{Remark:} Part (3) is not used in this work, but we plan to use it in future projects.
\begin{proof}
For the proofs of parts (1) and (2), see \cite[proposition~6.3(1),(2)]{Sarig}.

Part(3): Suppose $V^s$ is an $s$-admissible manifold in $\psi_{x}^{p^s,p^u}$, which stays in windows, then there is a positive chain $(\psi_{x_i}^{p_i^s,p_i^u})_{i\geq0}$ s.t. $\psi_{x_0}^{p_0^s,p_0^u}=\psi_x^{p^s,p^u}$, and there are $s$-admissible manifolds $W_i^s$ in $\psi_{x_i}^{p_i^s,p_i^u}$ s.t. $f^i[V^s]\subset W_i^s$ for all $i\geq0$. We write: \begin{itemize}
\item $V^s=\{(t,F_0(t)): |t|_\infty\leq p^s\}$
\item $W_i^s=\psi_{x_i}[\{(t,F_i(t)):|t|_\infty\leq p_i^s\}]$
\end{itemize} Admissibility means that $\|d_\cdot F_i\|_{\beta/3}\leq\frac{1}{2},\|d_0F_i\|\leq\frac{1}{2}(p_i^s\wedge p_i^u)^{\beta/3}$, $|F_i(0)|_\infty\leq10^{-3}(p_i^s\wedge p_i^u)$ and $\mathrm{Lip}(F_i)<\epsilon$ ($\because$ definition \ref{admissible}). 


Since $V^s$ stays in windows: $f^k[V^s]\subset\psi_{x_k}[R_{Q_\epsilon(x_k)}(0)]$ for all $k\geq0$. Therefore $\forall y,z\in V^s$ one can write $f^k(y)=\psi_{x_k}(y_k')$ and $f^k(z)=\psi_{x_k}(z_k')$, where $y_k':=(y_k,F_k(y_k)),z_k':=(z_k,F_k(z_k))$ belong to $R_{Q_\epsilon(x_k)}(0)$. For every $k$: $y_{k+1}'=f_{x_kx_{k+1}}(y_k')$ and $z_{k+1}'=f_{x_kx_{k+1}}(z_k')$. (Reminder: $f_{x_kx_{k+1}}=\psi_{x_{k+1}}^{-1}\circ f\circ\psi_{x_k}$).
By a small calculation which appears in \cite[proposition~6.3(1)]{Sarig},

\begin{align}\label{calcin6.3.1}  
|y_k'-z_k'|_\infty\leq&e^{-\frac{1}{2}k\chi}|y_0'-z_0'|_\infty=e^{-\frac{1}{2}k\chi}|\psi_x^{-1}(y)-\psi_x^{-1}(z)|\\
\leq&e^{-\frac{1}{2}k\chi}2\|C_\chi^{-1}(x)\|d(y,z)=2\|C_\chi^{-1}(x)\|e^{-\frac{1}{2}k\chi}\cdot d(y,z), \text{ }\forall k\geq0. \nonumber
\end{align}
In addition $f^k(y)=\psi_{x_k}(y_k'),f^k(z)=\psi_{x_k}(z_k')$, whence \begin{equation}\label{chromecastsonic}d(f^k(y),f^k(z))\leq \mathrm{Lip}(\psi_{x_k})|y_k'-z_k'|\leq4\|C_\chi^{-1}(x)\|e^{-\frac{1}{2}k\chi}\cdot d(y,z).\end{equation}
Fix $n\in\mathbb{N}$, and denote $A:=\Big|\log|d_yf^n\omega_s(y)|-\log|d_zf^n\omega_s(z)|\Big|$. For every $p\in V^s
$: $$d_pf^n\omega_s(p)=d_{f(p)}f^{n-1}d_pf(\omega_s(p))=|d_pf\omega_s(p)|\cdot d_{f(p)}f^{n-1}\omega_s(f(p))=\cdots=\prod_{k=1}^{n}|d_{f^k(p)}f\omega_s(f^k(p))|\cdot \omega_s(f^{n}(p)),$$ where $\omega_s(f^k(p)):=\frac{d_pf^k\omega_s(p)}{|d_pf^k\omega_s(p)|}$ is a normalized volume form of $T_{f^k(p)}f^k[V^s]$. Thus: $$A:=\Big|\log\frac{|d_yf^n\omega_s(y)|}{|d_zf^n\omega_s(z)|}\Big|\leq\sum_{k=1}^{n}\Big|\log|d_{f^k(y)}f\omega_s(f^k(y))|-\log|d_{f^k(z)}f\omega_s(f^k(z))|\Big|.$$

In section 1.2 we have covered $M$ by a finite collection $\mathcal{D}$ of open sets $D$, equipped with a smooth map $\theta_D:TD\rightarrow\mathbb{R}^d$ s.t. $\theta_D|_{T_xM}:T_xM\rightarrow\mathbb{R}^d$ is an isometry and $\nu_x:=\theta_D^{-1}:\mathbb{R}^d\rightarrow TM$ has the property that $(x,v)\mapsto\nu_x(v)$ is Lipschitz on $D\times B_1(0)$. Since $f$ is $C^{1+\beta}$ and $M$ is compact, $d_pfv$ depends in a $\beta$-H\"older way on $p$, and in a Lipschitz way on $v$. It follows that there exists a constant $H_0>1$ s.t. for every $D,D'\in\mathcal{D}; y',z'\in D,\text{ }f(y'),f(z')\in D'; u,v\in\mathbb{R}^d(1)$: $\Big|\Theta_{D'}d_{y'}f\nu_{y'}(u)-\Theta_{D'}d_{z'}f\nu_{z'}(v)\Big|<H_0(d(y',z')^\beta+|u-v|_2)$. Choose $D_k\in\mathcal{D}$ s.t. $D_k\ni f^k(y),f^k(z)$. Such sets exist provided $\epsilon$ is much smaller than the Lebesgue number of $\mathcal{D}$, since by part 1 $d(f^k(y),f^k(z))<4\epsilon$. By writing $Id=\Theta_{D_k}\circ\nu_{f^k(y)}$ and $Id=\Theta_{D_k}\circ\nu_{f^k(z)}$, and recalling that $\Theta_{k}$ are isometries, we see that:
\begin{align}\label{NewA}
A\leq&\sum_{k=1}^n\Big|\log|\Theta_{D_{k+1}}d_{f^k(y)}f\nu_{f^k(y)}\Theta_{D_k}\omega_s(f^k(y))|-\log|\Theta_{D_{k+1}}d_{f^k(z)}f\nu_{f^k(z)}\Theta_{D_k}\omega_s(f^k(z))|\Big|\\
\leq& \sum_{k=1}^nM_f^d\Big|\Theta_{D_{k+1}}d_{f^k(y)}f\nu_{f^k(y)}\Theta_{D_k}\omega_s(f^k(y))-\Theta_{D_{k+1}}d_{f^k(z)}f\nu_{f^k(z)}\Theta_{D_k}\omega_s(f^k(z))\Big|,\nonumber
\end{align}
by the inequalities $|\log a-\log b|\leq\frac{|a-b|}{\min\{a,b\}}$ and the following estimate:
\begin{align*}\inf_{p\in\{y,z\},k\geq0}|\Theta_{D_{k+1}}d_{f^k(p)}f\nu_{f^k(p)}\Theta_{D_k}\omega_s(f^k(p))|=&\inf_{p\in\{y,z\}}|\mathrm{Jac}(d_pf|_{T_pV^s})|\\
=&\inf_{p\in\{y,z\}}\mathrm{Jac}(\sqrt{(d_pf|_{T_pV^s})^td_pf|_{T_pV^s}})\geq M_f^{-d}.
\end{align*}
Let $\omega_1$ be the standard volume form for $\mathbb{R}^{s(x)}$, and consider the map $t\overset{G_k}{\mapsto}(t,F_k(t))$, $G_k:R_{p^s_k}(0)(\subset \mathbb{R}^{s(x)})\rightarrow \mathbb{R}^d$. Then $$\omega_s(f^k(y))=\pm\frac{d_{y_k}(\psi_{x_k}\circ G_k)\omega_1}{|d_{y_k}(\psi_{x_k}\circ G_k)\omega_1|},\text{ }\omega_s(f^k(z))=\pm\frac{d_{z_k}(\psi_{x_k}\circ G_k)\omega_1}{|d_{z_k}(\psi_{x_k}\circ G_k)\omega_1|},$$
where the $\pm$ notation means the equality holds for one of the signs. Since we are only interested in the absolute value of the volume forms, we can assume WLOG that the sign is identical for both of the volume forms. $\forall u\in \mathbb{R}^d(1)$, $|d_{y_k/z_k}(\psi_{x_k}\circ G_k)u|\geq\frac{1}{2\cdot\|C_\chi^{-1}(x_k)\|}$, then $|d_{y_k/z_k}(\psi_{x_k}\circ G_k)\omega_1|\geq\frac{1}{(2\|C_\chi^{-1}(x_k)\|)^{s(x)}}$. 
Recalling that $\Theta_{D_k}$ are isometries, we get
\begin{align}\label{shezifhadash}
    |\Theta_{D_k}\omega_s(f^k(y))-\Theta_{D_k}\omega_s(f^k(z))|&=\Big|\frac{\Theta_{D_k}d_{y_k}(\psi_{x_k}\circ G_k)\omega_1}{|\Theta_{D_k}d_{y_k}(\psi_{x_k}\circ G_k)\omega_1|}-\frac{\Theta_{D_k}d_{z_k}(\psi_{x_k}\circ G_k)\omega_1}{|\Theta_{D_k}d_{z_k}(\psi_{x_k}\circ G_k)\omega_1|}\Big|\\
    &\leq(2\|C_\chi^{-1}(x_k)\|)^{s(x)}\Big|\Theta_{D_k}d_{y_k}(\psi_{x_k}\circ G_k)\omega_1-\Theta_{D_k}d_{z_k}(\psi_{x_k}\circ G_k)\omega_1\Big|.\nonumber
\end{align}
Substituting this back in \eqref{NewA} yields $\Big($ recall Hadamard's inequality, which asserts that if a volume form $\widetilde{\omega}$ is the wedge product $\widetilde{\omega}=u_1\wedge...\wedge u_l$, then $|\widetilde{\omega}|\leq\prod_{i=1}^l|u_i|_2 \Big)$,
\begin{align}\label{wubbalubba}
    A\leq & M_f^d\sum_{k=1}^n\Big|\Theta_{D_{k+1}}d_{f^k(y)}f\nu_{f^k(y)}\Theta_{D_k}\omega_s(f^k(y))-\Theta_{D_{k+1}}d_{f^k(z)}f\nu_{f^k(z)}\Theta_{D_k}\omega_s(f^k(y))\\
    + & \Theta_{D_{k+1}}d_{f^k(z)}f\nu_{f^k(z)}\Theta_{D_k}\omega_s(f^k(y))-\Theta_{D_{k+1}}d_{f^k(z)}f\nu_{f^k(z)}\Theta_{D_k}\omega_s(f^k(z))\Big|\nonumber\\
    \leq & M_f^d\sum_{k=1}^n\|d_{f^k(z)}f\|^{s(x)}\cdot|\Theta_{D_k}\omega_s(f^k(y))-\Theta_{D_k}\omega_s(f^k(z))|+\|\Theta_{D_{k+1}}d_{f^k(y)}f\nu_{f^k(y)}-\Theta_{D_{k+1}}d_{f^k(z)}f\nu_{f^k(z)}\|\nonumber\\
    \leq & M_f^d\cdot \sum_{k=1}^n\Big( M_f^d\cdot (2\|C_\chi^{-1}(x_k)\|)^{s(x)}\cdot\Big|\Theta_{D_k}d_{y_k}(\psi_{x_k}\circ G_k)\omega_1-\Theta_{D_k}d_{z_k}(\psi_{x_k}\circ G_k)\omega_1\Big|+H_0d(f^k(y),f^k(z))^\beta\Big)\nonumber\\
    \leq& M_f^d\cdot \sum_{k=1}^n\Big( M_f^d\cdot (2\|C_\chi^{-1}(x_k)\|)^{s(x)}\cdot[\text{H\"ol}_{\frac{\beta}{3}}(d_\cdot (\psi_{x_k}\circ G_k))|y_k-z_k|^\frac{\beta}{3}]^{s(x)}+H_0d(f^k(y),f^k(z))^\beta\Big)\nonumber\\
    \leq& M_f^d\cdot \sum_{k=1}^n\Big( M_f^d\cdot (2\|C_\chi^{-1}(x_k)\|)^{s(x)}\cdot(2\cdot\frac{1}{2})^{s(x)}(|y_k-z_k|^\frac{\beta}{3.5}\cdot|y_k'-z_k'|^{\frac{\beta}{21}\cdot s(x)})+H_0d(f^k(y),f^k(z))^\beta\Big)\nonumber\\
    \leq&\sum_{k=1}^n M_f^{2d}2^{2d}(\|C_\chi^{-1}(x_k)\|\cdot (p_k^s)^\frac{\beta}{21})^{s(x)}\cdot [4\|C_\chi^{-1}(x)\|e^{\frac{-\chi}{2}k}d(y,z)]^\frac{\beta}{3.5}+M_f^dH_0d(f^k(y),f^k(z))^\beta\text{  }(\because\eqref{calcin6.3.1}),\nonumber
\end{align}
where H\"ol$_\frac{\beta}{3}(d_\cdot G_k$)=H\"ol$_\frac{\beta}{3}(d_\cdot F_k)\leq\frac{1}{2}$, and Lip($d_\cdot\psi_{x_k})\leq2$ since $\|C_\chi(x_k)\|\leq1$. The second inequality in \eqref{wubbalubba} uses the fact that $M_f>1$, and the penultimate inequality uses the fact that $|y_k-z_k|<1$. 

By the choice of $Q_\epsilon(\cdot)$ (definition \ref{balls}) and the inequality $p^s_k\leq Q_\epsilon(x_k)$, we have $\|C_\chi^{-1}(x_k)\|\cdot (p_k^s)^\frac{\beta}{21}<\epsilon^2$ for all $k\geq0$. So for small enough $\epsilon$,
\begin{align*}
    A\leq& \sum_{k=1}^n\epsilon^\frac{3}{2}\|C_\chi^{-1}(x)\|^\frac{\beta}{3.5}d(y,z)^\frac{\beta}{3.5}e^{-\frac{\chi\beta}{8}k}+M_f^dH_0d(f^k(y),f^k(z))^\beta\\
    \leq&\sum_{k=1}^n\|C_\chi^{-1}(x)\|\Big(\epsilon^\frac{3}{2}d(y,z)^\frac{\beta}{3.5}e^{-\frac{\chi\beta}{8}k}+M_f^dH_0\cdot4 e^{-\frac{\chi\beta}{2}k}d(y,z)^\beta\Big) \text{ (}\because\eqref{chromecastsonic})\\
    \leq&2\epsilon^\frac{3}{2}\|C_\chi^{-1}(x)\|d(y,z)^\frac{\beta}{3.5}\sum_{k=1}^ne^{-\frac{\chi\beta}{8}k}\leq2\epsilon^\frac{3}{2}\Big(\sum_{k=1}^\infty e^{-\frac{\chi\beta}{8}k}\Big)\|C_\chi^{-1}(x)\|d(y,z)^\frac{\beta}{3.5}\\
    \leq& \epsilon \|C_\chi^{-1}(x)\|d(y,z)^\frac{\beta}{3.5}
    \leq \epsilon d(y,z)^\frac{\beta}{4}\text{ (for }\epsilon\text{ small enough)}.
\end{align*}
This estimate is uniform in $n$, and part (3) is proven.

\end{proof}

\begin{lemma}\label{RegularOnVs}
Assume $W^s=V^s((v_i)_{i\geq0})$, and $\exists z\in W^s\cap NUH_\chi^\#$, then $\exists C_z<\infty$ s.t. $\forall y\in W^s, \eta\in T_yW^s(1)$, $\sqrt{2\sum_{m=0}^\infty |d_yf^m\eta|^2e^{2\chi m}}\equiv S(y,\eta)\leq C_z$.
\end{lemma}
\begin{proof}
$z\in NUH_\chi^\#$, therefore $\chi_z=\min\{|\chi_i(z)|\}>\chi$ (see definition \ref{chi_z}); 
Consider the Lyapunov change of coordinates $\widetilde{C}_{\chi}(z)$ (definition \ref{chi_z}). By the definition of $NUH_\chi^* (\supset NUH_\chi^\#)$ (definition \ref{chi_z}, claim \ref{tempered}), $\lim_{n\rightarrow\infty}\frac{1}{n}\log\|\widetilde{C}_\chi^{-1}(f^n(z))\|=0$. 
Let $\overline{Q}_\epsilon(x):=\max\{Q\in I_\epsilon: Q\leq \frac{1}{3^\frac{6}{\beta}}\epsilon^\frac{90}{\beta}\|\widetilde{C}_\chi^{-1}(x)\|^\frac{-48}{\beta}\}$ (this is the same as definition \ref{balls}(e), but with $\widetilde{C}_\chi$ replacing $C_\chi$). Then $\lim\limits_{|n|\rightarrow\infty}\frac{1}{n}\log \overline{Q}_\epsilon(f^n(x))=0$ for every $z\in NUH^*$.
Similarly, define $\frac{1}{\widetilde{q}_\epsilon(f^n(z))}:=\frac{1}{\epsilon}\sum_{k\in\mathbb{Z}}e^{-|k|\epsilon}\frac{1}{\overline{Q}_\epsilon(f^{k+n}(z))}$. As in lemma \ref{omega0}(5), $\frac{\widetilde{q}_\epsilon(f^{n+1}(z))}{\widetilde{q}_\epsilon(f^n(z))}=e^{\pm\epsilon}$, and $0<\widetilde{q}_\epsilon(f^n(z))\leq \epsilon \overline{Q}_\epsilon(f^n(z))$ for all $n\geq0$.

Define $\widetilde{\psi}_{f^m(z)}:=\exp_{f^m(z)}\circ\widetilde{C}_\chi(f^m(x))$ ($m\geq0)$. Define also $\widetilde{f}_{f^n(z)}:=\widetilde{\psi}_{f^{n+1}(z)} \circ f\circ\ \widetilde{\psi}_{f^n(z)}$ and recall $\widetilde{D}_\chi(f^n(z))=\widetilde{C}^{-1}_\chi(f^{n+1}(z))\circ d_{f^n(z)} f\circ \widetilde{C}_\chi(f^n(z))$ (definition \ref{chi_z}).
Define $\eta_n:=\delta_z\cdot\widetilde{q}_\epsilon(f^n(z))$ where $\delta_z:=(e^{-\frac{\chi_z+2\chi}{3}}-e^{-\frac{\chi_z+\chi}{2}})^\frac{3}{\beta}$.
By proposition \ref{Lambda}(1), $\forall t\in W^s$ $d(f^n(t),f^n(z))\leq4e^{-n\frac{\chi}{2}}$. Then
\begin{align*}
    |\widetilde{\psi}_{f^n(z)}^{-1}(f^n(t))|&\leq\mathrm{Lip}(\widetilde{\psi}_{f^n(z)}^{-1})\cdot d(f^n(t),f^n(z))\leq 2\|\widetilde{C}_\chi^{-1}(f^n(z))\|\cdot4e^{-n\frac{\chi}{2}}\\
    &\leq\eta_n^2\eta_0^{-2}e^{2n\epsilon}8\|\widetilde{C}_\chi^{-1}(f^n(z))\|e^{-n\frac{\chi}{2}}
    \leq\eta_n\cdot e^{-(\frac{\chi}{2}+2\epsilon)n}\cdot[(8\eta_0^{-2})\cdot (\eta_n\cdot\|\widetilde{C}_\chi^{-1}(f^n(z))\|)]\\
    &\leq\eta_n\cdot e^{-(\frac{\chi}{2}+2\epsilon)n}\cdot[8\eta_0^{-2}],\text{ because }\eta_n\ll\overline{Q}_\epsilon(f^n(z))\ll \|\widetilde{C}_\chi^{-1}(f^n(z))\|^{-1}.
\end{align*}
Hence, for $n_z$ large enough s.t. $e^{-(\frac{\chi}{2}+2\epsilon)n_z}\cdot[8\eta_0^{-2}]<1$, we get $\forall n>n_z$ $|\widetilde{\psi}_{f^n(z)}^{-1}(f^n(t))|\leq \eta_n$. So $\forall n>n_z$, $f^{n}[W^s]\subset \widetilde{\psi}_{f^n(z)}[R_{\eta_n}(0)]$. 

\medskip
\noindent Notice that $\widetilde{C}_\chi(z)=C_{\frac{\chi+\chi_z}{2}}(z)$; then the process of the Graph Transform can be done for the positive chain $\{\psi_{f^k(z)}^{\eta_k,\eta_k}\}_{k\geq0}$, as if $\chi$ had the alternative value $\frac{\chi_z+\chi}{2}$ (inequalities in the calculations require $\epsilon$ to be small enough w.r.t $\chi$, whence in particular also w.r.t $\frac{\chi_z+\chi}{2}>\chi$). This process admits stable manifolds $\widetilde{V}^s_n$ in $\widetilde{\psi}_{f^n(z)}^{\eta_n,\eta_n}$ $\forall n\geq0$. Then, $f^{n}[W^s]\subset \widetilde{V}^s_n, n>n_z$. Denote the representing function of $\widetilde{V}^s_n$ by $F_n$.

\medskip
\noindent\underline{Claim:} For any $k\geq0$, $y\in \widetilde{V}^s_{n_z}$, $u\in T_y\widetilde{V}^s_{n_z}(1)$:
$$|d_yf^ku|\leq2e^{-k\frac{2\chi+\chi_z}{3}}\|\widetilde{C}_\chi(f^{n_z}(z))^{-1}\|.$$
This claim is a subtle but crucial improvement of the bounds achieved in proposition \ref{Lambda}(2), as its uniform bound for the exponential rate of contraction is greater than $\chi$.

\medskip
\noindent\underline{Proof:} To simplify notations, we assume WLOG that $n_z=0$ (the proof remains the same), and we write $\widetilde{V}^s\equiv \widetilde{V}^s_{n_z}$ for short. Let $y\in \widetilde{V}^s$ and let $u\in T_y\widetilde{V}^s(1)$ be some unit vector tangent to $\widetilde{V}^s$. $f^k(y)\in \widetilde{W}_k^s\subset\widetilde{\psi}_{f^k(z)}[R_{\eta_k}(0)]$, so $d_yf^ku=d_{y_k'}\widetilde{\psi}_{f^k(z)}\Big(\begin{array}{c}
\xi_k\\
\eta_k\\
\end{array}\Big)$ where $\Big(\begin{array}{c}
\xi_k\\
\eta_k\\
\end{array}\Big)$ is tangent to the graph of $F_k$. Since $Lip(F_k)<\epsilon$, $|\eta_k|\leq\epsilon|\xi_k|$ for all $k$. $$\Big(\begin{array}{c}
\xi_{k+1}\\
\eta_{k+1}\\
\end{array}\Big)=\Big[\begin{pmatrix}D_{s,k}  &   \\  & D_{u,k}
\end{pmatrix}+\begin{pmatrix}d_{y_k'}h_s^{(k)} \\ d_{y_k'}h_u^{(k)}
\end{pmatrix}\Big]\Big(\begin{array}{c}
\xi_k\\
\eta_k\\
\end{array}\Big),$$
where the left terms on the RHS are block matrices as denoted (recall proposition \ref{3.4inomris}). By proposition \ref{3.4inomris} (for the ``new" $\chi\mapsto\frac{\chi+\chi_z}{2}$) we get,
$$|\xi_{k+1}|\leq(e^{-\frac{\chi+\chi_z}{2}}+2\cdot\epsilon\eta_k^\frac{\beta}{3})|\xi_k|\leq (e^{-\frac{\chi+\chi_z}{2}\chi}+\delta_z)|\xi_k|\leq e^{-k\frac{2\chi+\chi_z}{3}}|\xi_0|.$$
Returning to the defining relation $d_yf^ku=d_{y_k'}\widetilde{\psi}_{f^k(z)}\Big(\begin{array}{c}
\xi_k\\
\eta_k\\
\end{array}\Big)$, and recalling that $\|d_\cdot\widetilde{\psi}_{f^k(z)}\|\leq2$  we get
$|d_yf^ku|\leq2e^{-k\frac{2\chi+\chi_z}{3}}\|\widetilde{C}_\chi(z)^{-1}\|$ as wished. QED

This claim concludes our lemma in the following way: $$C_z^2:=\max_{y\in W^s,\eta\in T_yW^s(1)}2\sum_{j=0}^{n_z}|d_yf^j\eta|^2e^{2\chi j}+\sum_{j=n_z+1}^\infty4\|\widetilde{C}_\chi^{-1}(f^{n_z}(z))\|^2M_f^{2n_z}e^{-2\frac{2\chi+\chi_z}{3}j}e^{2\chi j}<\infty.$$
\end{proof}

\noindent\underline{Notice}: In a setup where we are given two double charts $\psi_x^{p^s,p^u}\rightarrow \psi_y^{q^s,q^u}$, if $V^s$ is an $s$-admissible manifold in $\psi_y^{q^s,q^u}$ which stays in windows, then also $\mathcal{F}(V^s)$ is an $s$-admissible manifold in $\psi_x^{p^s,p^u}$ which stays in windows.
For the next lemma, recall definition \ref{scalingfuncs} for $S(\cdot,\cdot)$.

\begin{lemma}\label{boundimprove} Let $\psi_x^{p^s,p^u}\rightarrow \psi_y^{q^s,q^u}$. Let $V^s$ be an $s$-admissible manifold in $\psi_y^{q^s,q^u}$ which stays in windows. Let $\mathcal{F}(V^s)$ be the Graph Transform of $V^s$ w.r.t to the edge $\psi_x^{p^s,p^u}\rightarrow \psi_y^{q^s,q^u}$. Let $q\in f[\mathcal{F}(V^s)]\subset V^s$.
Then
$\exists\pi_x:T_{f^{-1}(q)}\mathcal{F}(V^s)\rightarrow H^s(x),\pi_y:T_qV^s\rightarrow H^s(y)$ invertible linear transformations s.t. $\forall \rho\geq e^{\sqrt{\epsilon}}$: if $\frac{S(q,\lambda)}{S(y,\pi_y \lambda)}\in [\rho^{-1},\rho]$ then
$$\frac{S(f^{-1}(q),\xi)}{S(x,\pi_x\xi)}\in [e^{\frac{1}{6}Q_\epsilon(x)^{\beta/6}}\rho^{-1},e^{-\frac{1}{6}Q_\epsilon(x)^{\beta/6}}\rho],$$
where $\lambda=d_{f^{-1}(q)}f\xi$ and $\xi\in T_{f^{-1}(q)}\mathcal{F}(V^s)$ is arbitrary.
A similar statement holds for the unstable case. We will also see that $\frac{|\pi_x\xi|}{|\xi|}=e^{\pm2 Q_\epsilon(x)^{\beta/4}}$ for all $\xi\in T_{f^{-1}(q)}\mathcal{F}(V^s)$ (similarly for $\pi_y$).
\end{lemma}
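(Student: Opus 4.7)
The plan is to define $\pi_x,\pi_y$ via the representing functions of the two admissible manifolds, verify the norm ratio by Lipschitz comparison, and then read off the $S$-ratio from the one-step recursive identity $S^2(x,v)=2|v|^2+e^{2\chi}S^2(f(x),d_xfv)$ proved inside theorem~\ref{pesinreduction}, feeding the hypothesis in through prop.~\ref{Lambda} and lemma~\ref{overlap}.

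First I would set $\pi_x$ as follows: let $G$ represent $\mathcal{F}(V^s)$ in $\psi_x$, so every $\xi\in T_p\mathcal{F}(V^s)$ has a unique expression $\xi=d_{(0,G(0))}\psi_x(t,d_0G\,t)$ for some $t\in\mathbb{R}^{s(x)}$, and I declare $\pi_x\xi:=d_0\psi_x(t,0)=C_\chi(x)(t,0)\in H^s(x)$; the map $\pi_y$ is defined symmetrically from the representing function $F$ of $V^s$ in $\psi_y$. Both are linear and invertible by dimension count. The norm claim $|\pi_x\xi|/|\xi|=e^{\pm 2Q_\epsilon(x)^{\beta/4}}$ splits into (i) comparing $d_{(0,G(0))}\psi_x$ with $d_0\psi_x$, using smoothness of $\exp_x\circ\nu_x$ together with $|(0,G(0))|_\infty\leq10^{-3}\eta$, and (ii) comparing $(t,d_0G\,t)$ with $(t,0)$, using $\|d_0G\|\leq\tfrac12\eta^{\beta/3}$ from admissibility. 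Both deviations are bounded by a constant multiple of $\eta^{\beta/3}\leq Q_\epsilon(x)^{\beta/3}$, which is $\leq \tfrac12 Q_\epsilon(x)^{\beta/4}$ once $\epsilon$ is small.

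Next I would apply the recursion at the two base points,
\[
S^2(p,\xi)=2|\xi|^2+e^{2\chi}S^2(f(p),d_pf\xi),\qquad S^2(x,\pi_x\xi)=2|\pi_x\xi|^2+e^{2\chi}S^2(f(x),d_xf\pi_x\xi),
\]
and control the second summands by two independent comparisons. Since $f[\mathcal F(V^s)]\subset V^s$ and $V^s$ stays in windows, prop.~\ref{Lambda} applied to $V^s$ yields a linear isomorphism $\tilde\Lambda\colon T_{f(p)}V^s\to T_qV^s$ with $S(f(p),d_pf\xi)=e^{\pm Q_\epsilon(y)^{\beta/4}/3}S(q,\tilde\Lambda d_pf\xi)$; I would then pick the $\Lambda$ supplied by prop.~\ref{Lambda} applied to $\mathcal F(V^s)$ compatibly, so that $\tilde\Lambda d_pf\xi=d_{f^{-1}(q)}f\Lambda\xi=\lambda$. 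This compatibility is forced because both sides are the linearisations, via the representing functions, of the graph-transform identification $\mathcal F(V^s)\to V^s$ between the two points $p,f^{-1}(q)$ and $f(p),q$ respectively. For the second summand on the right, the edge $\psi_x^{p^s,p^u}\to\psi_y^{q^s,q^u}$ gives the $\epsilon$-overlap of $\psi_{f(x)}$ with $\psi_y$, so lemma~\ref{overlap} lets me replace $S(f(x),d_xf\pi_x\xi)$ by $S(y,\pi_y\lambda)$ at a cost of order $Q_\epsilon(x)Q_\epsilon(y)$ (using that $d_xf\pi_x\xi\in H^s(f(x))$ is matched to $\pi_y\lambda\in H^s(y)$ through the overlap of $C_\chi(f(x))$ and $C_\chi(y)$).

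To conclude I would use the convex-combination identity
\[
\frac{S^2(p,\xi)}{S^2(x,\pi_x\xi)}=w\cdot\frac{2|\xi|^2}{2|\pi_x\xi|^2}+(1-w)\cdot\frac{S^2(f(p),d_pf\xi)}{S^2(f(x),d_xf\pi_x\xi)},\qquad w:=\frac{2|\pi_x\xi|^2}{S^2(x,\pi_x\xi)}\in(0,1).
\]
The first ratio is $e^{\pm 4Q_\epsilon(x)^{\beta/4}}$ by paragraph~2, while the second lies in $[\rho^{-2}e^{-E},\rho^2e^E]$ with $E=O(Q_\epsilon^{\beta/4})$ by the previous paragraph. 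Since $\rho\geq e^{\sqrt\epsilon}$, $Q_\epsilon(x)<\epsilon^{3/\beta}$, and $Q_\epsilon^{\beta/6}\gg Q_\epsilon^{\beta/4}$, a short arithmetic argument (using the lower bound $w\geq 2\|C_\chi^{-1}(x)\|^{-2}$ to absorb the remaining loss) squeezes the convex combination into $[\rho^{-2}e^{+\frac13Q_\epsilon(x)^{\beta/6}},\rho^2e^{-\frac13Q_\epsilon(x)^{\beta/6}}]$, and square-rooting gives the stated bound. The hard part will be step 2(b): keeping all of the cross-identifications --- $\tilde\Lambda$ along $V^s$, the overlap between $\psi_{f(x)}$ and $\psi_y$, and the matching of $H^s(f(x))$ with $H^s(y)$ --- aligned so that their combined error is genuinely $O(Q_\epsilon^{\beta/4})$, strictly smaller than the improvement margin $\tfrac13Q_\epsilon^{\beta/6}$; the compatibility choice of $\Lambda$ is what lets this accounting close up.
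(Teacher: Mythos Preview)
Your approach is essentially the paper's: the same definition of $\pi_x,\pi_y$ via the representing functions, the same one--step identity $S^2(z,v)=2|v|^2+e^{2\chi}S^2(f(z),d_zfv)$, and the same improvement mechanism --- the ``$2|\cdot|^2$'' summand carries weight $\ge 2/\|C_\chi^{-1}(x)\|^2$ and has ratio $\approx 1\ll\rho^2$, which pulls the quotient strictly inside $[\rho^{-2},\rho^2]$ by a margin $\asymp\sqrt\epsilon/\|C_\chi^{-1}(x)\|^2\ge 3\,Q_\epsilon(x)^{\beta/6}$ (this inequality is precisely why $Q_\epsilon$ is defined with such a high power of $\|C_\chi^{-1}\|$).

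There is, however, a real gap in your organisation. The paper applies the recursion at the pair $(f^{-1}(q),f^{-1}(y))$ and \emph{defines} the auxiliary vector $\eta:=d_yf^{-1}\pi_y\lambda\in H^s(f^{-1}(y))$, so that $d_{f^{-1}(y)}f\eta=\pi_y\lambda$ \emph{exactly}; the hypothesis $S(q,\lambda)/S(y,\pi_y\lambda)\in[\rho^{-1},\rho]$ then enters the recursion with zero error, and the remaining two factors are estimated separately (the paper's Steps~1--4) to $e^{\pm O(Q^{\beta/6})}$. Your version applies the recursion at $(p,x)$ and is then forced to claim the compatibility $\tilde\Lambda\, d_pf\xi=\lambda=d_{f^{-1}(q)}f\,\Lambda\xi$. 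This is \emph{false} as an identity: $\Lambda$ is the $\psi_x$--chart transport along $\mathcal F(V^s)$, $\tilde\Lambda$ is the $\psi_y$--chart transport along $V^s$, and these do not commute with $df$. You would need an approximate version with error $e^{\pm O(Q^{\beta/4})}$, whose proof is essentially the paper's Steps~1--4; the paper's choice of $\eta$ sidesteps this entirely. A second, smaller point: your ``cost of order $Q_\epsilon(x)Q_\epsilon(y)$'' for the $S(f(x),\cdot)\leftrightarrow S(y,\cdot)$ comparison is too optimistic --- lemma~\ref{overlap} controls $\|C_1^{-1}-C_2^{-1}\|$, but the relevant vectors differ by $O(Q^{\beta/4})$, and after the factor $\|C_\chi^{-1}\|$ the $S$--ratio error is $O(Q^{\beta/6})$, not $O(Q^2)$. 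This is still small enough, but the accounting must be done at that scale.
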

\begin{figure}[h]
\caption{
}
\centerline{
\includegraphics[scale=0.25]{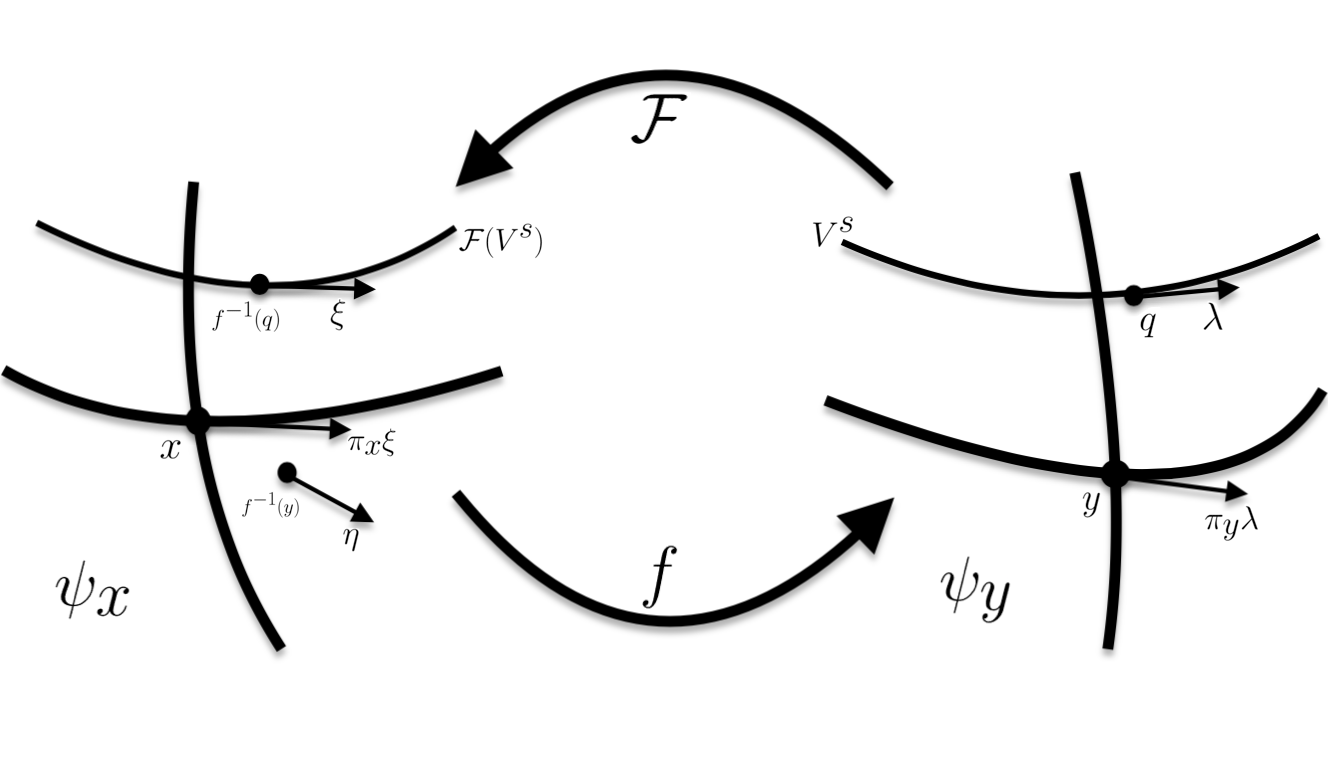}
}
\end{figure}

\begin{proof}  Recall the disks $\mathcal{D}=\{D\}$ and the isometries $\Theta_D:TD\rightarrow\mathbb{R}^d,\nu_a=\Theta_D^{-1}|_{T_aM}, a\in D\subset M$ from definition \ref{isometries}. If $\epsilon$ is smaller than the Lebesgue number of $\mathcal{D}$, then $\exists D$ which contains $x, f^{-1}(q), f^{-1}(y)$. Let $\Theta=\Theta_D$, and set $C_1:=\Theta\circ C_\chi(x),C_2:=\Theta\circ C_\chi(f^{-1}(y))$.
Let $F,G$ denote the representing functions of $V^s,\mathcal{F}(V^s)$, respectively. 

Let $\xi\in T_{f^{-1}(q)}\mathcal{F}(V^s)$. Write $t=\psi_x^{-1}(f^{-1}(q))=(t_s,G(t_s))$, and
$\xi=d_{t}\psi_x\Big(\begin{array}{c}
v\\
d_{t_s}Gv\\
\end{array}\Big)$ for a vector $v\in \mathbb{R}^{s(x)}$. We define
$\pi_x\xi:=d_{(0,0)}\psi_x\Big(\begin{array}{c}
v\\
0\\
\end{array}\Big)$ (
$\pi_y$ is defined analogously). Denote by $E_0$ the $|\cdot|_\infty$-Lipschitz constant of $(x,\underline{u},\underline{v})\mapsto[\Theta_D\circ d_{\underline{v}}\exp][\nu_x \underline{u}]$.

We begin with bounding the distortion of $\pi_{x}$ (assume WLOG\footnote{Otherwise cancel its size from both the denominator and numerator of the LHS of eq. \eqref{eq9}.} $|\xi|=1$):

\begin{align}\label{step1new}
|\Theta\xi-\Theta\pi_x\xi|=&|\Theta d_{(t_s,G(t_s))}\psi_x\Big(\begin{array}{c}
v\\
d_{t_s}Gv\\
\end{array}\Big)-\Theta d_{(0,0)}\psi_x\Big(\begin{array}{c}
v\\
0\\
\end{array}\Big)|\\
=&|\Theta \Big(d_{C_\chi(x)(t_s,G(t_s))}\exp_x\Big)\nu_x\circ C_1\Big(\begin{array}{c}
v\\
d_{t_s}Gv\\
\end{array}\Big)-\Theta \Big(d_{C_\chi(x)(0,0)}\exp_x\Big)\nu_x\circ C_1\Big(\begin{array}{c}
v\\
0\\
\end{array}\Big)|\nonumber\\
\leq& E_0\Big(\Big| C_1\Big( \Big(\begin{array}{c}
v\\
d_{t_s}Gv\\
\end{array}\Big) -\Big(\begin{array}{c}
v\\
0\\
\end{array}\Big)\Big)\Big|+\Big|C_\chi(x)\Big((t_s,G(t_s))-(0,0)\Big)\Big|\Big)\nonumber\\ \leq&E_0\Big(|v|\cdot\|d_{t_s}G\|+ |\exp_x^{-1}f^{-1}(q)|\Big)= E_0\Big(|v|\cdot\|d_{t_s}G\|+ d(f^{-1}(q),x)\Big)(\because C_1\text{ is a contraction})\nonumber\\
\leq&4E_0 Q_\epsilon(x)^{\beta/3}\|C_\chi^{-1}(x)\|\leq Q_\epsilon(x)^{\beta/4}\text{ }(\because |v|\leq 2\|C_\chi^{-1}(x)\|,\|d_{t_s}G\|\leq \frac{3}{2}Q_\epsilon(x)^{\beta/3}).\nonumber
\end{align} 
Therefore $\frac{|\pi_x\xi|}{|\xi|}=e^{\pm2Q_\epsilon(x)^\frac{\beta}{4}}$  (similarly, $\frac{|\pi_y\xi'|}{|\xi'|}=e^{\pm2Q_\epsilon(y)^\frac{\beta}{4}}, \xi'\in T_q V^s\setminus\{0\}$). We continue to decompose:
\begin{equation}\label{multipliers}\frac{S(x,\pi_x\xi)}{S(f^{-1}(q),\xi)}=\frac{S(f^{-1}(y),\eta)}{S(f^{-1}(q),\xi)}\cdot\frac{S(x,\pi_x \xi)}{S(f^{-1}(y),\eta)},\end{equation}
where $\eta:= d_y (f^{-1})\pi_y d_{f^{-1}(q)}f\xi\in H^s(f^{-1}(y))$  ($\|\pi_y\|\leq e^{2Q_\epsilon^\frac{\beta}{4}(y)}$, hence $|\eta|\leq2M_f^2$).
We study the second factor. Denote
$$u_1:=C_\chi^{-1}(x)\xi,u_2:=C_\chi^{-1}(f^{-1}(y))\eta.$$
By the definitions of $S(\cdot,\cdot)$ and $C_\chi(\cdot)$ (definition \ref{scalingfuncs}, theorem \ref{pesinreduction}), the inverse of the second factor satisfies:
\begin{align}
\Big|\frac{S(f^{-1}(y),\eta)}{S(x,\pi_x\xi)}-1\Big|=&\Big|\frac{|C_\chi^{-1}(f^{-1}(y))\eta|}{|C_\chi^{-1}(x)\pi_x\xi|}-1\Big|=\Big|\frac{|C_2^{-1}\Theta\eta|}{|C_1^{-1}\Theta\pi_x\xi|}-1\Big|\label{eq9}\\ \leq&\Big|\frac{|C_2^{-1}\Theta\eta-C_1^{-1}\Theta\eta|+|C_1^{-1}\Theta\eta-C_1^{-1}\Theta\pi_x\xi|+|C_1^{-1}\Theta\pi_x\xi|}{|C_1^{-1}\Theta\pi_x\xi|}-1\Big|\nonumber\\
\leq&\frac{1}{|C_\chi^{-1}(x)\pi_x\xi|}\Big(\|C_2^{-1}-C_1^{-1}\|\cdot|\eta|+\|C_1^{-1}\|\cdot|\Theta\pi_x\xi-\Theta \eta|\Big)\nonumber\\
\leq&2\Big(2\epsilon \cdot2M_f^2Q_\epsilon(x)Q_\epsilon(f^{-1}(y))+\|C_\chi^{-1}(x)\|\cdot|\Theta\pi_x\xi-\Theta\eta|\nonumber\Big)\\
(\because&\|C_\chi(x)\|\leq1\text{, lemma \ref{overlap}},|\eta|\leq 2M_f^2,|\pi_x\xi|\geq e^{-2Q_\epsilon^\frac{\beta}{4}(x)}\geq\frac{1}{2}).\nonumber
\end{align}
By the definition of $\omega_0$ (lemma \ref{omega0}), $\frac{Q_\epsilon(f^{-1}(y))}{Q_\epsilon(y)}\leq \omega_0$. Substituting this gives:
\begin{align}\label{thirdterm}
\Big|\frac{S(f^{-1}(y),\eta)}{S(x,\pi_x\xi)}-1\Big|\leq&2\cdot\epsilon\cdot4M_f^2\omega_0Q_\epsilon(x)Q_\epsilon(y)+2\cdot\|C_\chi^{-1}(x)\|\cdot|\Theta\pi_x\xi-\Theta\eta|\nonumber\\
\leq&\frac{1}{4}Q_\epsilon(x)Q_\epsilon(y)+2\|C_\chi^{-1}(x)\|\cdot|\Theta\pi_x\xi-\Theta\eta|,\end{align}
for small enough $\epsilon$. Next we bound $|\Theta\pi_x\xi-\Theta\eta|$:

\medskip
\textit{Step 1}: $|\Theta\xi-\Theta\pi_x\xi|\leq Q_\epsilon(x)^{\beta/4}$ by \eqref{step1new}.


\medskip
\textit{Step 2}: Estimation of $|\Theta\xi-\Theta \eta|$. Let $\lambda:=d_{f^{-1}(q)}f\xi$ and $\hat{\lambda}=\frac{\lambda}{|\lambda|}$ \text{ }\footnote{$\hat{\lambda}$ is well defined since $f$ is a diffeomorphism and $|\xi|\neq0$.}.
Then
\begin{align*}
|\Theta\eta-\Theta\xi|\equiv&|\Theta d_y(f^{-1})\pi_yd_{f^{-1}(q)}f\xi
-\Theta\xi|=|\Theta d_y(f^{-1})\pi_yd_{f^{-1}(q)}f\xi-\Theta d_q(f^{-1})d_{f^{-1}(q)}f\xi|\\
=&|\Theta d_y(f^{-1})\pi_y\lambda-\Theta d_q(f^{-1})\lambda|=|\Theta d_y(f^{-1})\nu_y\Theta'\pi_y\lambda-\Theta d_q(f^{-1})\nu_q\Theta'\lambda|\\
\leq&|\Theta d_y(f^{-1})\nu_y\Theta'\pi_y\lambda-\Theta d_q(f^{-1})\nu_q\Theta'\pi_y\lambda|+|\Theta d_q(f^{-1})\nu_q\Theta'\pi_y\lambda-\Theta d_q(f^{-1})\nu_q\Theta'\lambda|\\
\leq &\|\Theta d_y(f^{-1})\nu_y-\Theta d_q(f^{-1})\nu_q\|\cdot|\pi_y\lambda|+M_f|\Theta'\pi_y\lambda-\Theta'\lambda|\\
\leq &H_0\cdot d(q,y)^\beta|\pi_y\lambda|+M_f|\Theta'\pi_y\hat{\lambda}-\Theta'\hat{\lambda}|\cdot|\lambda|,    
\end{align*}
where $\Theta'$ is the linear isometry for the neighborhood containing $y$ and $q$, as in definition \ref{isometries}, and $H_0$ is the bound for H\"ol$(d_\cdot f^{-1})$ as in the proof of proposition \ref{Lambda}. 
$|\lambda|\leq M_f$. As in \eqref{step1new}, $|\Theta'\pi_y\hat{\lambda}-\Theta'\hat{\lambda}|\leq Q_\epsilon(y)^{\beta/4}$, and hence also $|\pi_y\lambda|=|\pi_y\hat{\lambda}|\cdot|\lambda|\leq  (1+Q_\epsilon(y)^{\beta/4})\cdot M_f<2M_f$. Therefore,
$$|\Theta \eta
-\Theta\xi|\leq M_f\cdot(H_0\cdot Q_\epsilon(y)^{\beta}\cdot2+M_f\cdot Q_\epsilon(y)^{\beta/4}\cdot2)<2M_f^2(H_0+1)Q_\epsilon(y)^{\beta/4}.$$
Since $\psi_x^{p^s,p^u}\rightarrow \psi_y^{q^s,q^u}$,  $\psi_{f(x)}^{p^s\wedge p^u}$ and $\psi_y^{q^s\wedge q^u}$ $\epsilon$-overlap. Hence lemma \ref{overlap} gives us the bound \begin{align*}\frac{Q_\epsilon(y)}{Q_\epsilon(x)}=&\frac{Q_\epsilon(f(x))}{Q_\epsilon(x)}\cdot\frac{Q_\epsilon(y)}{Q_\epsilon(f(x))}\leq\omega_0 \frac{Q_\epsilon(y)}{Q_\epsilon(f(x))}\leq\omega_0 \Big(\frac{\|C_\chi^{-1}(y)\|}{\|C_\chi^{-1}(f(x))\|}\Big)^\frac{-48}{\beta}\\
\leq&\omega_0 (e^{Q_\epsilon(f(x))Q_\epsilon(y)})^\frac{-48}{\beta}\leq\omega_0 e^\epsilon\text{ (for small enough }\epsilon\text{)}.
\end{align*}
Thus
$$|\Theta\eta
-\Theta\xi|\leq2(H_0+1)(e^\epsilon\omega_0)^\frac{\beta}{4}M_f^2Q_\epsilon(x)^{\beta/4}\leq3(H_0+1)\omega_0M_f^2Q_\epsilon(x)^{\beta/4}.$$

\medskip
\textit{Step 3}: Adding the estimates in steps 1 and 2, we obtain
\begin{align*}|\Theta\pi_x\xi-\Theta\eta|\leq&|\Theta\pi_x\xi-\Theta\xi|+|\Theta\xi-\Theta\eta|\\
\leq &Q_\epsilon(x)^{\beta/4}(1+3(H_0+1)\omega_0M_f^2)\leq4(H_0+1)\omega_0M_f^2Q_\epsilon(x)^{\beta/4}.\end{align*}

Substituting this in \eqref{thirdterm} gives (for small enough $\epsilon$):
$$\Big|\frac{S(f^{-1}(y),\eta)}{S(x,\pi_x\xi)}-1\Big|\leq\frac{1}{4}Q_\epsilon(x)Q_\epsilon(y)+2\|C_\chi^{-1}(x)\|\cdot4(H_0+1)\omega_0M_f^2Q_\epsilon(x)^\frac{\beta}{4}\leq\frac{1}{6}Q_\epsilon(x)^\frac{\beta}{6}
\text{ and hence, }\frac{S(f^{-1}(y),\eta)}{S(x,\pi_x\xi)}= e^{\pm\frac{1}{3}Q_\epsilon(x)^\frac{\beta}{6}}.$$ This concludes the study of the second factor of the RHS of \eqref{multipliers}.

\medskip
It remains to study the first factor of the RHS of \eqref{multipliers}. We begin with the identity

\begin{align*}S^2(f^{-1}(y),\eta)=&2\sum_{m=0}^{\infty}|d_{f^{-1}(y)}f^m\eta|^2e^{2\chi m}
=2\Big(|\eta|^2+\sum_{m=1}^\infty|d_yf^{m-1}d_{f^{-1}(y)}f\eta|^2e^{2\chi m}\Big)
=2|\eta|^2+e^{2\chi}S^2(y,d_{f^{-1}(y)}f\eta).\end{align*}
Similarly, and by the assumption of the lemma 
:
\begin{align*}S^2(f^{-1}(q),\xi)=&2|\xi|^2+e^{2\chi}S^2(q,d_{f^{-1}(q)}f\xi)\leq2|\xi|^2+\rho^2 e^{2\chi}S^2(y,\pi_yd_{f^{-1}(q)}f\xi).\end{align*}
By the choice of $\eta$, $d_{f^{-1}(y)}f\eta=\pi_yd_{f^{-1}(q)}f\xi$, so
$$S^2(f^{-1}(q),\xi)\leq2|\xi|^2+\rho^2e^{2\chi}S^2(y,d_{f^{-1}(y)}f\eta)$$
and so,
\begin{align}\frac{S^2(f^{-1}(q),\xi)}{S^2(f^{-1}(y),\eta)}\leq&\frac{2|\xi|^2+\rho^2e^{2\chi}S^2(y,d_{f^{-1}(y)}f\eta)}{2|\eta|^2+e^{2\chi}S^2(y,d_{f^{-1}(y)}f\eta)}=
\rho^2-\frac{2(\rho^2-1)|\eta|^2+2(|\eta|^2-|\xi|^2)}{2|\eta|^2+e^{2\chi}S^2(y,d_{f^{-1}(y)}f\eta)}\nonumber\\=&\rho^2-\frac{2(\rho^2-1)|\eta|^2+2(|\eta|^2-|\xi|^2)}{S^2(f^{-1}(y),\eta)}\label{kofiko}.\end{align}

By the bound achieved in step 2, and since $|\xi|=1$, $|\eta|=e^{\pm Q_\epsilon(x)^{\beta/5}}$ (for small enough $\epsilon$). Hence $\Big||\xi|^2-|\eta|^2\Big|\leq2(e^{ Q_\epsilon(x)^{\beta/5}}-1)\leq4Q_\epsilon(x)^{\beta/5}$. We get that for $\epsilon$ small enough, since $Q_\epsilon(x)\ll \epsilon$,
 \begin{align*}\rho^2-1\geq e^{2\sqrt{\epsilon}}-1\geq2\sqrt{\epsilon}\Rightarrow&
 (\rho^2-1)|\eta|^2+(|\xi|^2-|\eta|^2)\geq(\rho^2-1)|\eta|^2-4Q_\epsilon(x)^{\beta/5}=(\rho^2-1)[|\eta|^2-\frac{4Q_\epsilon(x)^{\beta/5}}{\rho^2-1}]\\
 \geq&(\rho^2-1)[e^{-Q_\epsilon(x)^\frac{\beta}{5}}-\frac{4Q_\epsilon(x)^{\beta/5}}{2\sqrt{\epsilon}}]\geq(\rho^2-1)[1-\frac{1}{20}Q_\epsilon(x)^{\beta/6}]
 \geq (\rho^2-1)e^{-\frac{1}{10}Q_\epsilon(x)^{\beta/6}}.
 \end{align*}
 Plugging this in \eqref{kofiko} yields
\begin{align*}\frac{S^2(f^{-1}(q),\xi)}{S^2(f^{-1}(y),\eta)}\leq\rho^2-\frac{2(\rho^2-1)e^{-\frac{1}{10}Q_\epsilon(x)^{\frac{\beta}{6}}}}{S^2(f^{-1}(y),\eta)}\leq\rho^2-\frac{2(\rho^2-1)e^{-\frac{1}{10}Q_\epsilon(x)^{\frac{\beta}{6}}}e^{-\frac{2}{3}Q_\epsilon(x)^{\frac{\beta}{6}}}}{S^2(x,\pi_x\xi)},
\end{align*}
where the last inequality is because, as we saw above, $\frac{S(f^{-1}(y),\eta)}{S(x,\pi_x\xi)}=e^{\pm\frac{1}{3}Q_\epsilon(x)^\frac{\beta}{6}}$ 
. So
$$\frac{S^2(f^{-1}(q),\xi)}{S^2(f^{-1}(y),\eta)}\leq\rho^2-\frac{2(\rho^2-1)e^{-\frac{2}{3}Q_\epsilon(x)^{\beta/6}}e^{-\frac{1}{10}Q_\epsilon(x)^{\frac{\beta}{6}}}}{\|C_\chi^{-1}(x)\|^2\cdot|\pi_x\xi|^2}.$$
Substituting $|\xi|=1$, $\frac{|\pi_x\xi|}{|\xi|}= e^{\pm2 Q_\epsilon(x)^{\beta/4}}$ gives:
$$\frac{S^2(f^{-1}(q),\xi)}{S^2(f^{-1}(y),\eta)}\leq\rho^2-\frac{2(\rho^2-1)e^{-\frac{2}{3}Q_\epsilon(x)^{\beta/6}}e^{-\frac{1}{10}Q_\epsilon(x)^{\beta/6}}}{\|C_\chi^{-1}(x)\|^2\cdot e^{4 Q_\epsilon(x)^{\beta/4}}}\leq\rho^2\Big(1-\frac{2(1-\frac{1}{\rho^2})e^{-Q_\epsilon(x)^{\beta/6}}}{\|C_\chi^{-1}(x)\|^2}\Big)$$
$\Big(\because$ $4Q_\epsilon(x)^\frac{\beta}{4}\leq(1-\frac{2}{3}-\frac{1}{10})Q_\epsilon(x)^\frac{\beta}{6}$, for $\epsilon$ small enough$\Big)$. For $\rho\geq e^{\sqrt{\epsilon}}$, $1-\frac{1}{\rho^2}\geq1-e^{-2\sqrt{\epsilon}}\geq\sqrt{\epsilon}$, so:
$$\frac{S^2(f^{-1}(q),\xi)}{S^2(f^{-1}(y),\eta)}\leq\rho^2\Big(1-\frac{2\sqrt{\epsilon}e^{-Q_\epsilon(x)^{\beta/6}}}{\|C_\chi^{-1}(x)\|^2}\Big)\leq\rho^2\Big(1-\frac{\epsilon^\frac{1}{2}}{\|C_\chi^{-1}(x)\|^{2}}\Big).$$
By definition, $Q_\epsilon(x)<\frac{1}{3^{6/\beta}}(\frac{\epsilon^{90}}{\|C_\chi^{-1}(x)\|^{48}})^\frac{1}{\beta}<\frac{1}{3^{6/\beta}}(\frac{\epsilon^3}{\|C_\chi^{-1}(x)\|^{12}})^\frac{1}{\beta}\leq\frac{1}{3^{6/\beta}}(\frac{\epsilon^\frac{1}{2}}{\|C_\chi^{-1}(x)\|^2})^{6/\beta}$. Hence,
$$\frac{S^2(f^{-1}(q),\xi)}{S^2(f^{-1}(y),\eta)}\leq\rho^2(1-3Q_\epsilon(x)^{\beta/6})\leq\rho^2 e^{-3Q_\epsilon(x)^{\beta/6}}.$$
Similarly, $\frac{S^2(f^{-1}(q),\xi)}{S^2(f^{-1}(y),\eta)}\geq\rho^{-2}e^{3Q_\epsilon(x)^\frac{\beta}{6}}$.

\medskip
In summary, $\frac{S^2(x,\pi_x\xi)}{S^2(f^{-1}(q),\xi)}$ is the product of two factors (see \eqref{multipliers}). The second factor is bounded by $e^{\pm\frac{1}{3}Q_\epsilon(x)^\frac{\beta}{6}}$, 
and the first factor takes values in $[\rho^{-2}e^{3Q_\epsilon(x)^\frac{\beta}{6}},\rho^2e^{-3Q_\epsilon(x)^\frac{\beta}{6}}]$.
Therefore, $\frac{S^2(f^{-1}(q),\xi)}{S^2(f^{-1}(y),\eta)}\leq\rho^2 e^{-3Q_\epsilon(x)^{\frac{\beta}{6}}+\frac{1}{3}Q_\epsilon(x)^{\frac{\beta}{6}}}$ $\leq\rho^2 \Big(e^{-(\frac{3}{2}-\frac{1}{6})Q_\epsilon(x)^{\frac{\beta}{6}}}\Big)^2$ $\leq \rho^2\Big(e^{-\frac{1}{6}Q_\epsilon(x)^\frac{\beta}{6}}\Big)^2$ and $\frac{S^2(f^{-1}(q),\xi)}{S^2(f^{-1}(y),\eta)}\geq\rho^{-2}\Big(e^{\frac{1}{6}Q_\epsilon(x)^\frac{\beta}{6}}\Big)^2$ for small enough $\epsilon$. This concludes the proof.


\end{proof}
\begin{lemma}\label{chainbounds}
The following holds for any $\epsilon$ small enough. For any two regular chains $(\psi_{x_i}^{p^s_i,p^u_i})_{i\in\mathbb{Z}},(\psi_{y_i}^{q^s_i,q^u_i})_{i\in\mathbb{Z}}$, if $\pi[(\psi_{x_i}^{p^s_i,p^u_i})_{i\in\mathbb{Z}}]=\pi[(\psi_{y_i}^{q^s_i,q^u_i})_{i\in\mathbb{Z}}]=p$ then $\forall k\in\mathbb{Z}$, and for any $\xi\in T_{f^k(p)}V^s_k$,
$$\frac{S(x_k,\pi_{x,k}\xi)}{S(y_k,\pi_{y,k}\xi)}= e^{\pm4\epsilon^{1/2}}$$
 where $\pi_{x,k},\pi_{y,k}$ denote respectively the maps $\pi_{x_k}:T_{f^k(p)}V^s_k\rightarrow H^s(x_k),\pi_{y_k}:T_{f^k(p)}U^s_k\rightarrow H^s(y_k)$ as in lemma \ref{boundimprove},
and $V^s_k=V^s((\psi_{x_i}^{p_i^s,p_i^u})_{i\geq k}),U^s_k=V^s((\psi_{y_i}^{q_i^s,q_i^u})_{i\geq k})$.
A similar statement holds for the unstable manifold.
\end{lemma}
\noindent\textbf{Remark}: Notice that $T_{f^k(p)}V^s_k=T_{f^k(p)}U^s_k$, which makes the choice and use of $\xi$ well defined. 
This can be seen by the facts that the orbit $(f^k(p))_{k\in\mathbb{Z}}$ stays on the intersection of stable and unstable manifolds, and demonstrates hyperbolic behavior (proposition \ref{Lambda}(2) gives lower bounds for the contraction/expansion by the differential on the respective manifolds' tangent spaces). Thus, the 
stable space of $f^k(p)$ (which exists by proposition \ref{Lambda}(2))
coincides with $T_{f^k(p)}V^s_k$, and $T_{f^k(p)}V^s_k=\text{stable space of }f^k(p)=T_{f^k(p)}U^s_k$.

\begin{proof} (For a proof in the two dimensional case, see proposition 7.3 in \cite{Sarig}) Denote $\vec{v}=(\psi_{x_i}^{p^s_i,p^u_i})_{i\in\mathbb{Z}}$ and $\vec{u}=(\psi_{y_i}^{q^s_i,q^u_i})_{i\in\mathbb{Z}}$. $V^s_k$ and $U^s_k$ are $s$-admissible manifolds which stay in windows. We will prove that:
\begin{equation}\label{thatexpression}\forall\text{ } \xi\in T_{f^k(p)}V^s_k=T_{f^k(p)}U^s_k:\text{ }\frac{S(f^k(p),\xi)}{S(x_k,\pi_{x,k}\xi)},\frac{S(f^k(p),\xi)}{S(y_k,\pi_{y,k}\xi)}= e^{\pm\sqrt{\epsilon}}.
\end{equation}
This is sufficient, since 
$$\frac{S(x_k,\pi_{x,k}\xi)}{S(y_k,\pi_{y,k}\xi)}=\frac{S(x_k,\pi_{x,k}\xi)}{S(f^k(p),\xi)}\cdot\frac{S(f^k(p),\xi)}{S(y_k,\pi_{y,k}\xi)}
.$$
We show $\frac{S(f^k(p),\xi)}{S(x_k,\pi_{x,k}\xi)}=e^{\pm\sqrt{\epsilon}}$; the case of $\frac{S(f^k(p),\xi)}{S(y_k,\pi_{y,k}\xi)}$ is identical. Since $\vec{v}$ is regular, there exists a relevant double chart $v$ and a sequence $n_k\uparrow \infty$ s.t. $v_{n_k}=v$ for all $k$. Write $v=\psi_x^{p^s,p^u}$.

\medskip
\noindent\textit{Claim 1}:  $$\exists\rho\geq e^{\sqrt{\epsilon}}\text{ s.t. }\frac{S(f^{n_k}(p),\xi)}{S(x_{n_k},\pi_{x,n_k}\xi)}\in[\rho^{-1},\rho]\text{ for all }k\in\mathbb{Z},\xi\in T_{f^{n_k}(p)}V^s(1).$$
\textit{Proof}: 
Choose a chain $\vec{w}$ s.t. $w_0=v$ and $z:=\pi(\vec{w})\in NUH_\chi^\#$. Let $W^s:=V^s((w_i)_{i\geq0})$.
$W^s$ is an admissible manifold in $v_{n_k}$, so by taking $W^s$ in $v_{n_{k+l}}\equiv v$ and applying  $\mathcal{F}_s$ $(n_{k+l}-n_k)$ times, the resulting manifolds $W^s_l:=\mathcal{F}_s^{n_{k+l}-n_k}(W^s)$ are $s$-admissible manifolds in $v_{n_k}$ which converge to $V^s_{n_k}$ in $C^1$ (proposition \ref{firstofchapter}, proof of item 2).
The convergence of $W^s_l$ to $V^s_{n_k}$ means that if $W^s_l$ is represented in $v_{n_k}$ by $F_l$, and $V^s_{n_k}$ is represented by $F$, then $\|F_l-F\|_{C^1}\rightarrow0$. Write $f^{n_k}(p)=\psi_x((r,F(r)))$, and define $t_l:=\psi_x((r,F_l(r)))\in W^s_l$; then $t_l\rightarrow f^{n_k}(p)$.
Since $f^{n_{k+l}-n_k}(t_l)\in W^s$ $\forall l$, also $\sup_l\max_{\eta\in T_{f^{n_{k+l}-n_k}(t_l)}W^s(1)}S(f^{n_{k+l}-n_k}(t_l),\eta)\leq C_z<\infty$, by lemma \ref{RegularOnVs}. Therefore the following quantity is finite and well defined:
$$\rho_1:=\max\{\sup_l\max_{\eta\in T_{f^{n_{k+l}-n_k}(t_l)}W^s(1)}\frac{S(f^{n_{k+l}-n_k}(t_l),\eta)}{S(x,\pi_x\eta)},\sup_l\max_{\eta\in T_{f^{n_{k+l}-n_k}(t_l)}W^s(1)}\frac{S(x,\pi_x\eta)}{S(f^{n_{k+l}-n_k}(t_l),\eta)},e^{\sqrt{\epsilon}}\},$$
where $\pi_x$ is the linear transformation defined in lemma \ref{boundimprove} respective to $W^s$. By lemma \ref{boundimprove}, these bounds can only improve, then the following quantity is also finite and well defined:
$$\rho_0:=\max\{\sup_l\max_{\eta\in T_{t_l}W^s_l(1)}\frac{S(t_l,\eta)}{S(x,\pi_x^{(l)}\eta)},\sup_l\max_{\eta\in T_{t_l}W^s_l(1)}\frac{S(x,\pi_x^{(l)}\eta)}{S(t_l,\eta)},e^{\sqrt{\epsilon}}\},$$
where $\pi_x^{(l)}:T_{t_l}W_l^s\rightarrow H^s(x)$ is as in the proof of lemma \ref{boundimprove}. 
 Also, since $F_l\xrightarrow[]{C^1} F$, and $\|d_\cdot F_l\|_{\beta/3}\leq\frac{1}{2}$ uniformly in $l$: $\|d_\cdot F_l-d_\cdot F\|_\infty\rightarrow 0$. Then 
\begin{align*}
&\sup_{|u|=1}|d_{(r,F(r))}\psi_x((u,d_rFu))-d_{(r,F_l(r))}\psi_x((u,d_rF_lu))|\\
&\leq\mathrm{Lip}(\psi_x)\cdot|F(r)-F_l(r)|\cdot |(u,d_rFu)| 
+\|d_{(r,F_l(r))}\psi_x\|\cdot\|\|d_\cdot F-d_\cdot F_l\|_\infty\\
&\leq3\|F-F_l\|_\infty+2\cdot\|d_\cdot F-d_\cdot F_l\|_\infty\rightarrow 0.
\end{align*}

\noindent For any $\xi\in T_{f^{n_k}(p)}V^s_{n_k}$ there exists $u\in\mathbb{R}^{s(x)}$ s.t. $\xi=d_{(r,F(r))}\psi_x((u,d_rFu))$.

\noindent 
Define
$\xi_l:=d_{(r,F_l(r))}\psi_x((u,d_rF_lu))\in T_{t_l}W^s_l$.
Then $\xi_l\rightarrow \xi$ (in the metric of $TM$), and similarly: $\pi_x^{(l)}\xi_l\rightarrow \pi_x \xi$ (in the Riemannian metric on $T_xM$). Fix some large $N\in\mathbb{N}$, and some small $\delta>0$. Since $df$ is continuous, $\exists l$ s.t.
$$\sqrt{2}\Big(\sum_{j=0}^Ne^{2j\chi}|d_{f^{n_k}(p)}f^j\xi|^2\Big)^{\frac{1}{2}}\leq e^\delta\sqrt{2}\Big(\sum_{j=0}^Ne^{2j\chi}|d_{t_l}f^j\xi_l|^2\Big)^{\frac{1}{2}}.$$
The RHS is $\leq e^\delta S(t_l,\xi_l)\leq e^\delta S(x,\pi_x^{(l)}\xi_l)\rho_0$ (by lemma \ref{boundimprove} and the choice of $\rho_0$). In theorem \ref{SandU} we have seen that $S(x,\cdot)$ is continuous, so: $\lim_{l\rightarrow \infty}e^{\delta}S(x,\pi_x^{(l)}\xi_l)\rho_0=e^{\delta}S(x,\pi_x\xi)\rho_0$. Since the inequality would be true for any $l'\geq l$ we get, $\sqrt{2}\Big(\sum_{j=0}^Ne^{2j\chi}|d_{f^{n_k}(p)}f^j\xi|^2\Big)^{\frac{1}{2}}\leq e^{\delta}S(x,\xi)\rho_0$. Thus, since this is true for any small $\delta>0$ and large $N\in\mathbb{N}$:
$$\frac{S(f^{n_k}(p),\xi)}{S(x,\pi_x\xi)}\leq \rho_0.$$
Recalling $x_{n_k}=x$ and that $S(f^{n_k}(p),\xi)\geq\sqrt{2}|\xi|$ $\forall\xi\neq0$, we can assume WLOG $|\xi|=1$, otherwise we cancel its size in the numerator and the denominator of the fraction:
$$\frac{S(f^{n_k}(p),\xi)}{S(x_{n_k},\pi_{x,n_k}\xi)}\in[\frac{\sqrt{2}}{\max\limits_{|\xi|=1}S(x,\pi_{x,n_k}\xi)},\rho_0].$$
Since we have seen in the previous lemma that $\frac{|\pi_x\xi|}{|\xi|}\leq e^{2 Q_\epsilon(x_k)^{\beta/4}}\leq e^{\sqrt{\epsilon}}$, and $\pi_x$ is invertible:
$$\frac{S(f^{n_k}(p),\xi)}{S(x_{n_k},\pi_{x,n_k}\xi)}\in[\frac{\sqrt{2}}{\max\limits_{|\eta|=1}e^{\sqrt{\epsilon}}S(x,\eta)},\rho_0]\Rightarrow\frac{\sqrt{2}}{\max\limits_{|\eta|=1}e^{\sqrt{\epsilon}}S(x,\eta)}\leq\min_\xi\frac{S(f^{n_k}(p),\xi)}{S(x_{n_k},\pi_{x,n_k}\xi)}\leq\max\limits_\xi\frac{S(f^{n_k}(p),\xi)}{S(x_{n_k},\pi_{x,n_k}\xi)}\leq\rho_0.$$
Hence, the claim would work with $\rho:=\rho_0\cdot e^{\sqrt{\epsilon}}\max\limits_{|\eta|=1}S(x,\eta)$. This concludes the proof of claim 1.

\medskip
\noindent\textit{Claim 2}: $$\forall \xi\in T_{p}V^s_0(1):\frac{S(p,\xi)}{S(x_0,\pi_{x,0}\xi)}= e^{\pm\sqrt{\epsilon}}.$$
\textit{Proof}: Fix $k$ large. By claim 1, $\forall \xi\in T_{f^{n_k}(p)}V^s_{n_k}(1):\frac{S(f^{n_k}(p),\xi)}{S(x_{n_k},\pi_{x,n_k}\xi)}\in[\rho^{-1},\rho]$. By proposition \ref{firstofchapter} $\mathcal{F}_s(V^s_{n_k})=V^s_{n_k-1}$; and thus by lemma \ref{boundimprove}: the bounds for $\max\setminus\min_{\xi\in T_{f^{n_{k-1}}(p)}V^s_{n_k}(1)}\frac{S(f^{n_k-1}(p),\xi)}{S(x_{n_k-1},\pi_{x,n_k-1}\xi)}$ improve or fall in $[e^{-\sqrt{\epsilon}},e^{\sqrt{\epsilon}}]$
. We ignore those improvements, and write $\frac{S(f^{n_k-1}(p),\xi)}{S(x_{n_k},\pi_{x,n_k-1}\xi)}\in[\rho^{-1},\rho]$. We continue this way until we get $\frac{S(f^{n_{k-1}+1}(p),\xi)}{S(x_{n_{k-1}+1},\pi_{x,n_{k-1}+1}\xi)}\in[\rho^{-1},\rho]$. Since $x_{n_k}=x$, the next application of $\mathcal{F}_s$ improves the ratio bound by at least $e^{\frac{1}{6}Q_\epsilon(x)^{\beta/6}}$:
$$\frac{S(f^{n_{k-1}}(p),\xi)}{S(x_{n_{k-1}},\pi_{x,n_{k-1}}\xi)}\in[e^{\frac{1}{6}Q_\epsilon(x)^{\beta/6}}\rho^{-1},e^{-\frac{1}{6}Q_\epsilon(x)^{\beta/6}}\rho],\forall \xi\in T_{f^{n_{k-1}}(p)}V^s_{n_{k-1}}(1).$$
We repeat the procedure by applying $\mathcal{F}_s$ $(n_{k-1}-n_{k-2}+1)$ times, whilst ignoring the potential improvements of the error bounds. Then we apply $\mathcal{F}_s$ once more and arrive at 
$$\frac{S(f^{n_{k-2}}(p),\xi)}{S(x_{n_{k-2}},\pi_{x,n_{k-2}}\xi)}\in[e^{2\frac{1}{6}Q_\epsilon(x)^{\beta/6}}\rho^{-1},e^{-2\frac{1}{6}Q_\epsilon(x)^{\beta/6}}\rho],\forall \xi\in T_{f^{n_{k-2}}(p)}V^s_{n_{k-2}}(1).$$
By repeating this process $k_0$ times, where $k_0$ is large enough so $e^{k_0\frac{1}{6}Q_\epsilon(x)^{\beta/6}}>\rho e^{-\sqrt{\epsilon}}$,  
we get:
$$\frac{S(f^{n_{k-k_0}}(p),\xi)}{S(x_{n_{k-k_0}},\pi_{x,n_{k_0}}\xi)}\in[e^{-\sqrt{\epsilon}},e^{\sqrt{\epsilon}}],\forall \xi\in T_{f^{n_{k-k_0}}(p)}V^s_{n_k}(1).$$
This is the threshold of applicability of the previous lemma. 
From this point onward, the ratio bound may not improve, but it is guaranteed to remain at least as good as $[e^{-\sqrt{\epsilon}},e^{\sqrt{\epsilon}}]$. We are free to choose $k$ as large as we want, therefore 
we obtain $\frac{S(p,\xi)}{S(x_0,\pi_{x,0}\xi)}= e^{\pm\sqrt{\epsilon}},\forall \xi\in T_{p}V^s_{0}(1)$. This proves claim 2. Claim 2 gives the lemma for $k=0$. For other $k$, apply claim 2 to $(\psi_{x_{i+k}}^{p^s_{i+k},p^u_{i+k}})_{i\in\mathbb{Z}}$,$(\psi_{y_{i+k}}^{q^s_{i+k},q^u_{i+k}})_{i\in\mathbb{Z}}$.
\end{proof}

\medskip
Our next task is to show that if $\pi((\psi_{x_{i}}^{p^s_{i},p^u_{i}})_{i\in\mathbb{Z}})=\pi((\psi_{y_{i}}^{q^s_{i},q^u_{i}})_{i\in\mathbb{Z}})$ then $C_\chi(x_i)^{-1}:T_{x_i}M\rightarrow\mathbb{R}^d,C_\chi(y_i)^{-1}:T_{y_i}M\rightarrow\mathbb{R}^d$ are ``approximately the same." There are two issues in expressing this formally:
\begin{enumerate}[label=(\alph*)]
\item $C_\chi(x_i)^{-1},C_\chi(y_i)^{-1}$ are linear maps with different domains.
\item $C_\chi(\cdot)$ is only determined up to orthogonal self maps of $H^s(\cdot),H^u(\cdot)$. In the two-dimensional case this means choosing a number from $\{\pm1\}$, which commutes with composition of linear maps, but the higher dimensional case is more subtle, because of non-commutativity.
\end{enumerate}
We address these issues in the following proposition.
\begin{prop}\label{Xi} Under the assumptions of the previous lemma:
$$\exists\Xi_i:T_{x_i}M\rightarrow T_{y_i}M\text{ s.t. }\forall\xi\in T_{x_i}M(1):\frac{|C_\chi^{-1}(x_i)\xi|}{|C_\chi^{-1}(y_i)\Xi_i\xi|}=e^{\pm4\sqrt{\epsilon}},$$
where $\Xi_i$ is an invertible linear transformation, and $\|\Xi_i\|,\|\Xi_i^{-1}\|\leq \exp(\epsilon)$.
\end{prop}
\begin{proof} 

Denote the following operators as defined in lemma \ref{boundimprove}
: $\pi_{x,k}^s:T_{f^k(p)}V_k^s\rightarrow H^s(x_k)$, $\pi_{x,k}^u:T_{f^k(p)}V^u_k\rightarrow H^u(x_k)$, $\pi_{y,k}^s:T_{f^k(p)}U_k^s\rightarrow H^s(y_k)$, $\pi_{y,k}^u:T_{f^k(p)}U_k^u\rightarrow H^u(x_k)$.
Using the notations from the previous lemma, define $\pi_{x,k}^s:=\pi_{x,k}:T_{f^k(p)}V^s_k\rightarrow H^s(x_k),\pi_{y,k}^s:=\pi_{y,k}:T_{f^k(p)}U^s_k\rightarrow H^s(y_k)$. Let $V^u_k:=V^u((\psi_{x_i}^{p_i^s,p_i^u})_{i\leq k}),U^u_k:=V^u((\psi_{y_i}^{q_i^s,q_i^u})_{i\leq k})$, and define $\pi_{x,k}^u:=\pi_{x,k}:T_{f^k(p)}V^u_k\rightarrow H^u(x_k),\pi_{y,k}^u:=\pi_{y,k}:T_{f^k(p)}U^u_k\rightarrow H^u(y_k)$ as in the unstable case of lemma \ref{boundimprove}. Define $\Xi_k^s:H^s(x_k)\rightarrow H^s(y_k)$, $\Xi_k^u:H^u(x_k)\rightarrow H^u(y_k)$ the following way:
$$\Xi_k^s:=\pi_{y,k}^s\circ
(\pi_{x,k}^s)^{-1}\text{ , }\Xi_k^u:=\pi_{y,k}^u\circ
(\pi_{x,k}^u)^{-1}.$$
The composition of $\pi_{y,k}^{s/u}\circ(\pi_{x,k}^{s/u})^{-1}$ is well defined by the remark after lemma \ref{chainbounds}. Define $\Xi_k:T_{x_k}M\rightarrow T_{y_k}M$:
\begin{equation}\label{defxi}\forall \xi\in T_{x_k}M\text{:  }\Xi_k\xi:=\Xi_k^s\xi_s+\Xi_k^u\xi_u\text{, where }\xi=\xi_s+\xi_u, \xi_{s/u}\in H^{s/u}(x_k).\end{equation}
This is well defined since $T_{x_k}M=H^s(x_k)\oplus H^u(x_k)$. Notice that $\Xi_k[H^{s/u}(x_k)]=H^{s/u}(y_k)$. 
Hence, by lemma \ref{chainbounds}:
\begin{equation}\label{shlukit}
\frac{|C_\chi^{-1}(x_k)\xi|}{|C_\chi^{-1}(y_k)\Xi_k\xi|}=\sqrt{\frac{S^2(x_k,\xi_s)+U^2(x_k,\xi_u)}{S^2(y_k,\Xi_k\xi_s)+U^2(y_k,\Xi_k\xi_u)}}=e^{\pm4\sqrt{\epsilon}}  .  
\end{equation}
It remains to bound $\|\Xi_k\|$ and $\|\Xi_k^{-1}\|$. We begin by showing bounds on the norms of $\Xi_k^{s/u}$. By the bounds for  $\|\pi_{x,k}^{s/u}\|$,$\|(\pi_{x,k}^{s/u})^{-1}\|$, $\|\pi_{y,k}^{s/u}\|$,$\|(\pi_{y,k}^{s/u})^{-1}\|$
from lemma \ref{boundimprove}
,
\begin{equation}\label{shluk}
\|\Xi_k^{s/u}\|\leq \exp\Big( 2Q_\epsilon(x_k)^{\frac{\beta}{4}}+
2Q_\epsilon(y_k)^{\frac{\beta}{4}}
\Big)=\exp(2(Q_\epsilon(x_k)^\frac{\beta}{4}+Q_\epsilon(y_k)^\frac{\beta}{4})).
\end{equation}

We continue to bound $\|\Xi_k\|$. To ease notations we will omit the '$k$' subscripts. Recall $\rho(M)$ from \textsection\ref{forrho}. By lemma \ref{firstchapter2}, $d(x,f^k(p))<\frac{\sqrt{d}}{100}Q_\epsilon(x)<\rho(M)$ for $\epsilon$ small enough.  Similarly, $d(y,f^k(p))<\rho(M)$. Therefore the following is well defined: $\psi_x^{-1}(f^k(p))=:z_x'$, $\psi_y^{-1}(f^k(p))=:z_y'$. The vectors of the first $s(x)$ coordinates of $z_{x}'/z_y'$ will be called $z_{x}/z_y$ respectively.

\medskip
$\textit{Part 1}$: Let $\xi\in T_xM$, $\xi=\xi_s+\xi_u$, $\xi_{s/u}\in H^{s/u}(x)$, then $|\xi_s|,|\xi_{u}|\leq \|C_\chi^{-1}(x)\|\cdot|\xi|$.

\textit{Proof}: WLOG $|\xi|=1$. Notice that the size of $\xi_{s/u}$ can be very big, even when $|\xi_s+\xi_u|=|\xi|=1$. This can happen when 
$\sphericalangle(H^s(x),H^u(x)):=\inf\limits_{\eta_s\in H^s(x),\eta_u\in H^u(x)}|\sphericalangle(\eta_s,\eta_u)|$ is very small, and $\xi_s$ and $\xi_u$ are almost parallel, of the same size, and are pointing to almost opposite directions. Consider the triangle created by the tips of $\xi_s$ and $-\xi_u$, and the origin. Denote its angle at the origin by $\alpha$ (the angle between $\xi_s$ and $-\xi_u$). The size of the edge in front of $\alpha$ has length 1 (since $|\xi_s-(-\xi_u)|=|\xi_s+\xi_u|=1$). Angles between two non-parallel vectors are in $(0,\pi)$, and their sine is always positive. By the sine theorem,
\begin{equation}\label{darksalt}
|\xi_s|,|\xi_u|\leq \frac{1}{|\sin\alpha|}\leq\frac{1}{\inf\limits_{\eta_s\in H^s(x),\eta_u\in H^u(x)}|\sin\sphericalangle(\eta_s,\eta_u)|}=\frac{1}{\sin\sphericalangle(H^s(x),H^u(x))}.
\end{equation}
Notice that $\frac{1}{|\sin\sphericalangle(\eta_s,\eta_u)|}\leq\|C_\chi^{-1}(x)\|$, $\forall\eta_{s/u}\in H^{s/u}(x)$. To see this, notice first the identity
$$\|C_\chi^{-1}(x)\|^2=\sup_{\eta_s\in H^s(x),\eta_u\in H^u(x),|\eta_s+\eta_u|=1}\{S^2(x,\eta_s)+U^2(x,\eta_u)\}.$$

Choose $\eta_s\in H^s(x)(1),\eta_u\in H^u(x)(1)$ which minimize $\sin\sphericalangle(\eta_s,\eta_u)$. WLOG $\cos\sphericalangle(\eta_s,\eta_u)>0$. Let $\zeta:=\frac{\eta_s-\eta_u}{|\eta_s-\eta_u|}$. Since $C_\chi^{-1}(x)\eta_s\perp C_\chi^{-1}(x)\eta_u$ and $\|C_\chi(x)\|\leq1$,
$$|C_\chi^{-1}(x)\zeta|^2\geq \frac{|\eta_s|^2}{|\eta_s-\eta_u|^2}+\frac{|\eta_u|^2}{|\eta_s-\eta_u|^2}=\frac{2}{2-2\cos\sphericalangle(\eta_s,\eta_u))}\geq\frac{1}{1-\cos^2\sphericalangle(\eta_s,\eta_u)}=\frac{1}{\sin^2\sphericalangle(\eta_s,\eta_u)}.$$
So $\|C_\chi^{-1}(x)\|\geq\frac{1}{|\sin\sphericalangle(\eta_s,\eta_u)|}$. By \eqref{darksalt}, $|\xi_s|,|\xi_u|\leq\|C_\chi^{-1}(x)\|$.

\medskip
\noindent QED

$\medskip$
$\textit{Part 2}$: $\Xi=d_{C_\chi(x)z_x'}(\exp_y^{-1}\exp_x)+E_1$, where $E_1:T_xM\rightarrow T_yM$ is linear and $\|E_1\|\leq Q_\epsilon(x)^\frac{\beta}{5}+Q_\epsilon(y)^\frac{\beta}{5}<\frac{\epsilon}{2}$.

\textit{Proof}: We will begin by showing that the expression above is well defined for $\epsilon<\frac{\rho(M)}{2}$. This is true because $\exp_x[B_\epsilon(0)]\subset B_{\epsilon+d(x,y)}(y)\subset B_{2\epsilon}(y)$; $|C_\chi(x)z_x'|=d(x,f^k(p))<\epsilon$; so $\exp_y^{-1}\exp_x$ is defined on an open neighborhood of $C_\chi(x)z_x'$, and hence is differentiable on it.

By definition, $\Xi^s=\pi_{y,k}^s\circ(\pi_{x,k}^s)^{-1}$.
\begin{itemize}
\item $\xi_s\in H^s(x)$ is mapped by $(\pi_{x,k}^s)^{-1}$ to $\xi_s'$, a tangent vector at $f^k(p)$
given by $\xi_s'=d_{z_x'}\psi_x\Big(\begin{array}{c}
v\\
d_{z_x}Gv\\ \end{array}\Big)$, where $G$ represents $V^s_k$ in $\psi_{x_k}^{p_k^s,p^u_k}$ and $v=C_\chi^{-1}(x)\xi_s$. We write $\xi_s'$ as follows:
$$\xi_s'=d_{z_x'}\psi_x\Big(\begin{array}{c}
v\\
d_{z_x}Gv\\ \end{array}\Big)=(d_{z_y'}\psi_y)(d_{z_y'}\psi_y)^{-1}d_{z_x'}\psi_x\Big(\begin{array}{c}
v\\
d_{z_x}Gv\\ \end{array}\Big)=:d_{z_y'}\psi_y\Big(\begin{array}{c}
w\\
d_{z_y}Fw\\ \end{array}\Big),$$
for some $w$, where $F$ is the representing function of $U^s_k$ in $\psi_{y_k}^{q^s_k,q^u_k}$. This representation is possible because $\xi_s'$ is tangent to the stable manifold $U^s_k$ in $\psi_{y_k}^{q^s_k,q^u_k}$ at $f^k(p)=\psi_y(z_y')$.
\item $\xi_s'=(\psi_{x,k}^s)^{-1}\xi_s$ is mapped by $\pi_{y,k}^s$ to
$$\Xi^s\xi_s=\pi_{y,k}^s\xi_s'=d_0\psi_y\Big(\begin{array}{c}
w\\
0\\ \end{array}\Big).$$
\end{itemize}

By construction,
$$\Big(\begin{array}{c}
w\\
d_{z_y}Fw\\ \end{array}\Big)=(d_{z_y'}\psi_y)^{-1}(d_{z_x'}\psi_x)\Big(\begin{array}{c}
v\\
d_{z_x}Gv\\ \end{array}\Big).$$

\noindent We abuse notation and use $w,v$ to represent both vectors in $\mathbb{R}^{s(x)}\times \{0\}
$ and vectors in $\mathbb{R}^{s(x)}
$. Then, $$w=(d_{z_y'}\psi_y)^{-1}d_{z_x'}\psi_xv+\Big[ (d_{z_y'}\psi_y)^{-1}d_{z_x'}\psi_x\Big(\begin{array}{c}
0\\
d_{z_x}Gv\\ \end{array}\Big)-\Big(\begin{array}{c}
0\\
d_{z_y}Fw\\ \end{array}\Big)\Big].$$
Call the term in the brackets $E_s\xi_s$, then
$\Xi_s=C_\chi(y)(d_{z_y'}\psi_y)^{-1}d_{z_x'}\psi_xC_\chi^{-1}(x)+C_\chi(y)E_s$.
\begin{itemize}
  \item  $\Xi^s\xi_s=d_0\psi_y\Big(\begin{array}{c}
w\\
0\\ \end{array}\Big)\Rightarrow |w|=|(d_0\psi_y)^{-1}\Xi^s\xi_s|=|C_\chi^{-1}(y)\Xi^s \xi_s|\leq\|C_\chi^{-1}(y)\|\cdot\|\Xi^s\|\cdot|\xi_s|\leq$

$\leq\|C_\chi^{-1}(y)\|\cdot e^{2(Q_\epsilon(x)^\frac{\beta}{4}+Q_\epsilon(y)^\frac{\beta}{4})}|\xi_s|\leq 2\|C_\chi^{-1}(y)\|\cdot |\xi_s|(\because \text{ \eqref{shluk} })$.
\item $\mathrm{Lip}(G)\leq Q_\epsilon(x)^\frac{\beta}{3},\mathrm{Lip}(F)\leq Q_\epsilon(y)^\frac{\beta}{3}$ (by the remark after definition \ref{admissible}).
\item $\xi_s=C_\chi(x)v\Rightarrow|v|\leq \|C_\chi^{-1}(x)\|\cdot |\xi_s|$.
\end{itemize} Hence:

\begin{align}\label{Es}
|C_\chi(y)E_s\xi_s|\leq&\|d_{z_y'}\exp_y^{-1}\|\cdot\|d_{z_x'}\psi_x\|\cdot\|d_{z_x}G\|\cdot |v|+\|d_{z_y}F\|\cdot|w|\nonumber\\
\leq&2\cdot2Q_\epsilon(x)^\frac{\beta}{3}\cdot\|C_\chi^{-1}(x)\|\cdot|\xi_s|+Q_\epsilon(y)^\frac{\beta}{3}\cdot2\|C_\chi^{-1}(y)\|\cdot|\xi_s|
\leq(Q_\epsilon(x)^\frac{\beta}{4}+Q_\epsilon(y)^\frac{\beta}{4})|\xi_s|.
\end{align}
Similarly, $\Xi_u=C_\chi(y)(d_{z_y'}\psi_y)^{-1}d_{z_x'}\psi_xC_\chi^{-1}(x)+C_\chi(y)E_u$, where $E_u:H^u(x)\rightarrow T_yM$ and 
\begin{equation}\label{Eu}
\|C_\chi(y)E_u\|
\leq (Q_\epsilon(x)^\frac{\beta}{4}+Q_\epsilon(y)^\frac{\beta}{4})
.
\end{equation}
For a general tangent $\xi\in T_xM$, write $\xi=\xi_s+\xi_u,\xi_{s/u}\in H^{s/u}(x)$, then

\begin{align*}\Xi\xi=&\Xi(\xi_s+\xi_u)=\Xi_s\xi_s+\Xi_u\xi_u \\
=&\Big(C_\chi(y)(d_{z_y'}\psi_y)^{-1}d_{z_x'}\psi_xC_\chi^{-1}(x)+C_\chi(y)E_s\Big)\xi_s+\Big(C_\chi(y)(d_{z_y'}\psi_y)^{-1}d_{z_x'}\psi_xC_\chi^{-1}(x)+C_\chi(y)E_u\Big)\xi_u\\
=&C_\chi(y)(d_{z_y'}\psi_y)^{-1}d_{z_x'}\psi_xC_\chi^{-1}(x)(\xi_s+\xi_u)+C_\chi(y)(E_s\xi_s+E_u\xi_u)\\
=&C_\chi(y)(d_{z_y'}\psi_y)^{-1}d_{z_x'}\psi_xC_\chi^{-1}(x)\xi+C_\chi(y)(E_s\xi_s+E_u\xi_u).
\end{align*}

Hence, since $
d_{z'_{x}}\psi_{x}=d_{C_\chi(x)z_{x}'}\exp_{x}\circ C_\chi(x),d_{z'_{y}}\psi_{y}=d_{C_\chi(y)z_{y}'}\exp_{y}\circ C_\chi(y)$, we get
$$\Xi\xi=d_{f^k(p)}\exp_y^{-1}d_{C_\chi(x)z_x'}\exp_x\xi+C_\chi(y)(E_s\xi_s+E_u\xi_u)
\equiv d_{C_\chi(x)z_x'}(\exp_y^{-1}\exp_x)\xi+E_1\xi,$$
where
\begin{align*}|E_1\xi|\leq& |C_\chi(y)E_s\xi_s|+|C_\chi(y)E_u\xi_u|\leq(Q_\epsilon(x)^\frac{\beta}{4}+Q_\epsilon(y)^\frac{\beta}{4})(|\xi_s|+|\xi_u|)\text{ } (\because\text{ \eqref{Es},\eqref{Eu}})\\
\leq& (Q_\epsilon(x)^\frac{\beta}{4}+Q_\epsilon(y)^\frac{\beta}{4})(\|C_\chi^{-1}(x)\|\cdot|\xi|+\|C_\chi^{-1}(x)\|\cdot|\xi|)\text{ }(\because\text{Part 1})\leq2\|C_\chi^{-1}(x)\|(Q_\epsilon(x)^\frac{\beta}{4}+Q_\epsilon(y)^\frac{\beta}{4})|\xi|.\end{align*}
Therefore,
$$\Xi=d_{C_\chi(x)z_x'}(\exp_y^{-1}\exp_x)+E_1\Rightarrow\|\Xi\|\leq \|d_\cdot\exp_y^{-1}\|\cdot\|d_\cdot\exp_x\|+\|E_1\|\leq4+2\|C_\chi^{-1}(x)\|(Q_\epsilon(x)^\frac{\beta}{4}+Q_\epsilon(y)^\frac{\beta}{4})$$
Now using this again in \eqref{shlukit}:
\begin{align*}
\|C_\chi^{-1}(x)\|\leq& e^{4\sqrt{\epsilon}}\|C_\chi^{-1}(y)\Xi\|\leq e^{4\sqrt{\epsilon}}\|C_\chi^{-1}(y)\|\cdot\|\Xi\|\leq e^{4\sqrt{\epsilon}}\|C_\chi^{-1}(y)\|\Big(4+2\|C_\chi^{-1}(x)\|(Q_\epsilon(x)^\frac{\beta}{4}+Q_\epsilon(y)^\frac{\beta}{4})\Big)\\
=& \|C_\chi^{-1}(y)\|\cdot e^{4\sqrt{\epsilon}}\Big(4+2\|C_\chi^{-1}(x)\|Q_\epsilon(x)^\frac{\beta}{4}\Big)+\|C_\chi^{-1}(x)\|\cdot2e^{4\sqrt{\epsilon}}\Big(\|C_\chi^{-1}(y)\|Q_\epsilon(y)^\frac{\beta}{4}\Big)\\
\leq&5(1-\epsilon)\|C_\chi^{-1}(y)\|+\epsilon\|C_\chi^{-1}(x)\|
\Rightarrow (1-\epsilon)\|C_\chi^{-1}(x)\|\leq (1-\epsilon)5\|C_\chi^{-1}(y)\|\Rightarrow\|C_\chi^{-1}(x)\|\leq 5\|C_\chi^{-1}(y)\|.
\end{align*}
By symmetry we will also get $\|C_\chi^{-1}(y)\|\leq 5\|C_\chi^{-1}(x)\|$. Substituting these in the bounds for $\|E_1\|$:
$$\|E_1\|\leq2\|C_\chi^{-1}(x)\|Q_\epsilon(x)^\frac{\beta}{4}+2\|C_\chi^{-1}(x)\|Q_\epsilon(y)^\frac{\beta}{4}\leq2\|C_\chi^{-1}(x)\|Q_\epsilon(x)^\frac{\beta}{4}+2\cdot5\|C_\chi^{-1}(y)\|Q_\epsilon(y)^\frac{\beta}{4}\leq$$ $$\leq Q_\epsilon(x)^\frac{\beta}{5}+Q_\epsilon(y)^\frac{\beta}{5}\leq\frac{1}{2}\epsilon.$$
QED

\medskip
$\textit{Part 3}$: $\forall u\in R_\epsilon(0)$: $\exists D\in \mathcal{D}$ s.t. $\|\Theta_D d_{C_\chi(x)u}(\exp_y^{-1}\exp_x)-\Theta_D\|\leq \frac{1}{2}\epsilon$ (see definition \ref{isometries}).

\textit{Proof}: Begin by choosing $\epsilon$ smaller than $\frac{\varpi(\mathcal{D})}{2\sqrt{d}}$ (as in definition \ref{isometries}). Then $\exists D\in\mathcal{D}$ such that $\exp_x[R_\epsilon(0)]\subset B_{\frac{\varpi(\mathcal{D})}{2}}(x)$, because $d(x,\exp_xC_\chi(x)v)=|C_\chi(x)v|_2\leq\sqrt{d}|v|_\infty<\epsilon\sqrt{d}$, so $\exp_x[R_\epsilon(0)]$ has diameter less than $2\sqrt{d}\epsilon<\varpi(\mathcal{D})$. Hence the choice of $D$ and of $\Theta_D$ is proper. Since

$$\Theta_D\exp_y^{-1}\exp_x=(\Theta_D\exp_y^{-1}-\Theta_D\exp_x^{-1})\exp_x+\Theta_D,$$
we get that for any $u\in R_\epsilon(0)$:
$$\Theta_Dd_{C_\chi(x)u}(\exp_y^{-1}\exp_x)=d_{C_\chi(x)u}(\Theta_D\exp_y^{-1}\exp_x)=d_{\exp_xC_\chi(x)u}(\Theta_D\exp_y^{-1}-\Theta_D\exp_x^{-1})d_{C_\chi(x)u}\exp_x+\Theta_D.$$
Then
$$\|\Theta_Dd_{C_\chi(x)u}(\exp_y^{-1}\exp_x)-\Theta_D\|\leq \|\Theta_D\exp_y^{-1}-\Theta_D\exp_x^{-1}\|_{C^2}\cdot \|d_{C_\chi(x)u}\exp_x\|\leq L_2d(x,y)\|d_{C_\chi(x)u}\exp_x\|,$$
where $L_2$ is the uniform Lipschitz const. of $x\mapsto\nu_x^{-1}\exp_x^{-1}$ (proposition \ref{chartsofboxes}). This is due to the fact that $d(x,\exp_xC_\chi(x)u)<\sqrt{d}\epsilon\Rightarrow\exp_xC_\chi(x)u\in D$. In addition, $\|d_{C_\chi(x)u}\exp_x\|\leq\|Id_{T_xM}\|+E_0|C_\chi(x)u-0|\leq1+E_0\epsilon$ ($E_0$ is a constant introduced in lemma \ref{boundimprove}). Therefore, in total,
\begin{equation}\label{explicitcalculation}\|\Theta_Dd_{C_\chi(x)u}(\exp_y^{-1}\exp_x)-\Theta_D\|\leq L_2(1+\epsilon E_0)\cdot d(x,y)<Q_\epsilon(y)^\frac{\beta}{5}<\frac{1}{2}\epsilon.\end{equation}
QED

Adding the estimates of part 2 and part 3, we get \begin{align*}\|\Xi\|=&\|\Theta_Dd_{C_\chi(x)v}(\exp_y^{-1}\exp_x)-\Theta_D+\Theta_D+\Theta_D E_1\|\\ \leq&\|\Theta_D d_{C_\chi(x)v}(\exp_y^{-1}\exp_x)-\Theta_D\|+\|\Theta_D\|+\|E_1\|\leq \frac{1}{2}\epsilon+1+\frac{1}{2}\epsilon\leq e^\epsilon.\end{align*}

If we exchange the roles of $x_k,y_k$ then we get $\Xi_k^{-1}$, so $\|\Xi_k^{-1}\|$ is also less than $e^\epsilon$.
\end{proof}

\begin{cor} Under the assumptions and notations of the previous lemma:
$$\forall i\in\mathbb{Z}: \frac{\|C_\chi^{-1}(x_i)\|}{\|C_\chi^{-1}(y_i)\|}=e^{\pm5\epsilon^\frac{1}{2}}.$$

\end{cor}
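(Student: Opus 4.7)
The plan is to deduce the bound on the ratio of operator norms directly from the two pieces of Claim \ref{Xi}: the pointwise ratio bound and the bound on the operator norm of $\Xi_i$ and its inverse. Since the statement is symmetric in $x_i$ and $y_i$, it will suffice to bound $\|C_\chi^{-1}(x_i)\|$ by a constant times $\|C_\chi^{-1}(y_i)\|$ and then invoke the analogous inequality with $x_i$ and $y_i$ interchanged, which follows from the same reasoning applied to $\Xi_i^{-1}$.

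First I would fix $i \in \mathbb{Z}$ and pick a unit vector $\xi \in T_{x_i}M(1)$ which nearly realizes $\|C_\chi^{-1}(x_i)\|$ as a supremum. By Claim \ref{Xi},
$$|C_\chi^{-1}(x_i)\xi| \le e^{4\sqrt{\epsilon}}\, |C_\chi^{-1}(y_i)\Xi_i \xi|.$$
Now $\Xi_i\xi$ is a vector in $T_{y_i}M$ of norm at most $\|\Xi_i\| \le e^{\epsilon}$, so normalizing and using the definition of the operator norm gives
$$|C_\chi^{-1}(y_i)\Xi_i\xi| \le \|C_\chi^{-1}(y_i)\|\,\|\Xi_i\|\,|\xi| \le e^{\epsilon}\|C_\chi^{-1}(y_i)\|.$$
Combining these and taking the supremum over $\xi$, we obtain $\|C_\chi^{-1}(x_i)\| \le e^{4\sqrt{\epsilon}+\epsilon}\|C_\chi^{-1}(y_i)\|$, which is bounded above by $e^{5\sqrt{\epsilon}}\|C_\chi^{-1}(y_i)\|$ for all $\epsilon$ small enough (since $\epsilon < \sqrt{\epsilon}$ when $\epsilon < 1$).

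For the reverse inequality, I would pick a near-extremal $\eta \in T_{y_i}M(1)$ and set $\xi := \Xi_i^{-1}\eta/|\Xi_i^{-1}\eta|$; using $\|\Xi_i^{-1}\|\le e^{\epsilon}$ (noted in the Remark after Claim \ref{Xi}) together with the pointwise bound of Claim \ref{Xi} applied to $\xi$, the same calculation yields $\|C_\chi^{-1}(y_i)\| \le e^{5\sqrt{\epsilon}}\|C_\chi^{-1}(x_i)\|$. Taken together, the two one-sided bounds give the desired two-sided estimate $\|C_\chi^{-1}(x_i)\|/\|C_\chi^{-1}(y_i)\| = e^{\pm 5\sqrt{\epsilon}}$.

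There is no real obstacle here: all of the work has been done in Claim \ref{Xi}, and the corollary is just a passage from the pointwise comparison of $|C_\chi^{-1}(x_i)\,\cdot\,|$ and $|C_\chi^{-1}(y_i)\Xi_i(\cdot)|$ to the comparison of the corresponding operator norms. The only thing to verify is the arithmetic $4\sqrt{\epsilon}+\epsilon \le 5\sqrt{\epsilon}$, which is automatic for small $\epsilon$, and the fact that $\Xi_i$ is a bijection on tangent spaces (which is part of its definition), so that supremizing over $\{\Xi_i\xi : |\xi|=1\}$ and over the unit sphere in $T_{y_i}M$ yields quantities differing by factors of at most $e^{\pm\epsilon}$.
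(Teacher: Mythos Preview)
Your proposal is correct and takes essentially the same approach as the paper. The paper's proof is the one-line chain $\|C_\chi^{-1}(x_i)\|\leq e^{4\sqrt{\epsilon}}\|C_\chi^{-1}(y_i)\Xi_i\|\leq e^{\epsilon+4\sqrt{\epsilon}}\|C_\chi^{-1}(y_i)\|\leq e^{5\sqrt{\epsilon}}\|C_\chi^{-1}(y_i)\|$ followed by ``by symmetry'', which is exactly your argument written more tersely.
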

\begin{proof}
By proposition \ref{Xi}:
$$\|C_\chi^{-1}(x_i)\|\leq e^{4\sqrt{\epsilon}}\|C_\chi^{-1}(y_i)\Xi_i\|\leq e^{\epsilon+4\sqrt{\epsilon}}\|C_\chi^{-1}(y_i)\|\leq e^{5\sqrt{\epsilon}}\|C_\chi^{-1}(y_i)\|,$$
and by symmetry we get the other inequality, and we are done.
\end{proof}

\subsubsection{Comparing frame parameters}

\begin{definition}
(See \cite{Sarig})
\medskip
\begin{enumerate}
\item A positive or negative chain is called {\em regular} if it can be completed to a regular chain (i.e., every coordinate is relevant, and some double chart appears infinitely many times).
\item If $v$ is a double chart, then $p^{u/s}(v)$ means the $p^{u/s}$ in $\psi_x^{p^s,p^u}=v$.
\item A negative chain $(v_i)_{i\leq0}$ is called {\em $\epsilon$-maximal} if it is regular, and $$p^u(v_0)\geq e^{-\sqrt[3]{\epsilon}}p^u(u_0)$$ for every regular chain $(u_i)_{i\in\mathbb{Z}}$ for which there is a positive regular chain $(v_i)_{i\geq0}$ s.t. $\pi((u_i)_{i\in\mathbb{Z}})=\pi((v_i)_{i\in\mathbb{Z}})$.
\item A positive chain $(v_i)_{i\geq0}$ is called {\em $\epsilon$-maximal} if it is regular, and $$p^s(v_0)\geq e^{-\sqrt[3]{\epsilon}}p^s(u_0)$$ for every regular chain $(u_i)_{i\in\mathbb{Z}}$ for which there is a negative regular chain $(v_i)_{i\leq0}$ s.t. $\pi((u_i)_{i\in\mathbb{Z}})=\pi((v_i)_{i\in\mathbb{Z}})$.
\end{enumerate}
\end{definition}

\begin{prop}\label{prop213}
The following holds for all $\epsilon$ small enough. For every regular chain $(v_i)_{i\in\mathbb{Z}}$, $(u_i)_{i\leq0}$,$(u_i)_{i\geq0}$ are $\epsilon$-maximal.
\end{prop}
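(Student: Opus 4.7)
The plan is to leverage the quantitative ratio bounds between chains which shadow the same orbit (obtained in Section 2.1.1) together with a closed-form expression for $p_0^u$ in terms of the $Q_\epsilon$ values along the backward orbit. By the symmetry between the $u$ and $s$ cases under time reversal it will suffice to treat the negative half. So write the given chain as $(v_i)=(\psi_{x_i}^{p_i^s,p_i^u})_{i\in\mathbb{Z}}$ and take an arbitrary regular $(u_i)=(\psi_{y_i}^{q_i^s,q_i^u})_{i\in\mathbb{Z}}$ with $\pi((v_i))=\pi((u_i))$; I shall prove $p_0^u\ge e^{-\epsilon^{1/3}}q_0^u$.

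The key uniform comparison comes directly from the corollary following Claim~\ref{Xi}: one has $\|C_\chi^{-1}(x_i)\|/\|C_\chi^{-1}(y_i)\|=e^{\pm 5\sqrt{\epsilon}}$ for every $i$. Since $\tilde Q_\epsilon(x)=\tfrac{1}{3^{6/\beta}}\epsilon^{90/\beta}\|C_\chi^{-1}(x)\|^{-48/\beta}$, raising the above estimate to the $-48/\beta$ power gives a bound on $\tilde Q_\epsilon(x_i)/\tilde Q_\epsilon(y_i)$, and the inclusion $Q_\epsilon\in(e^{-\epsilon/3}\tilde Q_\epsilon,\tilde Q_\epsilon]$ absorbs the discretization. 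The net result is
\[
\frac{Q_\epsilon(x_i)}{Q_\epsilon(y_i)}=e^{\pm C\sqrt{\epsilon}},\qquad C=C(\beta),
\]
with the bound uniform in $i$.

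Next I will obtain a closed formula for $p_0^u$. Since $(v_i)$ is regular, it lies in $\Sigma^\#$ and some vertex $v$ satisfies $v_{-m_k}=v$ along a sequence $m_k\uparrow\infty$; in particular $p^u_{-m_k}=p^u(v)$ is bounded below, so $\limsup_{n\to -\infty}(p_n^s\wedge p_n^u)>0$. The lemma immediately preceding Prop.~\ref{prop131} then yields $p_n^u=Q_\epsilon(x_n)$ for infinitely many $n<0$. Iterating the defining recursion $p_{k+1}^u=\min\{e^\epsilon p_k^u,Q_\epsilon(x_{k+1})\}$ backward from any such $n=-m_k$ down to $0$, and then letting $k\to\infty$, gives
\[
p_0^u=\inf_{j\ge 0}\bigl\{e^{j\epsilon}Q_\epsilon(x_{-j})\bigr\},\qquad q_0^u=\inf_{j\ge 0}\bigl\{e^{j\epsilon}Q_\epsilon(y_{-j})\bigr\}.
\]

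Applying the index-wise ratio bound inside the infima yields $p_0^u/q_0^u\in[e^{-C\sqrt{\epsilon}},e^{C\sqrt{\epsilon}}]$, and since $\sqrt{\epsilon}/\epsilon^{1/3}=\epsilon^{1/6}\to 0$, choosing $\epsilon$ so small that $C\sqrt{\epsilon}\le\epsilon^{1/3}$ produces $p_0^u\ge e^{-\epsilon^{1/3}}q_0^u$. The $s$-case follows by applying the same argument to $f^{-1}$ in place of $f$. The main technical obstacle is the first step, the uniform transfer of the $\|C_\chi^{-1}\|$ ratio from Lemma~\ref{chainbounds} to the discrete parameters $Q_\epsilon$ (and hence $p^u$); once that transfer is in hand, the closed form for $p_0^u$ reduces the proposition to a routine inequality in $\epsilon$.
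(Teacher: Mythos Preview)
Your argument is correct and follows essentially the same route as the paper: Step~1 here is exactly your transfer of the $\|C_\chi^{-1}\|$-ratio to a $Q_\epsilon$-ratio via the corollary after Claim~\ref{Xi}, and the ``rest as in Sarig, prop.~8.3'' that the paper invokes is precisely your closed-form $p_0^u=\inf_{j\ge 0}e^{j\epsilon}Q_\epsilon(x_{-j})$ obtained from the recursion together with the lemma guaranteeing $p^u_n=Q_\epsilon(x_n)$ infinitely often. Two minor remarks: (i) your constant should carry a $1/\beta$, since $\tilde Q_\epsilon\propto\|C_\chi^{-1}\|^{-48/\beta}$, but this is harmless once you absorb it into $\epsilon^{1/3}$; (ii) ``iterating backward from $n=-m_k$ down to $0$'' should read ``forward up to $0$'', though the formula you obtain is correct.
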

\begin{proof} The following holds for all $\epsilon$ small enough. Let $u$ and $v$ be two regular chains s.t. $\pi(u)=\pi(v)$. If $u_0=\psi_x^{p^s,p^u}$ and $v_0=\psi_y^{q^s,q^u}$, then $\frac{Q_\epsilon(x)}{Q_\epsilon(y)}=e^{\pm\sqrt[3]{\epsilon}}.$ In order to see that, recall that $$\|C_\chi^{-1}(x)\|^{48}=\widetilde{Q}_\epsilon(x)^{-\beta}\cdot\epsilon^{90}\frac{1}{3^{6/\beta}},Q_\epsilon(x)\leq\widetilde{Q}_\epsilon(x)\leq Q_\epsilon(x)\cdot e^{\frac{1}{3}\epsilon}.$$
So, $\frac{Q_\epsilon(x)}{Q_\epsilon(y)}\leq e^{\frac{1}{3}\epsilon} \frac{\|C_\chi^{-1}(x)\|^{48}}{\|C_\chi^{-1}(y)\|^{48}}$, and by the previous corollary this is less than $e^{\frac{1}{3}\epsilon}e^{240\epsilon^\frac{1}{2}}$. For $\epsilon$ small enough this less than $e^{\sqrt[3]{\epsilon}}$. The other inequality is derived identically.

From here onward, the proof is exactly as in \cite[proposition~8.3]{Sarig}.

\end{proof}
\begin{lemma}\label{prop214}
Let $(\psi_{x_i}^{p_i^s,p_i^u})_{i\in\mathbb{Z}}$, and $(\psi_{y_i}^{q_i^s,q_i^u})_{i\in\mathbb{Z}}$ be two regular chains s.t. $\pi((\psi_{x_i}^{p_i^s,p_i^u})_{i\in\mathbb{Z}})=\pi((\psi_{y_i}^{q_i^s,q_i^u})_{i\in\mathbb{Z}})=p$ then for all $i\in\mathbb{Z}$, $\frac{p_i^u}{q_i^u},\frac{p_i^s}{q_i^s}\in[e^{-\sqrt[3]{\epsilon}},e^{\sqrt[3]{\epsilon}}]$.
\end{lemma}
\begin{proof}
By the previous proposition, $(\psi_{x_i}^{p_i^s,p_i^u})_{i\leq0}$ is $\epsilon$-maximal, so $p_0^u\geq e^{-\sqrt[3]{\epsilon}}q_0^u$. $(\psi_{x_i}^{p_i^s,p_i^u})_{i\leq0}$ is also $\epsilon$-maximal, so $q_0^u\geq e^{-\epsilon^\frac{1}{3}}p_0^u$. It follows that $\frac{p_0^u}{q_0^u}\in[e^{-\epsilon^\frac{1}{3}},e^{\epsilon^\frac{1}{3}}]$. Similarly $\frac{p_0^s}{q_0^s}\in[e^{-\epsilon^\frac{1}{3}},e^{\epsilon^\frac{1}{3}}]$. Working with the shifted sequences $(\psi_{x_{i+k}}^{p_{i+k}^s,p_{i+k}^u})_{i\in\mathbb{Z}}$, and $(\psi_{y_{i+k}}^{q_{i+k}^s,q_{i+k}^u})_{i\in\mathbb{Z}}$, we obtain $\frac{p_i^u}{q_i^u},\frac{p_i^s}{q_i^s}\in[e^{-\epsilon^\frac{1}{3}},e^{\epsilon^\frac{1}{3}}]$ for all $i\in\mathbb{Z}$.
\end{proof}

\subsubsection{Comparing Pesin charts}
The following theorem,  the ``solution to the inverse problem", is a multidimensional generalization of theorem 5.2 in \cite{Sarig}. Our proof is different from Sarig's: in the higher dimensional case there is no  explicit formula for the operator $C_\chi(x)$, because of the lack of a canonical basis compatible with the Lyapunov inner product (recall the definition in  \eqref{LIP},\eqref{omrisuggestnumber}). We will use the map $\Xi$ from proposition \ref{Xi} to overcome this problem.
\begin{theorem}\label{beforefinal}
The following holds for all $\epsilon$ small enough. Suppose $(\psi_{x_i}^{p_i^s,p_i^u})_{i\in\mathbb{Z}},(\psi_{y_i}^{q_i^s,q_i^u})_{i\in\mathbb{Z}}$ are regular chains s.t. $\pi((\psi_{x_i}^{p_i^s,p_i^u})_{i\in\mathbb{Z}})=p=\pi((\psi_{y_i}^{q_i^s,q_i^u})_{i\in\mathbb{Z}})$ then for all i:
\begin{enumerate}
\item $d(x_i,y_i)<\epsilon$,
\item $\frac{p_i^s}{q_i^s},\frac{p_i^u}{q_i^u}\in[e^{-\epsilon^\frac{1}{3}},e^{\epsilon^\frac{1}{3}}]$,
\item $(\psi_{y_i}^{-1}\circ\psi_{x_i})(u)=O_iu+a_i+\Delta_i(u)$ for all $u\in R_\epsilon(0)$, where $O_i\in O(d)$ is a $d\times d$ orthogonal matrix, $a_i$ is a constant vector s.t. $|a_i|_\infty\leq10^{-1}(q_i^u\wedge q_i^s)$, and $\Delta_i$ is a vector field s.t. $\Delta_i(0)=0,\|d_v\Delta_i\|<\frac{1}{2}\epsilon^\frac{1}{3}$. $O_i$ preserves $\mathbb{R}^{s(x)}$ and $\mathbb{R}^{d-s(x)}$. \footnote{Recall, in our notations $\mathbb{R}^{s(x)}:=C_\chi^{-1}(x_i)[H^s(x_i)]=C_\chi^{-1}(y_i)[H^s(y_i)]\subset \mathbb{R}^d$ and $\mathbb{R}^{d-s(x)}:=(\mathbb{R}^{s(x)})^\perp=C_\chi^{-1}(x_i)[H^u(x_i)]=C_\chi^{-1}(y_i)[H^u(y_i)]$.}
\end{enumerate}
\end{theorem}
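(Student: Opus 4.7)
Items (1) and (2) will follow immediately from the preceding results: (2) is precisely Proposition \ref{prop214}, and (1) is obtained by combining Proposition \ref{frameparms} with the uniform bound $p_i^{s/u},q_i^{s/u}\leq Q_\epsilon<\epsilon^{3/\beta}<\epsilon$. Thus the content of the theorem is really (3).

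For (3), I will set $\phi_i:=\psi_{y_i}^{-1}\circ\psi_{x_i}$, $a_i:=\phi_i(0)=C_\chi^{-1}(y_i)\exp_{y_i}^{-1}(x_i)$, and $L_i:=d_0\phi_i=C_\chi^{-1}(y_i)K_iC_\chi(x_i)$, where $K_i:=d_{x_i}\exp_{y_i}^{-1}\colon T_{x_i}M\to T_{y_i}M$. Taylor expansion at the origin gives $\phi_i(u)=a_i+L_iu+\tilde\Delta_i(u)$ with $\tilde\Delta_i(0)=0=d_0\tilde\Delta_i$. The plan is to choose an orthogonal matrix $O_i$ close to $L_i$, and absorb the discrepancy into $\Delta_i:=\tilde\Delta_i+(L_i-O_i)(\cdot)$, so that $\|d_v\Delta_i\|\leq\|d_v\tilde\Delta_i\|+\|L_i-O_i\|$ on $R_\epsilon(0)$.

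The construction of $O_i$ rests on Claim \ref{Xi}. Define $\tilde O_i:=C_\chi^{-1}(y_i)\Xi_iC_\chi(x_i)$; using $|C_\chi^{-1}(z)w|_2=|w|'_z$ and Claim \ref{Xi}, a direct check shows $|\tilde O_iv|_2/|v|_2=e^{\pm 4\sqrt\epsilon}$, so the polar factorization $\tilde O_i=O_iS_i$ yields an orthogonal $O_i$ with $\|\tilde O_i-O_i\|\leq 5\sqrt\epsilon$. To pass from $\tilde O_i$ to $L_i$, I will write $L_i-\tilde O_i=C_\chi^{-1}(y_i)(K_i-\Xi_i)C_\chi(x_i)$; part 2 of Claim \ref{Xi} together with the H\"older regularity of $d_\cdot\exp^{-1}$ bounds $\|K_i-\Xi_i\|_{\mathrm{op}}$ by $O(Q_\epsilon(x_i)^{\beta/5}+Q_\epsilon(y_i)^{\beta/5})$, and the outer $\|C_\chi^{-1}(y_i)\|$ is absorbed because $\|C_\chi^{-1}(y_i)\|\,Q_\epsilon(y_i)^{\beta/5}$ is polynomially small in $\epsilon$ (by the bound $\|C_\chi^{-1}\|^{48}\leq\epsilon^{2/\beta}/Q_\epsilon$ from the lemma following Definition \ref{balls}). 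For the constant $a_i$, I split $\exp_{y_i}^{-1}(x_i)=\exp_{y_i}^{-1}(f^i(p))+[\exp_{y_i}^{-1}(x_i)-\exp_{y_i}^{-1}(f^i(p))]$: the first summand's image under $C_\chi^{-1}(y_i)$ is the Pesin-coordinate vector $(v_i',w_i')$, with $|\cdot|_\infty\leq 10^{-2}(q_i^u\wedge q_i^s)$ by admissibility (Proposition \ref{firstbefore}); the second, by Taylor-expanding $\exp_{y_i}^{-1}\circ\exp_{x_i}$ and again invoking Claim \ref{Xi}, reduces to a constant multiple of $|\exp_{x_i}^{-1}(f^i(p))|'_{x_i}=|(v_i,w_i)|_2\leq\sqrt d\cdot 10^{-2}(p_i^s\wedge p_i^u)$, and combining with item (2) gives $|a_i|_\infty\leq 10^{-1}(q_i^u\wedge q_i^s)$. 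Finally, $\|d_v\tilde\Delta_i\|\leq C|v|^{\beta/2}$ on $R_\epsilon(0)$ follows from the $C^{1+\beta/2}$-smoothness of the chart change (as in Proposition \ref{chartsofboxes}), which combined with $\|L_i-O_i\|<\tfrac14\epsilon^{1/3}$ gives the desired $\|d_v\Delta_i\|<\tfrac12\epsilon^{1/3}$.

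The hard part will be the systematic presence of $\|C_\chi^{-1}\|$-factors in the Euclidean estimates, which individually can be unbounded. The resolution throughout is to work consistently in the Lyapunov norms $|\cdot|'_{x_i},|\cdot|'_{y_i}$, in which Claim \ref{Xi} says $\Xi_i$ is an almost-isometry, and to exploit the inequality $\|C_\chi^{-1}(z)\|^{48}\leq\epsilon^{2/\beta}/Q_\epsilon(z)$ to trade leftover $C_\chi^{-1}$-factors against small powers of $Q_\epsilon$.
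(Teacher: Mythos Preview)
Your overall strategy is the paper's: set $\tilde O_i:=C_\chi^{-1}(y_i)\,\Xi_i\,C_\chi(x_i)$, use Claim~\ref{Xi} to see that $\tilde O_i$ is an $e^{\pm4\sqrt\epsilon}$-isometry, take $O_i$ to be its orthogonal polar factor, and then compare $d_v\phi_i$ with $\tilde O_i$ (hence with $O_i$) via Parts~2 and~3 of Claim~\ref{Xi}. Items (1) and (2), and the construction and bound for $O_i$, are correct.

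There is, however, a real gap in your treatment of $\tilde\Delta_i$. You cannot invoke Proposition~\ref{chartsofboxes}: that proposition assumes the $\epsilon$-overlap condition $d(x_i,y_i)+\|C_{x_i}-C_{y_i}\|<\eta^4\zeta^4$, which is \emph{not} available for $\psi_{x_i}$ and $\psi_{y_i}$ here. Without overlap, writing $d_v\tilde\Delta_i=C_\chi^{-1}(y_i)\bigl[d_{C_\chi(x_i)v}h-d_0h\bigr]C_\chi(x_i)$ (with $h=\exp_{y_i}^{-1}\exp_{x_i}$) and bounding the bracket by a constant times $|v|^{\beta/2}$ via generic smoothness leaves a factor $\|C_\chi^{-1}(y_i)\|$ that $|v|^{\beta/2}\le\epsilon^{\beta/2}$ cannot absorb: $\|C_\chi^{-1}(y_i)\|\epsilon^{\beta/2}$ need not be small. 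The correct mechanism is the one you already use for $L_i-\tilde O_i$: by Part~3 of Claim~\ref{Xi}, $\|\Theta_D\,d_{C_\chi(x_i)u}h-\Theta_D\|\le L_2(1+\epsilon E_0)\,d(x_i,y_i)$ \emph{uniformly} in $u\in R_\epsilon(0)$, and $d(x_i,y_i)<25^{-1}\max\{p_i^s\wedge p_i^u,\,q_i^s\wedge q_i^u\}\lesssim Q_\epsilon$ by Proposition~\ref{frameparms}. Combining with Part~2 of Claim~\ref{Xi} gives $\|d_v\phi_i-\tilde O_i\|\le\|C_\chi^{-1}(y_i)\|\cdot O\bigl(Q_\epsilon(y_i)^{\beta/5}\bigr)$ for \emph{all} $v\in R_\epsilon(0)$, and this product is small by the inequality $\|C_\chi^{-1}\|^{48}\le\epsilon^{2/\beta}/Q_\epsilon$ you already quote. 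In short: do not split off $\tilde\Delta_i$ and try to exploit $|v|$-smallness; bound $d_v\Delta_i=d_v\phi_i-O_i$ in one stroke, with the smallness coming from $d(x_i,y_i)$ rather than from $|v|$. This is exactly what the paper does.

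For $a_i$ the paper's argument is shorter than yours: once $\|d_v\Delta_i\|<\tfrac12\epsilon^{1/3}$ is established, evaluate the identity $\phi_i=O_i+a_i+\Delta_i$ at the point $z'_{x_i}:=\psi_{x_i}^{-1}(f^i(p))$, whose image is $z'_{y_i}:=\psi_{y_i}^{-1}(f^i(p))$, to get $a_i=z'_{y_i}-O_iz'_{x_i}-\Delta_i(z'_{x_i})$; then $|z'_{x_i}|_\infty,|z'_{y_i}|_\infty\le 10^{-2}(p_i^s\wedge p_i^u)$ by Proposition~\ref{firstbefore} finish the estimate. Your splitting via $f^i(p)$ and Taylor expansion also works but is more involved.
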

\begin{proof}
Parts $(1)$ and $(2)$ are the content of lemma \ref{firstchapter2} and lemma \ref{prop214} respectively; hence we are left with proving $(3)$.

First we show that $\psi_{y_i}^{-1}\psi_{x_i}$ is well defined on $R_\epsilon(0)$. Recall the definition of $r=r(M)$ from \textsection1.1.3. Then
$$\psi_{y_i}^{-1}\psi_{x_i}=C_\chi^{-1}(y_i)\exp_{y_i}^{-1}\exp_{x_i}C_\chi(x_i)$$
is well defined and smooth for all $v\in \mathbb{R}^d$ s.t. $|C_\chi(x_i)v|_2<2r$ and $d(y_i,\exp_{x_i}C_\chi(x_i)v)<\rho$. So for $v\in R_\epsilon(0)$ and $\epsilon$ small enough:
$$|C_\chi(x_i)v|_2\leq|v|_2\leq\sqrt{d}|v|_\infty<\epsilon\sqrt{d}<2r,$$
and $d(y_i,\exp_{x_i}C_\chi(x_i)v)\leq d(y_i,x_i)+d(x_i,\exp_{x_i}C_\chi(x_i)v)=d(x_i,y_i)+|C_\chi(x_i)v|_2\leq \epsilon+\sqrt{d}\epsilon$.
Now we can begin to show (3). We start by pointing out that proposition \ref{Xi} can be restated as follows:
$$\forall v\in\mathbb{R}^d: |C_\chi^{-1}(y)\Xi C_\chi(x)v|=e^{\pm4\sqrt{\epsilon}}|C_\chi^{-1}(x)C_\chi(x)v|=|v|e^{\pm4\sqrt{\epsilon}}.$$
By the polar decomposition for real matrices
, there exists an orthogonal matrix $O_i$, and a positive symmetric matrix $R_i$ s.t. 
$$C_\chi^{-1}(y_i)\Xi_i C_\chi(x_i)=O_i\cdot R_i, \forall v |R_iv|=e^{\pm4\sqrt{\epsilon}}|v|.$$
$$\Big(R_i=\sqrt{(C_\chi^{-1}(y_i)\Xi_i C_\chi(x_i))^t(C_\chi^{-1}(y_i)\Xi_i C_\chi(x_i))},O_i=C_\chi^{-1}(y_i)\Xi_i C_\chi(x_i)\cdot R_i^{-1}\Big).$$
$C_\chi^{-1}(y_i)\Xi_i C_\chi(x_i)$ preserves $\mathbb{R}^{s(x)}\times \{0\}=C_\chi^{-1}(x_i)[H^s(x_i)]=C_\chi^{-1}(y_i)[H^s(y_i)]$; $\{0\}\times\mathbb{R}^{d-s(x)}=C_\chi^{-1}(x_i)[H^u(x_i)]=C_\chi^{-1}(y_i)[H^u(y_i)]$. Therefore $R_i$ and $O_i$ preserve $\mathbb{R}^{s(x)}\times \{0\},\{0\}\times\mathbb{R}^{s(x)}$.

We now define a transformation $\Delta_i$ using the following identity:
$$\psi_{y_i}^{-1}\circ\psi_{x_i}=O_i+\psi_{y_i}^{-1}\circ\psi_{x_i}(0)+\Delta_i.$$
Call $\psi_{y_i}^{-1}\circ\psi_{x_i}(0)=:a_i$. Since $\psi_z=\exp_z\circ C_\chi(z)$ ($z=x_i,y_i$),
$$\Delta_i=[C_\chi^{-1}(y_i)\exp_{y_i}^{-1}\exp_{x_i}C_\chi(x_i)]-a_i-O_i.$$
We omit the subscript $i$ for easier notation. We get
$$\Delta=C_\chi^{-1}(y)[\exp_y^{-1}\exp_x-\Xi]C_\chi(x)+(C_\chi^{-1}(y)\Xi C_\chi(x)-O)-a.$$
We bound the differentials of these summands, beginning with the second one:
$$\|d_\cdot(C_\chi^{-1}(y)\Xi C_\chi(x)-O)\|=\|C_\chi^{-1}(y)\Xi C_\chi(x)-O\|=\|OR-O\|=\|R-Id\|.$$
Since $R$ is positive and symmetric, it can be orthogonally diagonalized. Let $v_i$ be the orthonormal basis of eigenvectors, and let $\lambda_i$ be the eigenvalues. Necessarily $\lambda_i=e^{\pm4\sqrt{\epsilon}}$. So $|(R-Id)v_i|\leq(e^{4\sqrt{\epsilon}}-1)|v_i|\leq8\sqrt{\epsilon}$, so $\|C_\chi^{-1}(y)\Xi C_\chi(x)-O\|=\|R-Id\|\leq8\sqrt{\epsilon}$.

$\medskip$
Now to bound the differential of the first summand $(\exp_y^{-1}\exp_x-\Xi)$. Let $z_x':=\psi_{x}^{-1}(f^i(p))$ and choose $D\ni x,y$ as in definition \ref{isometries}. Then for every $v\in R_\epsilon(0)$,
\begin{align*}\Theta_Dd_{C_\chi(x)v}(\exp_y^{-1}\exp_x-\Xi)=&\Theta_Dd_{C_\chi(x)v}(\exp_y^{-1}\exp_x)-\Theta_Dd_{C_\chi(x)v}\Xi=\Theta_Dd_{C_\chi(x)v}(\exp_y^{-1}\exp_x)-\Theta_D\Xi\\
=&\Theta_Dd_{C_\chi(x)v}(\exp_y^{-1}\exp_x)-\Theta_Dd_{C_\chi(x)z_x'}(\exp_y^{-1}\exp_x)-\Theta_DE_1,\end{align*}
where $E_1=\Xi-d_{C_\chi(x)z_x'}(\exp_y^{-1}\exp_x)$ as in part 2 of proposition \ref{Xi}. Hence, for any $v\in R_\epsilon(0)$:
$$\|\Theta_Dd_{C_\chi(x)v}(\exp_y^{-1}\exp_x-\Xi)\|\leq\|\Theta_Dd_{C_\chi(x)v}(\exp_y^{-1}\exp_x)-\Theta_Dd_{C_\chi(x)z_x'}(\exp_y^{-1}\exp_x)\|+\|E_1\|\leq$$
$$\leq\|\Theta_Dd_{C_\chi(x)v}(\exp_y^{-1}\exp_x)-\Theta_D\|+\|\Theta_D-\Theta_Dd_{C_\chi(x)z_x'}(\exp_y^{-1}\exp_x)\|+\|E_1\|.$$
 Therefore, by part 2 of proposition \ref{Xi}, by eq. \eqref{explicitcalculation}, and since $z_x'\in R_\epsilon(0)$, 
\begin{align*}\|d_\cdot (\exp_y^{-1}\exp_x-\Xi)\|&\leq Q_\epsilon(y)^\frac{\beta}{5}+Q_\epsilon(y)^\frac{\beta}{5}+(Q_\epsilon(y)^\frac{\beta}{5}+Q_\epsilon(x)^\frac{\beta}{5})\\
&\leq(3+e^{\sqrt[3]{\epsilon}})Q_\epsilon(y)^\frac{\beta}{5}\text{ (by the proof of proposition \ref{prop213})}\leq 5Q_\epsilon(y)^\frac{\beta}{5}.
\end{align*}
Hence, for all $v\in R_\epsilon(0)$, if $C_2:=\Theta_D\circ C_\chi(y)$, then
$$\|d_v\Big(C_\chi^{-1}(y)(\exp_y^{-1}\exp_x-\Xi)C_\chi(x)\Big)\|=\|C_2^{-1}\Theta_Dd_{C_\chi(x)v}(\exp_y^{-1}\exp_x-\Xi)C_\chi(x)\|\leq$$
$$\leq\|C_2^{-1}\|\cdot\|\Theta_Dd_{C_\chi(x)v}(\exp_y^{-1}\exp_x-\Xi)\|\leq\|C_\chi^{-1}(y)\|5Q_\epsilon(y)^\frac{\beta}{5}\leq \sqrt{\epsilon}.$$
It is clear that $\|d_\cdot a\|=0$. Hence in total:
$$\|d_v\Delta_i\|\leq\sqrt{\epsilon}+8\sqrt{\epsilon}+0\leq\frac{1}{2}\sqrt[3]{\epsilon}\text{ , for }\epsilon\text{ small enough and }v\in R_\epsilon(0).$$

It is also clear that $\Delta(0)=0$ by definition.

Finally, if $z_y'=\psi_y^{-1}(f^i(p))$ then $$z'_y=\psi_y^{-1}\psi_x(z'_x)=Oz'_x+\Delta(z'_x)+a.$$
We obtain
$$|a|\leq|z'_y|+|z'_x|+|\Delta(z'_x)|\leq|z'_y|+|z'_x|+\frac{1}{2}\epsilon^\frac{1}{3}|z'_x|\leq\frac{1}{10}q^u\wedge q^s.$$
\end{proof}
We can improve part $(3)$ in the previous theorem the following way: 
\begin{cor}\label{final} There is a finite $\frac{1}{2}\epsilon^\frac{1}{3}$-dense set $O_\epsilon(d)\subset O(d)$ s.t. for any $i\in\mathbb{Z}$ there is an $O'_i\in O_\epsilon(d)$, and $\Delta'_i$ as before s.t.:

$(\psi_{y_i}^{-1}\circ\psi_{x_i})(u)=O'_iu+a_i+\Delta'_i(u)$ for all $u\in R_\epsilon(0)$, where $\Delta'_i(0)=0,\|d_v\Delta'_i\|<\epsilon^\frac{1}{3}$
\end{cor}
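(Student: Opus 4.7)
The plan is to deduce this corollary from Theorem \ref{beforefinal} by a simple compactness/net argument on the orthogonal group, absorbing the discretization error into the nonlinear remainder term.

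First I would exploit the fact that $O(d)$, viewed as a subset of the space of $d\times d$ real matrices with the operator norm, is compact. Therefore, for every fixed $\epsilon>0$ there exists a finite $\tfrac{1}{2}\epsilon^{1/3}$-net $O_\epsilon(d)\subset O(d)$, i.e.\ a finite subset such that every $O\in O(d)$ satisfies $\|O-O'\|\le\tfrac{1}{2}\epsilon^{1/3}$ for some $O'\in O_\epsilon(d)$. This set $O_\epsilon(d)$ depends only on $\epsilon$ and on the dimension $d$, not on $i$ or on the chains; this is what makes the construction uniform.

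Next I would apply Theorem \ref{beforefinal} to the two chains, obtaining for each $i\in\mathbb{Z}$ the decomposition
\[
(\psi_{y_i}^{-1}\circ\psi_{x_i})(u)=O_i u+a_i+\Delta_i(u),\qquad u\in R_\epsilon(0),
\]
with $O_i\in O(d)$, $|a_i|_\infty\le10^{-1}(q_i^u\wedge q_i^s)$, $\Delta_i(0)=0$, and $\|d_v\Delta_i\|<\tfrac{1}{2}\epsilon^{1/3}$. Then I choose $O'_i\in O_\epsilon(d)$ with $\|O_i-O'_i\|\le\tfrac{1}{2}\epsilon^{1/3}$ using the net property, and define
\[
\Delta'_i(u):=\Delta_i(u)+(O_i-O'_i)u.
\]

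The verification is then immediate and constitutes the main (routine) calculation: rewriting the decomposition gives $(\psi_{y_i}^{-1}\circ\psi_{x_i})(u)=O'_i u+a_i+\Delta'_i(u)$; clearly $\Delta'_i(0)=\Delta_i(0)+0=0$; and since $(O_i-O'_i)u$ is linear in $u$, we compute
\[
\|d_v\Delta'_i\|\le\|d_v\Delta_i\|+\|O_i-O'_i\|<\tfrac{1}{2}\epsilon^{1/3}+\tfrac{1}{2}\epsilon^{1/3}=\epsilon^{1/3},
\]
as required. There is no real obstacle here — the only thing to note is that the Lipschitz bound on $\Delta_i$ is exactly $\tfrac{1}{2}\epsilon^{1/3}$ (not just $o(1)$), precisely so that after adding the net-discretization error of size $\tfrac{1}{2}\epsilon^{1/3}$ one still obtains the clean bound $\epsilon^{1/3}$. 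The constant $a_i$ is unchanged, so its bound from Theorem \ref{beforefinal} carries over.
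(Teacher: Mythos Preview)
Your proposal is correct and follows exactly the paper's approach: pick a finite $\tfrac{1}{2}\epsilon^{1/3}$-net in $O(d)$ by compactness, approximate each $O_i$ by some $O'_i$ from the net, and set $\Delta'_i:=\Delta_i+(O_i-O'_i)$. The paper's proof is just a terser version of what you wrote.
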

\begin{proof} $s(x)$ belongs to a finite set of possible values $1,...,d-1$.\footnote{$s(x):=s(x_i)=s(y_i)$ for all $i$. This fact was pointed out and explained in the remark before the proof of lemma \ref{chainbounds}.} For each one of those values $s$,
choose once and for all some $\frac{1}{2}\epsilon^\frac{1}{3}$-net for $O(s)\times O(d-s)=\{\text{all orthogonal maps which preserve }\mathbb{R}^s\text{ and }\mathbb{R}^{d-s}\}$, which exists since $O(s), O(d-s)$ are compact groups. Take some $O'_i$ in the corresponding $O(s(x))\times O(d-s(x))$ to approximate $O_i$ from the theorem. Define $\Delta'_i:=\Delta_i+O_i-O'_i$. The rest follows immediately.
\end{proof}
\subsection{Similar charts have similar manifolds}
\subsubsection{Comparing manifolds}
The following proposition is the higher-dimensional version of \cite[proposition~6.4]{Sarig}.
\begin{prop}\label{prop221} The following holds for all $\epsilon$ small enough. Let $V^s$ (resp. $U^s$) be an $s$-admissible manifold in $\psi_x^{p^s,p^u}$ (resp. $\psi_y^{q^s,q^u}$). Suppose $V^s, U^s$ stay in windows. If $x=y$ then either $V^s,U^s$ are disjoint, or one contains the other. The same statement holds for $u$-admissible manifolds.
\end{prop}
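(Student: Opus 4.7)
The plan is to adapt the uniqueness-of-local-stable-manifold argument, much in the spirit of the proof of part (4) of Proposition \ref{firstofchapter}. Let $F \colon R_{p^s}(0) \to \mathbb{R}^{u(x)}$ and $G \colon R_{q^s}(0) \to \mathbb{R}^{u(x)}$ be the functions representing $V^s$ and $U^s$ in the common chart $\psi_x = \psi_y$. Assume without loss of generality that $p^s \leq q^s$, so $R_{p^s}(0) \subset R_{q^s}(0)$ and the inclusion $V^s \subset U^s$ is equivalent to the identity $F \equiv G$ on $R_{p^s}(0)$. Picking any $z \in V^s \cap U^s$ (otherwise there is nothing to prove) and writing $z = \psi_x(a, F(a)) = \psi_x(a, G(a))$ via injectivity of $\psi_x$ on $R_{Q_\epsilon(x)}(0)$, we see $F$ and $G$ already agree at the point $a$.

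Fix $t \in R_{p^s}(0)$ and suppose, toward a contradiction, that $F(t) \neq G(t)$. Put $w := \psi_x(t, F(t)) \in V^s$ and $w' := \psi_x(t, G(t)) \in U^s$; then $\psi_x^{-1}(w) - \psi_x^{-1}(w') = (0, F(t) - G(t))$ is a nonzero vector lying purely in the unstable coordinate block of $\psi_x$. Applying part (1) of Proposition \ref{Lambda} to $V^s$ and to $U^s$ separately gives $d(f^k(w), f^k(z)) \leq 4 p^s e^{-k\chi/2}$ and $d(f^k(w'), f^k(z)) \leq 4 q^s e^{-k\chi/2}$, hence $d(f^k(w), f^k(w')) \to 0$ exponentially. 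I track this difference along the forward chain $(\psi_{x_i}^{p_i^s, p_i^u})_{i \geq 0}$ witnessing that $V^s$ stays in windows. Since $V^s$ stays in these windows, $\psi_{x_k}^{-1}(f^k(w)) \in R_{Q_\epsilon(x_k)}(0)$ for every $k$, and the exponential convergence above, combined with the sub-exponential variation of $Q_\epsilon(x_k)$ from Lemma \ref{lemma131}, ensures (for $\epsilon \ll \chi$) that $\psi_{x_k}^{-1}(f^k(w')) \in R_{10 Q_\epsilon(x_k)}(0)$ for all $k \geq 0$ as well.

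Now set $\delta_k := \psi_{x_k}^{-1}(f^k(w)) - \psi_{x_k}^{-1}(f^k(w'))$ and decompose as $(\delta_k^s, \delta_k^u)$. The hyperbolic form of $f_{x_k x_{k+1}}$ from Proposition \ref{3.3inomris}, valid on $R_{10 Q_\epsilon(x_k)}(0)$ with $\|d h_{s/u}\| < \epsilon$ (as invoked in the proof of Prop.~\ref{firstofchapter}(4)), yields
\[
|\delta_{k+1}^u| \geq (e^\chi - \epsilon)|\delta_k^u| - \epsilon |\delta_k^s|, \qquad |\delta_{k+1}^s| \leq (e^{-\chi} + \epsilon)|\delta_k^s| + \epsilon |\delta_k^u|.
\]
Since $\delta_0^s = 0$ and $|\delta_0^u| = |F(t) - G(t)| > 0$, the same cone-preservation-and-expansion induction as in the proof of Prop.~\ref{firstofchapter}(4) shows $|\delta_k^s| \leq |\delta_k^u|$ and $|\delta_k^u| \geq e^{k\chi/2} |\delta_0^u|$ for all $k$, provided $\epsilon$ is small enough. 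But $|\delta_k|_\infty \leq 20 Q_\epsilon(x_k) \leq 20 \epsilon^{3/\beta}$ by the window bound, contradicting the unbounded exponential growth. Hence $F(t) = G(t)$ for every $t \in R_{p^s}(0)$, so $V^s \subset U^s$. The $u$-admissible case follows by applying the argument to $f^{-1}$, which exchanges $u$- and $s$-admissibility as noted at the end of the proof of Theorem \ref{graphtransform}. The principal obstacle is not the cone-expansion estimate itself (routine, given Prop.~\ref{3.3inomris}) but the bookkeeping that legitimizes iterating $\delta_k$ in a \emph{single} chain of charts, even though $V^s$ and $U^s$ a priori only stay in windows of their own, distinct chains; this is won by pitting the exponential contraction rate $\chi/2$ of $d(f^k(w), f^k(w'))$ against the $O(\epsilon)$ per-step variation of $Q_\epsilon(x_k)$ from Lemma \ref{lemma131}, which places the orbit of $w'$ in an enlargement of the $V^s$-chain's windows for all $k \geq 0$.
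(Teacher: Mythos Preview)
Your cone-expansion strategy is natural and mirrors the proof of Proposition~\ref{firstofchapter}(4), but the step asserting that $\psi_{x_k}^{-1}(f^k(w'))\in R_{10Q_\epsilon(x_k)}(0)$ for \emph{every} $k\ge 0$ is not justified, and indeed need not hold. Converting the manifold distance $d(f^k(w),f^k(w'))\le 8q^s e^{-k\chi/2}$ into a chart-coordinate bound on $|\delta_k|$ costs a factor $\|C_\chi^{-1}(x_k)\|$, and neither this norm nor $Q_\epsilon(x_k)$ varies by $e^{\pm\epsilon}$ along a chain: Lemma~\ref{lemma131} controls only $p_k^s\wedge p_k^u$, while $Q_\epsilon(x_k)/Q_\epsilon(x_{k+1})$ is bounded only by the fixed constant $\omega_0=e^{\epsilon/3}F_0^{48/\beta}$ (see Theorem~\ref{SandU} and item~(4) of the lemma after Definition~\ref{balls}), which is not close to $1$. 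Concretely, already at $k=1$ the unstable coordinate of $f_{x_0x_1}(t,G(t))$ is of order $\kappa\cdot 10^{-2}Q_\epsilon(x_0)$, and there is no reason this is $\le 10\,Q_\epsilon(x_1)$ when $Q_\epsilon(x_0)/Q_\epsilon(x_1)$ can be as large as $\omega_0$. Thus $f^k(w')$ may leave the $x_k$-window for small $k$, the recursion for $\delta_k$ becomes illegal precisely where the initial condition $\delta_0^s=0$ lives, and once you re-enter the windows at some $n_0$ the difference $\delta_{n_0}$ has $|\delta_{n_0}^u|\le\epsilon|\delta_{n_0}^s|$ (both points now lie on the same stable graph $V^s((v_i)_{i\ge n_0})$), so the expansion argument no longer fires.

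The paper circumvents this by never demanding containment for all $k$. It first shows (its Claims~1 and~2) that there is an $n_0$ with $f^n[V^s],f^n[U^s]\subset\psi_{x_n}[R_{Q_\epsilon(x_n)}(0)]$ for all $n\ge n_0$ --- exactly your distance estimate, used only asymptotically, with the chart-norm factor absorbed via $\|C_\chi^{-1}(x_n)\|\ll (p_n^s)^{-1}\le e^{n\epsilon}(p_0^s)^{-1}$. By Proposition~\ref{firstofchapter}(4) both images at time $n_0$ then sit inside the single graph $W^s=V^s((\psi_{x_i}^{p_i^s,p_i^u})_{i\ge n_0})$. The conclusion $V^s\subset U^s$ (when $p^s\le q^s$) is obtained not by a cone argument but by a short topological argument on the stable parameter sets $S_v,S_u\subset\mathbb{R}^{s(x)}$: a boundary point of $S_u$ inside the interior of $S_v$ would pull back to a boundary point of $U^s$ whose stable coordinate has $|\cdot|_\infty=q^s\ge p^s$, yet lies in the relative interior of $V^s$, a contradiction.
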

\begin{proof}
Assume $V^s\cap U^s\neq\varnothing$. Since $V^s$ stays in windows there is a positive chain $(\psi_{x_i}^{p^s_i,p^u_i})_{i\geq0}$ such that $\psi_{x_i}^{p^s_i,p^u_i}=\psi_x^{p^s,p^u}$ and such that for all $i\geq0$: $f^i[V^s]\subset W_i^s$ where $W_i^s$ is an $s$-admissible manifold in $\psi_{x_i}^{p^s_i,p^u_i}$.

\noindent\textit{Claim 1}: The following holds for $\epsilon$ small enough: $f^n[V^s]\subset \psi_{x_n}[R_{\frac{1}{2}Q_\epsilon(x_n)}(0)]$ for all $n$ large enough.

\noindent\textit{Proof}: See claim 1 in \cite[proposition~6.4]{Sarig}.

\noindent\textit{Claim 2}: For all $\epsilon$ small enough $f^n[U^s]\subset\psi_{x_n}[R_{Q_\epsilon(x_n)}(0)]$ for all $n$ large enough.

\noindent\textit{Proof}: See claim 2 in \cite[proposition~6.4]{Sarig}.

\medskip
\noindent\textit{Claim 3}: Recall that $V^s$ is $s$-admissible in $\psi_x^{p^s,p^u}$ and $U^s$ is $s$-admissible in $\psi_y^{q^s,q^u}$. If $p^s\leq q^s$ then $V^s\subset U^s$ (and if $q^s\leq p^s$ then $U^s\subset V^s$).

\textit{Proof}: WLOG $p^s\leq q^s$. First assume $p^s<q^s$. Pick $n$ s.t. for all $m\geq n$: $f^m[U^s],f^m[V^s]\subset \psi_{x_m}[R_{Q_\epsilon(x_m)}(0)]$. Then: $f^{n}[V^s],f^{n}[U^s]\subset W^s:=V^s((\psi_{x_i}^{p_i^s,p_i^u})_{i\geq n})$ (proposition \ref{firstofchapter} section 4.). Let $G$ be the function which represents $W^s$ in $\psi_{x_{n}}$, then $\psi_{x_n}^{-1}[f^n[U^s]]$ and $\psi_{x_n}^{-1}[f^n[V^s]]$ are two connected subsets of $graph(G)$.
So there are $S_u,S_v\subset \mathbb{R}^{s(x)}$ s.t.
$$f^n[V^s]=\psi_{x_n}[\{(t,G(t))|t\in S_v\}],f^n[U^s]=\psi_{x_n}[\{(t,G(t))|t\in S_u\}].$$

\noindent $S_u$ and $S_v$ are closed sets, because $U^s, V^s$ are closed sets. $V^s$ and $U^s$ intersect, therefore $f^n[V^s]$ and $f^n[U^s]$ intersect, whence $S_v$ and $S_u$ intersect.


Recall that $x=y$. By definition, the map $t\mapsto \psi_x^{-1}f^{-n}\psi_{x_n}(t,G(t))$ maps $S_u$ homeomorphically onto $\psi_x^{-1}[U^s]$, and $S_v$ homeomorphically onto $\psi_x^{-1}[V^s]$. Since $V^s,U^s$ are admissible manifolds, $\psi_x^{-1}[V^s],\psi_x^{-1}[U^s]$ are homeomorphic to closed boxes in $\mathbb{R}^{s(x)}$, and as such to the closed unit ball in $\mathbb{R}^{s(x)}$. So, $S_v,S_u\subset\mathbb{R}^{s(x)}$ are homeomorphic to the closed unit ball in $\mathbb{R}^{s(x)}$, and $S_u\cap S_v\neq\varnothing$. Therefore, if $S_v\not\subset S_u$, then $\exists z\in(\partial S_u)\cap S_v$\footnote{Fix $a\in S_u\cap S_v$. Since $S_v\subset S_u$, $\exists y\in S_v\setminus S_u$. Let $\gamma:[0,1]\rightarrow S_v$ be a continuous curve such that $\gamma(0)=a,\gamma(1)=y$ (it exists since $S_v$ is homeomorphic to a closed ball). Let $t_0:=\inf\{t:\gamma(t)\notin S_u\}$, then $\gamma(t_0)\in \partial S_u\cap S_v$.}. 



The map $t\mapsto f^{-n}(\psi_{x_n}(t,G(t))):S_u\rightarrow U^s$ is continuous and onto $U^s$. Therefore $f^{-n}(\psi_{x_n}(z,G(z)))$ belongs to the boundary of $U^s$ and to $V^s$. We now use the assumption that $x=y$ again. $\psi_x^{-1}[V^s]$ and $\psi_x^{-1}[U^s]$ are submanifolds of the chart $\psi_x^{p^s,p^u}$. The boundary points of $\psi_x^{-1}[U^s]$ have $s$-coordinates with $|\cdot|_\infty$-norm equal to $q^s$. On the other hand, $\psi_x^{-1}\Big(f^{-n}(\psi_{x_n}(z,G(z)))\Big)\in\psi_x^{-1}[V^s]$, hence its $s$-coordinates have $|\cdot|_\infty$-norm less or equal to $p^s$. This yields $q^s\leq p^s<q^s$, a contradiction. Therefore $S_v\subset S_u$.

We now address the case of $q^s=p^s$: Denote with $F$ the representing function of $V^s$. Define $\delta_0:=\frac{q^s}{2}$, $\forall\delta\in(0,\delta_0)$: $V^s_\delta:=\psi_x[\{(t,F(t))|t\in R_{q^s-\delta}(0)\}]$. It follows that for all $\forall\delta\in(0,\delta_0)$ $V^s_\delta \subset V^s$.
The argument we used in the case $q^s<p^s$ also shows that $V^s_\delta\subset U^s$. $\bigcup\limits_{\delta\in(0,\delta_0)}V^s_\delta=\mathring{V^s}\Rightarrow \mathring{V^s}\subset U^s\Rightarrow \overline{\mathring{V^s}}\subset\overline{U^s}$. Since $V^s,U^s$ are homeomorphic to the closed unit ball in $\mathbb{R}^{s(x)}$, $V^s\subset U^s$.
\end{proof}

\section{Markov partitions and symbolic dynamics \cite{B3,B4,Sarig}}
In sections 1,2,3 of this paper we developed the shadowing theory needed to construct Markov partitions \`a la Bowen (\cite{B3,B4}). What remains is to carry out this construction. This is the content of this part. We follow \cite{B3} and \cite{Sarig} closely.
\subsection{A locally finite countable Markov partition}
\subsubsection{The cover}
In subsection 1.3 we constructed a countable Markov shift $\Sigma$ with countable alphabet $\mathcal{V}$ and a H\"older continuous map $\pi:\Sigma\rightarrow M$ which commutes with the left shift $\sigma:\Sigma\rightarrow\Sigma$, so that $\pi[\Sigma]$ has full measure with respect to all $\chi$-hyperbolic measures. Moreover, using the convention that any element of $\mathcal{V}$ is relevant, we get that if:
$$\Sigma^\#=\{u\in\Sigma: u \text{ is a regular chain}\}=\{u\in\Sigma: \exists v,w\in \mathcal{V}\exists n_k,m_k\uparrow\infty\text{ s.t.   }u_{n_k}=v,u_{-m_k}=w\}$$
then $\pi[\Sigma^{\#}]\supset NUH_\chi^{\#}$. Therefore $\pi[\Sigma^{\#}]$ has full probability w.r.t any $\chi$-hyperbolic measure. But $\pi$ is not finite-to-one. In this section we study the following countable cover of $NUH_\chi^\#$:
\begin{definition}\label{ZV} $\mathcal{Z}:=\{Z(v):v\in\mathcal{V}\}$, where $Z(v):=\{\pi(u):u\in\Sigma^\#,u_0=v\}$.
\end{definition}
\begin{theorem}\label{Zlocallyfinite}
For every $Z\in\mathcal{Z}$, $|\{Z'\in\mathcal{Z}:Z'\cap Z\neq \varnothing\}|<\infty$.
\end{theorem}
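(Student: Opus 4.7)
The plan is to fix $v = \psi_x^{p^s,p^u} \in \mathcal{V}$ and show that any $w = \psi_y^{q^s,q^u} \in \mathcal{V}$ with $Z(w) \cap Z(v) \ne \emptyset$ must have $q^s, q^u$ lying in a finite subset of $I_\epsilon$, and the underlying Pesin chart $\psi_y^{q^s\wedge q^u}$ must lie in a finite subset of $\mathcal A$. These two finitenesses will force $\{w : Z(w)\cap Z(v)\ne\emptyset\}$ to be finite.

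First I would unwind the definition. If $Z(w)\cap Z(v)\ne\emptyset$, there exist $\underline u,\underline u'\in\Sigma^\#$ with $u_0=v$, $u'_0=w$, and $\pi(\underline u)=\pi(\underline u')$. In particular $\underline u$ and $\underline u'$ are \emph{regular} chains mapped by $\pi$ to the same point. This is precisely the hypothesis of the inverse-problem results developed in \S2. Applying Proposition~\ref{prop214} at coordinate $i=0$ gives
\[
\frac{p^s}{q^s},\ \frac{p^u}{q^u}\in[e^{-\epsilon^{1/3}},e^{\epsilon^{1/3}}].
\]
Since $p^s,p^u,q^s,q^u$ are constrained to the discrete set $I_\epsilon=\{e^{-k\epsilon/3}:k\in\mathbb N\}$, the set of admissible pairs $(q^s,q^u)$ is \emph{finite}: there are only finitely many elements of $I_\epsilon$ inside $[e^{-\epsilon^{1/3}}p^{s/u},e^{\epsilon^{1/3}}p^{s/u}]$.

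Next I would control the underlying chart $\psi_y^{q^s\wedge q^u}\in\mathcal A$. From the same bound, $q^s\wedge q^u\geq e^{-\epsilon^{1/3}}(p^s\wedge p^u)=:t_0>0$, and $t_0$ depends only on the fixed $v$. Now invoke the Discreteness property of $\mathcal A$ (Proposition~\ref{discreteness}(1)): the family $\{\psi_z^{\eta}\in\mathcal A:\eta>t_0\}$ is finite. Hence there are only finitely many choices for the underlying chart $\psi_y^{q^s\wedge q^u}$, which in particular pins down $y$ and $C_\chi(y)$ to finitely many possibilities. Combining this with the finiteness of the pairs $(q^s,q^u)$ obtained in the previous step, the full double chart $w=\psi_y^{q^s,q^u}$ belongs to a finite subset of $\mathcal V$, and the theorem follows.

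The only delicate point is the use of Proposition~\ref{prop214}: it is stated for regular chains (those with some relevant symbol appearing infinitely often in the past and in the future), which is exactly what membership of $\underline u,\underline u'$ in $\Sigma^\#$ guarantees once we recall the convention (from the paragraph preceding the definition of $\mathcal Z$) that every vertex of $\mathcal V$ is relevant. Once this is in place, everything else is bookkeeping against the two discreteness statements: the discrete scale set $I_\epsilon$ and the discreteness of $\mathcal A$.
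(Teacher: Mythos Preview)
Your proposal is correct and follows essentially the same approach as the paper's proof: both invoke Proposition~\ref{prop214} on two regular chains in $\Sigma^\#$ sharing the same $\pi$-image to obtain the lower bound $q^s\wedge q^u\geq e^{-\epsilon^{1/3}}(p^s\wedge p^u)$, and then conclude finiteness from the discreteness of $\mathcal A$ (Proposition~\ref{discreteness}(1)) together with the discreteness of $I_\epsilon$. Your observation about the convention that all vertices are relevant, ensuring the applicability of Proposition~\ref{prop214}, is exactly the point the paper relies on implicitly.
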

\begin{proof} The proof is the same as in \cite[theorem~10.2]{Sarig}, but we include it to show the purpose of the results of \textsection\ref{chapter333}.

Fix some $Z=Z(\psi_x^{p^s,p^u})$. If $Z'=Z(\psi_y^{q^s,q^u})$ intersects $Z$ then there must exist two chains $v,w\in\Sigma^\#$ s.t. $v_0=\psi_x^{p^s,p^u},w_0=\psi_y^{q^s,q^u}$ and $\pi(v)=\pi(w)$. Lemma \ref{prop214} says that in this case $q^u\geq e^{-\sqrt[3]{\epsilon}}p^u,q^s\geq e^{-\sqrt[3]{\epsilon}}p^s$.
It follows that $Z'$ belongs to $\{Z(\psi_y^{q^s,q^u}):\psi_y^{q^s,q^u}\in\mathcal{V},q^u\wedge q^s\geq e^{-\sqrt[3]{\epsilon}}(p^u\wedge p^s)\}$. By definition, the cardinality of this set is less than or equal to:
$$|\{\psi_y^\eta\in\mathcal{A}:\eta\geq e^{-\sqrt[3]{\epsilon}}(p^s\wedge p^u)\}|\times|\{(q^s,q^u)\in I_\epsilon\times I_\epsilon: q^s\wedge q^u\geq e^{-\epsilon^\frac{1}{3}}(p^s\wedge p^u)\}|.$$
This is a finite number because of the discreteness of $\mathcal{A}$ (proposition  \ref{discreteness}).
\end{proof}
\subsubsection{Product structure}
Suppose $x\in Z(v)\in \mathcal{Z}$, then $\exists u\in\Sigma^\#$ s.t. $u_0=v$ and $\pi(u)=x$. Associated to $u$ are two admissible manifolds in $v$: $V^u((u_i)_{i\leq 0})$ and $V^s((u_i)_{i\geq 0})$. These manifolds do not depend on the choice of $u$: If $w\in\Sigma^\#$ is another  chain s.t. $w_0=v$ and $\pi(w)=x$ then $$V^u((u_i)_{i\leq 0})=V^u((w_i)_{i\leq 0})\text{ and }V^s((u_i)_{i\geq 0})=V^s((w_i)_{i\geq 0}),$$
because of proposition \ref{prop221} and the equalities $p^{s/u}(w_0)=p^{s/u}(u_0)=p^{s/u}(v)$. We are therefore free to make the following definitions:
\begin{definition}
Suppose $Z=Z(v)\in\mathcal{Z}$. For any $x\in Z$:
\begin{enumerate}
    \item $V^u(x,Z):=V^u((u_i)_{i\leq 0})$ for some (any) $u\in\Sigma^\#$ s.t. $u_0=v$ and $\pi(u)=x$.
    
    We also set $W^u(x,Z):=V^u(x,Z)\cap Z$.
    \item $V^s(x,Z):=V^s((u_i)_{i\geq 0})$ for some (any) $u\in\Sigma^\#$ s.t. $u_0=v$ and $\pi(u)=x$.
    
    We also set $W^s(x,Z):=V^s(x,Z)\cap Z$.
\end{enumerate}
\end{definition}
\begin{prop}\label{forBuzzi1} Suppose $Z\in\mathcal{Z}$, then for any $x,y\in Z$: $V^u(x,Z),V^u(y,Z)$ are either equal or they are disjoint. Similarly for $V^s(x,Z),V^s(y,Z)$; and for $W^s(x,Z),W^s(y,Z)$ and for $W^u(x,Z),W^u(y,Z)$.
\end{prop}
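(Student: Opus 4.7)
The plan is to reduce everything to Proposition \ref{prop221}, the dichotomy ``either disjoint or one contains the other'' for admissible manifolds that stay in windows and sit in double charts with the same base point. Write $Z = Z(v)$ with $v = \psi_{x_0}^{p^s,p^u}$. Given $x \in Z$, by definition of $Z(v)$ there exists a chain $u \in \Sigma^\#$ with $u_0 = v$ and $\pi(u) = x$, and $V^u(x,Z) = V^u((u_i)_{i\leq 0})$. First I would record two facts about this manifold: it is a $u$-admissible manifold in $v$ (by Proposition \ref{firstofchapter}(2)) and it stays in windows via the negative chain $(u_i)_{i \leq 0}$ (by the remark following the definition of ``stays in windows''). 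The same applies to $V^u(y,Z)$ for any $y \in Z$: both manifolds live in the \emph{single} double chart $v$, both stay in windows, and both have central point $x_0$.

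Next I would apply the $u$-version of Proposition \ref{prop221} to conclude that either $V^u(x,Z) \cap V^u(y,Z) = \emptyset$ or one contains the other. To upgrade containment to equality, observe that by Definition \ref{admissible} every $u$-admissible manifold in $v$ has $q$-parameter exactly equal to $p^u(v)$, so each of $V^u(x,Z)$ and $V^u(y,Z)$ is presented in $\psi_{x_0}$ as the graph of a representing function over the full unstable cube $\{|t|_\infty \leq p^u\} \subset \mathbb{R}^{d-s(x_0)}$. Hence if $V^u(x,Z) \subset V^u(y,Z)$, then on the common domain the two representing functions must coincide (because $V^u(y,Z)$ is itself a graph), forcing the two manifolds to be equal. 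The $V^s$ case is treated identically, using the $s$-version of Proposition \ref{prop221} and interchanging the roles of stable and unstable coordinates.

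Finally, for the $W$-versions: since $W^{u/s}(x,Z) = V^{u/s}(x,Z) \cap Z$ directly from the definition, the dichotomy transfers without further work --- equality of the $V$'s gives equality of the $W$'s, and disjointness of the $V$'s gives disjointness of the $W$'s. The whole proof hinges on Proposition \ref{prop221}; the only additional ingredient is the elementary ``size is fixed'' observation promoting containment to equality for admissible manifolds in a common double chart, so I do not anticipate any genuine obstacle.
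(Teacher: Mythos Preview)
Your proposal is correct and follows the same approach as the paper, which simply states that the claim for $V^{s/u}$ holds ``because of proposition~\ref{prop221}'' and that the $W^{s/u}$ case is an immediate corollary. You make explicit the one step the paper leaves implicit: Proposition~\ref{prop221} only yields ``disjoint or one contains the other,'' and you correctly upgrade containment to equality using the fact that both manifolds are $u$-admissible in the \emph{same} double chart $v$, hence are graphs over the identical domain $\{|t|_\infty \leq p^u\}$.
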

\begin{proof}
The statement holds for $V^{s/u}(x,Z)$ because of proposition \ref{prop221}. The statement for $W^{s/u}$ is an immediate corollary.
\end{proof}
\begin{prop}\label{propForBracketZ} Suppose $Z\in\mathcal{Z}$, then for any $x,y\in Z$: $V^u(x,Z)$ and $V^s(y,Z)$ intersect at a unique point $z$, and $z\in Z$. Thus $W^u(x,Z)\cap W^s(y,Z)=\{z\}$.
\end{prop}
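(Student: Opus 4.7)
Let $Z = Z(v)$ with $v = \psi_{x_0}^{p^s,p^u}\in\mathcal{V}$. By the definition of $Z$, choose $\underline{v}',\underline{w}\in\Sigma^\#$ with $v'_0 = w_0 = v$, $\pi(\underline{v}') = x$, $\pi(\underline{w}) = y$. By the definition of $V^u(x,Z)$ and $V^s(y,Z)$, we have
\[
V^u(x,Z) = V^u((v'_i)_{i\le 0}), \qquad V^s(y,Z) = V^s((w_i)_{i\ge 0}),
\]
and both are admissible manifolds in the same double chart $v$ (of complementary dimensions summing to $d$). By Proposition \ref{firstbefore}, they intersect at a unique point $z\in M$, which also gives the uniqueness assertion. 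Hence everything reduces to showing that $z\in Z$.

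\textbf{Construction of a chain landing at $z$.} First I would splice the two chains across index $0$: define $\underline{u}\in\mathcal{V}^{\mathbb{Z}}$ by $u_i := v'_i$ for $i\le 0$ and $u_i := w_i$ for $i\ge 0$. The definition is consistent at $i=0$ because $v'_0 = w_0 = v$. The transitions $u_i\to u_{i+1}$ are edges of $\mathcal{G}$ for $i<0$ (from $\underline{v}'$) and for $i>0$ (from $\underline{w}$); at $i=-1$ it is the edge $v'_{-1}\to v$ from $\underline{v}'$, and at $i=0$ it is the edge $v\to w_1$ from $\underline{w}$. Thus $\underline{u}\in\Sigma$. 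Moreover, $\underline{u}\in\Sigma^\#$: the recurrent symbol in the past of $\underline{v}'$ still repeats infinitely often in the past of $\underline{u}$, and the recurrent symbol in the future of $\underline{w}$ still repeats infinitely often in the future of $\underline{u}$.

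\textbf{Identifying $\pi(\underline{u}) = z$.} By the definition of $\pi$ in Theorem \ref{DefOfPi}, $\pi(\underline{u})$ is the unique intersection point of $V^u((u_i)_{i\le 0})$ and $V^s((u_i)_{i\ge 0})$. Since $(u_i)_{i\le 0}=(v'_i)_{i\le 0}$ and $(u_i)_{i\ge 0}=(w_i)_{i\ge 0}$, these manifolds are exactly $V^u(x,Z)$ and $V^s(y,Z)$, whose unique intersection we called $z$. Therefore $\pi(\underline{u})=z$. Combined with $u_0=v$ and $\underline{u}\in\Sigma^\#$, this gives $z\in Z(v)=Z$. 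Consequently $z\in V^u(x,Z)\cap Z = W^u(x,Z)$ and likewise $z\in W^s(y,Z)$, while any element of $W^u(x,Z)\cap W^s(y,Z)$ lies in $V^u(x,Z)\cap V^s(y,Z)=\{z\}$, proving the claim.

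\textbf{Where the work is.} There is no delicate estimate here: all the hard analysis was already absorbed into Proposition \ref{firstbefore} (existence/uniqueness of the intersection inside a double chart) and into Theorem \ref{DefOfPi} (the characterization of $\pi$ as such an intersection). The only step requiring care is the splicing: one must verify that the pasted sequence is both a chain and in $\Sigma^\#$, and that the stable/unstable manifolds associated to the two half-chains coincide on the nose with $V^s(y,Z)$ and $V^u(x,Z)$. This is where the well-definedness of $V^{s/u}(\cdot,Z)$ (independence from the chain used in their definition), already established immediately before the statement via Proposition \ref{prop221}, is essential.
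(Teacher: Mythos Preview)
Your proof is correct and is exactly the standard splicing argument that the paper intends by its reference to \cite{Sarig}, proposition 10.5: splice the past of a chain coding $x$ with the future of a chain coding $y$ through the common zeroth symbol $v$, check that the result lies in $\Sigma^\#$, and use the definition of $\pi$ to identify the image as the unique intersection point of $V^u(x,Z)$ and $V^s(y,Z)$. There is nothing to add.
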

\begin{proof} See proposition 10.5 in \cite{Sarig}.
\end{proof}
\begin{definition}\label{bracketZ} The {\em Smale bracket} of two points $x,y\in Z\in\mathcal{Z}$ is the unique point $[x,y]_Z\in W^u(x,Z)\cap W^s(y,Z)$.
\end{definition}
\begin{lemma} Suppose $x,y\in Z(v_0)$, and $f(x),f(y)\in Z(v_1)$. If $v_0\rightarrow v_1$ then $f([x,y]_{Z(v_0)})=[f(x),f(y)]_{Z(v_1)}$.
\end{lemma}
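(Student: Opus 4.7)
The plan is to unpack both brackets through the definition via chains, and exploit the fact that $V^u(x,Z(v_0))$ depends only on the negatives of any chain through $x$ with $0$-th symbol $v_0$, while $V^s(y,Z(v_0))$ depends only on the non-negatives. By mixing and concatenating chains I will build a single chain $w^z$ with $\pi(w^z)=z:=[x,y]_{Z(v_0)}$, $w^z_0=v_0$ and $w^z_1=v_1$; then $\sigma w^z$ will exhibit $f(z)$ as an element of $Z(v_1)$ sitting in both $V^u(f(x),Z(v_1))$ and $V^s(f(y),Z(v_1))$.

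In detail: first I would pick chains $u^x,u^y\in\Sigma^\#$ with $u^x_0=u^y_0=v_0$, $\pi(u^x)=x$, $\pi(u^y)=y$ (witnessing $x,y\in Z(v_0)$), and chains $u'^x,u'^y\in\Sigma^\#$ with $u'^x_0=u'^y_0=v_1$, $\pi(u'^x)=f(x)$, $\pi(u'^y)=f(y)$ (witnessing $f(x),f(y)\in Z(v_1)$). Using the hypothesis $v_0\to v_1$, I would form the ``spliced'' chains $w^x,w^y$ defined by $w^\bullet_i=u^\bullet_i$ for $i\le 0$ and $w^\bullet_i=u'^\bullet_{i-1}$ for $i\ge 1$; these are genuine chains in $\Sigma^\#$, and a direct check (using that $\pi$ is characterized as the unique point whose forward/backward orbit stays in the windows) shows $\pi(w^x)=x$ and $\pi(w^y)=y$. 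In particular $V^u(x,Z(v_0))=V^u((w^x_i)_{i\le 0})$ and $V^s(y,Z(v_0))=V^s((w^y_i)_{i\ge 0})$, and since $w^x_1=w^y_1=v_1$, we also have $V^u(f(x),Z(v_1))=V^u((w^x_{i+1})_{i\le 0})$ and $V^s(f(y),Z(v_1))=V^s((w^y_{i+1})_{i\ge 0})$.

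Next I would cross-concatenate: define $w^z_i:=w^x_i$ for $i\le 0$ and $w^z_i:=w^y_i$ for $i\ge 1$. Because $w^x_0=w^y_0=v_0$ and $w^x_1=w^y_1=v_1$, this is an admissible chain; it lies in $\Sigma^\#$ since the past symbols of $w^x$ and future symbols of $w^y$ repeat. The defining property of $\pi$ as the intersection point of $V^u$ (built from the past) with $V^s$ (built from the future) gives
\[
\pi(w^z)\in V^u((w^x_i)_{i\le 0})\cap V^s((w^y_i)_{i\ge 0})=V^u(x,Z(v_0))\cap V^s(y,Z(v_0))=\{z\},
\]
so $\pi(w^z)=z$. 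Applying $\sigma$ yields $\sigma w^z\in\Sigma^\#$ with $(\sigma w^z)_0=v_1$ and $\pi(\sigma w^z)=f(z)$, so $f(z)\in Z(v_1)$.

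Finally, since $(w^z_{i+1})_{i\le 0}=(w^x_{i+1})_{i\le 0}$ and $(w^z_{i+1})_{i\ge 0}=(w^y_{i+1})_{i\ge 0}$, the characterization of $V^{u/s}(\cdot,Z(v_1))$ via any witnessing chain gives $V^u(f(z),Z(v_1))=V^u(f(x),Z(v_1))$ and $V^s(f(z),Z(v_1))=V^s(f(y),Z(v_1))$. Hence $f(z)\in V^u(f(x),Z(v_1))\cap V^s(f(y),Z(v_1))\cap Z(v_1)=W^u(f(x),Z(v_1))\cap W^s(f(y),Z(v_1))$, and by the uniqueness clause of the previous proposition this intersection is exactly $\{[f(x),f(y)]_{Z(v_1)}\}$. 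I expect the only delicate point to be checking that the spliced chain really represents the intended point (that is, the independence of $V^{u/s}(\cdot,Z)$ from the choice of witnessing chain, together with the invariance of $\pi$ under well-chosen concatenations); once that bookkeeping is in place the rest is formal.
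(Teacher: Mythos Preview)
Your proof is correct and follows essentially the same chain-splicing approach as the proof referenced by the paper (lemma 10.7 in \cite{Sarig}): build witnessing chains for $x,y$ at $v_0$ and for $f(x),f(y)$ at $v_1$, splice them across the edge $v_0\to v_1$, then cross-concatenate past and future to obtain a chain in $\Sigma^\#$ representing $z=[x,y]_{Z(v_0)}$ whose shift represents $f(z)$ in $Z(v_1)$. The verification that $\pi(w^x)=x$, $\pi(w^y)=y$ via the window characterization (Proposition~\ref{firstofchapter}(4) and the proof of Theorem~\ref{DefOfPi}) is exactly the bookkeeping needed, and your observation that $w^z_1=w^x_1=w^y_1=v_1$ is what makes the identifications $V^u(f(z),Z(v_1))=V^u(f(x),Z(v_1))$ and $V^s(f(z),Z(v_1))=V^s(f(y),Z(v_1))$ go through.
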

\begin{proof} This is proved as in \cite[lemma~10.7]{Sarig} using theorem \ref{graphtransform}(2).
\end{proof}

\begin{lemma}
The following holds for all $\epsilon$ small enough: Suppose $Z,Z'\in\mathcal{Z}$. If $Z\cap Z'\neq\varnothing$ then for any $x\in Z,y\in Z'$: $V^s(x,Z)$ and $V^s(y,Z')$ intersect at a unique point.

We do not claim that this point is in $Z$ nor $Z'$.
\end{lemma}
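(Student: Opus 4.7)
My plan is to reduce the two-chart situation to a single chart by transporting $V^s(y, Z')$ into the $\psi_{x_0}$-chart, and then combine proposition \ref{prop221} with a Banach fixed-point argument of the type used in proposition \ref{firstbefore}. Writing $v = \psi_{x_0}^{p^s,p^u}$ for the generator of $Z$ and $v' = \psi_{y_0}^{q^s,q^u}$ for the generator of $Z'$, the hypothesis $Z \cap Z' \neq \emptyset$ produces a common point $z = \pi(\underline u) = \pi(\underline w)$ for chains $\underline u, \underline w \in \Sigma^\#$ with $u_0 = v$, $w_0 = v'$. Theorem \ref{beforefinal} together with proposition \ref{prop214} yields (i) $s(x_0) = s(y_0)$, (ii) $d(x_0, y_0) < \epsilon$, (iii) the frame-parameter comparison $p^{s/u}/q^{s/u} = e^{\pm \epsilon^{1/3}}$, and (iv) the transition formula $\psi_{y_0}^{-1} \circ \psi_{x_0} = O + a + \Delta$ on $R_\epsilon(0)$ with $O \in O(d)$ \emph{block-diagonal} with respect to $\mathbb{R}^{s(x_0)} \oplus \mathbb{R}^{u(x_0)}$; the block-diagonality is inherited from the construction of $\Xi$ in claim \ref{Xi}, whose factors $\pi_{x/y}$ and $\Lambda_{x/y}$ each preserve the stable/unstable splitting.

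Let $G$ be the representing function of $V^s(y, Z')$ in $\psi_{y_0}^{q^s,q^u}$, so $V^s(y, Z') = \psi_{y_0}[\{(t, G(t)) : |t|_\infty \leq q^s\}]$. Pulling back by $\psi_{x_0}^{-1} \circ \psi_{y_0}$ (which has the same form as the transition map, with $O^{-1}$ in place of $O$ and modified but still small $a$ and $\Delta$), the block-diagonality of $O^{-1}$ keeps the stable subspace mapped onto the stable subspace, so after an implicit-function-type inversion the image is a graph of a $C^{1+\beta/3}$ function $\tilde G$ over the $s$-coordinates of $\psi_{x_0}$. A direct parameter check in the style of proposition \ref{chartsofboxes} (using $\|d\Delta\| < \tfrac12\epsilon^{1/3}$, $|a|_\infty \leq 10^{-1}(q^u\wedge q^s)$, and (iii)) shows that $\sigma(\tilde G), \gamma(\tilde G), \varphi(\tilde G)$ still satisfy the bounds of definition \ref{admissible} for $\psi_{x_0}^{p^s,p^u}$, after at worst shrinking the domain by the factor $e^{-\epsilon^{1/3}}$ absorbed in (iii). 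Moreover $\tilde V^s := \psi_{x_0}[\mathrm{graph}(\tilde G)]$ stays in windows, because $V^s(y, Z')$ does through the positive tail of $\underline w$ and the $f$-orbit of a point in $M$ does not depend on which chart represents it.

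Both $V^s(x, Z)$ and $\tilde V^s$ are now $s$-admissible manifolds in the \emph{same} double chart $\psi_{x_0}^{p^s,p^u}$ that stay in windows, so proposition \ref{prop221} applies: they either coincide (one contains the other) or are disjoint. The disjoint alternative is excluded by the base-point comparison—the representing functions $F$ of $V^s(x,Z)$ and $\tilde G$ of $\tilde V^s$ have $|F(0) - \tilde G(0)|_\infty$ controlled by $|a|_\infty + \|d\Delta\|\cdot q^s \ll p^s\wedge q^s$, so their graphs cannot be separated. The unique intersection point is then produced by the Banach contraction argument of proposition \ref{firstbefore}: pair $F$ with a local section transverse to $\tilde V^s$ obtained from the unstable fiber through the base point of $V^s(x,Z)$, and identify the unique fixed point of the composed $\epsilon^2$-contraction on $\overline{R_{10^{-2}(p^s\wedge q^s)}(0)}$.

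The main obstacle is the second paragraph: verifying that after composing $G$ with the affine-plus-perturbation map, one genuinely obtains a $C^{1+\beta/3}$ graph $\tilde G$ over the stable coordinates, and that the $\beta/3$-H\"older norm of $d_\cdot \tilde G$ stays below $1/2$. This requires an implicit-function argument to invert the stable component of $(O^{-1} + \tilde\Delta)(t, G(t))$ in $t$ (using $\|d\Delta\| < \tfrac12\epsilon^{1/3}$ to ensure invertibility), together with the algebra-of-H\"older-norms lemma \ref{banachalgebras} to propagate H\"older regularity through the composition—essentially a static analogue of the graph-transform bookkeeping carried out in the proof of theorem \ref{graphtransform}.
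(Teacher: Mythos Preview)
There is a misprint in the lemma as stated: the paper's proof (and the only geometrically sensible reading) concerns $V^u(x,Z)\cap V^s(y,Z')$, not $V^s\cap V^s$. You have taken the misprint literally and built an argument for two \emph{stable} manifolds, which cannot meet in a single point: both are $s(x_0)$-dimensional graphs whose tangents are $\epsilon$-close to $\mathbb{R}^{s(x_0)}$, so a transverse zero-dimensional intersection is impossible. Your own invocation of proposition~\ref{prop221} confirms this---it yields ``one contains the other, or disjoint''---which directly contradicts the ``unique point'' conclusion you then assert. The closing sentence about pairing $F$ with an ``unstable fiber'' is not a proof of $V^s\cap V^s=\{\text{pt}\}$; it is a different (unstated) claim. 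Separately, the ``stays in windows'' step is a genuine gap even on its own terms: that notion is defined relative to a fixed positive chain, and transporting $V^s(y,Z')$ into the $\psi_{x_0}$-chart does not supply a chain with zeroth vertex $\psi_{x_0}^{p^s,p^u}$; the remark that the $f$-orbit is chart-independent does not address this.

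For the intended statement, the paper's route is a direct fixed-point argument in the spirit of proposition~\ref{firstbefore}, not proposition~\ref{prop221}. Using theorem~\ref{beforefinal} one writes $V^u(x,Z)$ in $\psi_{y_0}$-coordinates as a graph $s\mapsto(G(s),s)$ over a region $R'\supset R_{2q/3}(0)$ of the \emph{unstable} coordinates, with $|G(0)|<\tfrac16 q$ and $\|d_\cdot G\|<2\epsilon^{1/3}$ (no block-diagonality of $O$ is needed; one simply decomposes along $O[\mathbb{R}_s]\oplus O[\mathbb{R}_u]$ and inverts the $u$-component). The manifold $V^s(y,Z')$ is already a graph $t\mapsto(t,H(t))$ with $|H(0)|<10^{-3}q$, $\|d_\cdot H\|<\epsilon$. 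Then $H\circ G$ contracts $R_{2q/3}(0)$ into itself, giving existence; uniqueness follows by extending $G,H$ to Lipschitz maps on a common box and repeating the contraction argument.
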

\begin{proof}Suppose $Z=Z(\psi_{x_0}^{p_0^s,p_0^u}),Z'=Z(\psi_{y_0}^{q_0^s,q_0^u})$ intersect. We are asked to show that for every $x\in Z$ and $y\in Z'$: $V^u(x,Z)$ and $V^s(y,Z')$ intersect at a unique point.
Fix some $z\in Z\cap Z'$, then there are $v,w\in \Sigma^\#$ s.t. $v_0=\psi_{x_0}^{p_0^s,p_0^u},w_0=\psi_{y_0}^{q_0^s,q_0^u}$ and $z=\pi(v)=\pi(w)$. Write $p:=p_0^s\wedge p_0^u$ and $q:=q_0^s\wedge q_0^u$. By theorem \ref{beforefinal} $p_0^u/q_0^u,p_0^s/q_0^s,p_0^s\wedge p_0^u/q_0^s\wedge q_0^u\in [e^{-\sqrt[3]{\epsilon}},e^{\sqrt[3]{\epsilon}}]$ and by corollary \ref{final}:
$$\psi_{x_0}^{-1}\circ\psi_{y_0}=O+a+\Delta\text{ on }R_{\max\{Q_\epsilon(x_0),Q_\epsilon(y_0)\}}(0),$$
where $O,a,\Delta$ are as in the notations of corollary \ref{final}. Now suppose $x\in Z$. $V^u=V^u(x,Z)$ is a $u$-admissible manifold in $\psi_{x_0}^{p_0^s,p_0^u}$, therefore it can be put in the form $V^u(x,Z)=\{(F(t),t):|t|_\infty<p_0^u\}$, where $F$ is the representing function. We write $V^u(x,Z)$ in $\psi_{y_0}$-coordinates. Denote the standard orthonormal basis of $\mathbb{R}^d$ by $\{e_i\}_{i=1,...,d}$, and $\mathbb{R}_{s/u}:=\spn\{e_i\}_{i=1,..,s(x)}/\spn\{e_i\}_{i=s(x)+1,...,d}$. By theorem \ref{beforefinal}, $O[\mathbb{R}_s]=\mathbb{R}_s, O[\mathbb{R}_u]=\mathbb{R}_u$. 
Let $a=a_s+a_u$, $a_{s/u}\in \mathbb{R}_{s/u}$ and $\Delta=\Delta_s+\Delta_u$ where $\Delta_{s/u}:\mathbb{R}^d\rightarrow \mathbb{R}_{s/u}$, then
\begin{align*}V^u(x,Z)=&\Big(\psi_{y_0}\circ(\psi_{y_0}^{-1}\circ\psi_{x_0})\Big)[\{(F(t),t):|t|_\infty\leq p_0^u\}]\\
=&\psi_{y_0}[\{O(F(t),t)+a_s+\Delta_s((F(t),t))+a_u+\Delta_d((F(t),t)):|t|_\infty\leq p_0^u\}]\\
=&\psi_{y_0}[\{(\widetilde{F}(\theta)+a_s+\widetilde{\Delta}_s((\widetilde{F}(\theta),\theta))\text{ , }\theta+a_u+\widetilde{\Delta}_u((\widetilde{F}(\theta),\theta))):|\theta|_\infty\leq p_0^u\}],\end{align*}
where we have used the transformation $\theta:=O(0,t),\widetilde{F}(s)=O(F(O^{-1}s),0),\widetilde{\Delta}(\vec{v})=\Delta(O^{-1}\vec{v})$; and the norm $|\theta|_\infty$ is calculated w.r.t to the new orthonormal basis after $e_i\mapsto Oe_i$.

Let $\tau(\theta):=\theta+a_u+\widetilde{\Delta}_u((\widetilde{F}(\theta),\theta))$. Assuming $\epsilon$ is small enough we have:
\begin{itemize}
    \item $\|d_\cdot\tau-Id\|\leq \sqrt[3]{\epsilon}\Rightarrow\exists (d_\cdot\tau)^{-1}\text{ and }\|(d_\cdot\tau)^{-1}\|^{-1},\|d_\cdot\tau\|\leq e^{2\sqrt[3]{\epsilon}}$; and in particular $\tau$ is 1-1.
    \item $|\tau(0)|_\infty<|a_u|_\infty+|\widetilde{\Delta}_u((\widetilde{F}(0),0))|_\infty<10^{-1}q+\sqrt[3]{\epsilon}10^{-3}p<\frac{1}{6}\min\{p,q\} (\because p\leq e^{\sqrt[3]{\epsilon}}q\text{ , }q\leq e^{\sqrt[3]{\epsilon}}p)$.
    \item Hence $R':=\tau[R_q(0)]\supset R_{e^{-2\sqrt[3]{\epsilon}}q}(0)+\tau(0)\supset R_{\frac{2}{3}q}(0)$, and in particular $0\in R'$.
\end{itemize}
Since $\tau:R_q(0)\rightarrow R'$ is 1-1 (as seen in the first item in the list) and onto, it has a well-defined inverse function $\zeta:R'\rightarrow R_q(0)$ which is also onto its target.

Let $G(s):=\widetilde{F}(\zeta(s))+a_s+\widetilde{\Delta}_s((\widetilde{F}(\zeta(s)),\zeta(s)))$, then
$$V^u(x,Z)=\psi_{y_0}[\{(G(s),s):s\in R'\}].$$
Here, again, the decomposition into orthogonal components is by $O[\mathbb{R}_s]\perp O[\mathbb{R}_u]$.
Using the properties of $\tau$ it is not hard to check that
\begin{itemize}
    \item $\|d_\cdot\zeta^{-1}\|^{-1},\|d_\cdot\zeta\|\leq e^{2\sqrt[3]{\epsilon}}$
    \item $\mathrm{Rad}_{\|\cdot\|_\infty}(R'):=\max_{t\in R'}\{|t|_\infty\}\leq \|d_\cdot \tau\|\cdot \mathrm{Rad}_{\|\cdot\|_\infty}(R_q(0))+|\tau(0)|\leq e^{2\sqrt[3]{\epsilon}}q+\frac{1}{6}p\leq (1+\frac{1}{6})e^{2\sqrt[3]{\epsilon}}q<2q$
    \item $|\zeta(0)|_\infty\leq \|d_\cdot\zeta\|\cdot|\tau(0)-0|+|\zeta(\tau(0))|\leq e^{2\sqrt[3]{\epsilon}}\frac{1}{6}p+0\leq\frac{1}{5}p$ (this is well defined since $0\in R_q(0)\Rightarrow \tau(0)\in R'$ ; and by the third item in the previous list $0\in R'$)
    \item Since $|\zeta(0)|\leq\frac{1}{5}p$, $\widetilde{F}\circ\zeta$ is well defined on $0$, and thus:
    
    $|\widetilde{F}\circ\zeta(0)|\leq|\widetilde{F}\circ\zeta(\tau(0))|+\|d_\cdot (\widetilde{F}\circ\zeta)\|\cdot|\tau(0)-0|\leq|\widetilde{F}(0)|+\|d_\cdot \widetilde{F}\|\cdot\|d_\cdot\zeta\|\cdot|\tau(0)|=$
    
    $=|F(0)|+\|d_\cdot F\|\cdot\|d_\cdot\zeta\|\cdot|\tau(0)|\leq 10^{-3}p+\epsilon\cdot e^{2\sqrt[3]{\epsilon}}\cdot\frac{1}{6}p\leq 10^{-2}p$
\end{itemize}

Since $|\widetilde{F}(\zeta(0))|_\infty\leq 10^{-2}p$, we get $|G(0)|_\infty\leq10^{-2}p+10^{-1}q+\sqrt[3]{\epsilon}p<\min\{\frac{1}{6}p,\frac{1}{6}q\}(\because \frac{q}{p}\in[e^{-\sqrt[3]{\epsilon}},e^{\sqrt[3]{\epsilon}}]).$
We also know $\|d_\cdot G\|\leq \|d_\cdot \widetilde{F}\|\cdot\|d_\cdot \zeta\|+\epsilon^\frac{1}{3}\sqrt{1+\|d_\cdot \widetilde{F}\|^2}\cdot\|d_\cdot \zeta\|<2\sqrt[3]{\epsilon}.$

It follows that for all $\epsilon$ small enough: $G[R_{\frac{2}{3}p}(0)]\subset R_{2\epsilon^\frac{1}{3}\frac{2}{3}p+\frac{1}{6}p}(0)\subset R_{\frac{2}{3}p}(0)$. We can now show that $|V^u(x,Z)\cap V^s(y,Z')|\geq1$: $V^s(y,Z')$ can be represented as $\psi_{y_0}[\{(t,H(t)): |t|_\infty\leq q_0^s\}]$. By admissibility, $H(0)<10^{-3}q,\|d_\cdot H\|<\epsilon$. So as before: $H[R_{\frac{2}{3}p}(0)]\subset R_{\frac{2}{3}p}(0)$. It follows that $H\circ G$ is a contraction of $R_{\frac{2}{3}q}(0)$ to itself ($\mathrm{Lip}(H\circ G)\leq \mathrm{Lip}(H)\cdot \mathrm{Lip}(G)\leq 2\epsilon^\frac{1}{3}\cdot\epsilon$). Such a function has a unique fixed point $s_0$ s.t. $H\circ G(s_0)=s_0$. It is easy to see that $\psi_{y_0}((G(s_0),s_0))$ belongs to $V^u(x,Z)\cap V^s(y,Z')$.

Next we claim that $V^u(x,Z)\cap V^s(y,Z')$ contains at most one point: Extend $G$ and $H$ to $\epsilon$-Lipschitz functions $\widetilde{G}$ and $\widetilde{H}$ on $A$ using McShane's extension formula \cite{McShane} (as demonstrated in footnote \footref{lipextensions}),
where $A:=R'\cup R_{q_0^s}(0)$. We saw: $\mathrm{Rad}_{\|\cdot\|_\infty}(R')<2q<2q_0^s$, whence $A\subset R_{2q_0^s}(0)$. We also know $|\widetilde{G}(0)|_\infty<\frac{1}{6}q\leq\frac{1}{6}q_0^s$ and $\mathrm{Lip}(G)<2\epsilon^\frac{1}{3}$, thus $\widetilde{G}[A]\subset\widetilde{G}[R_{2q_0^s}(0)]\subset R_{2\epsilon^\frac{1}{3}\cdot2q_0^s+\frac{1}{6}q_0^s}(0)\subset R_{q_0^s}(0)\subset A$. Also $|\widetilde{H}(0)|_\infty<10^{-3}q_o^s$, so $\widetilde{H}:A\rightarrow A$ by similar calculations. It follows that $\widetilde{H}\circ\widetilde{G}$ is a contraction of $A$ into itself, and therefore it has a unique fixed point. Every point in $V^u(x,Z)\cap V^s(y,Z')$ takes the form $\psi_{y_0}((G(s),s))$ where $s\in R'$ and $s=H\circ G(s)\equiv \widetilde{H}\circ \widetilde{G}(s)$. Since the equation $\widetilde{H}\circ \widetilde{G}(s)=s$ has at most one solution in $A$, it has at most one solution in $R'$. It follows that $|V^u(x,Z)\cap V^s(y,Z')|\leq1$.
\end{proof}
\subsubsection{The symbolic Markov property}
\begin{prop}\label{symbolicmarkovproperty}
If $x=\pi((v_i)_{i\in\mathbb{Z}})$, where $v\in\Sigma^\#$, then $f[W^s(x,Z(v_0))]\subset W^s(f(x),Z(v_1))$ 

and $f^{-1}[W^u(f(x),Z(v_1))]\subset W^u(x,Z(v_0))$.
\end{prop}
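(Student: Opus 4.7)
Fix $y\in W^s(x,Z(v_0))$; I aim to establish both $f(y)\in V^s(f(x),Z(v_1))$ and $f(y)\in Z(v_1)$, which together give $f(y)\in W^s(f(x),Z(v_1))$. Since $y\in Z(v_0)$, choose $u\in\Sigma^\#$ with $u_0=v_0$ and $\pi(u)=y$. The first key observation is that
$$V^s((u_i)_{i\geq 0})\;=\;V^s((v_i)_{i\geq 0}).$$
Indeed, both are $s$-admissible manifolds in the common double chart $v_0$ which stay in windows; they both contain $y$ (the first by construction, the second because $y\in V^s(x,Z(v_0))=V^s((v_i)_{i\geq 0})$ by hypothesis), so Proposition \ref{prop221} forces them to coincide rather than be disjoint.

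Next I construct a spliced chain $w$ by setting $w_i:=u_i$ for $i\leq 0$ and $w_i:=v_i$ for $i\geq 1$. The edge $w_0\to w_1$ reads $u_0\to v_1$, which holds because $u_0=v_0$ and $v_0\to v_1$; the remaining edges are inherited from $u$ or $v$, so $w\in\Sigma$. Moreover $w\in\Sigma^\#$: some symbol repeats infinitely often in $(u_i)_{i\leq 0}=(w_i)_{i\leq 0}$, and some in $(v_i)_{i\geq 0}$ which governs $(w_i)_{i\geq 1}$. By the splice and the first observation, $V^u((w_i)_{i\leq 0})=V^u((u_i)_{i\leq 0})$ and $V^s((w_i)_{i\geq 0})=V^s((v_i)_{i\geq 0})=V^s((u_i)_{i\geq 0})$; therefore
$$\pi(w)\;=\;V^u((u_i)_{i\leq 0})\cap V^s((u_i)_{i\geq 0})\;=\;\pi(u)\;=\;y.$$

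Applying the conjugacy $f\circ\pi=\pi\circ\sigma$ yields $f(y)=\pi(\sigma w)$, and $(\sigma w)_0=w_1=v_1$, which witnesses $f(y)\in Z(v_1)$. For the manifold inclusion, note that $V^s(f(x),Z(v_1))=V^s((v_{i+1})_{i\geq 0})$ (using the chain $\sigma v$); combining $y\in V^s((v_i)_{i\geq 0})$ with the invariance $f[V^s((v_i)_{i\geq 0})]\subset V^s((v_{i+1})_{i\geq 0})$ from Proposition \ref{firstofchapter}(3) yields $f(y)\in V^s(f(x),Z(v_1))$. This completes the first inclusion. The symmetric inclusion $f^{-1}[W^u(f(x),Z(v_1))]\subset W^u(x,Z(v_0))$ follows by the time-reversed argument: splice the positive tail of a chain representing a point in $W^u(f(x),Z(v_1))$ with $(v_i)_{i\leq 0}$, and apply the unstable versions of Propositions \ref{prop221} and \ref{firstofchapter}(3).

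The only delicate step is the splicing equality $V^s((u_i)_{i\geq 0})=V^s((v_i)_{i\geq 0})$, since without it the spliced chain $w$ need not satisfy $\pi(w)=y$, and the argument collapses. However this rigidity is exactly the content of Proposition \ref{prop221}, so once that is in hand the remaining steps are formal manipulations with the shift and with parts (3)--(4) of Proposition \ref{firstofchapter}.
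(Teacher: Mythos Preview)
Your proof is correct and follows essentially the same approach as the paper (which defers to \cite{Sarig}, proposition 10.9): splice the negative tail of $u$ with the positive tail of $v$ to witness $f(y)\in Z(v_1)$, and use Proposition~\ref{firstofchapter}(3) together with Proposition~\ref{prop221} to get the manifold inclusion. The one small point worth making explicit is that $\sigma w\in\Sigma^\#$ (needed for $f(y)\in Z(v_1)$), but this is immediate since $\Sigma^\#$ is shift-invariant.
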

\begin{proof} See proposition 10.9 in \cite{Sarig}.
%
%
%
%
\end{proof}
\begin{lemma}\label{lastOne} Suppose $Z,Z'\in\mathcal{Z}$ and $Z\cap Z'\neq\varnothing$, then:
\begin{enumerate}
    \item If $Z=Z(\psi_{x_0}^{p_0^s,p_0^u})$ and $Z'=Z(\psi_{y_0}^{q_0^s,q_0^u})$ then $Z\subset \psi_{y_0}[R_{q_0^s\wedge q_0^u}(0)]$
    \item For any $x\in Z\cap Z'$: $W^u(x,Z)\subset V^u(x,Z')$ and $W^s(x,Z)\subset V^s(x,Z')$
\end{enumerate}
\end{lemma}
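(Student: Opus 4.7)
The proof splits along the two parts of the statement. Part~1 is a direct estimate combining theorem~\ref{beforefinal} with proposition~\ref{firstbefore}, and Part~2 reduces to proposition~\ref{prop221} after transporting one admissible manifold into the other's chart.

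For Part~1 I would fix any reference point $w\in Z\cap Z'$, take chains $v,u\in\Sigma^\#$ with $\pi(v)=\pi(u)=w$, $v_0=\psi_{x_0}^{p_0^s,p_0^u}$, $u_0=\psi_{y_0}^{q_0^s,q_0^u}$, and apply theorem~\ref{beforefinal} to get $p_0^s\wedge p_0^u\leq e^{\epsilon^{1/3}}(q_0^s\wedge q_0^u)$ together with
\[\psi_{y_0}^{-1}\circ\psi_{x_0}(\vec t)=O_0\vec t+a_0+\Delta_0(\vec t)\qquad(\vec t\in R_\epsilon(0)),\]
where $O_0$ is orthogonal, $|a_0|_\infty\leq 10^{-1}(q_0^s\wedge q_0^u)$, $\Delta_0(0)=0$, and $\|d\Delta_0\|\leq\tfrac12\epsilon^{1/3}$. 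For arbitrary $z\in Z$, proposition~\ref{firstbefore} writes $z=\psi_{x_0}(\vec t)$ with $|\vec t|_\infty\leq 10^{-2}(p_0^s\wedge p_0^u)$. Using $|O_0\vec t|_\infty\leq\sqrt d\,|\vec t|_\infty$ and the triangle inequality yields $|\psi_{y_0}^{-1}(z)|_\infty\leq q_0^s\wedge q_0^u$ provided $\epsilon$ is small enough to absorb the $\sqrt d$ factor.

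For Part~2 I would treat the unstable inclusion $W^u(x,Z)\subset V^u(x,Z')$ (the stable case is symmetric). The plan is to transport the representing function of $V^u(x,Z)$ into the chart $\psi_{y_0}$, verify that this yields a $u$-admissible manifold $\tilde V^u$ in an appropriate double chart based at $\psi_{y_0}$ which passes through $x$ and stays in windows, and then invoke proposition~\ref{prop221} (both $\tilde V^u$ and $V^u(x,Z')$ being $u$-admissible at the same base chart $\psi_{y_0}$ and staying in windows). The structural fact that makes the transport work is that the orthogonal matrix $O_0$ is \emph{block diagonal} with respect to $\mathbb{R}^d=\mathbb{R}^{s(x)}\oplus\mathbb{R}^{u(x)}$: the map $\Xi_0$ from claim~\ref{Xi} preserves the stable/unstable decomposition of tangent spaces by construction (each factor $\pi_{x/y,0},\Lambda_{x/y,0}$ does), and since $C_\chi(\cdot)$ identifies $\mathbb{R}^{s/u}$ with $H^{s/u}$ and $s(x)=s(y)$, the product $C_\chi^{-1}(y_0)\Xi_0 C_\chi(x_0)$ is block diagonal and so is its polar decomposition. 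Writing $O_0=\mathrm{diag}(O_s,O_u)$ and $V^u(x,Z)=\psi_{x_0}[\{(F(t),t):|t|_\infty\leq p_0^u\}]$, the transport of $V^u(x,Z)$ into $\psi_{y_0}$-coordinates has unstable component $\tau=O_ut+a_u+\Delta_u(F(t),t)$, which I invert to $t=t(\tau)$ exactly as in claim~1 of the proof of theorem~\ref{graphtransform}, with $O_u$ playing the role of $D_u$ and $(a_u,\Delta_u)$ the role of $h_u$; substituting gives $G(\tau)=O_sF(t(\tau))+a_s+\Delta_s(F(t(\tau)),t(\tau))$. Redoing the estimates of claims~2--4 of theorem~\ref{graphtransform} shows that $\tilde V^u:=\psi_{y_0}[\mathrm{graph}(G)]$ is $u$-admissible (with some $q$-parameter $\tilde q^u\leq q_0^u$), and iterating the same transport at each negative time $-k$ (using theorem~\ref{beforefinal} applied to the shifted chains $\sigma^{-k}v,\sigma^{-k}u$) shows $\tilde V^u$ stays in windows along $(u_i)_{i\leq 0}$. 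By Part~1, $W^u(x,Z)\subset V^u(x,Z)\cap\psi_{y_0}[R_{q_0^s\wedge q_0^u}(0)]\subset\tilde V^u$, and proposition~\ref{prop221} (for the unstable case), applied with $\tilde q^u\leq q_0^u$, forces $\tilde V^u\subset V^u(x,Z')$, whence $W^u(x,Z)\subset V^u(x,Z')$.

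The main obstacle will be the Graph-Transform-style verification that the transported representing function $G$ satisfies the admissibility bounds on $\sigma,\gamma,\varphi$; this amounts to redoing the estimates of claims~2--4 in the proof of theorem~\ref{graphtransform} in the perturbed-orthogonal setting ($O_u$ in place of $D_u$, coordinate-change data $(a_u,\Delta_u)$ in place of the hyperbolic perturbation $h_u$). The block-diagonal structure of $O_0$ is what reduces this step cleanly to essentially the original graph-transform argument. A secondary delicate point is ensuring that $\tilde V^u$ stays in windows along the full past chain $(u_i)_{i\leq 0}$, which requires iterating the transport at every negative time and verifying compatibility of the transported windows with the chain $u$ via theorem~\ref{beforefinal} at each step.
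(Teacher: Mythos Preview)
Your Part~1 is essentially the paper's argument.

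Your Part~2 takes a much harder route than necessary and has real gaps. First, to show the transported graph $G$ is $u$-admissible you need the $\sigma$-bound $\|d_\cdot G\|_{\beta/3}\le\tfrac12$, but theorem~\ref{beforefinal} only controls $\|d_\cdot\Delta_0\|$, not its $\beta/3$-H\"older seminorm, so the analogue of claim~4 in theorem~\ref{graphtransform} does not go through as stated. Second, even granting such control, that claim~4 estimate produces $\|d_\cdot H\|_{\beta/3}\le e^{-2\chi+\sqrt\epsilon}[\sigma+\sqrt\epsilon]$ precisely because $\|D_s\|,\|D_u^{-1}\|\le e^{-\chi}$; with orthogonal blocks $O_s,O_u$ there is no such contraction, and you cannot prevent the $\sigma$-parameter from exceeding $\tfrac12$. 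Third, ``stays in windows'' requires a negative chain whose zeroth vertex is \emph{exactly} the double chart $\psi_{y_0}^{\tilde q^s,\tilde q^u}$ in which you declared $\tilde V^u$ admissible; the chain $(u_i)_{i\le 0}$ has zeroth vertex $\psi_{y_0}^{q_0^s,q_0^u}$, and altering that vertex breaks the edge condition $u_{-1}\to u_0$ in general. So the appeal to proposition~\ref{prop221} is not justified.

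The paper's argument for Part~2 avoids all of this by using Part~1 \emph{dynamically} instead of statically. By the symbolic Markov property, $f^k[W^s(x,Z)]\subset W^s(f^k(x),Z(v_k))\subset Z(v_k)$ for every $k\ge 0$; since $f^k(x)\in Z(v_k)\cap Z(w_k)$, Part~1 applied at time $k$ gives $Z(v_k)\subset\psi_{y_k}[R_{q_k^s\wedge q_k^u}(0)]\subset\psi_{y_k}[R_{Q_\epsilon(y_k)}(0)]$. Thus every forward iterate of $W^s(x,Z)$ lands in the window $\psi_{y_k}[R_{10Q_\epsilon(y_k)}(0)]$, and the dynamical characterization of proposition~\ref{firstofchapter}(4) immediately yields $W^s(x,Z)\subset V^s((w_i)_{i\ge 0})=V^s(x,Z')$. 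No coordinate transport, no admissibility check, no proposition~\ref{prop221} needed.
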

\begin{proof}
Fix some $x\in Z\cap Z'$. Write $x=\pi(v),x=\pi(w)$ where $v,w\in\Sigma^\#$ satisfy $w_0=\psi_{x_0}^{p_0^s,p_0^u}$ and $w_0=\psi_{y_0}^{q_0^s,q_0^u}$. Write $p:=p_0^s\wedge p_0^u,q:=q_0^s\wedge q_0^u$. Since $\pi(v)=\pi(w)$ we have by theorem \ref{beforefinal} $\frac{p}{q}\in[e^{-\epsilon^\frac{1}{3}},e^{\epsilon^\frac{1}{3}}]$ and $$\psi_{y_0}^{-1}\circ\psi_{x_0}=O+a+\Delta\text{ on }R_\epsilon(0).$$ 
Here $O$ is an orthogonal transformation, $a$ is a constant vector s.t. $|a|<10^{-1}q$ and $\Delta:R_\epsilon(0)\rightarrow\mathbb{R}^d$ satisfies $\Delta(0)=0$, $\|d_\cdot \Delta\|<\epsilon^\frac{1}{3}$ (and hence also $|\Delta(u)|\leq\epsilon^\frac{1}{3}|u|$ for all $u\in R_\epsilon(0)$). Recall that all vector norms are infinity norms, and all operators norms are calculated w.r.t to infinity norms.

Every point in $Z$ is the intersection of a $u$-admissible and an $s$-admissible manifolds in $\psi_{x_0}^{p_0^s,p_0^u}$, therefore $Z$ is contained in $\psi_{x_0}[R_{10^{-2}p}(0)]$ (proposition \ref{firstbefore}). Thus
$$Z\subset\psi_{y_0}[(\psi_{y_0}^{-1}\circ\psi_{x_0})[R_{10^{-2}p}(0)]\subset \psi_{y_0}[R_{(1+\epsilon^\frac{1}{3})10^{-2}p}(a)]\subset$$
$$\subset \psi_{y_0}[R_{(1+\epsilon^\frac{1}{3})10^{-2}p+10^{-1}q}(0)]\subset\psi_{y_0}[B_{(1+\epsilon^\frac{1}{3})10^{-2}qe^{\epsilon^\frac{1}{3}}+10^{-1}q}(0)]\subset\psi_{y_0}[R_{q}(0)]\text{  }(\because \epsilon<1).$$
This proves the first statement of the lemma. Next we show $W^s(x,Z)\subset V^s(x,Z')$: Write $v_i=\psi_{x_i}^{p_i^s,p_i^u}$ and $w_i=\psi_{y_i}^{q_i^s,q_i^u}$. Since $\pi(v)=x$ and $Z=Z(v_0)$, the symbolic Markov property gives us:
$$f^k[W^s(x,Z)]\subset W^s(f^k(x),Z(v_k))\text{  }  (k\geq0).$$
The sets $Z(v_k)$ and $Z(w_k)$ intersect, because they both contain $f^k(x)$. By the first part of the lemma, $Z(v_k)\subset\psi_{y_k}[R_{q_k^s\wedge q_k^u}(0)]$. It follows that
$f^k[W^s(x,Z)]\subset \psi_{y_k}[R_{q_k^s\wedge q_k^u}(0)]\subset\psi_{y_k}[R_{Q_\epsilon(y_k)}(0)]$
for all $k\geq0$. By proposition \ref{firstofchapter}(4): $W^s(x,Z)\subset V^s((w_i)_{i\geq0})\equiv V^s(x,Z')$.
\end{proof}
\section{Proof of the main results}\label{chapter5}


We present a summary of the objects we have constructed so far, and their properties.

We have constructed a family of sets $\mathcal{Z}$ s.t.
\begin{enumerate}
    \item $\mathcal{Z}$ is \textbf{countable}  (definition \ref{ZV}; see also proposition \ref{discreteness} and definition \ref{graphosaurus}).
    \item $\mathcal{Z}$ covers $NUH_\chi^\#$, in particular $\mathcal{Z}$ covers a set of \textbf{full measure} for every ergodic $\chi$-hyperbolic $f$-invariant measure (theorem \ref{DefOfPi} and proposition \ref{sigmarelprop}).
    \item $\mathcal{Z}$ is \textbf{locally finite}: $\forall Z\in \mathcal{Z}, \#\{Z'\in\mathcal{Z}:Z\cap Z'\neq\varnothing\}<\infty$  (theorem \ref{Zlocallyfinite}).
    \item Every $Z\in\mathcal{Z}$ has \textbf{product structure}: There are subsets $W^{s}(x,Z),W^u(x,Z)\subset Z$ $(x\in Z)$
s.t. \begin{enumerate}
    \item $x\in W^{s}(x,Z)\cap W^u(x,Z)$.
    \item $\forall x,y\in Z \exists ! z\in Z$ s.t. $W^u(x,Z)\cap W^s(y,Z)=\{z\}$, we write $z=[x,y]_Z.$
    \item $\forall x,y\in Z$ $W^u(x,Z),W^u(y,Z)$ are equal or disjoint (similarly for $W^s(x,Z),W^s(y,z)$).
\end{enumerate}
 (proposition \ref{forBuzzi1}, proposition \ref{propForBracketZ}).
\item The \textbf{symbolic Markov property}: if $x\in Z(u_0), f(x)\in Z(u_1)$,  and $u_0\rightarrow u_1$, then $f[W^s(x,Z_0)]\subset W^s(f(x),Z_1)$ and $f^{-1}[W^u(f(x),Z_1)]\subset W^u(x,Z_0)$ (proposition \ref{symbolicmarkovproperty}).\footnote{To apply proposition \ref{symbolicmarkovproperty}, represent $x=\pi(v)$ with $v\in\Sigma^\#$, $v_0=u_0$, and $f(x)=\pi(w)$ with $w\in\Sigma^\#$, $w_0=u_1$; and note that $x=\pi(u), u\in \Sigma^\#$ where $u_i=v_i$ $(i\leq0)$, $u_i=w_{i-1}$ $(i\geq1)$.}
\item 
\textbf{Uniform expansion/contraction on $W^{s}(x,Z),W^{u}(x,Z)$}: 
$\exists \theta\in (0,1), C>0$ s.t. $\forall Z\in \mathcal{Z}, \forall z\in Z$, $\forall x\in W^s(z,Z),y\in W^u(z,Z)$, $\forall n\geq0$, $d(f^n(x),f^n(z)),d(f^{-n}(y), f^{-n}(z))\leq \frac{C}{2}\theta^n$ (proposition \ref{Lambda}(1)).

\item 
\textbf{Bowen's property with finite degree} (see \cite[definition~1.11]{BoyleBuzzi}): \begin{enumerate}
    \item 
    $\forall u,v\in \Sigma:$ $\Big(\forall i, Z(u_i)\cap Z(v_i)\neq\varnothing\Big)\Rightarrow \pi(u)=\pi(v)$  (proposition \ref{firstofchapter}(4), proof of proposition \ref{prop213}).
    \item $\forall u,v\in \Sigma^\#$: $\pi(u)=\pi(v)\Leftrightarrow \Big(Z(u_i)\cap Z(v_i)\neq\varnothing$ $\forall i\in\mathbb Z\Big)$ ((a), definition \ref{ZV}).
    \item $\forall Z\in \mathcal{Z}$:  $\#\{Z'\in\mathcal{Z}: Z'\cap Z\neq\emptyset\}<\infty$  (theorem \ref{Zlocallyfinite}).
\end{enumerate}
\end{enumerate}
We explain how to use these constructs to prove our main results, theorems \ref{t4.1.1}-\ref{t4.2.2}.

\noindent\textbf{\underline{Proof of theorem \ref{t4.1.1}:}}

\underline{Step 1:} $\mathcal{Z}$ can be refined into a countable \textbf{partition} $\mathcal{R}$ with the following properties:
\begin{enumerate}
    \item[(a)] Product structure.
    \item[(b)] Symbolic Markov property.
    \item[(c)] Every $Z\in\mathcal{Z}$ contains only finitely many $R\in\mathcal{R}$.
\end{enumerate}

\underline{Proof:} This is done exactly as in \cite[\textsection~11]{Sarig}, using the Bowen-Sinai refinement procedure \cite{B1,B4,B3}, the local-finiteness property (3), and the product structure (4) of $Z_i\in\mathcal{Z}$. The partition elements are the equivalence classes of the following equivalence relation on $\bigcup\mathcal{Z}$ (see \cite[proposition~11.2]{Sarig}):
$$x\sim y \iff \forall Z,Z'\in \mathcal{Z}, 
\begin{bmatrix}
    x\in Z\iff y\in Z  \\
    W^u(x,Z)\cap Z'\neq\varnothing \iff W^u(y,Z)\cap Z'\neq\varnothing   \\
     W^s(x,Z)\cap Z'\neq\varnothing \iff W^s(y,Z)\cap Z'\neq\varnothing
\end{bmatrix}.$$


\underline{Step 2:} For every finite chain $R_0\rightarrow\cdots\rightarrow R_m$,  $\bigcap_{j=0}^{m}f^{-j}[R_j]\neq\varnothing$.

\underline{Proof:} This follows from the fact that $\mathcal{R}$ is a Markov partition, using a classical argument due to Adler and Weiss, and Sinai. The proof is by induction on $m$. Assume the statement holds for $m$, and prove for $m+1$: Take $p\in f^m\Big[\bigcap_{j=0}^{m}f^{-j}[R_j]\Big]$, and take $p'\in R_{m}\cap f^{-1}R_{m+1}$. $p,p'\in R_{m}$, whence belong to some $Z\in\mathcal{Z}$; therefore $[p,p']_Z$ is well defined by definition \ref{bracketZ} and proposition \ref{propForBracketZ}. By the symbolic Markov property, $f^{-m}([p,p']_Z)\in \bigcap_{j=0}^{m+1}f^{-j}[R_{j}]$.

\underline{Step 3:} Let $\widehat{\mathcal{G}}$ be a directed graph with set of vertices $\mathcal{R}$ and set of edges $\{(R,S)\in \mathcal{R}\times\mathcal{R}: R\cap f^{-1}[S]\neq\varnothing\}$. Let $\widehat{\Sigma}:=\Sigma(\widehat{\mathcal{G}})$ and define $\widehat{\pi}:\widehat{\Sigma}\rightarrow M$ by $\{\widehat{\pi}(\vec{R})\}=\bigcap\limits_{n\geq0}\overline{\bigcap\limits_{i=-n}^n f^{-i}[R_i]}$. Then $\widehat{\pi}$ is well defined, and:
\begin{enumerate}
    \item[(a)] $\widehat{\pi}[\widehat{\Sigma}]\supset NUH_\chi^\#$.
    \item[(b)] $\widehat{\pi}\circ\sigma=f\circ\widehat{\pi}$.
    \item[(c)] $\widehat{\pi}$ is H\"older continuous.
    \item[(d)] $\widehat{\Sigma}$ is locally compact.
\end{enumerate}

\underline{Proof:} This proof is a concise presentation of \cite[theorem~12.3, theorem~12.5]{Sarig}. Step 2 guarantees that the intersection in the expression defining $\{\widehat{\pi}\}$ is not empty. 
The intersection is a singleton, because $\mathrm{diam}(\bigcap_{i=-n}^nf^{-i}[R_i])\leq C\theta^n$. To see this, fix $x,y\in \bigcap_{i=-n}^nf^{-i}[R_i]$, and fix $u\in\Sigma^\#$ s.t. $\pi(u)=x$. It follows that $\bigcap_{i=-n}^nf^{-i}[R_i]\subset \bigcap_{i=-n}^nf^{-i}[Z(u_i)]$, whence $y\in\bigcap_{i=-n}^nf^{-i}[Z(u_i)]$. Let $z:=[x,y]_{Z(u_0)}$. By (4) and (5), $z\in \bigcap_{i=-n}^nf^{-i}[Z(u_i)]$, $f^n(z)\in W^u(f^n(x),Z(u_n))$, $f^{-n}(z)\in W^s(f^{-n}(y),Z(u_{-n}))$, whence by (6) $d(x,y)\leq d(x,z)+d(z,y)=d(f^{-n}(f^n(x)),f^{-n}(f^n(z)))+d(f^{n}(f^{-n}(z)),f^{n}(f^{-n}(y)))\leq C\theta^n$.
Therefore $\widehat{\pi}$ is well defined. 
\begin{enumerate}
    \item[(a)] Let $x\in NUH_\chi^\#$. By theorem \ref{DefOfPi}, $f^i(x)\in\pi[\Sigma^\#]$ for all $i\in\mathbb{Z}$. By definition $\mathcal{Z}$ is a cover of $\pi[\Sigma^\#]$, and $\mathcal{R}$ refines $\mathcal{Z}$; whence $f^i(x)\in\bigcupdot\mathcal{R}$ $\forall i\in\mathbb{Z}$. Denote by $R_i$ the unique partition member of $\mathcal{R}$ which contains $f^i(x)$. $\vec{R}$ is an admissible chain by definition, and $\widehat{\pi}(\vec{R})=x$. Let $\vec{u}\in\Sigma^\#$ s.t. $\pi(\vec{u})=x$, then $Z(u_i)\supset R_i$ $\forall i\in\mathbb{Z}$. $\mathcal{Z}$ is locally-finite, so by the pigeonhole principle, $\vec{R}$ must be in $\widehat{\Sigma}^\#$.
    \item[(b)] $$\{(f\circ\widehat{\pi})(\vec{R})\}=\bigcap_{n\geq0}\bigcap_{i=-n}^n\overline{f^{-(i-1)}[R_{i-1+1}]}=\{\widehat{\pi}(\sigma \vec{R})\}\text{ }(\because f\text{ is a diffeomorphism}).$$
    \item[(c)] Suppose $d(\vec{R},\vec{S})=e^{-n}$, then $R_i=S_i$ for $|i|\leq n$, whence $\widehat{\pi}(\vec{R}),\widehat{\pi}(\vec{S})\in\overline{\bigcap_{i=-n}^nf^{-i}[R_i]}$. We just saw in the begining of the proof of step 3 that $\mathrm{diam}(\bigcap_{i=-n}^nf^{-i}[R_i])\leq C\theta^n$. So $d(\widehat{\pi}(\vec{R}),\widehat{\pi}(\vec{S}))\leq C\theta^n=C\cdot d(\vec{R},\vec{S})^{\log\frac{1}{\theta}}$.
    \item[(d)] Fix a vertex $R\in \mathcal{R}$. Assume $R\rightarrow S$. Let $u\in \mathcal{V}$ s.t. $R\subset Z(u)$. Take $x\in R\cap f^{-1}[S]\subset Z(u)$, then $\exists \vec{u}\in\Sigma^\#$ s.t. $u_0=u$ and $\pi(\vec{u})=x$. Then $f(x)\in S\cap Z(u_1)$. Thus, by step 1(c), and lemma \ref{lemma131}, $$\#\{S\in\mathcal{R}:R\rightarrow S\}\leq\sum_{u\in \mathcal{V}:R\subset Z(u)}\sum_{v\in\mathcal{V}:u\rightarrow v}\#\{S\in\mathcal{R}:S\subset Z(v)\}
    <\infty.$$
    A similar calculation shows $\#\{S':S'\rightarrow R\}<\infty$.
\end{enumerate}

\underline{Step 4:} $\widehat{\pi}[\widehat{\Sigma}^\#]$ has full measure for all $\chi$-hyperbolic measures, and $\widehat{\pi}|_{\widehat{\Sigma}^\#}$ is finite-to-one.

\underline{Proof:} The fact that $\widehat{\pi}[\widehat{\Sigma}^\#]$ has full measure for all $\chi$-hyperbolic measures follows from (a) in step 3, and the Poincar\'e recurrence theorem and the Oseledec theorem which say that $NUH_\chi^\#$ carries all $\chi$-hyperbolic measures (see definition \ref{nuhchisharp}). The finite-to-one property follows from theorem \ref{t4.1.2} which we prove below.

\medskip
\noindent\textbf{\underline{Proof of theorem \ref{t4.1.3}:}} Given $x\in\bigcupdot\mathcal{R}=\pi[\Sigma^\#]$, define $R(x):=$ the unique element of $\mathcal{R}$ to which $x$ belongs; and define $\vec{R}(x):=(R(f^i(x)))_{i\in\mathbb{Z}}$. The following properties of $\vec{R}(\cdot)$ follow immediately:
 \begin{enumerate}
     \item[(a)] $\vec{R}(\cdot)$ is a measurable map from $NUH_\chi^\#\subset\bigcupdot\mathcal{R}$ to $\widehat{\Sigma}$.
     \item[(b)] $\sigma\circ\vec{R}=\vec{R}\circ f$.
     \item[(c)] $\vec{R}(\cdot)$ is one-to-one onto its image.
 \end{enumerate}
Given an invariant $\chi$-hyperbolic measure $\mu$ on $M$, define $\nu:=\mu\circ \vec{R}^{-1}$ an invariant measure on $\widehat{\Sigma}$. By (b), if $\mu$ is ergodic, so is $\nu$ (in addition $\nu$ is carried by $\widehat{\Sigma}^\#$, as demonstrated in step 3(a) of the proof of theorem \ref{t4.1.1} above). By (c), $\nu$ has the same metric entropy as $\mu$. Since $\widehat{\pi}\circ\vec{R}=Id$, $\nu\circ\widehat{\pi}^{-1}=\mu$.

\medskip
\noindent\textbf{\underline{Proof of theorem \ref{t4.1.2}:}}
This proof follows  \cite[theorem~12.8]{Sarig} (see \cite{B1,B2,B3,BoyleBuzzi}). The proof in \cite{Sarig} had a mistake, which was later corrected in \cite[theorem~5.6(5)]{SL14}.

$R,S\in\mathcal{R}$ are called {\em affiliated} if $\exists Z_1,Z_2\in\mathcal{Z}$ s.t. $Z_1\supset R,Z_2\supset S$ and $Z_1\cap Z_2\neq\varnothing$ (see \cite[\textsection~12.3]{Sarig}).

\underline{Step 1:} Define $N(R):=\#\{S:S\text{ is affiliated to }R\}$. By the local-finiteness of $\mathcal{Z}$ and step 1(c) in the proof of theorem \ref{t4.1.1}, $N(R)<\infty$ for all $R\in\mathcal{R}$. Consider a chain $\vec{R}'\in\widehat{\Sigma}^\#$, then there exist $R,S\in\mathcal{R}$ s.t. $R'_i=R$ for infinitely many $i\geq0$, $R'_i=S$ for infinitely many $i\leq0$
. Denote all chains in $\widehat{\Sigma}^\#$ which shadow the same point as $\vec{R}'$ (including $\vec{R}'$) by $\{\vec{S}^{(k)}\}_{k\in K}$.

\underline{Step 2:} For all $k\in K $, and for all $i\in\mathbb{Z}$, $R'_i$ and $S_i^{(k)}$ are affiliated.

\underline{Proof:} Given $k\in K$, consider two chains $\vec{v},\vec{u}\in \Sigma$ s.t. $\forall i\in\mathbb{Z}$, $Z(u_i)\supset R'_i, Z(v_i)\supset S_i^{(k)}$ and $\pi(\vec{u})=\widehat{\pi}(\vec{R}')=\widehat{\pi}(\vec{S}^{(k)})=\pi(\vec{v})$.\footnote{Fix some $u_0\in\mathcal{V}$ s.t. $Z(u_0)\supset R_0$. $R_0\rightarrow R_{1}$, so there exists a point $z\in f^{-1}[R_{1}]\cap R_0$. In particular $z\in Z(u_0)$, and so there exists by definition a chain $\vec{u}'\in \Sigma^\#$ s.t. $\pi(\vec{u}')=z$ and $u_0'=u_0$; then $Z(u_1')\supset R_1$ and $u_0\rightarrow u_1'$. Set $u_1:= u_1'$. Repeating this procedure inductively (forwards and backwards) gives us an admissible chain $\vec{u}\in \Sigma$ s.t. $Z(u_i)\supset R_i$ for all $i\in\mathbb{Z}$. $\forall i\in\mathbb{Z}$, $f^i(\widehat{\pi}(\vec{R}))\in \overline{R_i}\subset\overline{Z(u_i)}$, whence $\vec{u}$ shadows $\widehat{\pi}(\vec{R})$, and so $\pi(\vec{u})=\widehat{\pi}(\vec{R})$ (see proposition \ref{firstofchapter}(4)). $\vec{v}$ is constructed similarly.} By the facts $\forall T\in\mathcal{R}$ $N(T)<\infty$ and  $\vec{R}',\vec{S}^{(k)}\in\widehat{\Sigma}^\#$ and by the pigeonhole principle, $\vec{u},\vec{v}\in\Sigma^\#$. Therefore $R'_i$ is affiliated to $S_i^{(k)}$ for all $i\in\mathbb{Z}$.

\underline{Step 3:} $\exists \vec{R},\vec{S}\in \{\vec{S}^{(k)}\}_{k\in K}$, $i_-\leq0,i_+\geq0$ s.t. $S_{i_-}=R_{i_-},S_{i_+}=R_{i_+}$ and $(S_{i_-},...,S_{i_+})\neq(R_{i_-},...,R_{i_+})$.

\underline{Proof:} Assume $|K|>N(R)N(S)$, then there exists a subset $\widetilde{K}\subset K$ s.t. $|\widetilde{K}|=N(R)N(S)+1$. We assume $\{\vec{S}^{(k)}\}_{k\in K}$ are all different, then there exists some $n$ s.t. $\forall k\neq l\in \widetilde{K}$, $(S_{-n}^{(l)},...,S_{n}^{(l)})\neq(S_{-n}^{(k)},...,S_{n}^{(k)})$. Denote by $i_+$ the first $i\geq n$ s.t. $R'_i=R$, and by $i_-$ the first $i\leq -n$ s.t. $R'_i=S$. By the pigeonhole principle and the choice of $\widetilde{K}$, there exist two chains $\vec{S},\vec{R}\in\widetilde{K}$ s.t. $S_{i_-}=R_{i_-}$ and $S_{i_+}=R_{i_+}$.

\underline{Step 4:} For all $m\geq0$, $\bigcap_{j=0}^{m}f^{-j}[S_{i_-+j}], \bigcap_{j=0}^{m}f^{-j}[R_{i_-+j}]\neq\varnothing$.

\underline{Proof:} See step 2 of the proof of theorem \ref{t4.1.1}.

\underline{Step 5:} $\exists z_R\in R_0, z_S\in S_0$ s.t. if $i>i_+$ or $i<i_-$, then $f^i(z_R),f^i(z_S)\in S_i$, and if $i_-\leq i\leq i_+$ then $f^i(z_S)\in S_i,f^i(z_R)\in R_i$.

\underline{Proof:} Using step 4, take $x\in \bigcap_{j=i_-}^{i_+}f^{-j}[S_{j}],y\in \bigcap_{j=i_-}^{i_+}f^{-j}[R_{j}]$. 
Define $t:=f^{-i_-}([f^{i_-}(x),f^{i_-}(y)]_{Z_1})$, $z_R:=f^{-i_+}([f^{i_+}(t),f^{i_+}(x)]_{Z_2})$ for some $Z_1,Z_2\in\mathcal{Z}$ s.t. $Z_1\supset S_{i_-}=R_{i_-},Z_2\supset S_{i_+}=R_{i_+}$. Let $z_S:=x$. By the symbolic Markov property, if $i>i_+$ or $i<i_-$, then $f^i(z_R),f^i(z_S)\in S_i$ 
, and if $i_-\leq i\leq i_+$, then $f^i(z_S)\in S_i,f^i(z_R)\in R_i$.

\underline{Step 6:} There cannot be more than $N(R)N(S)$ chains in $\widehat{\Sigma}^\#$ which shadow the same point as $\vec{R}'$.

\underline{Proof:} 
In addition to the property of $z_S, z_R$ stated in step 5, when $i\in [i_-,i_+]$,  $f^i(z_S)\in S_i$, $f^i(z_R)\in R_i$ and $S_i$  and $R_i$ are affiliated (as shown in step (2)). Therefore any chain $\vec{w}\in\Sigma$ which shadows $z_S$ must also shadow $z_R$ (Bowen's property with finite degree, see (7) in begining of \textsection \ref{chapter5}). So $z_S=z_R$. But $z_S\in \bigcap_{j=i_-}^{i_+}f^{-j}[S_{j}]$ and $z_R\in \bigcap_{j=i_-}^{i_+}f^{-j}[R_{j}]$ which are disjoint by the choice of $n$, a contradiction to the assumption $|K|>N(R)N(S)$.

\medskip
\noindent\textbf{\underline{Proof of theorem \ref{t4.2.1}:}} This follows the arguments in \cite[\textsection~1.1]{Sarig}, which have been done before by Katok and Buzzi in \cite{K3,Bu4}.

\underline{Step 1:} Let $\mu$ be an ergodic hyperbolic probability measure of maximal entropy on $f$. Let $\chi>0$ s.t. $\mu$ is $\chi$-hyperbolic. By theorems \ref{t4.1.1} and \ref{t4.1.3}, $\mu$ lifts to an invariant ergodic measure $\nu$ on a countable TMS $\widehat{\Sigma}$. $\nu$ must be a measure of maximal entropy as well of the same entropy (otherwise project a measure with greater entropy by the almost-everywhere-finite-to-one factor $\widehat{\pi}$, which preserves entropy, and get a measure on $M$ with entropy $>h_{top}(f)$). In particular, $h_{\max}(\sigma):=\sup\{h_{\nu'}(\sigma): \nu' \text{ is a shift-invariant probability}\}$ must be equal to $h_{top}(f)$.

\underline{Step 2:} $\nu$ is ergodic, thus it is carried by an irreducible component of $\widehat{\Sigma}$, $\Sigma(\widehat{\mathcal{G}}')$ s.t. $\widehat{\mathcal{G}}'$ is a subgraph of $\widehat{\mathcal{G}}$. The irreducibilty of $\Sigma(\widehat{\mathcal{G}}')$ means that $(\Sigma(\widehat{\mathcal{G}}'),\sigma)$ is topologically transitive.

\underline{Step 3:} By \cite{G1,G2}, a topologically transitive TMS admits at most one measure of maximal entropy; and such a measure must satisfy $$\exists p\in \mathbb{N}\forall R\text{ a vertex of }\widehat{\mathcal{G}}'\exists C_R>0\text{ s.t. }\#\{\vec{R}\in \Sigma(\widehat{\mathcal{G}}'): R_0=R,\sigma^n\vec{R}=\vec{R}\}=C_R^{\pm1}e^{n h_{\max}(\sigma)}\text{ }\forall n\in p\mathbb{N}.$$ By theorem \ref{t4.1.2}, each periodic point in $\{\vec{R}\in \Sigma(\widehat{\mathcal{G}}'): R_0=R,\sigma^n\vec{R}=\vec{R}\}$ is mapped by $\widehat{\pi}$ to a periodic point of the same period in $M$, with the mapping being at most $N(R)^2$-to-one.  Therefore, for any $R$ a vertex of $\widehat{\mathcal{G}}'$ and $n\in p\mathbb{N}$,
\begin{align*}
P_n(f)=&\#\{x\in M: f^n(x)=x\}
\geq \frac{1}{N(R)^2}\#\{\vec{R}\in \Sigma(\widehat{\mathcal{G}}'): R_0=R,\sigma^n\vec{R}=\vec{R}\}\geq \frac{1}{N(R)^2C_R}e^{n h_{top}(f)}.
\end{align*}

\textbf{\underline{Proof of theorem \ref{t4.2.2}:}} We have seen in step 2 of the proof of theorem \ref{t4.2.1}, that each $\chi$-hyperbolic ergodic measure of maximal entropy corresponds to a subgraph of $\widehat{\mathcal{G}}$ which induces an irreducible component of $\widehat{\Sigma}$. For a fixed $\chi$, all subgraphs of $\widehat{\mathcal{G}}$ which induce an irreducible component of $\widehat{\Sigma}$ are disjoint. Thus, there can be at most countably many disjoint subgraphs of $\widehat{\mathcal{G}}$  for every fixed $\chi>0$. Take some countable vanishing sequence $\chi_n>0$, and for each $n$ there could be at most countably many ergodic $\chi_n$-hyperbolic measures of maximal entropy. Every hyperbolic ergodic measure is $\chi_n$-hyperbolic for some $n\in\mathbb{N}$. The countable union of countable sets of ergodic hyperbolic measures of maximal entropy is countable as well.

\medskip
We state the following proposition which is verbatim as in \cite[\textsection 12]{Sarig}, for reference purposes.
\begin{prop}
For every $x\in \widehat{\pi}[\widehat{\Sigma}]$, $T_xM=E^s(x)\oplus E^u(x)$, where
$$(1)\text{ }\limsup\limits_{n\rightarrow\infty}\frac{1}{n}\log\|d_xf^n|_{E^s(x)}\|\leq -\frac{\chi}{2}\text{ , }(2)\text{ }\limsup\limits_{n\rightarrow\infty}\frac{1}{n}\log\|d_xf^{-n}|_{E^u(x)}\|\leq -\frac{\chi}{2}.
$$
The maps $\underline{R}\mapsto E^{s/u}(\widehat{\pi}(\underline{R}))$ are H\"older continuous as maps from $\widehat{\Sigma}$ to $TM$.
\end{prop}
\begin{proof}
The proof is the same as in \cite[proposition~12.6]{Sarig}, so we limit ourselves to a brief sketch; One first writes $x$ as the image of a chain $\underline{u}$ in $\Sigma$ (it is possible since the partition $\mathcal{R}$ is a refinement of the cover $\mathcal{Z}$). Then the tangent space $T_xM$ splits by $T_xV^s(\underline{u})\oplus T_xV^u(\underline{u})$. The rest is clear by lemma \ref{Lambda}(2) and proposition \ref{prop133}.
\end{proof}
\bibliographystyle{alpha}
\bibliography{SymbolicDynamicsSBO}

\end{document}